\crefname{page}{page}{pages}
\newtheorem{mythm}{Theorem}[section]
\newtheorem{mycor}[mythm]{Corollary}
\newtheorem{mylemma}[mythm]{Lemma}
\newtheorem{mythmintro}{Theorem}
\newtheorem{myquest}{Question}
\newtheorem{myptn}[mythm]{Proposition}
\theoremstyle{definition}
\newtheorem{mydef}[mythm]{Definition}
\newtheorem{mydefintro}{Definition}
\newtheorem{mynotation}[mythm]{Notation}
\newtheorem{myidentification}[mythm]{Identification}
\newtheorem{myexample}[mythm]{Example}
\newtheorem{myexampleintro}{Example}
\newtheorem{myremark}[mythm]{Remark}
\newtheorem{mysetting}[mythm]{Setting}
\DeclareMathOperator{\Sym}{\mathrm{Sym}}
\DeclareMathOperator{\im}{\mathrm{Im}} 
\DeclareMathOperator{\CEff}{\mathrm{Eff}} 
\DeclareMathOperator{\Hom}{\mathrm{Hom}} 
\DeclareMathOperator{\HHom}{\mathbf{Hom}} 
\DeclareMathOperator{\Spec}{\mathrm{Spec}} 
\DeclareMathOperator{\Pic}{\mathrm{Pic}} 
\DeclareMathOperator{\NS}{\mathrm{NS}} 
\DeclareMathOperator{\vol}{\mu} 
\DeclareMathOperator{\pr}{\mathrm{pr}} 
\DeclareMathOperator{\Supp}{\mathrm{Supp}} 
\DeclareMathOperator{\res}{\mathrm{res}} 
\DeclareMathOperator{\rk}{\mathrm{rk}}  
\DeclareMathOperator{\rg}{\mathrm{rk}}  
\DeclareMathOperator{\ord}{\mathrm{ord}} 
\DeclareMathOperator{\Gr}{\mathrm{Gr}} 
\DeclareMathOperator{\HOM}{\mathcal{H}\mathit{om}} 
\DeclareMathOperator{\ordjac}{\mathrm{ordjac}} 
\DeclareMathOperator{\length}{\mathrm{length}} 
\DeclareMathOperator{\coker}{\mathrm{coker}} 
\newcommand{\KVar}[1]{K_0  \mathbf{Var}_{#1} } 
\newcommand{\KVaruh}[1]{K_0^{\mathrm{uh}}  \mathbf{Var}_{#1} } 
\newcommand{\Perv}{\mathbf{Perv}}
\DeclareMathOperator{\MHM}{\mathbf{MHM}}
\DeclareMathOperator{\rat}{\mathrm{rat}} 
\DeclareMathOperator{\Sy}{\mathrm{S}} 
\DeclareMathOperator{\degg}{\mathbf{deg}}  
\newcommand{\dd}{\mathbf{d}}  
\newcommand{\ee}{\mathbf{e}}  
\newcommand{\ii}{\mathbf{i}}  
\newcommand{\mm}{\mathbf{m}}  
\newcommand{\nn}{\mathbf{n}}  
\newcommand{\pp}{\mathbf{p}} 
\newcommand{\qq}{\mathbf{q}}   
\newcommand{\CC}{\mathbf{C}}
\newcommand{\DD}{\mathbf{D}}
\newcommand{\PP}{\mathbf{P}}
\newcommand{\ZZ}{\mathbf{Z}}
\newcommand{\NN}{\mathbf{N}}
\newcommand{\TT}{\mathbf{t}}
\newcommand{\LL}{\mathbf{L}}
\newcommand{\GG}{\mathbf{G}}
\newcommand{\QQ}{\mathbf{Q}}
\newcommand{\AAA}{\mathscr{A}}
\newcommand{\BBB}{\mathscr{B}}
\newcommand{\CCC}{\mathscr{C}}
\newcommand{\DDD}{\mathscr{D}}
\newcommand{\FFF}{\mathscr{F}}
\newcommand{\GGG}{\mathscr{G}}
\newcommand{\LLL}{\mathscr{L}}
\newcommand{\MMM}{\mathscr{M}}
\newcommand{\OOO}{\mathscr{O}}
\newcommand{\RRR}{\mathscr{R}}
\newcommand{\SSS}{\mathscr{S}}
\newcommand{\TTT}{\mathscr{T}}
\newcommand{\UUU}{\mathscr{U}}
\newcommand{\VVV}{\mathscr{V}}
\newcommand{\WWW}{\mathscr{W}}
\newcommand{\XXX}{\mathscr{X}}
\newcommand{\YYY}{\mathscr{Y}} 
\newcommand{\ZZZ}{\mathscr{Z}} 
\newcommand{\Hdg}{\mathrm{Hdg}}
\newcommand{\Zar}{\mathrm{Zar}}
\newcommand{\mdeg}{\delta}
\newcommand{\scdot}{\,\cdot \,}
\newcommand{\kb}{k} 
\newcommand{\kp}{{\kappa_p}}
\newcommand{\Rp}{{R_p}} 
\newcommand{\Fp}{{F_p}} 
\begin{document}

\title[Motivic distribution of rational curves]{Motivic distribution of rational curves \\
and twisted products of toric varieties}


\begin{abstract}
	This work concerns asymptotical stabilisation phenomena occurring in the moduli space of sections of certain algebraic families over a smooth projective curve, whenever the generic fibre of the family is a smooth projective Fano variety, or not far from being Fano. 
	
	We describe the expected behaviour of the class, in a ring of motivic integration, 
	of the moduli space of sections of given numerical class. 
	Up to an adequate normalisation, it should converge, 
	when the class of the sections goes arbitrarily far from the boundary of the dual of the effective cone,
	to an effective element given by a motivic Euler product. 
	Such a principle can be seen as an analogue for rational curves of the Batyrev-Manin-Peyre principle for rational points. 
		
	The central tool of this article is the property of equidistribution of curves. 
	We show that this notion does not depend on the choice of a model of the generic fibre,
	and that equidistribution of curves holds for smooth projective split toric varieties. 
	As an application, we study the Batyrev-Manin-Peyre principle for curves on a certain kind of twisted products.
	\end{abstract}


\author[L. Faisant]{Loïs \textsc{Faisant}}
\address{IST Austria, Am Campus 1, 3400 Klosterneuburg, Austria}
\email{lois.faisant@ista.ac.at or @m4x.org}
\date{\today}

\makeatletter
\@namedef{subjclassname@2020}{%
  \textup{2020} Mathematics Subject Classification}
\makeatother

\subjclass[2020]{14H10, 14E18, 11G50, 11M41, 14G40.}
\keywords{Manin's conjectures, rational curves, motivic Euler products, toric varieties,  equidistribution, twisted products, motivic stabilisation.}


\maketitle


\setcounter{tocdepth}{2}
\tableofcontents


\section*{Introduction}

The aim of this work is to describe the expected behavior of moduli spaces of morphisms from a smooth projective curve to a smooth Fano variety 
when the numerical class of the curves goes to infinity.
More generally, we study sections of a faithfully flat morphism to such a curve.
What we consider are the classes of these moduli spaces in a variant of the Grothendieck ring of varieties. 

\smallskip 

This is related to the classical subject of homological stability.
Recent developments concern
homological stability for moduli spaces,
as described for example by Ellenberg, Venkatesh and Westerland in \cite{ellenberg2016homological}.
The underlying philosophy is that given
a sequence $(X_n)$ 
of algebraic varieties defined over the finite field $\mathbf F_q$, of growing dimension with respect to $n$
-- for instance, a sequence of moduli spaces -- 
then 
the quantity $| X_n ( \mathbf F_q ) | q^{-\dim ( X_n )}$ 
should admit a limit as $n\to \infty$
precisely when the cohomology groups of $X_n$ stabilise \cite[\S 1.8]{ellenberg2016homological}. 
For example, 
when the $X_n$ are smooth and geometrically irreducible varieties of dimension $n$,
Deligne's proof of the Weil conjecture
\cite{deligne1974conjecture}
provides an upper bound on the eigenvalues of the Frobenius morphism, 
acting on
the étale cohomology groups of $X_n$,
and the Grothendieck-Lefschetz formula 
allows one to pass from
homological stabilisation to point-counting stabilisation.

The study of the asymptotic behaviour of the number of $\mathbf F_q ( t ) $-rational points of bounded height on Fano varieties over $\mathbf F_q$,
in the framework suggested by Manin and his collaborators in the late eighties,
is expected to be an illustrative instance of this phenomenon: in that case, $\mathbf F_q ( t ) $-points can be interpreted as $\mathbf F_q$-points of the moduli space of 
morphisms from $\PP^1_{\mathbf F_q}$
to the variety. 
One of the goals of this article is to formulate a motivic analogue of this framework which allows ``point-counting'' in any characteristic (in particular, of $\CC (t )$-points). Actually, the motivic point of view tends to incorporate both the homological approach and the point-counting approach. Then, techniques coming from arithmetic 
help one get answers to questions concerning geometry and topology: dimension, number of components, homological stabilisation, or even stabilisation of Hodge structures. 
 
\medskip 
Let us now recall that the modern formulation of a Batyrev--Manin--Peyre principle 
\textit{over number fields}
 is the sum of works carried out by
Franke--Manin--Tschinkel \cite{franke1989rational}, 
Batyrev--Manin \cite{batyrev1990nombre},
Batyrev--Tschinkel \cite{batyrev1998tamagawa},
Peyre \cite{peyre1995hauteurs},
Salberger \cite{salberger1998tamagawa},
Lehmann--Tanimoto \cite{lehmann2017geometry}
and Lehmann--Sengupta--Tanimoto \cite{lehmann2022geometric}, among many others. 
This principle admits many variants, for example when one works 
\textit{over a function field of positive characteristic}
\cite{peyre2012points}, 
when one considers 
\textit{integral points} \cite{chambert2012integral}
or 
\textit{Campana points}
\cite{pieropan2021campana},
or counts rational points on stacks
\cite{ellenberg2021heights,darda2022yasuda}.

This literature is a rich source of inspiration for anyone wishing to consider a geometric analogue of this principle, namely the asymptotic behaviour of \textit{rational curves} of arbitrarily large degree  -- which
corresponds to rational points of varieties defined \textit{over a function field of arbitrary characteristic}. This is no longer strictly speaking a counting problem if the characteristic is zero, 
but a quite natural framework is provided by the theory of motivic integration,
introduced by Kontsevich (during his talk in Orsay, 1995) and then developed and expanded by Denef, Loeser \cite{denef1999germs}, Looijenga \cite{looijenga2002motivic} and their collaborators \cite{sebag2004integration,sebag2007motivic,chambert2018motivic}. 
Then \textit{counting curves} means taking the classes, in the ring of motivic integration, of the components of the moduli space of rational curves. 

\medskip

Following 
remarks by Batyrev about the dimension of the moduli space
and
a question raised by Peyre in the early 2000s, one may ask whether the class of the moduli space of curves of a given degree stabilises when the degree, which is an element of the dual of the Picard group of the variety, goes arbitrarily far away from the boundary of the dual of the effective cone.

\medskip

Any positive answer to this question
for a given class of varieties
would be 
in the spirit of
a larger family of recent stabilisation results in motivic statistics, 
such as those appearing in several works by Vakil and Wood concerning moduli of hypersurfaces \cite{vakil2015discriminants},
by Bilu and Howe about
moduli of sections of vector bundles \cite{bilu2021motivic},
by Bilu, Das and Howe about
configuration spaces and Hadamard convergence \cite{bilu2020zeta},
or by Landesman, Vakil and Wood regarding
low degree Hurwitz spaces \cite{landesman2022low},
to cite a few of them. 

\medskip

Until recently, an obstruction to a precise formulation of a geometric or motivic Batyrev-Manin-Peyre principle 
was the absence of a sufficiently robust \textit{notion of motivic Euler product} which would play the role of Peyre's adelic constant. 
A path was
opened up by Bourqui \cite{bourqui2009produit,bourqui2010fonctions,bourqui2011asymptotic} in the late 2000's
and the notion reached a certain degree of maturity and robustness quite recently with Bilu's thesis \cite{bilu2018motivic}. In this article we make extensive use of this notion in order to formulate a motivic Batyrev-Manin-Peyre principle for curves, as well as a stronger notion of equidistribution of curves.
We test the validity of these concepts on smooth split projective toric varieties and twisted products of toric varieties, in a continuation of \cite{bourqui2003fonctions,bourqui2009produit,chambert2001fonctions,chambert2016motivic,bilu2020zeta}. 

\subsection*{Framework}
We fix once and for all a smooth projective 
and geometrically integral
curve $\CCC$ over a base field $\kb$, with function field $F = \kb ( \CCC ) $.
\begin{mydefintro}\label{def:Fano-like}
If $V$ is a smooth (irreducible) projective variety over $F $, we will say that $V$ is a \textit{Fano-like variety} if 
\begin{itemize}
\item $ V ( F ) $ is Zariski-dense in $V$; 
\item both cohomology groups $H^1 ( V , \OOO_V ) $ and $ H^2 ( V , \OOO_V ) $ are trivial;
\item the Picard group of $V$ coincides with its geometric Picard group;
\item the geometric Picard group of $V$ has no torsion, and its geometric Brauer group is trivial;
\item the class of the anticanonical line bundle of $V$ lies in the interior of the effective cone $\CEff ( V ) $, which itself is rational polyhedral: there exists a finite set of effective line bundles spanning it. 
\end{itemize}
\end{mydefintro}

In the case of a projective variety $V$ defined over the base field $\kb$, $F$-points of $V$ correspond to
morphisms $f:\CCC \to V$. Such a morphism will be somewhat abusively called
a \textit{rational curve} if $\CCC$ is the projective line $ \PP^1_\kb $. 
In general, we will consider sections of models $\pi : \VVV \to \CCC $ of $V$. 
By \textit{model}, we mean a separated and faithfully flat morphism of finite type whose generic fibre is isomorphic to $V$. 
If furthermore $\pi$ is proper, $F $-points of $V$ will correspond to sections of $\pi$. 
For the sake of conciseness, in this introduction we start by assuming that $V$ is actually defined over the base field $\kb$. 

\subsubsection*{Multidegrees of curves}
If $L$ is a invertible sheaf on $V$ and $f:\CCC \to V$ a morphism, 
then a relevant invariant of $f$ is its degree with respect to $L$, that is, the degree
\[
\deg ( f^* L ) 
\]
of the pull-back of $L$ to $\CCC$ through $f$. 
However, except for the canonical sheaf $\omega_V$ of $V$, 
there is no canonical choice for $L$, 
and the canonical degree has no particular reason to be an invariant sharp enough, 
as the analogy with the arithmetic setting may suggest (see for example \cite[\S 4]{peyre2021beyond}). 
A mean of addressing this issue would be to introduce more invariants.  

Our approach consists in taking \textit{all the degrees} 
in the following manner: 
actually, a morphism $f:\CCC \to V$ 
defines an element of the dual of the Picard group of $V$ 
\[
\degg f : [L] \in \Pic ( V ) \mapsto  \deg ( f^* L ) ,
\]
called the \textit{multidegree}. 
Given a class $\mdeg \in \Pic ( V ) ^\vee $, 
one can show that 
morphisms $f:\CCC \to V$ 
of multidegree $\delta $ are parametrized 
by a quasiprojective scheme $\Hom^\delta_\kb ( \CCC , V ) $ \cite[Chap. 2]{debarre2001higher}.
The dimension of $\Hom^\delta_\kb ( \CCC , V ) $ admits the lower bound 
\[
\mdeg
\scdot 
\omega_V^{-1} 
+ 
\dim ( V ) ( 1  - g ( \CCC ))
\]
called the \textit{expected dimension}.

In this paper, we are interested in the behaviour of this sequence of moduli spaces
when $\delta $ 
belongs to the dual of the effective cone $\CEff ( V )$ 
and 
goes arbitrarily far from its boundaries. 
More precisely, we study the sequence of the corresponding \textit{classes} in a ring of varieties. 

\subsubsection*{The ring of motivic integration} 

Let $S$ be a scheme. 
The Grothendieck group of $S$-varieties 
\[
\KVar S
\] 
is defined as the abelian group generated by the isomorphism classes of \textit{$S$-varieties} (by this, we mean $S$-schemes of finite presentation), together with \textit{scissors relations}
\[
X - Y - U
\] 
whenever $X$ is an $S$-variety, $Y $ is a closed subscheme of $X$ and $U$ is its open complement in $X$. 
The class of an $S$-variety $X$ is denoted by $[X]$.
The class of the affine line $\mathbf A^1 _S $  is denoted by $\LL_S$
and when the base scheme is clear from the context we may drop the index.
Any constructible subset $X$ of an $S$-variety admits a class $[X]$ in such a group \cite[p. 59]{chambert2018motivic}. 
In our case, a constructible subset is a finite union of locally closed subset of an $S$-variety. 

The product $[X][Y]=[X\times_S Y]$ defines a ring structure on $\KVar S $ with unit element the class of $S$ over itself with natural structural map. 
The localised Grothendieck ring of varieties $ \mathscr M_S  $ is by definition the ring $\KVar S $ localised at the class $\LL_S$ of the affine line.

The ring $\mathscr M_S$ admits a decreasing filtration by the virtual dimension: 
for $m\in \ZZ$, let $\mathcal F^m \MMM_S $ be the subgroup of $\MMM_S$ generated by elements of the form
\[
[ X ]\LL_S^{-i} 
\]
where $X$ is an $S$-variety and $i$ an integer such that $\dim_S ( X ) - i \leqslant - m$.
The completion of $\MMM_S$ with respect to this decreasing dimensional filtration is the projective limit
\[
\widehat{\MMM_S}^{\dim } = 
\underset{\longleftarrow}{\lim } \, \MMM_S / \mathcal F^m \MMM_S 
\] 
which comes with a morphism $\MMM_S \to \widehat{\MMM_S}^{\dim }$. 
The dimensional filtration is one of the filtrations we are going to use, another one being the filtration by the weight of the Hodge realisation, see \cref{subsection:weight-topology}.

In positive characteristic, we will work with modified versions of $\KVar S$ and $\MMM_S$ (see \S 4.4 of \cite[Chap. 2]{chambert2018motivic}).
An $S$-morphism $f:X\to Y$ between $S$-varieties is called a \textit{universal homeomorphism} 
if for any $S$-morphism $Y' \to Y$ the induced morphism $X\times_Y Y ' \to Y' $ is a homeomorphism.
Then the modified ring of varieties $\KVaruh S$ is the quotient of $\KVar S$ by the ideal
given by differences $[X] - [Y]$ of classes of $S$-varieties such that there exists a $S$-morphism $X\to  Y$ which is a universal homeomorphism. 
If $S$ is a $\QQ$-scheme, this ideal is trivial so that $\KVaruh S \simeq \KVar S$ \cite[Chap. 2, Cor. 4.4.7]{chambert2018motivic}.
An equivalent description is given by the quotient of $\KVar S$ by \textit{radicial surjective morphisms}, see \cite[Remark 2.1.4]{bilu2021motivic}.
Note that we will systematically drop the "uh" exponent in this article. 

\subsubsection*{Expected asymptotical behaviour}

Now we are able to formulate a first version of what could be a \textit{motivic Batyrev-Manin-Peyre principle for curves}. 
Again by analogy with the arithmetical side, 
one has to take into account the hypothetical existence of \textit{accumulating subsets}. 
A description of these subsets in the geometric context is given 
in the works of Lehmann, Tanimoto and Tschinkel \cite{lehmann2018balanced,lehmann2019geometric}.
In particular, we will have to focus on curves intersecting a well-chosen non-empty open subset $U$ of $V$, denoting by $\Hom^\mdeg _\kb ( \CCC , V ) _U$ the corresponding moduli space. 
 
Let $\CEff ( V )^\vee_\ZZ $ be the subset of $\CEff ( V )^\vee$ consisting of points in $\Pic ( V )^\vee$. 
In characteristic zero,
curves whose classe belongs to this subset are \textit{moveable} by \cite{boucksom2012pseudo}.  
With Question 5.4 in \cite{peyre2021beyond}, Peyre raised the following question. 

\begin{myquest}\label{BMP-motivic-weak}
Assume that $V$ is a Fano-like variety, defined over the base field $\kb$. 
Does the symbol 
\[
\left [ 
\Hom^\mdeg _\kb ( \CCC , V ) _U 
\right ]
\LL_\kb^{- \, \mdeg \scdot   \omega_V^{-1} } 
\]
converge in $\widehat{\mathscr M_\kb}^{\dim} $ as the class $\mdeg \in \CEff (V) ^\vee_\ZZ $ goes arbitrarily far from the boundaries of $ \CEff (V) ^\vee $?  
\end{myquest}
Note that in some cases this formulation is too optimistic, in particular the convergence may only hold for a filtration by the weight, which is finer than the one by the dimension, as it is the case for example in \cite{bilu2018motivic,faisant2022geometric,bilu2023circle},
rather than for the coarser dimensional filtration.
We refer to \cref{conj:BMP-motivic-strong-weight-top} \cpageref{conj:BMP-motivic-strong-weight-top} for a more accurate version.

In what follows, we will say that $V$ verifies the motivic Batyrev-Manin-Peyre principle for curves
if the answer to the previous question is positive for $V$. 
We now give a few examples for which it is the case. 

\begin{myexampleintro}
The first and simplest example is given by the projective space of dimension $n\geqslant 1$. Since its Picard group is generated by the class of a hyperplane $H$, the class of a moveable curve is given by a degree $d\in \NN$ while an anti-canonical divisor is $(n+1)H$, so that the normalisation factor is $\LL^{(n+1)d}_\kb $.  
One can easily compute the class of the moduli space $ \Hom^d _\kb( \PP^1_\kb , \PP^n_\kb ) $ in $\MMM_\kb$ and show that for $d\geqslant 1$
\[
\left [ \Hom^d _\kb ( \PP^1_\kb , \PP^n_\kb ) \right ] \LL_\kb^{-(n+1)d} = 
\left [ 
\PP^n_\kb 
\right ]
\left ( 1 - \LL_\kb^{-n} \right )  ,
\]
see \cite[Proposition 5.5]{peyre2021beyond}. The answer to \cref{BMP-motivic-weak} is positive in this case.
\end{myexampleintro} 

\begin{myexampleintro}
More generaly, \cref{BMP-motivic-weak} has a positive answer for smooth projective split toric varieties over any field $\kb$.
The work of Bourqui 
\cite{bourqui2009produit},
together with the unpublished notes \cite{bourqui2011asymptotic} only provided the result in the ring of Chow motives, when $\kb$ has characteristic zero. 
Bilu, Das and Howe \cite[\S 5]{bilu2020zeta}  checked a residue-type result 
at the level of the ring of varieties 
and we finally show in \cref{section-BMP-toric} that Bourqui's study actually provides convergence of the normalized class $\left [ \Hom^\mdeg _\kb ( \PP^1_\kb , V ) \right ]\LL_\kb^{-\, \mdeg \scdot \omega_V^{-1}}$ in $\widehat{\mathscr M_\kb}^{\dim}$
(see \cref{thm-intro:BMP-toric-varieties} below and \cref{thm:motivic-BMP-toric} \cpageref{thm:motivic-BMP-toric}). 
\end{myexampleintro}

\begin{myexampleintro}
	Building on previous works by Chambert-Loir-Loeser \cite{chambert2016motivic} and Bilu \cite{bilu2018motivic}, we show in \cite{faisant2022geometric} that the answer to \cref{BMP-motivic-weak} is positive for equivariant compactifications of vector spaces, 
	when 
	$\kb$ is algebraically closed of characteristic zero 
	and
	one considers the finer filtration on the ring of motivic integration given by the weight of the mixed Hodge realisation. 	
	See \cref{example:compact-ev}. 
\end{myexampleintro}

\begin{myexampleintro}
	Bilu and Browning develop in \cite{bilu2023circle} 
	a motivic circle method and apply it to show that the answer to \cref{BMP-motivic-weak}
	is positive when $V \subset \PP^{n-1}_\CC$ is a hypersurface of degree $d\geqslant 3$ with $n > 2^d ( d - 1 )$, if one considers the filtration on $\MMM_\CC$ coming from the weight of the mixed Hodge realisation. 
\end{myexampleintro}

\subsection*{Main results and structure of the paper} 

In this article we show that our motivic Batyrev-Manin-Peyre principle for curves
is compatible with a certain kind of twisted products \cite{chambert2001torseurs}. 
A little bit more precisely, we show:
\begin{mythmintro}[{\cref{thm:BMP-twisted-products}}]
\label{thm-intro:BMP-twisted-products-toric}
	If $\BBB$ is a Fano-like variety over $\kb$ verifying the motivic Batyrev-Manin-Peyre principle for curves, 
$X$ is a smooth projective  split toric variety over $\kb$, with torus $T$, 
and $\TTT$ is a Zariski-locally trivial $T$-torsor over $\BBB$,
then the twisted product  
\[ \XXX = X \times^\TTT \BBB \]
verifies the motivic Batyrev-Manin-Peyre principle as well: 
the answer to \cref{BMP-motivic-weak} is positive 
for rational curves on $\XXX$. 
\end{mythmintro}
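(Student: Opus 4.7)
The overall strategy is to fibre the moduli space of sections of $\XXX \to \CCC$ over that of $\BBB$ through the projection $\pi: \XXX \to \BBB$, to identify each fibre as the moduli space of sections of a twisted toric fibration over $\CCC$, and then to combine the motivic equidistribution of curves on $X$ (proved earlier in the paper for split projective toric varieties) with the motivic Batyrev--Manin--Peyre hypothesis on $\BBB$.

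Because $\TTT$ is Zariski-locally trivial and $X$ is split toric, $\pi$ is a Zariski-locally trivial fibration with fibre $X$, inducing a canonical splitting $\Pic(\XXX) \simeq \Pic(\BBB) \oplus \Pic(X)$. A multidegree on $\XXX$ decomposes accordingly as $\mdeg = (\mdeg_\BBB, \mdeg_X)$, with compatible anticanonical pairings
\[ \mdeg \scdot \omega_\XXX^{-1} = \mdeg_\BBB \scdot \omega_\BBB^{-1} + \mdeg_X \scdot \omega_X^{-1}. \]
Choose $U \subset \XXX$ to be $\pi^{-1}(U_\BBB)$ intersected with the image of $U_X \times \TTT$ in $\XXX$, for an open $U_\BBB \subset \BBB$ on which motivic BMP is controlled and a $T$-invariant open $U_X \subset X$ on which equidistribution holds. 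Composition with $\pi$ then stratifies $\Hom^\mdeg_\kb(\CCC, \XXX)_U$ over $\Hom^{\mdeg_\BBB}_\kb(\CCC, \BBB)_{U_\BBB}$: the fibre above a section $s$ is the moduli space of sections of vertical multidegree $\mdeg_X$ of the pullback $s^*\XXX \to \CCC$, which is an alternative model of the generic fibre $X_F$ (where $F = \kb(\CCC)$) classified by $s^*\TTT \in H^1_{\mathrm{Zar}}(\CCC, T) \simeq \Pic(\CCC) \otimes_\ZZ N$, with $N$ the cocharacter lattice of $T$.

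Applying equidistribution of curves for $X$ together with the model-independence statement (also proved in the paper), which identifies equidistribution over $X_F$ with equidistribution over each model $s^*\XXX$, gives, after normalisation by $\LL_\kb^{-\mdeg_X \scdot \omega_X^{-1}}$ and as $\mdeg_X$ moves far from the boundary of $\CEff(X)^\vee$, a limit expressed as a motivic Euler product
\[ \prod_{v \in \CCC} \tau_v\bigl(s^*\XXX\bigr) \]
whose factor at a closed point $v \in \CCC$ is the Tamagawa-type motivic volume of the fibre of $s^*\XXX$ at $v$. Summing these fibre contributions over $s \in \Hom^{\mdeg_\BBB}_\kb(\CCC, \BBB)_{U_\BBB}$ and invoking the motivic BMP hypothesis for $\BBB$ to pass to the limit in $\mdeg_\BBB$, one recombines the fibrewise Euler products with the one predicted for $\BBB$ into the global Euler product on $\XXX$ predicted by the motivic BMP principle; the compatibility of normalisations established above is precisely what makes this recombination possible.

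The main obstacle is that the two limits in $\mdeg_X$ and $\mdeg_\BBB$ cannot be iterated naively: the fibre contribution above depends on $s$ through the twist $s^*\TTT$, so the limit in $\mdeg_X$ must hold uniformly in $s$ in the relevant topology on $\MMM_\kb$ — likely the weight topology coming from the mixed Hodge realisation, which is finer than the dimensional one and is the topology of \cref{conj:BMP-motivic-strong-weight-top}. Securing this uniformity is exactly why equidistribution of curves, rather than merely BMP, is required for the toric fibre, and why the hypotheses that $T$ is split and $\TTT$ is Zariski-locally trivial matter: together they ensure that the twist $s^*\TTT$ is captured by a finite amount of data controlled by $\mdeg_\BBB$, which lets the fibrewise error terms be dominated uniformly as $s$ varies.
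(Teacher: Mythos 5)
Your overall strategy --- fibring $\Hom^\mdeg_\kb(\PP^1_\kb,\XXX)_{\widetilde\UUU}$ over $\Hom^{\mdeg_\BBB}_\kb(\PP^1_\kb,\BBB)_\UUU$ via $\pi_*$, identifying the fibre over $f$ with the space of sections of the twisted model $\XXX_f\to\PP^1_\kb$, and transferring equidistribution from the constant toric model to these twisted models by the change-of-model theorem --- is the paper's. But there is a genuine gap in how you pass from the fibrewise statement to the global one. You propose to ``sum the fibre contributions over $s$'' and to control their dependence on $s$ by a uniform bound on error terms. In $\KVar\kb$ there is no such summation: one cannot integrate a class that varies with the base point. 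The operation actually available is \cref{ptn:point-wise-criterion-for-piecewise-trivial-fibrations}, which requires the fibre to be (up to reduction) one and the same $\kb$-scheme over every point of the base. The crucial fact --- which your proposal does not isolate, and which is why the hypotheses that $T$ is split and $H^1(\kb,T)=0$ enter --- is that for an H90 multiplicative group the type of a torsor determines its class (the split sequence \eqref{equation:exact-sequence-obstruction-torsor-class-type}), and the type of $f^*\TTT$ is $\degg(f)\circ\eta_\TTT$. Hence $\XXX_f$ depends on $f$ only through $\mdeg_\BBB=\degg(f)$: the fibre of $\pi_*$ is literally constant on $\Hom^{\mdeg_\BBB}_\kb(\PP^1_\kb,\BBB)_\UUU$ (\cref{lemma:schematic-fibre-of-projection-moduli-spaces}) and the class factors exactly as a product (\cref{ptn:class-of-moduli-space-is-product-of-the-classes}). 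No uniformity in $s$ is needed because nothing varies with $s$; the only double limit is in $(\mdeg_\BBB,\mdeg_X)$, where the model $\XXX_{\mdeg_\BBB}$ varies with $\mdeg_\BBB$ and \cref{thm:equidistribution-and-models} is applied to each of them. Your stated reason for needing equidistribution rather than BMP (uniformity in $s$) is therefore not the right one; the right one, which you do mention in passing, is that BMP for the constant model $X\times_\kb\PP^1_\kb$ does not transfer to the twisted models $\XXX_{\mdeg_\BBB}$, whereas equidistribution does.

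A secondary inaccuracy: the splitting $\Pic(\XXX)\simeq\Pic(\BBB)\oplus\Pic(X)$ is not canonical. The sequence \eqref{equation:exact-sequence-picard-groups} only splits after choosing $G$-linearisations $s:\Pic(X)\to\Pic^G(X)$ of a basis, and the resulting model $\vartheta(s\circ\varpi(\omega_X^{-1}))$ of $\omega_X^{-1}$ on $\XXX_{\mdeg_\BBB}$ differs from $\vartheta(\omega_X^{-1})$ by a character twist. This produces the correction factor $\LL_\kb^{\,\delta_\BBB\scdot\eta_\TTT([\omega_X^{-1}]-s\circ\varpi[\omega_X^{-1}])}$ of \cref{lemma:relation-delta-X-G-delta-X}, which must be tracked for the normalisations of the two factors to recombine into $\LL_\kb^{-\mdeg\scdot\omega_\XXX^{-1}}$; your ``compatible anticanonical pairings'' silently absorbs it. This is bookkeeping rather than a conceptual obstacle, but without it the claimed limit would be off by a nontrivial power of $\LL_\kb$.
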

The proof of this result is to be found in the very last section. 
It requires two ingredients.

\subsubsection*{Formulating a Batyrev-Manin-Peyre principle for curves}

The first ingredient is presented in \cref{section:BMP-general} and consists in a precise formulation of a motivic Batyrev-Manin-Peyre principle in a non-constant setting, namely when one considers a proper model $\VVV \to \CCC$ of a $F $-variety $V$
and studies the moduli space of its sections $\sigma : \CCC \to \VVV$
generically intersecting a convenient open subset $U$ of $V$, 
as it is done for example in \cite{chambert2016motivic,bilu2018motivic,faisant2022geometric} for equivariant compactifications of vector spaces. 
Fixing line bundles forming a basis of $\Pic ( V ) $ 
and choosing models of them over $\VVV$,
one is able to define the multidegree of a section, generalising the previous notion (see \cref{def:multidegree-relative} \cpageref{def:multidegree-relative}). 
If $\VVV$ is projective over the base field $\kb$,
we show in the first section of this article that the corresponding moduli space of sections 
$\Hom_\CCC^{\mdeg} ( \CCC , \VVV )_U $ \textit{of multidegree} $\mdeg$
exists as a quasi-projective $\kb$-scheme, 
and formulate a relative geometric Batyrev-Manin-Peyre principle concerning the behaviour of the class 
\begin{equation}\tag{1}\label{intro:normalised-class}
	\left [ \Hom_\CCC^{\mdeg} ( \CCC , \VVV )_U \right ] \LL_\kb^{-\, \mdeg \scdot \omega_V^{-1} } 
\end{equation}
when the multidegree $\mdeg$ \textit{becomes arbitrarily large} (see \cref{conj:BMP-motivic-strong-weight-top} \cpageref{conj:BMP-motivic-strong-weight-top}). 

\smallskip 

Let $\tau ( \VVV )$ be the expected limit of \eqref{intro:normalised-class}.
We give an explicit description of $\tau ( \VVV ) $ as a motivic Euler product, using Bilu's construction \cite{bilu2018motivic} of such an object. 
This motivic Tamagawa number can be interpreted as a motivic analogue of Peyre's constant \cite{peyre1995hauteurs}. For a constant family ${\VVV = V \times_\kb \CCC } $, where $V$ is actually defined over $\kb$, it takes the form
\[
\tau ( \VVV ) =  \frac{\LL_\kb^{( 1 - g ( \CCC ) )\dim V } \left [ \Pic^0 ( \CCC ) \right ]^{\rg ( \Pic ( V  ))} }{\left ( 1 - \LL_\kb^{-1} \right )^{\rg ( \Pic ( V ) )}} \prod_{p\in \CCC } \left ( 1 - \LL_p^{-1} \right )^{\rg ( \Pic ( V  ))} \frac{\left [ V _ p \right ]}{\LL_{p}^{\dim ( V )}} 
\]
where $g ( \CCC ) $ is the genus of the curve $\CCC$ 
and $\Pic^0 ( \CCC )$ the connected component of the Picard group of  $ \CCC $. 
Note that the class $\left ( 1 - \LL_p^{-1} \right )^{\rg ( \Pic ( V  ))} \frac{\left [ V _ p \right ]}{\LL_{p}^{\dim ( V )}} $ 
can be rewritten 
\[
[ \mathcal T_V ] \LL^{-(\dim ( V) + \rg ( \Pic ( V ))} 
\]
where $\mathcal T_V$ is the\footnote{Since $\Pic ( V ) = \Pic ( \overline{ V } )$, the universal torsor of $V$ is unique up to isomorphism. For more about this alternative definition of the local factor in a more general framework, see Peyre's talk \cite[around 23']{peyre2021banff}.} 
universal torsor of $V$.

However, the definition of $\tau ( \VVV )$ may require the use of a finer filtration on $\mathscr M_\kb$, namely the filtration by the weight of the Hodge realisation of $\VVV / \CCC$, when $\kb$ is a subfield of $\CC$. 
We refer to \cref{def:motivic-Tamagawa-number-of-a-model} \cpageref{def:motivic-Tamagawa-number-of-a-model}.

\subsubsection*{Equidistribution of curves and models}

The second ingredient is a change-of-model theorem, given by \cref{thm-intro:change-of-model} below. 
Such a result is grounded on the
central idea of this article, which is the concept of equidistribution of curves.
This notion, which is presented in \cref{section:equidistribution}, describes the behaviour of constrained curves of large multidegree: if $\SSS$ is a zero-dimensional subscheme of the curve $\CCC$, and $\varphi : \SSS \to \VVV$ a $\CCC$-morphism, broadly,
the equidistribution principle says that the class of the moduli space of curves 
of multidegree $\mdeg$
whose restriction to $\SSS$ is $\varphi$, normalised $\LL_\kb^{\mdeg \scdot \omega_V^{-1}}$, converges to the restriction of the product $\tau ( \VVV  ) $ to the complement of the closed points of $\SSS$. 
More generally, one considers curves whose restriction to $\SSS$ belongs to any constructible subset of $\Hom_{\CCC} ( \SSS , \VVV)$. 
In particular, the notion of equidistribution is much stronger than the Batyrev-Manin-Peyre principle, see \cref{def:equidistribution-products-constructible-subsets} \cpageref{def:equidistribution-products-constructible-subsets}. 
Smooth projective split toric varieties provide a first example of varieties for which this principle holds (see \cref{section-BMP-toric}).  

\begin{mythmintro}[{\cref{thm-equidistribution-toric}}]
\label{thm-intro:BMP-toric-varieties}
The principle of equidistribution of curves holds for smooth and projective split toric varieties defined over any base field $\kb$, with respect to the dimensional filtration on $\MMM_\kb$. 

In particular, for such varieties, the answer to \cref{BMP-motivic-weak} is positive.
\end{mythmintro}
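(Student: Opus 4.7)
The strategy is to use the Cox description of $V$. Let $\Sigma$ be the fan of $V$, with set of rays $\Sigma(1)$; since $V$ is smooth, projective and split, the Cox construction realises the universal torsor $\TTT_V \to V$ as an open subset $\mathbf A^{\Sigma(1)} \setminus Z_\Sigma$ of affine space, on which the split diagonalisable group $G = \Hom(\Pic(V), \GG_m)$ acts freely with quotient $V$. A morphism $f : \CCC \to V$ of multidegree $\mdeg$ is described, up to a twist by $\Pic^0(\CCC)$, by a tuple $(s_\rho)_{\rho \in \Sigma(1)}$ of global sections of line bundles $L_\rho$ on $\CCC$ of prescribed degrees $d_\rho = \mdeg \scdot [D_\rho]$, subject to the non-vanishing condition that for every $c \in \CCC$ the tuple $(s_\rho(c))$ avoids the irrelevant locus $Z_\Sigma$. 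Modulo the $G$-action, these tuples form an open subscheme of a product of linear systems, which is the classical parametrisation of $\Hom_\kb^\mdeg(\CCC, V)$ used by Bourqui.

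The first step is to compute, unconstrained, the class $[\Hom_\kb^\mdeg(\CCC, V)]$ via a motivic Möbius inversion over the stratification of $\mathbf A^{\Sigma(1)}$ by the faces of $\Sigma$, rewriting the bad locus $Z_\Sigma$ as a union indexed by primitive collections of rays. This expresses $[\Hom_\kb^\mdeg(\CCC, V)] \LL^{-\mdeg \scdot \omega_V^{-1}}$ as a sum of truncated motivic Euler products over the closed points of $\CCC$, indexed by effective divisors supporting the simultaneous vanishing loci. This computation is carried out in \cite{bourqui2009produit,bourqui2011asymptotic} at the level of Chow motives and refined at the level of the localised Grothendieck ring by Bilu, Das and Howe in \cite{bilu2020zeta}. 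The additional input I would make explicit is that, as $\mdeg$ moves deep inside $\CEff(V)^\vee$, every non-principal term drops in dimension at a rate tending to infinity with the distance of $\mdeg$ from the walls of $\CEff(V)^\vee$, so that the normalised class converges in $\widehat{\MMM_\kb}^{\dim}$ to the motivic Tamagawa number $\tau(V \times_\kb \CCC)$.

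The second step upgrades convergence to equidistribution. Fix a zero-dimensional $\SSS \subset \CCC$ and a $\CCC$-morphism $\varphi : \SSS \to V \times_\kb \CCC$. Since $\SSS$ is zero-dimensional and $G$ is a split torus, $\varphi$ lifts, at each closed point $p \in \SSS$, to a tuple $(t_{\rho,p}) \in \TTT_V(\kappa(p))$. In Cox coordinates the constraint $f|_\SSS = \varphi$ becomes the collection of affine linear conditions $s_\rho|_p = t_{\rho,p}$, which cut an affine subspace of codimension $\length(\SSS)$ in each factor $H^0(\CCC, L_\rho)$; moreover the non-vanishing condition at points of $\SSS$ is automatically satisfied by construction. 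The same Möbius inversion and asymptotic analysis as in the first step go through verbatim, the only change being that the local Euler factor at each $p \in \SSS$ is replaced by a factor reflecting the prescribed value $\varphi(p)$. Passing from a single $\varphi$ to an arbitrary constructible subset $W \subseteq \Hom_\CCC(\SSS, V \times_\kb \CCC)$ is a routine scissor-relations argument, yielding the expected limit, namely the motivic Euler product $\tau(V \times_\kb \CCC)$ with the local factors at $\SSS$ suitably modified.

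The main obstacle I anticipate is the motivic tauberian step: controlling the cancellations in the Möbius inversion and showing that, in $\widehat{\MMM_\kb}^{\dim}$, every non-principal contribution vanishes as $\mdeg$ moves far from the walls of $\CEff(V)^\vee$. This ultimately rests on the rationality and polar structure of the motivic zeta function associated to $\Sigma$ established by Bourqui, combined with the uniformity of the Riemann--Roch estimate $\dim H^0(\CCC, L_\rho) = d_\rho + 1 - g(\CCC)$ that is valid once all $d_\rho$ are sufficiently large. The Batyrev--Manin--Peyre principle for $V$, and hence the positive answer to \cref{BMP-motivic-weak}, then follows by specialising to $\SSS = \emptyset$.
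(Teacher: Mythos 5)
Your overall architecture is the same as the paper's: Cox coordinates and the universal torsor, a motivic Möbius inversion over degeneracy divisors to compute the unconstrained class (this is Bourqui's parametrisation, refined by Bilu--Das--Howe, and the paper's Theorem 5.8), imposition of the jet condition as linear conditions on spaces of sections, and a dimensional error analysis. (One small point: the theorem is about rational curves, i.e.\ $\CCC = \PP^1_\kb$, so there is no $\Pic^0(\CCC)$-twist to worry about; the paper's proof is written entirely for $\PP^1_\kb$ via tuples of homogeneous polynomials and Euclidean division modulo a defining polynomial of $\SSS$.)

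There are, however, two steps that you declare routine and that are precisely where the work lies. First, the Möbius inversion does \emph{not} go through ``verbatim'' in the presence of the constraint. The inversion writes a $\chi$-equivariant morphism as a product of a degeneracy divisor $D$ (supported away from $\SSS$) with a lower-degree morphism, and the condition $f|_{\SSS}\in\widetilde W$ on the product becomes the \emph{twisted} condition $\overline{D}^{-1}\widetilde W$ on the second factor, with $D$ varying over a configuration space. To turn the inversion into an identity of classes one needs (i) a refinement of the multiplicativity of motivic Euler products at the level of the configuration spaces $\Sym^{\kappa'}\times_\gamma\Sym^{\kappa''}$ (the paper's Proposition 5.20), and (ii) the explicit ``detwisting'' isomorphism $\tau_{\overline D}$ identifying the fibre over $\overline D^{-1}\widetilde W$ with the fibre over $\widetilde W$ when all degrees exceed $\ell(\SSS)-1$, together with a dimension bound for the finitely many low-degree strata where the restriction map fails to be surjective (Lemma 5.12 and Proposition 5.21). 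Second, passing from a single jet $\varphi$ to a constructible $W$ is not a scissor-relations argument: knowing the class of each fibre of $\res_\SSS$ over each point of $\Hom(\SSS,V_\Sigma)$ does not determine the class of the preimage of $W$ in $\KVar\kb$. One needs the fibration to be piecewise trivial, i.e.\ genuine isomorphisms between fibres varying constructibly in the base --- which is exactly what the translation isomorphisms $\tau_g$ provide, and why the paper carries the constructible set $\widetilde W$ through the whole computation rather than arguing fibre by fibre over it. With these two points supplied, your outline matches the paper's proof, including the final assembly of the local factor $[W]\prod_{p\in|\SSS|}(1-\LL_p^{-1})^{r}\LL_p^{-(m_p+1)\dim V_\Sigma}$ and the error term of virtual dimension $-\tfrac14\min_\alpha(\mdeg_\alpha)+O(1)$.
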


Our main fundamental result allows us to switch between models, whenever one has equidistribution of curves on one of them.

\begin{mythmintro}[{\cref{thm:equidistribution-and-models}}]
\label{thm-intro:change-of-model}
	Assume that 
	two proper models $\VVV / \CCC $ and $\VVV ' / \CCC$ 
	of the same Fano-like $F$-variety $V$ are given,
	together with models of a $\ZZ$-basis of $\Pic ( V ) $ on each of them, defining two multidegrees $\mdeg$ and $\mdeg '$, for sections of $\VVV $ and $\VVV ' $, respectively.  
	
	Then there is equidistribution of curves on $\VVV $ with respect to $\mdeg$ if and only if there is equidistribution of curves on $\VVV ' $ with respect to $\mdeg '$. 
\end{mythmintro}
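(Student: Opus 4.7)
The plan is to exploit the fact that two proper models of the same generic fibre agree away from a finite set of closed points of $\CCC$. Spreading out the identity of the generic fibre yields a dense open $U_0\subseteq\CCC$, with finite complement $T$, together with a $U_0$-isomorphism $\phi:\VVV|_{U_0}\xrightarrow{\sim}\VVV'|_{U_0}$. Since $\VVV$ and $\VVV'$ are proper over the regular curve $\CCC$, the valuative criterion provides a natural bijection between $\CCC$-sections of $\VVV$ and of $\VVV'$, respecting the condition of generically meeting $U$; restricted to sections whose $U_0$-part is prescribed, it is induced by an isomorphism of $\kb$-schemes. All the difference between the two families of moduli spaces is thus concentrated on the fibres above $T$.

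Next I would relate the two multidegrees. After a harmless integer translation, obtained by twisting the chosen extensions of the $\ZZ$-basis of $\Pic(V)$ by line bundles pulled back from $\CCC$, one may assume that for each basis element $[L_i]$ the chosen extensions $L_i$ on $\VVV$ and $L_i'$ on $\VVV'$ correspond under $\phi$ on $U_0$. For a pair of corresponding sections $(\sigma,\sigma')$ the pullbacks $\sigma^*L_i$ and $(\sigma')^*L_i'$ then agree on $U_0$, and their degrees differ by a sum of local contributions, one per point of $T$. Each such contribution is determined by the restriction of $\sigma$ to a sufficiently high-order thickening $T^{(N)}\subseteq\CCC$ of $T$; this defines a constructible function $c:\Hom_\CCC(T^{(N)},\VVV)\to\Pic(V)^\vee$ taking only finitely many values, with constructible strata $Z_j$.

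Now, given a zero-dimensional subscheme $\SSS\subseteq U_0$ and a constructible $W\subseteq\Hom_\CCC(\SSS,\VVV')$, the bijection above transports $W$ to a constructible subset of $\Hom_\CCC(\SSS,\VVV)$. Enlarge $\SSS$ to include $T^{(N)}$, and decompose the moduli space of $\sigma'$ according to the strata $Z_j$: on each piece, the shift $\Delta\in\Pic(V)^\vee$ is constant, and the bijection identifies sections $\sigma'$ of $\VVV'$ of multidegree $\mdeg$ on the piece with sections $\sigma$ of $\VVV$ of multidegree $\mdeg+\Delta$ whose restriction to $\SSS\sqcup T^{(N)}$ lies in the corresponding constructible subset. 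Applying equidistribution on $\VVV$ to each piece, and rewriting $\LL_\kb^{-\mdeg\scdot\omega_V^{-1}}=\LL_\kb^{\Delta\scdot\omega_V^{-1}}\LL_\kb^{-(\mdeg+\Delta)\scdot\omega_V^{-1}}$, the normalised class of the $\VVV'$-moduli space converges, as $\mdeg\to\infty$, to a finite sum indexed by the strata.

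The main obstacle is to identify this limit with $\tau(\VVV')$ times the normalised class of $W$, thereby yielding equidistribution on $\VVV'$. This reduces to a purely local statement at each $p\in T$: the local Euler factor of $\tau(\VVV')$ at $p$ must equal the local factor of $\tau(\VVV)$ at $p$, weighted by $\LL_p^{c\scdot\omega_V^{-1}}$ and summed against the constructible stratification of the local jet space of $\VVV$ at $p$ imposed by $c$. This is essentially the motivic change-of-variables formula applied fibrewise to the birational isomorphism $\phi$, and can be read off the explicit description of the local factors of $\tau$ recalled in the introduction, combined with the relation between the two anticanonical extensions above $p$. The setup being symmetric in $\VVV$ and $\VVV'$, this yields the claimed equivalence.
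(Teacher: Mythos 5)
Your overall architecture matches the paper's: spread out an isomorphism over a dense open $\CCC'$, observe that the degree discrepancy is a constructible, finitely-valued function of the jet of the section at the bad points, stratify the moduli space accordingly, apply equidistribution on one model stratum by stratum with the shifted multidegree, and then match the resulting finite sum of local Tamagawa factors with the local factors of the other model. The decomposition and the normalisation bookkeeping ($\LL^{-\mdeg'\scdot\omega_V^{-1}} = \LL^{-|\varepsilon|\scdot\omega_V^{-1}}\LL^{-(\mdeg'-|\varepsilon|)\scdot\omega_V^{-1}}$) are exactly as in the paper.

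The genuine gap is in your final step, which you yourself flag as "the main obstacle" and then dispose of by invoking "the motivic change-of-variables formula applied fibrewise to the birational isomorphism $\phi$". This does not work as stated: $\phi$ is only defined over $U_0$, i.e.\ it is undefined precisely above the finite set $T$ where the local Euler factors must be compared, and the change-of-variables formula (\cref{ptn:change-of-variable-smooth-case}) requires an actual morphism of $R_p$-schemes, not a birational map. Moreover the local factors $\mu^*_{\LLL|_{\VVV_{R_p}}}\bigl(W_p\cap\Gr(\VVV_{R_p}^\circ)\bigr)$ are measures on the arc space of the \emph{smooth locus}, so one also needs the two smooth loci to carry the same arcs. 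The paper resolves both issues at once by producing a third proper model $\widetilde{\VVV}$ with morphisms $f:\widetilde{\VVV}\to\VVV$ and $f':\widetilde{\VVV}\to\VVV'$ whose smooth locus is a Néron smoothening of both (\cref{cor:existence-global-wNm-dominating-2-global-wNm}, built from \cref{thm:existence-of-weak-neron-models} and the gluing \cref{ptn:gluing-dominating-models}), and then applies the change-of-variables formula to $f$ and $f'$ separately, using the identity $\varepsilon_{(\Omega^n_{\widetilde{\VVV}/R_p})^\vee - f^*(\Omega^n_{\VVV/R_p})^\vee}=\ordjac_f$ (\cref{ptn:ordjac-Neron-smoothening}) to cancel the Jacobian against the shift of anticanonical metrics. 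Without this intermediate dominating model your local matching of Euler factors is unjustified; with it, your argument becomes essentially the paper's proof (stratification by $\varepsilon_{\underline{\widetilde{\LLL'}}-\underline{\widetilde{\LLL}}}$ performed on $\widetilde{\VVV}$ rather than on $\VVV$ itself).

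A secondary, smaller imprecision: the claim that the section-by-section bijection "is induced by an isomorphism of $\kb$-schemes" on each stratum needs the pointwise criterion for piecewise trivial fibrations (\cref{ptn:point-wise-criterion-for-piecewise-trivial-fibrations}) to convert a bijection on points over all residue fields into an equality of classes in $\KVar{\kb}$; this is how the paper transfers the classes of the constrained moduli spaces between models.
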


\subsubsection*{Convention} In this article, without explicit mention, if $R$ is a discrete valuation ring, we will always assume that it has equal characteristic. 


\subsection*{Acknowledgements}
I am very grateful to my PhD advisor Emmanuel Peyre for all the remarks and suggestions he made during the writing process of this article. 
I warmly thank Margaret Bilu and Tim Browning for some valuable comments they made on a preliminary version of this work. 
I would like to thank David Bourqui as well for 
several helpful conversations. 
Finally, I thank an anonymous referee 
for their very careful reading, their numerous comments and suggestions 
which helped me a lot in improving the exposition,
besides fixing several typos.

During the revision process of this work, 
the author received 
funding from the European Union's Horizon 2020 
research and innovation programme under the Marie Sk\l odowska-Curie Grant Agreement No.101034413.

\section{Models and moduli spaces of sections}

\subsection{Global models and degrees}
Let $\CCC$ be a smooth projective 
and geometrically integral
curve over a field $\kb$, with function field $F$, 
and let $V$ be a smooth $F$-variety. 
As in the introduction, a \textit{model of $V$ over $\CCC$} is a separated, faithfully flat and finite type $\CCC$-scheme $\VVV$ whose generic fibre is isomorphic to $V$. 

\begin{myexample}
Let $V\hookrightarrow \PP^N_F$ be an embedding of $V$ in some projective space. Take $\VVV$ to be the Zariski closure of $V$ in $\PP^N_\CCC$. Then the composition $\VVV \to \PP^N_\CCC \to \CCC$ is a projective model of $V$. 
\end{myexample}

\begin{myremark}
	If $\pi : \VVV \to \CCC$ is a proper model,
	the functors $\pi_!$ and $\pi_*$ from the category of sheaves on $\VVV$ to the ones on $\CCC$ coincide \cite[Chap. 6, \S 3]{milne1980etale}.
	 Since $\CCC$ is integral, by \cite[Theorem 4.18.2]{kleiman2005picard} there exists a nonempty Zariski open subset $\CCC  ' \subset \CCC$ such that the Picard scheme $\Pic_{\VVV _{\CCC '} / \CCC ' }$
	representing the Picard functor $ {\mathbf{Pic}_{(\VVV_{\CCC '} / \CCC ' ) } }_{\mathrm{(fppf)} } $ exists and is a disjoint union of open quasi-projective subschemes. 
	Here we recall that $ {\mathbf{Pic}_{ ( X / S ) }}_{\mathrm{(fppf)} } $ is the sheaf associated to the functor
	\[
	( T / S ) \mapsto \Pic ( X \times_S T ) / \Pic ( T ) 
	\] in the fppf (faithfully flat of finite type) topology,
	given a separated map of finite type $X\to S$ between locally Noetherian schemes
	 \cite[Definition 2.2]{kleiman2005picard}.
	
	Moreover, we assume that $\pi$ has (local) sections, so that the Picard functor ${\mathbf{Pic}_{(\VVV_{\CCC } / \CCC ) } }_{\mathrm{(fppf)} }$ is actually 
\[
\mathbf{Pic}_{\VVV / \CCC } ( T ) = H^0 ( T , \mathcal R^1 \pi _* ( \GG_m )) 
\]
for the Zariski topology \cite[p.204]{bosch1990neron}. 
\end{myremark}

\begin{myremark}\label{remark:local-triviality-of-derived-fuctor}
Assume there exists a closed point $p_0 \in \CCC$ such that $H^1 ( \VVV_{p_0}, \OOO_{\VVV_{p_0}} ) = H^2 ( \VVV_{p_0}, \OOO_{\VVV_{p_0}} ) = 0$.
By the Semicontinuity Theorem \cite[(7.7.5-I)]{EGA-III},
there exists an open neighborhood $\CCC ''  $ of $p_0$ such that  $H^1 ( \VVV_p, \OOO_{\VVV_p} ) = H^2 ( \VVV_p, \OOO_{\VVV_p} ) = 0$ for all $p\in \CCC '' $. 
This shows in particular that $( R^1 \pi_! \OOO_{\VVV} ) _{| \CCC '' } = ( R^2 \pi_! \OOO_{\VVV} )_{ | \CCC '' } = 0$.
By flat base-change
 \cite[\href{https://stacks.math.columbia.edu/tag/02KH}{Lemma 02KH}]{stacks-project} 
 applied to the generic fibre,
 \[
 \begin{tikzcd}
 V \arrow[r] \arrow[d] & \VVV \arrow[d,"\pi"] \\
 \Spec ( F  ) \arrow[r] & 	\CCC 
 \end{tikzcd}
 \]
 we have $( R^i \pi_! \OOO_{\VVV} )_\eta = H^i ( V , \OOO_V ) $ for all $i$ and in particular $H^1 ( V , \OOO_V ) = H^2 ( V , \OOO_V ) = 0$. 
 Conversely, if $H^1 ( V , \OOO_V )$ and $ H^2 ( V , \OOO_V ) $ are both trivial, then there exists a non-empty open subset of $\CCC$ above which $R^1 \pi_! \OOO_{\VVV}$ and $R^2 \pi_! \OOO_{\VVV}$ are both trivial. 
 This argument actually shows that the assumptions on the first and second cohomology groups of the structure sheaf of a Fano-like variety,
 in \cref{def:Fano-like}, 
 can be done indifferently with respect to $V$ or to a special fibre of $\VVV$.  
 
Moreover, by \cite[Proposition 5.19]{kleiman2005picard}, $\Pic_{\VVV _{\CCC '} / \CCC ' } $ is smooth over $\CCC ' \cap \CCC ''$ and by \cite[Corollary 5.13]{kleiman2005picard} each fibre $\Pic_{\VVV _{p } / \kp }$ above $p\in \CCC ' \cap \CCC '' $ is discrete, given by $H^2 ( \VVV_p , \ZZ ) $. 
From this point of view, $\mathbf{Pic}_{\VVV_{\CCC '} / \CCC ' }$ is a constructible sheaf on $\CCC$ and can be seen as a variation of mixed Hodge structure, see the proof of \cref{proposition:hodge-realisation-almost-fano-varieties} \cpageref{proposition:hodge-realisation-almost-fano-varieties}.  
\end{myremark}

\begin{mysetting}\label{setting:main-setting}
Let $\VVV \to \CCC$ be a proper model of a Fano-like $F$-variety $V$. 
We fix a finite set $L_1 , ... , L_r $ of invertible sheaves on $V$ whose linear classes form a basis of the torsion-free $\ZZ$-module $\Pic ( V)$, as well as invertible sheaves $\LLL_1 , ... , \LLL_r$  on $\VVV$ extending respectively $L_1 , ... , L_r$.
\end{mysetting}

\begin{mydef}[Multidegree]
\label{def:multidegree-relative}  Let $\VVV \to \CCC$ 
and $\underline{\LLL} = (\LLL_ 1 , ... , \LLL_r )$
be as in \cref{setting:main-setting}.

A section $\sigma : \CCC \to \VVV$ 
defines an element $\degg_{\underline{\LLL}} ( \sigma )
$ of the dual $\Pic ( V )^\vee$
by 
sending an effective invertible sheaf of the form
\[
\otimes_{i=1}^r L_i ^{\otimes \lambda_i } 
\]
to the linear combination of degree
\[ 
\sum_{i=1}^r \lambda_i \deg ( \sigma^* \LLL_i ). 
\]
This element $\degg_{\underline{\LLL}} ( \sigma )$ is 
called the \textit{multidegree of $\sigma$ with respect to the model $ \underline{\LLL} $}.
\end{mydef}

\subsection{Moduli spaces of curves} 
Again, let $\VVV \to \CCC$ and $\underline{\LLL} = (\LLL_ 1 , ... , \LLL_r )$ be as in \cref{setting:main-setting}. 
For every class $\mdeg \in \Pic ( V ) ^\vee $, 
we consider the functor
\[
\HHom_\CCC^\mdeg ( \CCC , \VVV ) 
\]
sending a $\kb$-scheme $T$ to the set of maps  $\sigma \in \Hom_{\mathrm{Sch}/T} ( \CCC \times_\kb T , \VVV \times_\kb T ) $ such that
\begin{align*}
& \pi_T \circ  \sigma = \mathrm{id}_{\CCC \times_\kb T} \\
& \text{for all $t\in T$, }  \degg_{\underline{\LLL}} ( \sigma_t ) = \mdeg . 
\end{align*}
If $U$ is a dense open subset of the generic fibre $V$, 
we define 
\[
\HHom_\CCC^\mdeg ( \CCC , \VVV )_U 
\]
to be the subfunctor of $\HHom_\CCC^\mdeg ( \CCC , \VVV ) $ sending a $\kb$-scheme $T$ to the $T$-families of maps sending the generic point of $\CCC$ into $U ( F )$.  

The moduli space of sections of a proper model $\VVV\to \CCC$ is well-defined:
\index{moduli space of sections}  
adapting the ideas of \cite[Lemme 4.1]{bourqui2009produit} and \cite[Proposition 2.2.2]{chambert2016motivic} we get the following general representability lemma. Here we assume that $\VVV$ is projective over the base field $k$. 
\begin{mylemma}\label{lemma:representability-Hom-functor}
For any non-empty open subset $U \subset V$ and any class $\mdeg \in \Pic ( V )^\vee$, 
the functor $\mathbf{Hom}_\CCC^\mdeg ( \CCC , \VVV )_{U}$ 
is representable by a quasi-projective scheme. 
\end{mylemma}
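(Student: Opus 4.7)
The plan is to realise $\mathbf{Hom}_\CCC^\mdeg(\CCC, \VVV)_U$ as an open subscheme of a finite disjoint union of components of the Hilbert scheme of $\VVV$, following the strategy indicated in \cite[Lemme 4.1]{bourqui2009produit} and \cite[Proposition 2.2.2]{chambert2016motivic}. Fix a very ample line bundle $\mathcal O_\VVV(1)$ on $\VVV$. By Grothendieck's theorem, the Hilbert scheme $\mathrm{Hilb}_{\VVV/\kb}$ exists and decomposes as a disjoint union of projective $\kb$-schemes $\mathrm{Hilb}^P_{\VVV/\kb}$ indexed by Hilbert polynomials. Sending a section $\sigma$ to its graph $\sigma(\CCC)$ identifies the functor $\mathbf{Hom}_\CCC(\CCC, \VVV)$ with the subfunctor of $\mathrm{Hilb}_{\VVV/\kb}$ parametrising flat families of closed subschemes $Z \subset \VVV$ for which the restriction $\pi|_Z \colon Z \to \CCC$ is an isomorphism. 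Being an isomorphism is an open condition on the base for a proper morphism, so this realises $\mathbf{Hom}_\CCC(\CCC, \VVV)$ as an open subscheme of $\mathrm{Hilb}_{\VVV/\kb}$.

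The next step is to check that the multidegree condition selects only finitely many Hilbert polynomials. For a section $\sigma$, since $\sigma(\CCC) \cong \CCC$, Riemann--Roch on $\CCC$ gives
\[
P_\sigma(n) = n \cdot \deg(\sigma^*\mathcal O_\VVV(1)) + 1 - g(\CCC).
\]
Writing $\mathcal O_\VVV(1)|_V \simeq \bigotimes_i L_i^{\otimes a_i}$ in the chosen basis of $\Pic(V)$, the line bundle $\mathcal N = \mathcal O_\VVV(1) \otimes \bigotimes_i \LLL_i^{\otimes(-a_i)}$ restricts trivially to the generic fibre and hence is numerically supported on the finitely many reducible (or non-reduced) fibres of $\pi$. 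A section meets each fibre at one point lying in a unique irreducible component, and the identity $\sigma^*\pi^*(p) = p$ forces the local intersection multiplicities of $\sigma(\CCC)$ with the vertical divisors to be bounded uniformly in $\sigma$. Consequently $\deg(\sigma^*\mathcal N)$ takes only finitely many values, while $\sum_i a_i(\mdeg \cdot [L_i])$ is fixed by $\mdeg$. Therefore $\mathbf{Hom}_\CCC^\mdeg(\CCC, \VVV)$ is a finite disjoint union of open subschemes of the projective $\mathrm{Hilb}^P_{\VVV/\kb}$, hence quasi-projective.

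Finally, for the open condition defining $\mathbf{Hom}_\CCC^\mdeg(\CCC, \VVV)_U$, the complement $V \setminus U$ is closed in $V$; for a $T$-family $\sigma$, its scheme-theoretic preimage $\sigma^{-1}((V\setminus U) \times T)$ is closed in $\CCC \times T$, and the condition that no fibre $\CCC \times \{t\}$ lies entirely inside this preimage is open on $T$ by upper semicontinuity of fibre dimension for the proper projection onto $T$. Thus $\mathbf{Hom}_\CCC^\mdeg(\CCC, \VVV)_U$ is an open subscheme of $\mathbf{Hom}_\CCC^\mdeg(\CCC, \VVV)$, and in particular quasi-projective. The only delicate step is the uniform boundedness of $\deg(\sigma^*\mathcal N)$ over a multidegree stratum, which is governed by the combinatorics of the reducible fibres of $\pi$; everything else is a direct application of Grothendieck's representability theorem together with standard openness arguments for Hom functors.
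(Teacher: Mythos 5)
Your proof is correct and follows essentially the same route as the paper: representability via Grothendieck's Hilbert/Hom scheme theorems, openness of the section and $U$-conditions, and the key finiteness step comparing the ample degree (equivalently, the Hilbert polynomial) with the multidegree by observing that the discrepancy line bundle is vertical and that a section has intersection degree one with every fibre. The only cosmetic difference is that the paper indexes by the degree $d$ with respect to a fixed ample sheaf and bounds $d \leqslant a + \mdeg \scdot L$, whereas you phrase the same bound in terms of Hilbert polynomials; the substance is identical.
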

\begin{proof}
Let $\LLL$ be an ample invertible sheaf on $\VVV$. 
For every $d\geqslant 1$,
let 
\[
\HHom^d_ \kb ( \CCC , \VVV )
\]
be the functor of morphisms $ \varsigma : \CCC \to \VVV$ such that $\deg ( \varsigma ^* \LLL ) = d $
and
\[
\HHom^d_\CCC ( \CCC , \VVV ) 
\]
be
the functor of sections $ \sigma : \CCC \to \VVV$ such that $\deg ( \sigma ^* \LLL ) = d $. 

By the  existence theorems of Hilbert schemes \cite[4.c]{grothendieck1960techniques},
there exists a quasi-projective $\kb$-scheme $\Hom^d_\kb ( \CCC , \VVV ) $
representing $\HHom^d_ \kb ( \CCC , \VVV )$. 
The condition $\pi \circ \sigma = \mathrm{id}_\CCC$ is closed 
and thus 
$\HHom^d_\CCC ( \CCC , \VVV ) $ 
is a closed subfunctor of $\HHom^d_ \kb ( \CCC , \VVV ) $. 
Therefore it is represented by a quasi-projective scheme $\Hom^d_\CCC ( \CCC , \VVV )$.

Let $U$ be a non-empty open subset of $V$ and let 
$\Hom_\CCC^d ( \CCC , \VVV ) _U$ be the complement of the closed subscheme of $\Hom_\CCC^d ( \CCC , \VVV ) $ defined by the condition $\sigma ( \CCC ) \subset | \VVV \setminus U |$. This open subscheme $\Hom_\CCC^d ( \CCC , \VVV ) _U$ parametrises sections  
$\sigma :\CCC \to \VVV$ such that $f(\eta_\CCC ) \in U ( F )  $ and 
$\deg ( \sigma ^* ( \LLL ) )= d$. It is again a quasi-projective scheme, since $\Hom_\CCC^d ( \CCC , \VVV )$ is.

The restriction $L$ of $\LLL$ to $V$ is isomorphic to a linear combination 
\[
\otimes_{i=1}^r L_i^ {\otimes \lambda_i} 
\]
of the $L_i$'s. Let $\LLL '$ be the invertible sheaf
\[
\otimes_{i=1}^r \LLL_i^ {\otimes \lambda_i} 
\]
on $\VVV$.

Let $\mdeg \in \Pic ( V ) ^\vee $ be a multidegree
and $\sigma : \CCC \to \VVV $ 
a section 
such that $\degg_{\underline{\LLL}} ( \sigma ) = \mdeg$ and $\sigma ( \eta_\CCC) \in U ( F )$. 
Then 
\begin{align*}
\deg ( \sigma ^* \LLL  ) & = \deg ( \sigma^* \LLL ) - \deg ( \sigma ^* \LLL '  ) + \deg ( \sigma ^* \LLL '  ) \\
	 & = \deg ( \sigma^* \LLL ) - \deg ( \sigma ^* \LLL '  ) + \mdeg \scdot L  . \\
\end{align*}
Since the restriction of $\LLL \otimes  ( \LLL ' )^{-1} $ to the generic fibre is trivial,
there exist vertical divisors $E$ and $E' $ such that 
\[
\LLL \otimes  ( \LLL ' )^{-1}  \simeq \OOO_{\VVV} ( E ' - E ) .
\]
In particular, the difference $\deg ( \sigma^* \LLL ) - \deg ( \sigma ^* \LLL '  )$ only takes a finite number of values,
since $\sigma$ has intersection degree one with any fibre of $\pi$. Let $a$ be its maximal value.  
Moreover, by flatness, the $r$ conditions given by $\degg_{\underline{\LLL}} = \mdeg$ are open and closed in the Hilbert scheme of $\VVV$. 
Therefore $\HHom_\CCC^\mdeg ( \CCC , \VVV )_U $ can be identified with an open subfunctor of 
\[
\coprod_{0 \leqslant d \leqslant a + \mdeg \scdot L }  \HHom_\CCC^d ( \CCC , \VVV )_U 
\]
hence the lemma. 
\end{proof}

\subsection{Greenberg schemes and motivic integrals} 

A reference for this subsection is given by chapters 4 to 6 of \cite{chambert2018motivic}. 

If $R$ is a complete discrete valuation ring, with field of fractions $F$ and residue field $\kappa $, 
such that $F$ and $\kappa$ have equal characteristic, 
then the choice of an uniformizer $\pi$ of $R$ 
together with a section of $R\to \kappa$
provides a morphism of $\kappa$-algebras 
\[
\begin{array}{rll}
	\kappa [[ t ]] & \to & R \\
	t & \mapsto & \pi 
\end{array}
\]
which is an isomorphism by Theorem 2 of \cite[Chap. IX, \S 3]{bourbaki1983algebre}.

\begin{myexample}
If $p$ is a closed point of the smooth projective $\kb$-curve $\CCC$,
the previous result applies to the completed local ring $R_p = \widehat{\OOO_{\CCC , p}}$. 	
\end{myexample}

\begin{mydef}[Greenberg schemes] Let $R$ be a complete discrete valuation ring, with maximal ideal $\mathfrak m$ and residue field $\kappa = R / \mathfrak m$. 
Assume that $R$ has equal characteristic and fix a section of $R\to \kappa$. 

Let $\XXX$ be an $R$-variety. 
For any non-negative integer $m$, the Greenberg scheme of order $m$ of $\XXX$ is the $\kappa$-scheme $\Gr_m ( \XXX ) $ representing the functor 
\[
A \mapsto \Hom_R ( \Spec ( R_m \otimes_{ \kappa  } A  ) , \XXX ) 
\]
on the category of $\kappa$-algebras \cite[Chap. 4, \S 2.1]{chambert2018motivic},
where $R_m = R / \mathfrak m^{m+1}$ for all $m\geqslant 0$.
There are canonical affine projection morphisms 
\[ 
\theta^{m+1}_m :   \Gr_{m+1} ( \XXX ) 
 \rightarrow   \Gr_m ( \XXX ) 
,
\] 
given by truncation, and the Greenberg scheme is the $\kappa$-pro-scheme 
\[
\Gr_\infty ( \XXX ) = \underset{\longleftarrow}{\lim} \,  \Gr_m ( \XXX ) 
\]
(or more concisely $\Gr ( \XXX )$) which represents the functor 
\[
A \mapsto \Hom_\kappa  ( \Spec ( A \otimes_\kappa R ) , X ) 
\]
on the category of $\kappa $-algebras. 
This scheme carries a canonical projection
\[
\theta_m^\infty  : \Gr_\infty ( \XXX ) \to \mathscr\Gr_m ( \XXX )
\]
for every non-negative integer $m$, called the truncation of level $m$. 
\end{mydef}

\begin{myexample}
If $\VVV \to \CCC$ is a model of a projective variety $V$ over $F = \kb ( \CCC ) $, 
let $\VVV_{\Rp }$  be
the schematic fibre 
over the completed local ring $R_p = \widehat{\OOO_{\CCC,p}}$.
If $\mathfrak m_p$ is the maximal ideal of $R_p$, then the $\kp$-points respectively 
of 
$\Gr_m ( \VVV_{\Rp } ) $ 
and 
$\Gr (  \VVV_{\Rp } ) $
are in 
bijection 
respectively
with the sets of points 
$ \VVV_{\Rp } ( R_p / \mathfrak m_p ^{m+1}) $
and 
$\VVV_{\Rp }  ( R_p ) $.
Moreover, if $\VVV \to \CCC$ is proper, 
by the valuative criterion of properness 
the set $\VVV_{R_p}  ( R_p ) $ 
is in one-to-one correspondance 
with the set $V_{\Fp} ( \Fp )$, where $F_p$ is the completion of $F$ at $p$.  	
\end{myexample}

By \cite[Chap. 4, Lemma 4.2.2]{chambert2018motivic}, the constructible subsets of $\Gr (\XXX) $ are exactly the subsets $C$
of the form 
\[
C = \left ( \theta^\infty_m \right )^{-1} ( C_m ) 
\]
for a certain level $m$ and a constructible subset $C_m $ of $\Gr_m ( \XXX ) $. Moreover, if $C$ is Zariski-closed, respectively Zariski-open, then $C_m$ can be chosen to be closed, respectively open. 
Then, a map 
\[
f : C \to \ZZ \cup \{ \infty \} 
\]
on a constructible subset $C $ of $\Gr ( \XXX )$ is said to be constructible if $f^{-1} ( n ) $ is constructible for every $n \in \ZZ$. 

By \cite[Chap. 6, \S 2]{chambert2018motivic}, there is an additive motivic measure 
\[
\mu_\XXX : \mathrm{Cons}_{\Gr ( \XXX ) } \to \widehat{\MMM_{\XXX_0}} ^{\dim}
\]
(where $\XXX_0$ is the special fibre of $\XXX$) called the \textit{motivic volume} or \textit{motivic density}, which  extends to a countably additive motivic measure $\mu_\XXX^*$ on a class $\mathrm{Cons}_{\Gr ( \XXX ) }^*$ of measurable subsets of $\Gr ( \XXX )$, see \cite[Chap. 6, \S 3]{chambert2018motivic}. 
For example, if $\XXX$ is smooth of pure relative dimension $d$ over $R$, then 
\[
\mu_\XXX ( \Gr ( \XXX ) ) = [\XXX_0 ]\LL^{-d}. 
\]
If $ A $ is a measurable subset of $\Gr ( \XXX )$ and $f : A \to \ZZ \cup \{ \infty \} $
has measurable fibres (by this we mean that $f^{-1} ( n )$ is measurable for all $n\in \ZZ$) 
such that the series
\[
 \sum_{n\in \ZZ} \mu_\XXX^* ( f^{-1} (n)) \LL^{-n} 
\]
is convergent in $\widehat{\MMM_{\XXX_0}}^{\dim}$, then the motivic integral of $\LL^{-f}$ 
\[
\int_A \LL^{-f} \mathrm d \mu_\XXX^* = \sum_{n\in \ZZ} \mu_\XXX^* ( f^{-1} (n)) \LL^{-n} 
\]
is well-defined. 

\begin{myremark}
	By the quasi-compactness of the constructible topology, a constructible function $f$
	which does not reach infinity is bounded and thus 
	 only takes a finite number of values. 
In particular, if $f$ is bounded constructible, then $f$ is measurable and $\LL^{-f}$ is integrable, see Example 4.1.3 in \cite[Chap. 6]{chambert2018motivic}.
\end{myremark}

As explained in \cite[Chap.~4, (3.3.7)]{chambert2018motivic},
one can go from points on $\XXX$
with coordinates in extensions of $R$
of ramification index one
to points on the Greenberg schemes of $\XXX$. 
For this it is convenient to 
use the functors from $\kappa$-algebras to rings
given by 
\[
\mathscr R_m
:
A \mapsto  R_m \otimes_\kappa A 
\]
for any non-negative integer $m$, which form a system of functors whose limit is 
\[
\mathscr R_\infty 
:
A \mapsto  R \, \hat{\otimes}_\kappa A = \lim_{\leftarrow}  R_m \otimes_\kappa A .
\]
Such functors are 
represented respectively by $\mathbf A_\kb^m$, $m \in \NN$, and $\mathbf A_\kb^\NN$.
If $\kb '$ 
is a field extension of $\kb$,
then $R ' = \mathscr R_\infty ( k ' )$
is an extension of $R$ with ramification index one
(the only one up to unique isomorphism if $\kb ' / \kb $ is unramified),
by Proposition 2.3.2 of \cite[Chap.~4]{chambert2018motivic}.
Then by definition of $\Gr_\infty ( \XXX )$
there is a canonical bijection
\[
\Gr_\infty ( \XXX ) ( \kb ' )
\overset{\sim}{\longrightarrow} 
\XXX ( \mathscr R_\infty ( \kb ' )  ) 
\]
which is
compatible with the truncation morphisms.
Thus, if one wants to define functions at the level of Greenberg schemes, it is enough to define them on $R'$-points,
for any extension $R'$  of ramification index one over the complete valuation ring $R$.
This will allow one to use such functions on Greenberg schemes of weak Néron models later in \cref{subsection-weak-neron-models}, thanks to \cref{proposition:neron-smoothening-and-greenberg-schemes} stated a few pages below.

\begin{mydef}[{\S 3.1 of \cite[Chap. 5]{chambert2018motivic}}]
\label{def:jacobian-order-along-an-arc}
Let $f:\YYY \to \XXX$ be a morphism of flat 
$R$-schemes of finite type and pure relative dimension $d$. 
Let $R' $ be an extension of $R$ and $q \in \YYY ( R ' )$. 
Consider the morphism 
\[
\alpha ( q ) : \left ( ( f \circ q ) ^* \Omega^d_{\XXX / R '} \right ) / ( \mathrm{torsion} ) 
\to 
\left (  q  ^* \Omega^d_{\YYY / R '} \right ) / ( \mathrm{torsion} )
\]
of free $R'$-modules of finite rank
induced by $f$. 

The \textit{order of the Jacobian of $f$ along $q$}
is defined by 
\[
\ordjac_f ( q ) = \length_{R '} \coker ( \alpha ( q ) ) .
\]

This provides a function 
\[
\ordjac_f : \Gr ( \YYY ) \to \NN . 
\]
\end{mydef}

Note that by Proposition 3.1.4 of \cite[Chap. 5]{chambert2018motivic}, if $\YYY$ is smooth over $R$ and the restriction of $f$ to generic fibres is étale,
then $\ordjac_f$ is constructible and bounded. 

\begin{myptn}[Smooth change of variable]
\label{ptn:change-of-variable-smooth-case}
Let $\XXX$ and $\YYY$ be two $R$-schemes 
of finite type and pure relative dimension $d$,
with singular loci respectively $\XXX_{\text{sing}}$ and $\YYY_{\text{sing}}$.
Let ${f:\YYY \to \XXX}$ be a morphism of $R$-schemes.
Let $A$ and $B$ be constructible subsets respectively of $\Gr ( \XXX) - \Gr ( \XXX_{\text{sing}})  $ and $\Gr ( \YYY ) -  \Gr ( \YYY_{\text{sing}}) $. 
Assume that $f$ induces a bijection
\[
B ( \kappa ' ) \to A ( \kappa ' ) 
\]
for every extension $\kappa ' $ of $\kappa$,
and that $B \cap \ordjac^{-1}_f ( + \infty ) $ is empty. 

Let $\alpha : A \to \ZZ $ be a constructible function on $A$. 
Then the function
\[
\beta : y \in B \longmapsto ( \alpha \circ \Gr ( f ) ) ( y ) + \ordjac_f ( y) 
\]  
is constructible and 
\[
\int_A \LL^{-\alpha } \mathrm d \mu_\XXX = {f_0}_! \int_B \LL^{-\beta} \mathrm d \mu_\YYY
\]
in $\MMM_{\XXX_0}$. 
\end{myptn}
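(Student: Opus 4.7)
The strategy follows the standard proof of the motivic change-of-variables formula (Chap.~6, Theorem~1.1 of \cite{chambert2018motivic}), adapted to the present setting where the only global hypothesis on $f$ is bijectivity on arcs. It proceeds by stratification and a local trivialisation of $\Gr(f)$ at finite truncation level.

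First, I would observe that $\beta$ is bounded and constructible. Since $B$ avoids arcs through $\YYY_{\text{sing}}$ and $\ordjac_f < +\infty$ on $B$, the Jacobian order function $\ordjac_f$ is constructible and bounded on $B$ (this is the content of Proposition~3.1.4 of Chap.~5 in \cite{chambert2018motivic}, whose proof only uses smoothness of $\YYY$ along $B$ and finiteness of $\ordjac_f$, not global étaleness of $f$). Combined with the constructibility of the pullback $\alpha\circ \Gr(f)$, this yields constructibility and boundedness of $\beta$, hence integrability of $\LL^{-\beta}$ with integral in $\MMM_{\YYY_0}$ (the defining sum is finite).

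Next, the plan is to stratify. Since $\alpha$ and $\ordjac_f$ take only finitely many values $a_1,\dots,a_s$ and $e_1,\dots,e_t$, decompose $B=\bigsqcup_{i,j}B_{i,j}$ with $B_{i,j}=(\alpha\circ \Gr(f))^{-1}(a_i)\cap \ordjac_f^{-1}(e_j)$. The bijectivity of $f$ on $\kappa'$-points of $A,B$ implies that $A_{i,j}:=\Gr(f)(B_{i,j})\subset \alpha^{-1}(a_i)$ is constructible and that the $A_{i,j}$ form a partition of $A$. By additivity of $\mu_\XXX$, $\mu_\YYY$ and $f_{0!}$, the identity reduces to proving, for each non-empty stratum,
\[
\mu_\XXX(A_{i,j}) = f_{0!}\mu_\YYY(B_{i,j})\cdot \LL^{-e_j}\quad\text{in }\MMM_{\XXX_0}.
\]
Fix such a stratum and write $B'=B_{i,j}$, $A'=A_{i,j}$, $e=e_j$. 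Choose $m_0$ such that $B'$ (resp.\ $A'$) is a cylinder over constructible $B'_{m_0}\subset \Gr_{m_0}(\YYY)$ (resp.\ $A'_{m_0}\subset \Gr_{m_0}(\XXX)$). The heart of the proof is the claim that for $m$ large enough, the truncation $\Gr_{m+e}(f)$ restricts to a morphism $B'_{m+e}\to A'_{m+e}$ between truncated cylinders which is a piecewise trivial fibration with fibre $\mathbf A^e_\kappa$. Granted this, since $\XXX,\YYY$ are smooth of relative dimension $d$ along the relevant loci one has $\mu_\XXX(A')=[A'_{m+e}]\LL^{-(m+e+1)d}$ and $\mu_\YYY(B')=[B'_{m+e}]\LL^{-(m+e+1)d}$, while the fibration yields $f_{0!}[B'_{m+e}]=[A'_{m+e}]\LL^{e}$ in $\MMM_{\XXX_0}$; combining gives $f_{0!}\mu_\YYY(B')\cdot \LL^{-e}=\mu_\XXX(A')$, as required. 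Summing $\LL^{-a_i}$ times these identities over $(i,j)$ reproduces the integral equality.

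\textbf{Main obstacle.} The genuine difficulty lies in the piecewise trivial $\mathbf A^e$-fibration of the local trivialisation step. Pointwise, it follows from a Henselian lifting argument applied to the map $\alpha(q)$ of \cref{def:jacobian-order-along-an-arc}: the Smith normal form of the relative cotangent map has invariant factors whose valuations sum to $e$, so each $\kappa'$-arc in $A'_{m+e}$ admits an $e$-dimensional $\kappa'$-affine family of formal lifts in $B'_{m+e}$, and bijectivity of $\Gr(f)$ on $\kappa'$-points ensures that exactly one of these lifts extends beyond level $m+e$ to a full arc (so the fibre phenomenon sits entirely at truncation level $m+e$). Promoting this pointwise picture to a globally piecewise trivial fibration on a constructible stratum requires a further stratification of $B'$ and $A'$ according to the invariant factors of $\alpha(q)$, which is legitimate because $\ordjac_f$ and these finer invariants are constructible. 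The hypotheses that $A,B$ avoid arcs meeting the singular loci and that $\ordjac_f$ is finite on $B$ are exactly what is needed for Hensel's lemma to apply uniformly along each stratum.
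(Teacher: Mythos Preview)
The paper does not prove this proposition at all; its entire proof reads ``See Theorem 1.2.5 in \cite[Chap. 6]{chambert2018motivic}.'' Your proposal, by contrast, sketches the actual argument behind that theorem, and the outline you give --- reduction to bounded constructible $\beta$, stratification by the values of $\alpha$ and $\ordjac_f$, and the key local claim that $\Gr_{m+e}(f)$ is a piecewise trivial $\mathbf A^e$-fibration over each stratum for $m\gg 0$ --- is indeed the standard route taken in the Chambert-Loir--Nicaise--Sebag book (and earlier in Denef--Loeser and Looijenga). So your approach is correct and is in fact \emph{more} than what the paper supplies; you are reproducing the content of the cited reference rather than merely invoking it. One minor remark: you cite Theorem~1.1 of Chap.~6, whereas the paper points to Theorem~1.2.5; make sure you are looking at the statement that matches the present hypotheses (constructible sets avoiding the singular loci, pointwise bijectivity on $\kappa'$-points, finiteness of $\ordjac_f$ on $B$).
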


\begin{proof}
	See Theorem 1.2.5 in \cite[Chap. 6]{chambert2018motivic}.
\end{proof}

\begin{myremark}
	Here the symbol $\LL^{-\ordjac_f}$
	plays the role of the absolute value of the determinant 
	$\left | \det \left ( \left ( \frac{\partial x_i}{\partial y_j} \right )_{i,j} \right ) \right |$ in the usual change-of-variable formula. 
\end{myremark}

\subsection{Local intersection degrees}

Let $L$ be an invertible sheaf on $V$. 
A coherent sheaf on $\VVV$ whose restriction to $V$ is isomorphic to $L$ is called a \textit{model} of $L$. 
In this paragraph we define 
local intersection degrees on $L$ of a section $\CCC \to \VVV$ at a closed point $p \in \CCC$, with respect to a model of $L$,
and study the difference between the degrees given by two different models of $L$. 
This is a reformulation, in the framework of Greenberg schemes and motivic integration \cite{chambert2018motivic}, of the $p$-adic and adelic metrics of \cite[\S 1.2]{peyre2012points}. 
Let us first recall the local definition of such metrics. 

\begin{mydef}\label{def:local-order-with-respect-line-bundle}
\index{local intersection degree with respect to a model of a line bundle}
Let $R$ be a complete discrete valuation ring with fraction field $K$.
	Let $\XXX$ be an $R$-scheme of pure dimension, $X$ its generic fibre, $L$ an invertible sheaf on $X$ and $\LLL$ a model of $L$ on $\XXX$ (not necessarily invertible).
	
	For any extension $R'$ of $R$, of ramification index one over $R$, with uniformizer $\varpi$ and field of fractions $K'$, we define order functions on $\XXX ( R ' )$  as follows:
	let $\tilde{q} : \Spec ( R ' ) \to \XXX  $ be an $R'$-point of $\XXX$, 
	$q$ its restriction to the generic fibre and
	$y$ a point of the $K'$-vector space  $q^* L = (\tilde{q}^* \LLL ) \otimes_{R '} {K '}$. 
	Then we set, if $q^* y \neq 0 $,
	\[
	\ord_{\tilde{q}} ( y  ) = \max \{ n \in \ZZ \mid \varpi^{-n} y \in \tilde{q}^* \LLL / (\text{torsion}) \}
	\]
	(the unique integer $\ell$ such that $\varpi^{-\ell} y$ is a generator of the $R'$-lattice $ \tilde q^* \LLL / (\text{torsion})$),
	 and 
	 \[ 
	 \ord_{\tilde{q}} ( y ) = \infty 
	 \]
	 if $q^* y = 0 $. 
	 
	 In other words, $\ord_{\tilde{q}} ( y )$ is the valuation of $y$ with respect to the lattice $\tilde q^* \LLL / (\text{torsion})$: given a generator $y_0$ of $\tilde q^* \LLL / (\text{torsion})$, 
	 \[
	 \ord_{\tilde{q}} ( y ) = v_{R'} ( y / y_0 ) \in \ZZ \cup \{ \infty \}
	 \]
	 where $v_{R'}$ is the normalized discrete valuation extended to $K '$. 
	 In particular, this definition does not depend on the choice of the uniformizer $\varpi$ nor on the choice of the generator $y_0$. 
	 
	 If $s$ is a section of $L$ on an open set containing $q$, then 
	 \[
	 \ord \circ s  : \XXX ( R ' ) \to \ZZ \cup \{ \infty \}
	 \] 
	  is the function sending $\tilde q \in \XXX ( R ' ) $ to $\ord_{\tilde q }(s(q) )$. 
\end{mydef}


\begin{myremark}
We are going to apply the previous definition as follows. 
Let $\VVV$ be a proper model of a smooth $F$-variety $V$. We fix a closed point ${p\in \CCC}$ and work with the $R_p$-scheme $\VVV_\Rp$, identifying the sets $V(F_p)$ and $\VVV ( \Rp )$ by properness. 
We take $L$ to be an invertible sheaf on $V$ and $\LLL$ a model of $L$ on $\VVV$.

	If $q$ is a $F_p$-point of $V$, 
the function $\ord_{\tilde q } $ is well-defined on $L_q ( F_p )$.
Of course, any $F_p$-point of $L$ belongs to the fibre of a unique $F_p$-point of $V$, so that this function extends to a function
\[
\begin{array}{rll}
	\ord_p : L ( F_p ) &\to & \ZZ \cup \{  \infty \} \\
	y & \mapsto & \ord_{\tilde q} ( y ) \text{ whenever } y \in L_{q} .

\end{array}
\]

If $s \in \Gamma ( U , L ) $ is a section of $L$ above an open subset $U\subset V$, 
by composition one gets a map 
\[
\begin{array}{rll}
\ord_p \circ \, s  : U ( F_p ) & \to &  \ZZ \cup \{  \infty \} \\
x & \mapsto & \ord_{\tilde x}  ( s ( x ) )  . 
\end{array}
\]

If $\DDD$ is a Cartier divisor on $\VVV$ given by a rational section of $\LLL$ 
and $s$ is the restriction of this section to $V$, defined on an open subset $U\subset V$,
then $ \ord_p (s ( x )) $ coincides with the intersection number $( x , \DDD )_p = \deg ( \tilde x ^* \DDD )$ for all $x\in U ( F_p )$ not in $\DDD$.  
Besides, both take an infinite value whenever $x$ lies in $\DDD$. 
	
By the product formula, 
for all $x\in V(F)$
the (finite) sum on closed points 
	\[
	\sum_{p\in | \CCC | }\ord_p (s ( x ))\in \ZZ 
	\]
does not depend on the choice of a local section $s$ of $L$ such that $s(x) \neq 0$, 
and if 
$\sigma : \CCC \to \VVV $ is the section of the proper model $\VVV$ given by the point $x \in V ( F) $, 
then 
	\[
	\deg ( \sigma^* \LLL ) = \sum_{p\in | \CCC |} \ord_p (s ( x )) . 
	\]
\end{myremark}

We go back to the notations of \cref{def:local-order-with-respect-line-bundle}.

\begin{mylemma}
Let $\XXX$ be a model of $X$ and $\LLL $, $\LLL'$ be two models of $L$, 
and $\ord$, $\ord ' $ the corresponding order functions given by
\cref{def:local-order-with-respect-line-bundle}. 
For all $\tilde x \in \XXX ( R ' )$, 
the difference 
	\[
	y \mapsto \ord_{\tilde x }' ( y   )  - \ord_{ \tilde x } ( y   ) 
	\]
is constant on the stalks of $L$.
This constant value defines a function 
\[
\varepsilon_{ \LLL ' - \LLL } : \Gr_\infty ( \XXX ) \to \ZZ .
\] 	
\end{mylemma}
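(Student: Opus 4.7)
The plan is to reduce the computation to comparing two $R'$-lattices inside the same one-dimensional $K'$-vector space $q^*L$, where $q$ is the generic restriction of $\tilde x$. Since $L$ is invertible on the generic fibre of $\XXX$, its pullback $q^*L$ is a free $K'$-module of rank one. Both coherent sheaves $\LLL$ and $\LLL'$ restrict to $L$ on the generic fibre, so $\tilde x^*\LLL/(\mathrm{torsion})$ and $\tilde x^*\LLL'/(\mathrm{torsion})$ are finitely generated torsion-free $R'$-modules of rank one, hence each is a free $R'$-lattice of rank one inside $q^*L$.

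I would then pick generators $y_0$ of $\tilde x^*\LLL/(\mathrm{torsion})$ and $y_0'$ of $\tilde x^*\LLL'/(\mathrm{torsion})$. These are two nonzero vectors of the one-dimensional $K'$-space $q^*L$, so there is a unique scalar $c_{\tilde x} \in {K'}^\times$ with $y_0 = c_{\tilde x}\, y_0'$. Given any nonzero $y \in q^*L$, write $y = \lambda y_0 = \lambda' y_0'$ with $\lambda,\lambda'\in K'$; then $\lambda' = c_{\tilde x}\,\lambda$. Applying the normalised valuation $v_{R'}$ gives
\[
\ord'_{\tilde x}(y) - \ord_{\tilde x}(y) \;=\; v_{R'}(\lambda') - v_{R'}(\lambda) \;=\; v_{R'}(c_{\tilde x}),
\]
which is independent of $y$. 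This establishes the first claim, with the convention $\infty - \infty = v_{R'}(c_{\tilde x})$ at $y=0$, the natural extension here since the difference is already constant on $q^*L\setminus\{0\}$.

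Next I would define the candidate function by $\varepsilon_{\LLL'-\LLL}(\tilde x) := v_{R'}(c_{\tilde x})$ and check well-definedness: another choice of generators replaces $c_{\tilde x}$ by a unit multiple, which does not change its valuation, and replacing the uniformizer of $R'$ likewise has no effect on $v_{R'}$. Because $R'$ has ramification index one over $R$, Chap.~4 (3.3.7) of \cite{chambert2018motivic} ensures that it suffices to have defined the function consistently on such $R'$-points to obtain a function $\Gr_\infty(\XXX)\to \ZZ$; the value is integral because $c_{\tilde x}\in {K'}^\times$. The main (but minor) care needed is the torsion quotient, used precisely to guarantee that both lattices are free of rank one so that the ratio $c_{\tilde x}$ makes sense; once this is pinned down the rest of the verification is formal.
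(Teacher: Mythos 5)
Your proof is correct and follows essentially the same route as the paper: both pick generators $y_0$, $y_0'$ of the two rank-one lattices $\tilde x^*\LLL/(\mathrm{torsion})$ and $\tilde x^*\LLL'/(\mathrm{torsion})$ inside $q^*L$ and observe that the difference of valuations equals $v_{R'}(y_0/y_0')$, independent of $y$ and of the choice of generators up to units. The extra remarks on freeness of the lattices and on reducing to $R'$-points of ramification index one are exactly the justifications implicit in the paper's setup.
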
 

\begin{proof}
The difference of the two corresponding valuations is 
\[
\ord_{ \tilde x } ' ( y )   - \ord_{ \tilde x  } ( y  ) 
 = v_{R'} ( y / y_0  ') - v_{R'} ( y / y_0  ) = v_{R'} ( y_0  / y_0 ' ) 
\]
for every $\tilde x  \in \XXX ( R' )$ and $y\in x^* L$,
where $y_0$ and $y_0'$ are generators respectively of
$\tilde x ^* \LLL$ and $\tilde{x} ^* \LLL '$ in $x^* L$.
Consequently, this difference induces a map 
\[
\tilde x \in \XXX ( R ' ) \mapsto v_{R'} ( y_0  / y_0 ' ) 
\]
which does not depend on the choices of the generators $y_0$ and $y_0 ' $ of $\tilde x ^* \LLL$ and $\tilde x ^* \LLL ' $, since the quotient of two generators has valuation zero.  
\end{proof}

\begin{myremark}
Note that if $\LLL $, $\LLL' $ and $\LLL ''$ are three models of $L$, we have the relation 
\[
\varepsilon_{ \LLL ''  - \LLL } = \varepsilon_{ \LLL '' - \LLL '  } + \varepsilon_{ \LLL ' - \LLL }. 
\]
\end{myremark}

\begin{myremark}
If $\mathscr I$ is a coherent sheaf of ideals on $\XXX$, 
an order function 
\[
\ord_{\mathscr I} : \Gr_\infty ( \XXX ) \to \NN \cup \{ + \infty \}
\]
can be obtained by taking 
	\[
	\ord_{\mathscr I} ( \tilde x ) = \inf_{f \in \mathscr I _{\tilde x}} v_{R'} ( f ( \tilde x )) 
	\]
for all points $\tilde x \in \XXX ( R  ' ) $,
where $v_{R'} ( f ( \tilde x )) = v_{R'} ( \tilde x^* f )$,
see (4.4.3) of \cite[Chap. 4]{chambert2018motivic}.
By Corollary 4.4.8 of \cite[Chap. 4]{chambert2018motivic} this defines a constructible function  $\Gr_\infty ( \XXX ) \to \NN \cup \{ + \infty \}$. 

The affine local description of this function \cite[Chap. 4, Example 4.4.4]{chambert2018motivic}
shows that $\ord_{\mathscr I} ( \tilde x )$ is given by the smallest $v_{R'} ( f ( \tilde x ))$ for $f$ belonging to a finite set of generators of the ideal corresponding to $\mathscr I$. 
In particular, if $\mathscr I$ and $\mathscr I ' $ are two coherent sheaves of ideals,
 with local generators respectively $y_0 \in \mathscr I_{\tilde x }$ and $y_0'  \in \mathscr I_{\tilde x }' $, and whose restrictions $I$ and $I'$ to $X$
are invertible and isomorphic,
then $\ord_{\mathscr I} ( \tilde x )  = v_{R'} ( \tilde x^* y_0 )$ and
for all $y \in I_x \simeq I_x '$
\begin{align*}
\ord_{ \tilde x }' ( y )  - \ord_{  \tilde x }  ( y ) 
 & = v_{R'} ( y / (1/ y_0 ') ) - v_{R '} ( y / (1/y_0)  ) = v_{R '} ( y_0 ' / y_0  ) \\
& = v_{R '} ( \tilde x ^*  y_0  '  ) - v_{R '} ( \tilde x ^* y_0   ) \\
& = \ord_{\mathscr I ' } ( \tilde x ) - \ord_{\mathscr I } ( \tilde x ) . 
\end{align*}
The $(1/y_0)$ here 
comes from the fact that the ideal sheaf associated to an effective Cartier divisor $D$ on $X$ is $\OOO_X ( - D )$. 
\end{myremark}

\begin{mylemma}\label{lemma:local-difference-is-constructible}
	The difference $\varepsilon_{\LLL ' - \LLL }$ is a constructible function on $\Gr_\infty ( \XXX )$.
	By the quasi-compactness of the constructible topology, it takes only finitely many values. 
\end{mylemma}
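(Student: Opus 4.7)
The plan is to exhibit $\varepsilon_{\LLL'-\LLL}$ locally on $\XXX$ as a difference of two order functions of coherent ideal sheaves, which are each constructible by Corollary 4.4.8 of \cite[Chap.~4]{chambert2018motivic}, and then to glue and invoke quasi-compactness.

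First, since $\varepsilon_{\LLL'-\LLL}$ only depends on $\LLL$ and $\LLL'$ modulo torsion --- the order functions of \cref{def:local-order-with-respect-line-bundle} involve the torsion-free quotients --- I would reduce to the case where both models are torsion-free, and then choose a finite affine open cover $\XXX = \bigcup U_i$ on which each restriction $\LLL|_{U_i}$ and $\LLL'|_{U_i}$ is free of rank one, with chosen generators $y_{0,i}$ and $y'_{0,i}$. Via the canonical identifications $\LLL|_{U_i \cap X} \cong L|_{U_i \cap X} \cong \LLL'|_{U_i \cap X}$, the ratio $u_i := y'_{0,i}/y_{0,i}$ defines an invertible regular function on $U_i \cap X$, which one writes as $u_i = f_i/g_i$ with $f_i, g_i \in \Gamma(U_i, \OOO_{U_i})$ non-zero-divisors. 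A direct unpacking of \cref{def:local-order-with-respect-line-bundle} then yields, for any $R'$-point $\tilde x$ of $\XXX$ factoring through $U_i$,
\[
\varepsilon_{\LLL'-\LLL}(\tilde x) = v_{R'}(\tilde x^* f_i) - v_{R'}(\tilde x^* g_i) = \ord_{(f_i)}(\tilde x) - \ord_{(g_i)}(\tilde x),
\]
where $(f_i)$ and $(g_i)$ denote the principal ideal sheaves generated by $f_i$ and $g_i$. Each term on the right is a constructible $(\NN \cup \{\infty\})$-valued function by the cited corollary, and the preceding lemma guarantees the $\ZZ$-valuedness of the left-hand side, ruling out any $\infty - \infty$ pathology on $\Gr_\infty(U_i)$.

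For the gluing step, I would use that an $R'$-point of $\XXX$ whose closed-point image lies in the open $U_i$ factors entirely through $U_i$: the generic-point image is a generization of the closed-point image, and the complement of $U_i$ being closed is stable under specialization. Hence the subsets $(\theta_0^\infty)^{-1}(U_i \cap \XXX_0)$ form an open constructible cover of $\Gr_\infty(\XXX)$ on which constructibility is local, giving constructibility of $\varepsilon_{\LLL'-\LLL}$ globally. The final assertion then follows from quasi-compactness of the constructible topology: the fibres of the $\ZZ$-valued constructible function $\varepsilon_{\LLL'-\LLL}$ form a constructible partition of $\Gr_\infty(\XXX)$, only finitely many of which can be non-empty. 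The main obstacle lies in the local step --- realizing the ratio of generators cleanly as a difference of principal ideal-sheaf orders so that the cited result applies --- while the gluing and the finiteness conclusion are then routine.
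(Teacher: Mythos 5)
Your strategy --- localise, write $\varepsilon_{\LLL'-\LLL}$ as a difference of order functions of ideal sheaves, invoke Corollary 4.4.8 of \cite[Chap.~4]{chambert2018motivic}, glue, then use quasi-compactness --- is genuinely different from the paper's argument, which is global: there one first extracts a uniform lower bound $n_0$ by twisting $(\LLL')^\vee\otimes\LLL$ by a vertical divisor of poles so that the comparison section becomes regular, and then observes that for $n\geqslant n_0$ the superlevel set $\{\varepsilon_{\LLL'-\LLL}>n\}$ is a vanishing condition modulo a power of the maximal ideal, hence constructible of explicit level $\leqslant n-n_0$. Your route is viable and reduces more directly to the cited constructibility result, but as written it has a genuine gap.

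The gap is the locus where both ideal-order functions are infinite. Choosing $f_i,g_i$ to be non-zero-divisors does not prevent an arc $\tilde x$ from being generically contained in $V(g_i)$: since $u_i$ is a unit on $U_i\cap X$, one has $V(f_i)\cap X=V(g_i)\cap X$, and this common zero locus can be a non-empty divisor of $X$ through which arcs do pass. On such arcs $\ord_{(f_i)}(\tilde x)=\ord_{(g_i)}(\tilde x)=\infty$, the right-hand side of your identity reads $\infty-\infty$, and the finiteness of $\varepsilon_{\LLL'-\LLL}(\tilde x)$ --- which is true --- does not rescue it: it merely means the identity fails to express $\varepsilon_{\LLL'-\LLL}$ on that constructible stratum, so constructibility there is not established. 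The fix is to choose the denominator wisely: for $U_i$ affine one has $\Gamma(U_i\cap X,\OOO)=\Gamma(U_i,\OOO)[1/\pi]$ with $\pi$ a uniformizer of $R$, so one may write $u_i=a_i/\pi^{k_i}$ with $a_i\in\Gamma(U_i,\OOO)$; then $a_i=\pi^{k_i}u_i$ is invertible on $U_i\cap X$, so $V(a_i)$ lies in the special fibre, through which no point of $\Gr_\infty(\XXX)$ factors (the generic point of $\Spec(R')$ lands in $X$). With this choice $\varepsilon_{\LLL'-\LLL}=\pm\left(\ord_{(a_i)}-k_i\right)$ on $\Gr_\infty(U_i)$ with no infinite values, and your argument goes through. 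A secondary point: the lemma is stated for models that are not necessarily invertible (\cref{def:local-order-with-respect-line-bundle}), so a cover trivialising $\LLL$ and $\LLL'$ need not exist even after killing torsion; you should instead use that the lattice $\tilde q^*\LLL/(\text{torsion})$ is generated by the pullbacks of finitely many local generating sections, which replaces $\ord_{(f_i)}$ by the order function of an ideal with several generators --- still covered by the same corollary after clearing denominators by a power of $\pi$ as above.
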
 

\begin{proof} 
Let $n \in \ZZ$.
Assume first that $n\geqslant 0$.
Then $\varepsilon_{ \LLL ' - \LLL  } ( \tilde x ) > n$ if and only if $y_0/y_0 '$ belongs to the $(n+1)$-th power of the maximal ideal of $R$, 
if and only if its class in $R_n$ is zero. 

We claim that there exists $n_0 \in \ZZ$ such that $\varepsilon_{ \LLL ' - \LLL  } ( \tilde x ) \geqslant n_0$
for all $\tilde x \in \XXX ( R ' )$. 
We choose generators $y_0$ and $y_0 ' $ of $\tilde x ^* \LLL$ and $\tilde x ^* \LLL ' $. 
Then the rational  section ${y_0 \over y_0 '} \in K '$ of $ ( \LLL ')^\vee \otimes \LLL$ has a vertical divisor of poles $E$ which is the pull-back of a formal multiple of the closed point of $R '$, and $(\LLL ')^\vee \otimes \LLL \otimes \OOO_\XXX ( - E ) $ is effective. 
Let $z_0$ a generator of $E$, 
then $z_0 {y_0 \over y_0 '} \in R '$ and 
\begin{align*}
	\varepsilon_{ \LLL ' - \LLL  } ( \tilde x )  = v_{R'} ( y_0  / y_0 ' ) = v_{R'} \left ( \frac{y_0 z_0}{y_0 '} \cdot \frac{ 1}{ z_0 } \right )  & = v_{R'} \left ( \frac{y_0 z_0}{y_0 '}\right ) -  v_{R'} \left (  z_0 \right ) \\
& \geqslant -  v_{R'} \left (  z_0 \right ).
\end{align*}
We take $n_0 = -  v_{R'} \left (  z_0 \right )$ so that
\[
\varepsilon_{ \LLL ' - \LLL  } ( \tilde x ) = v_{R'} ( \varpi^{-n_0} y_0 '  / y_0 )  + n_0
\]
for all $\tilde x$. 
Then for a given $n\geqslant n_0$, one has 
$\varepsilon_{ \LLL ' - \LLL  } ( \tilde x ) > n$ if and only if $\varpi^{-n_0} y_0'/y_0 $ belongs to the $(n-n_0+1)$-th power of the maximal ideal of $R$, if and only if its class in $R_{n-n_0}$ is zero. 
Thus 
\[
\{ 
\xi \in \Gr ( \XXX ) \mid \varepsilon_{ \LLL ' - \LLL  } ( \xi ) > n 
\}
\]
is constructible of level $\leqslant (n-n_0)$. 

Moreover it follows from the definition that $\varepsilon_{ \LLL ' - \LLL  }$ does not reach infinity. 
Thus it takes only a finite number of values by the quasi-compactness of the 
constructible topology (see for example Theorem 1.2.4 in \cite[Appendix A]{chambert2018motivic}). 
\end{proof}

\begin{myremark}\label{remark:global-difference-is-bounded}
	Note that this difference is trivial if $\LLL$ and $\LLL ' $ are already isomorphic above $ R $. 
	In particular, if $\LLL$ and $\LLL' $ are two different models on $\VVV \to \CCC$ of the same $L $ on $V$,
	there exists a dense open subset of $\CCC$ above which they are isomorphic. Its complement $\mathsf S$ is a finite set of closed points of $\CCC$ and
	\[
	\deg_{\LLL '} - \deg_\LLL  = \sum_{p\in \mathsf S} \varepsilon_{\LLL_\Rp '  - \LLL_\Rp  } 
	\]
	is bounded. 
\end{myremark}

\begin{mydef}[Motivic density associated to a model of the anticanonical sheaf]
\label{def:motivic-measure-with-respect-model-line-bundle}
\index{motivic density associated to a model of the anticanonical sheaf}
	Let $\XXX$ be an $R$-scheme of pure relative dimension $n$. 
		
	Assume that the generic fibre $X$ is smooth over $K$ and take a model $\LLL_\XXX$ of the anticanonical sheaf $\omega_X^{-1}$ over $\XXX$.
	
	The sheaf $\Omega^1_{\XXX / R }$ of relative differentials of $\XXX$ over $R$ \cite[p. 175]{hartshorne1977algebraic} is a coherent sheaf of $\OOO_\XXX$-modules and the dual of its determinant $(\Lambda^{n } \Omega^1_{\XXX / R } ) ^\vee $ is a model of $\omega_X^{-1}$. 
	
	By \cref{lemma:local-difference-is-constructible} the local difference function $\varepsilon_{\LLL_\XXX - ( \Lambda^{n } \Omega^1_{\XXX / R })^\vee}$ 
	is a constructible function. 
	
	Its motivic integral over any measurable subset $A$ 
	avoiding the singular locus of $\XXX$
	will be written
	\[
	\mu_{\LLL_\XXX}^* ( A )  = 
	\int_{A } \LL^{- \varepsilon_{\LLL_\XXX - ( \Lambda^{n } \Omega^1_{\XXX / R })^\vee}} \mathrm d \mu_\XXX^* 
	\]
	and $\mu_{\LLL_\XXX}^* $ will be called \textit{motivic density associated to} $\LLL_\XXX$. 
\end{mydef}

\subsection{Weak Néron models and smoothening}
\label{subsection-weak-neron-models}

In order to prove our result about invariance by change of model (see \cref{thm:equidistribution-and-models} below), we need to collect a few additional definitions and results about weak Néron models and relations between them. 
References for this subsection
are the third chapter of the book of Bosch, Lütkebohmert and Raynaud \cite{bosch1990neron},
together with the reminder of \S 7.1 in \cite[Chap. 7]{chambert2018motivic}
as well as \S 3.4 in \cite[Chap. 4]{chambert2018motivic}.

\subsubsection{Local models}
We keep the notations of the previous paragraph, except that we do not need to assume that $R$ is complete.

\begin{mydef}
	Let $X$ be a $K$-variety. 
	
	A model for $X$ is a flat separated $R$-scheme of finite type $\XXX$ together with an isomorphism $\XXX_K \to X$.
	
	A \textit{weak Néron model}
	\index{weak Néron model}
	\footnote{We adopt the terminology used by Chambert-Loir, Nicaise and Sebag in \cite{chambert2018motivic}. For the comparison of this definition of weak Néron models with the one given by Bosch, Lütkebohmert and Raynaud in \cite{bosch1990neron}, see Remark 7.1.6 in \cite[Chap. 7]{chambert2018motivic}.
	The main difference is a properness assumption. 
	}
	of $X$ 
	is a model $\XXX$ of $X$ such that $\XXX$ is smooth over $R$
	and 
	every $K'$-point of $X$ extends to an $R'$-point of $\XXX$,
	for every unramified extension $R'$ of $R$ with fraction field $K'$. 
	Since by definition $\XXX$ is separated, such an $R'$-point is unique. 
	
	Let $\YYY$ be a flat separated $R$-scheme of finite type with smooth generic fibre.   
	A \textit{Néron smoothening} 
	\index{Néron smoothening}
	of $\YYY$ is a smooth $R$-scheme $\XXX$ of finite type together with an $R$-morphism $\XXX \to \YYY$
	inducing an isomorphism $\XXX_K \to \YYY_K$ 
	and such that 	
	$\XXX ( R ' ) \to \YYY ( R ' ) $ is bijective for every unramified extension $R'$ of $R$.
	
	Given $\XXX$ and $\XXX '$ two weak Néron models of $X$, a morphism of weak Néron models is a $R$-morphism $\XXX ' \to \XXX $ whose restriction to the generic fibre commutes with the isomorphisms with $X$. In that case we say that $\XXX ' $ dominates $\XXX$. 
\end{mydef}
Then a short reformulation of Theorem 3 and Corollary 4 of \cite[p. 61]{bosch1990neron} is the following fundamental fact.
\begin{mythm}\label{thm:existence-of-weak-neron-models}
	Every $R$-scheme of finite type whose generic fibre is smooth over $K$ admits a Néron smoothening,
	given by the $R$-smooth locus of a composition of admissible blow-ups. 
	\index{weak Néron model!local}
\end{mythm}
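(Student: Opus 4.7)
The plan is to follow the classical strategy of Bosch--Lütkebohmert--Raynaud: introduce a numerical defect measuring failure of smoothness along an $R'$-point, and show that a well-chosen admissible blow-up strictly decreases it. First I would reduce to the affine case, assuming $\YYY$ is embedded in some $\mathbf{A}^N_R$ as the vanishing locus of an ideal, with equidimensional smooth generic fibre of some dimension $d$. Then for each unramified extension $R'/R$ and each $R'$-point $\tilde y : \Spec(R') \to \YYY$, I would define $\delta(\tilde y) \in \NN$ as the length of the torsion part of $\tilde y^* \Omega^1_{\YYY/R}$, or equivalently as the order along $\tilde y$ of the Fitting-type ideal $\mathscr{J} \subset \OOO_{\YYY}$ generated by the $d \times d$ minors of a Jacobian matrix for $\YYY \to \Spec(R)$. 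The Jacobian criterion ensures that $\delta(\tilde y) = 0$ precisely when $\tilde y$ factors through the $R$-smooth locus of $\YYY$.

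The core of the argument would be a local blow-up lemma of the following shape: if $\tilde y$ has $\delta(\tilde y) > 0$, choose a closed subscheme $Z$ of the special fibre $\YYY_0$ which is contained in the non-smooth locus, contains the specialization of $\tilde y$, and is cut out by a suitable subset of the generators of $\mathscr{J}$. For the admissible blow-up $\YYY ' \to \YYY$ along $Z$, separatedness together with the fact that $\YYY ' \to \YYY$ is an isomorphism on the generic fibre yields a unique lift $\tilde y '$ of $\tilde y$, and a direct computation on affine charts of $\YYY '$ of the transform of $\mathscr{J}$ shows that $\delta(\tilde y ') \leqslant \delta(\tilde y) - 1$. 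I expect this strict decrease step to be the main technical obstacle: it requires carefully tracking how $\mathscr{J}$ pulls back and how the exceptional divisor absorbs part of its order along $\tilde y$, and this is where the bulk of the effort of Bosch--Lütkebohmert--Raynaud is concentrated.

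Once the decrease lemma is available, I would conclude by iteration. The defect $\delta(\tilde y)$ is bounded uniformly in $\tilde y$ and in $R'$ by some integer $N$ depending only on $\YYY$, for instance by the order of $\mathscr{J}$ along the special fibre (using quasi-compactness of $\YYY_0$ and the finite generation of $\mathscr{J}$, together with the fact that $\mathscr{J}$ generates the unit ideal on the generic fibre). Iterating the above procedure $N$ times, with $Z$ chosen at each stage as the non-smooth locus of the current model, produces a tower of admissible blow-ups $\YYY^{(N)} \to \cdots \to \YYY^{(0)} = \YYY$ such that every unramified $R'$-point of $\YYY_K$ lifts into the smooth locus of $\YYY^{(N)}$. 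Taking $\XXX$ to be the $R$-smooth locus of $\YYY^{(N)}$ then yields the desired Néron smoothening: it is smooth of finite type over $R$, its formation commutes with the generic fibre, and by construction it captures every unramified $K'$-point of $X$.
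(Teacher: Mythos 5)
First, note that the paper does not prove this statement at all: it is stated as ``a short reformulation of Theorem 3 and Corollary 4 of \cite{bosch1990neron}*{p.~61}'' and is used as a black box. What you have written is therefore an attempted reconstruction of the Bosch--Lütkebohmert--Raynaud smoothening algorithm itself. Your overall architecture is the correct one — Néron's measure $\delta(\tilde y)$ for the defect of smoothness (length of the torsion of $\tilde y^*\Omega^1_{\YYY/R}$, equivalently the order along $\tilde y$ of the $d$-th Fitting ideal $\mathscr J$), the characterisation $\delta=0$ iff $\tilde y$ factors through the smooth locus, the uniform bound $\delta \leqslant N$ coming from the fact that $\mathscr J$ is the unit ideal on the generic fibre, and an induction on the maximal defect.

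The genuine gap is in the induction step, precisely where you flag ``the main technical obstacle'' and then, in the final paragraph, resolve it by decree: you take the centre of each blow-up to be ``the non-smooth locus of the current model.'' That is not what makes the strict-decrease lemma work, and it is false in general that blowing up the non-smooth locus lowers $\delta(\tilde y)$ for \emph{every} unramified point simultaneously. In \cite{bosch1990neron} the decrease lemma (their 3.4/2) only applies to a subset $E$ of points that is \emph{$Y$-permissible} for a \emph{geometrically reduced} closed subscheme $Y$ of the special fibre; permissibility is a nontrivial constancy condition on the orders of the relevant Jacobian minors along $Y$, and the non-smooth locus (even reduced) need not satisfy it. The actual proof therefore requires the ``canonical partition'' of $\YYY(R^{\mathrm{sh}})$ into finitely many permissible pieces, a choice of centre for each piece, and a double induction (on the defect and on the strata of the partition), using that points not specialising into the chosen centre are unaffected. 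Without this, your single chain of $N$ blow-ups along the non-smooth locus is not justified. Since the paper itself defers the entire argument to \cite{bosch1990neron}, the honest options are either to cite the result as the paper does, or to carry out the permissibility/canonical-partition machinery in full; the intermediate position taken in your last paragraph does not close the argument.
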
  

Even if the result is formulated in the language of formal schemes, the proof of Proposition 3.4.7 in \cite[Chap. 3]{chambert2018motivic},
which is a variant of the Néron smoothening algorithm of \cite{bosch1990neron},
 gives the following useful proposition. 
\begin{myptn}\label{ptn:existence-wNm-dominating-2-wNm}
	Let $X$ be a smooth $K$-variety. 
	If $\XXX '$ and $\XXX '' $ are two models of $X$, 
	then 
	there exists another model $\XXX ''$ of $X$ above $\XXX $ and $\XXX '$
	whose $R$-smooth locus $( \XXX '')^\circ$ is a Néron smoothening of both models. 
\[
\begin{tikzcd}
		& ( \XXX '')^\circ \arrow[ddr] \arrow[d] \arrow[ddl] & 	\\
		&    \XXX ''\arrow[dd] \arrow[dr] \arrow[dl] & \\
	\XXX \arrow[dr] & 		   & 	\XXX ' \arrow[dl]	\\
		& \Spec ( R ) & 		\\
\end{tikzcd}
\]
\end{myptn}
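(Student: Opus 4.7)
The plan is to combine the two models into a single auxiliary model dominating both, and then invoke the Néron smoothening algorithm (\cref{thm:existence-of-weak-neron-models}) once on the resulting object. First, I would form the $R$-scheme $\ZZZ$ defined as the scheme-theoretic Zariski closure of the diagonal embedding $X \hookrightarrow \XXX \times_R \XXX'$, which is well defined since both $\XXX$ and $\XXX'$ have generic fibre identified with $X$. Equipped with the two restricted projections, $\ZZZ$ is a model of $X$ over $R$ together with $R$-morphisms $\ZZZ \to \XXX$ and $\ZZZ \to \XXX'$ that are isomorphisms on the generic fibre.

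Next, I would apply \cref{thm:existence-of-weak-neron-models} to $\ZZZ$, in the strengthened form given by Proposition 3.4.7 of \cite{chambert2018motivic}. This produces a finite sequence of admissible blow-ups $\XXX'' \to \ZZZ$ whose $R$-smooth locus $(\XXX'')^\circ$ is a Néron smoothening of $\ZZZ$; composing with the projections yields $R$-morphisms $(\XXX'')^\circ \to \XXX$ and $(\XXX'')^\circ \to \XXX'$, each an isomorphism on the generic fibre.

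To finish, I need to verify that the two composed maps are themselves Néron smoothenings, i.e.~that $(\XXX'')^\circ(R') \to \XXX(R')$ and $(\XXX'')^\circ(R') \to \XXX'(R')$ are bijective for every unramified extension $R'$ of $R$. Since $(\XXX'')^\circ(R') \to \ZZZ(R')$ is already bijective by construction, it suffices to check that $\ZZZ(R') \to \XXX(R')$ and $\ZZZ(R') \to \XXX'(R')$ are bijective. Injectivity comes from the separatedness of $\XXX \times_R \XXX'$ together with the definition of $\ZZZ$ as the closure of the diagonal: an $R'$-point of $\ZZZ$ is determined by either of its two coordinates, since the two coordinates must coincide on the generic fibre. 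For surjectivity, given $\tilde{x} \in \XXX(R')$ with generic fibre $x_K \in X(K')$, one needs an $R'$-point of $\XXX'$ extending $x_K$, which is provided by properness of the models in the applications of interest via the valuative criterion.

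\textbf{Main obstacle.} The delicate point is precisely the surjectivity step, since for arbitrary (possibly non-proper) models, $K'$-points of $X$ coming from $\XXX(R')$ need not extend to $R'$-points of $\XXX'$. In the general statement, one interleaves the smoothening procedure with the formation of $\ZZZ$, blowing up the non-smooth loci of $\XXX$ and $\XXX'$ simultaneously, so as to force every unramified $R'$-point of $\XXX$ or $\XXX'$ into the $R$-smooth locus of the final $\XXX''$. This is exactly the reason one speaks of a \emph{variant} of the Bosch--Lütkebohmert--Raynaud algorithm rather than a direct consequence of \cref{thm:existence-of-weak-neron-models}.
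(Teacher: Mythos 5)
Your construction---take $\ZZZ$ to be the schematic closure of the diagonal $X \hookrightarrow \XXX \times_R \XXX'$, smoothen $\ZZZ$, and compose with the two projections---is exactly the strategy behind the result the paper invokes (the paper gives no argument of its own beyond pointing to the proof of Proposition 3.4.7 in \cite{chambert2018motivic}, which proceeds by the same graph-closure device). Your injectivity argument via separatedness, and the observation that an unramified $R'$-point of $\XXX \times_R \XXX'$ whose generic fibre lies on the diagonal factors through $\ZZZ$, are both correct, and you have put your finger on the genuine crux: surjectivity of $\ZZZ(R') \to \XXX(R')$ and of $\ZZZ(R') \to \XXX'(R')$.

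Where the proposal goes wrong is the closing paragraph. No interleaving of blow-ups can repair surjectivity for arbitrary models, because any candidate $\XXX''$ fitting into the diagram maps to $\XXX'$, and $R$-morphisms send $R'$-points to $R'$-points; hence $(\XXX'')^\circ(R')$, viewed inside $X(K')$, is always contained in the image of $\XXX'(R')$. An unramified point of $X(K')$ extending to $\XXX$ but not to $\XXX'$ can therefore never be reached, and $(\XXX'')^\circ$ cannot be a Néron smoothening of $\XXX$. Concretely, $\XXX = \mathbf{A}^1_R$ and $\XXX' = \mathbf{A}^1_R$ with the origin of the special fibre removed already defeat the statement as literally written: any $\XXX''$ would have to induce bijections of $(\XXX'')^\circ(R')$ onto both $R'$ and $(R')^\times$. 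So the proposition is only true under the implicit hypothesis that $\XXX(R') = \XXX'(R')$ inside $X(K')$ for every unramified extension $R'$---automatic for proper models by the valuative criterion (the only case the paper actually uses, cf.\ \cref{thm:equidistribution-and-models} and the properness clause of \cref{cor:existence-global-wNm-dominating-2-global-wNm}), and built into the setting of the cited proposition, which concerns weak Néron models of one fixed bounded rigid variety. With that hypothesis made explicit your proof closes; without it, what is missing is not a further idea but a true statement.
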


In the equal characteristic case, we have the following correspondance of points of Greenberg schemes, see Proposition 3.5.1 of \cite[Chap. 4]{chambert2018motivic}.
It allows one to apply the motivic change of variable formula, \cref{ptn:change-of-variable-smooth-case}, to Néron smoothenings. 
\begin{myptn}\label{proposition:neron-smoothening-and-greenberg-schemes}
	Let $\YYY \to \XXX$ be a morphism of separated flat $R$-schemes of finite type,
	restricting to an immersion on generic fibres and such that $\YYY$ is smooth.
	
	Then $\YYY \to \XXX$ is a Néron smoothening if and only if
	the induced map
	\[
	\Gr ( \YYY ) ( \kappa ' )  \to \Gr ( \XXX ) ( \kappa ' )
	\]
	is a bijection for every separable extension $\kappa ' $ of $\kappa $. 
\end{myptn}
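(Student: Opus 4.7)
The plan is to reduce the statement to a purely scheme-theoretic one by identifying points of the Greenberg scheme with $R'$-points of $\XXX$ for suitable unramified extensions $R'/R$. Concretely, for any separable extension $\kappa'/\kappa$, one has in the equal characteristic setting a unique complete unramified extension $R'/R$ with residue field $\kappa'$, and by the defining functor of Greenberg schemes there is a canonical identification
\[
\Gr(\XXX)(\kappa') \;=\; \Hom_R(\Spec(R'),\XXX) \;=\; \XXX(R'),
\]
functorial in $\XXX$, so that $\Gr(\YYY)(\kappa') \to \Gr(\XXX)(\kappa')$ is just the map $\YYY(R') \to \XXX(R')$. As $\kappa'$ varies, $R'$ ranges over all complete unramified extensions of $R$. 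This identification is the technical heart of the argument.

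Once this is in place, I would first observe that injectivity of $\YYY(R') \to \XXX(R')$ is automatic: since $\YYY$ is separated over $R$, any two $R'$-points that agree on the generic fibre are equal by the valuative criterion of separatedness, and the generic fibre map $\YYY_K \to \XXX_K$ is injective on points because it is an immersion, hence a monomorphism. Thus the whole content of the equivalence concerns surjectivity.

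The ``only if'' direction is then immediate from the definition: a Néron smoothening satisfies the bijectivity of $\YYY(R') \to \XXX(R')$ for every unramified extension $R'/R$, and restricting to the complete ones gives the bijectivity on Greenberg points. For the ``if'' direction, assume the Greenberg maps are bijective. Surjectivity of $\YYY(R') \to \XXX(R')$ is then known for all complete unramified $R'$; one upgrades this to all unramified $R'$ by observing that $\XXX$ and $\YYY$ are of finite type, so that $R'$-points and $\widehat{R'}$-points coincide after Henselization (via a standard approximation argument). It then remains to promote the immersion $\YYY_K \to \XXX_K$ to an isomorphism; if it were not, its image would omit a nonempty locally closed subscheme $Z \subset \XXX_K$, and taking the scheme-theoretic (flat) closure $\overline{Z}$ of $Z$ in $\XXX$, one could produce over a sufficiently large complete unramified extension $R'$ an $R'$-point of $\overline{Z} \subset \XXX$ whose generic fibre lies in $Z$, contradicting surjectivity of $\YYY(R') \to \XXX(R')$.

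The step I expect to be the main obstacle is the very last one, namely ``spreading out'' a geometric generic point of $\XXX_K$ outside the image of $\YYY_K$ to an honest $R'$-point of $\XXX$. Since $\XXX$ is only assumed flat of finite type (possibly singular), this is not given for free by the valuative criterion; one needs to invoke either flatness of the scheme-theoretic closure $\overline{Z}$ together with \cref{thm:existence-of-weak-neron-models} applied to $\overline{Z}$ to produce such lifts, or a direct local computation on an affine chart. Everything else reduces to bookkeeping with the functorial description of $\Gr$.
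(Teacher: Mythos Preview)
The paper does not prove this proposition; it is quoted from \cite[Chap.~4, Proposition~3.5.1]{chambert2018motivic}, with the citation given in the sentence immediately preceding the statement. So there is no in-paper argument to compare against.

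Your outline is essentially the right one, and the identification $\Gr(\XXX)(\kappa') \simeq \XXX(R')$ for $R'$ the complete unramified extension of $R$ with residue field $\kappa'$ is indeed the heart of the matter in the equal-characteristic case; once that is in place, the ``only if'' direction and injectivity are immediate as you say. Two remarks on the ``if'' direction. First, your upgrade from complete to arbitrary unramified $R'$ can be made cleaner: since $\XXX$ and $\YYY$ are of finite presentation over $R$, any $R'$-point factors through a finitely generated $R$-subalgebra of $R'$, and over a complete $R$ the finite unramified extensions are themselves complete, which reduces directly to the case already handled. Second, and more importantly, the step you flag as the main obstacle---promoting the immersion on generic fibres to an isomorphism---genuinely requires the implicit hypothesis that $\XXX_K$ is smooth. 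Without it the implication fails: take $\XXX = \Spec R[x]/(x^2)$ and $\YYY = \Spec R$ mapping in via $x \mapsto 0$; both Greenberg schemes have a single $\kappa'$-point for every $\kappa'$, yet $\YYY_K \to \XXX_K$ is a closed immersion that is not an isomorphism. In the paper this is not an issue, since the very definition of N\'eron smoothening is only given for targets with smooth generic fibre, and every model considered is a model of a smooth variety; under that implicit hypothesis your proposed argument via the closure $\overline{Z}$ and \cref{thm:existence-of-weak-neron-models} does go through (one restricts $Z$ to a smooth locally closed piece of $\XXX_K \setminus \YYY_K$ before taking the closure).
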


\begin{myptn}
\label{ptn:ordjac-Neron-smoothening}
Let $\XXX$ be an $R$-model of a smooth $K$-variety $X$, of pure relative dimension $n$,
and $f : \YYY \to \XXX $ a Néron smoothening of $\XXX$. 

Then
\[
\varepsilon_{\left ( \Omega^n_{\YYY / R } \right ) ^\vee - f^* \left ( \Omega^n_{\XXX  / R } \right ) ^\vee }  = \ordjac_f 
\]
on $\Gr ( \YYY )$.
\end{myptn}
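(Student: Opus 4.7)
The strategy is to verify the identity pointwise on $R'$-points of $\YYY$ for any unramified extension $R'$ of $R$, since both $\varepsilon_{\LLL_1 - \LLL_2}$ (writing $\LLL_1 := (\Omega^n_{\YYY/R})^\vee$ and $\LLL_2 := f^*(\Omega^n_{\XXX/R})^\vee$) and $\ordjac_f$ are constructible functions on $\Gr_\infty(\YYY)$, the former by \cref{lemma:local-difference-is-constructible}. Fix then a point $q \in \YYY(R')$ with specialization $q_0$; both sides become integers that we aim to match.

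The plan is to set up compatible local coordinates on the two sides. Since $\YYY$ is smooth of relative dimension $n$ over $R$ at $q_0$, one chooses étale coordinates $y_1, \ldots, y_n$ so that $\omega_Y := dy_1 \wedge \cdots \wedge dy_n$ is a local generator of the invertible sheaf $\Omega^n_{\YYY/R}$ and $\omega_Y^\vee$ generates $\LLL_1$. Using that $X_K$ is smooth of dimension $n$, one can find $z_1, \ldots, z_n \in \OOO_{\XXX, f(q_0)}$ whose differentials form a basis of $\Omega^1_{X_K/K}$ at the generic point of $(f \circ q)_K$. Setting $\omega_X := dz_1 \wedge \cdots \wedge dz_n$, the pullback formula reads
\[
f^*\omega_X \;=\; d(f^* z_1) \wedge \cdots \wedge d(f^* z_n) \;=\; J \cdot \omega_Y,
\]
with Jacobian $J := \det(\partial f^*z_i / \partial y_j) \in \OOO_{\YYY, q_0}$, a unit on the generic fibre $Y_K$. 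The integer $v_{R'}(J(q))$ will be the main invariant appearing on both sides.

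When $\XXX$ happens to be smooth at $f(q_0)$ the verification is immediate: on one hand $\alpha(q)$ sends the generator $\omega_X|_{f(q)}$ of $(f\circ q)^*\Omega^n_{\XXX/R}$ to $J(q)\omega_Y|_q$, so $\ordjac_f(q) = v_{R'}(J(q))$; on the other hand, the generic-fibre duality $\omega_X^\vee = J^{-1}\omega_Y^\vee$ identifies $q^*\LLL_2$ with $J^{-1}$ times $q^*\LLL_1$ inside the one-dimensional $K'$-vector space $\omega_X^{-1}|_{(f \circ q)_K} \otimes K'$, and the formula $\varepsilon_{\LLL_1 - \LLL_2}(q) = v_{R'}(y_0 / y_0')$ from the construction of $\varepsilon$ gives the same value. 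The main obstacle is the singular case: when $\XXX$ is not smooth at $f(q_0)$, the sheaf $\Omega^n_{\XXX/R}$ is neither locally free nor torsion-free, so neither $(\Omega^n_{\XXX/R})^\vee$ nor its pullback is immediately controlled. The remedy is to use the identity $\Hom(\Omega^n_{\XXX/R}, \OOO_\XXX) = \Hom((\Omega^n_{\XXX/R})/\mathrm{tor}, \OOO_\XXX)$ to replace the source by a torsion-free module, and to write $(\Omega^n_{\XXX/R})/\mathrm{tor}$ locally near $f(q_0)$ as $I \cdot \omega_X$ for a fractional ideal $I \subset K$ containing $1$. Then $(\Omega^n_{\XXX/R})^\vee$ becomes $I^{-1}\omega_X^\vee$, and introducing a generator $\beta$ of $I^{-1}(q) \subset R'$, both $\ordjac_f(q)$ and $\varepsilon_{\LLL_1 - \LLL_2}(q)$ can be written in terms of $v_{R'}(J(q))$ and $v_{R'}(\beta)$; the length formula for the cokernel of a rank-one inclusion of $R'$-lattices then forces the two expressions to agree, yielding the claimed equality.
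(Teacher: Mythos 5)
Your overall strategy --- reduce to a pointwise identity on $\YYY ( R' )$ for unramified $R'$, introduce the Jacobian determinant $J$ via compatible generators $\omega_Y$ of $\Omega^n_{\YYY / R}$ and $\omega_X$ of $\Omega^n_{X_K}$ near the relevant points, and treat the non-smooth case by writing $\Omega^n_{\XXX / R}/\mathrm{tor} = I\cdot \omega_X$ for a fractional ideal $I$ of $\OOO_{\XXX , f(q_0)}$ (not of $K$, as you write) --- is reasonable and genuinely different from the paper's proof, which is a citation of the chain rule of Chambert-Loir--Nicaise--Sebag for the order functions attached to $\Omega^n$ itself, together with the identification of $\ordjac_f$ with the order of the Jacobian ideal. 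The smooth case of your argument is essentially fine, up to a sign: with the convention $\varepsilon_{\LLL ' - \LLL}(q) = v_{R'}(y_0 / y_0')$ of \cref{lemma:local-difference-is-constructible}, where $y_0$ is attached to $\LLL = f^*(\Omega^n_{\XXX/R})^\vee$, one finds $\varepsilon_{\LLL_1 - \LLL_2}(q) = -v_{R'}(J(q))$ rather than $+v_{R'}(J(q))$; the paper itself invokes the proposition with the opposite orientation in the proof of \cref{lemma:lifting-equidistribution-and-convergence-speed}, so this is bookkeeping rather than mathematics, but you should fix the orientation once and keep it.

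The genuine gap is the last sentence of your singular case. The two sides of the identity are governed by two \emph{different} lattices in the one-dimensional $K'$-vector space $q_K^* \omega_{X_K}^{\pm 1} \otimes K'$. Writing $\phi : \OOO_{\XXX , f(q_0)} \to R'$ for the ring map attached to $f\circ q$, the quantity $\ordjac_f ( q )$ is computed from $((f\circ q)^* \Omega^n_{\XXX / R})/\mathrm{tor}$, i.e.\ from the fractional $R'$-ideal $\phi ( I ) R' = \varpi^{-a} R'$ generated by the values of the elements of $I$; one gets $\ordjac_f ( q ) = v_{R'} ( J(q) ) - a$. By contrast, $\varepsilon$ is computed, per \cref{def:local-order-with-respect-line-bundle}, from the lattice $(f\circ q)^* \bigl( (\Omega^n_{\XXX / R})^\vee \bigr) / \mathrm{tor}$, i.e.\ from $\phi ( I^{-1} ) R' = \varpi^{b} R' = (\beta)$, and one gets $\pm ( v_{R'} ( J(q) ) - b )$. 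Your claim that ``the length formula \dots forces the two expressions to agree'' is precisely the assertion $a = b$, i.e.\ that evaluation along $\phi$ commutes with inversion of the fractional ideal $I$. But $\phi ( I ) R' \cdot \phi ( I^{-1} ) R' = \phi ( I I^{-1} ) R'$, and if $I$ is not principal at $f ( q_0 )$ then the ideal $I I^{-1}$ is proper, hence contained in the maximal ideal, hence $b - a = \length_{R'} ( R' / \phi ( I I^{-1} ) R' ) > 0$ and the two sides disagree. In other words, pullback does not commute with dualisation for the non-locally-free sheaf $\Omega^n_{\XXX / R}$, and this is exactly the point your proof skips. As written, your argument only establishes the identity at arcs $q$ whose specialisation $f ( q_0 )$ lies in the locus where $\Omega^n_{\XXX / R}/\mathrm{tor}$ is invertible; to cover all of $\Gr ( \YYY )$ you must either restrict the statement to such arcs (which is all that is used in \cref{lemma:lifting-equidistribution-and-convergence-speed}, where one integrates over the smooth loci), or replace the lattice defining $\varepsilon$ by the dual of $((f\circ q)^*\Omega^n_{\XXX/R})/\mathrm{tor}$ --- which is what the chain-rule argument for $\Omega^n$ itself, cited in the paper's proof, implicitly does.
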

\begin{proof}
This is given by the argument of the chain rule (5.2.2) in \cite[Chap. 7]{chambert2018motivic}
and the fact that in our situation the function $\ordjac_f$
coincides with the order function of the Jacobian ideal of $f$. 
See Lemma 3.1.3 in \cite[Chap. 5]{chambert2018motivic} as well.
\end{proof}

\subsubsection{From local models to global ones} We will need the following gluing result, which is a variant of \cite[p.18, Proposition 1]{bosch1990neron}. 

\begin{myptn}\label{ptn:gluing-dominating-models}
Let $\VVV \to \CCC$ be a model of $V$ above $\CCC$. 
	Let $\CCC_0 $ be a dense open subset of $\CCC$ and 
	$\VVV_0 ', \VVV_1 ' , ... , \VVV_s ' $ 
	a finite number of models of $V$ respectively over $\CCC_0$ and over the local rings of the closed points $p_1 , ... , p_s$ not in $\CCC_0$. 
	Assume that these models dominate respectively the restriction of $\VVV$ to $\CCC_0$ and to these local rings. In particular, they induce isomorphisms on generic fibres. 
	
	Then there exists a model $\VVV ' \to \CCC $ of $V$ extending $\VVV_0 , ... , \VVV_s$
	as well as a $\CCC$-morphism $\VVV ' \to \VVV$ extending the local ones. If moreover the local models are smooth, then $\VVV ' \to \CCC $ is smooth as well. 
	\end{myptn}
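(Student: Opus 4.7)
\emph{Plan.} The strategy is to spread each purely local model $\VVV_i'$ out to a Zariski open neighborhood of $p_i$, and then to glue these neighborhood-level models with $\VVV_0'$ using the dominations to $\VVV$ as a bridge on the overlaps.

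\textbf{Spreading out.} For each $i=1,\dots,s$, the scheme $\VVV_i'$ is of finite presentation over $\Spec\OOO_{\CCC,p_i}$, which is the inverse limit of the affine open neighborhoods of $p_i$. Standard limit arguments (EGA IV, Chapter 8) supply an open neighborhood $U_i\ni p_i$ and a finite-type model $\tilde{\VVV}_i$ over $U_i$ whose pullback to $\Spec\OOO_{\CCC,p_i}$ is $\VVV_i'$; after shrinking $U_i$, the dominating morphism spreads to a $U_i$-morphism $\tilde{\VVV}_i\to\VVV|_{U_i}$. Since the $p_i$'s are distinct closed points of the one-dimensional scheme $\CCC$, we may further arrange that the $U_i$'s are pairwise disjoint and that each $U_i$ meets $\{p_1,\dots,p_s\}$ only at $p_i$; at any closed point of $\CCC\setminus\CCC_0$ distinct from the $p_i$'s (if any), we use $\VVV$ itself as a local model.

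\textbf{Producing the clutching isomorphisms.} The generic fibre of $\tilde{\VVV}_i\to\VVV|_{U_i}$ is the identity on $V$, so the locus where this morphism is an isomorphism is a constructible subset of $U_i$ containing the generic point, hence contains a dense open $W_i\subset U_i$. Because $\{p_i\}$ is closed in $U_i$, the set $W_i\cup\{p_i\}$ is again open, so we may replace $U_i$ by it and obtain an isomorphism on $U_i\setminus\{p_i\}$. The same argument applied to $\VVV_0'\to\VVV|_{\CCC_0}$ produces a dense open $W\subset\CCC_0$ on which this morphism is an isomorphism; after replacing $U_i$ by $(W\cap U_i)\cup\{p_i\}$, both maps in
\[
\VVV_0'|_{U_i\setminus\{p_i\}}\;\xrightarrow{\sim}\;\VVV|_{U_i\setminus\{p_i\}}\;\xleftarrow{\sim}\;\tilde{\VVV}_i|_{U_i\setminus\{p_i\}}
\]
are isomorphisms, yielding a canonical clutching isomorphism on each overlap $U_i\cap\CCC_0=U_i\setminus\{p_i\}$.

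\textbf{Gluing, and smoothness.} The open sets $\CCC_0, U_1,\dots,U_s$ cover $\CCC$, and pairwise disjointness of the $U_i$'s makes the cocycle condition trivial, so the clutching data glue to a flat separated $\CCC$-scheme of finite type $\VVV'$ with $\VVV'|_{\CCC_0}\simeq\VVV_0'$ and $\VVV'|_{\Spec\OOO_{\CCC,p_i}}\simeq\VVV_i'$ as required. The $\CCC$-morphism $\VVV'\to\VVV$ is assembled from the local ones by construction. For the smooth case, smoothness is an open property over the base, so after further shrinking each $U_i$ we may assume $\tilde{\VVV}_i$ is smooth over $U_i$; combined with smoothness of $\VVV_0'$ over $\CCC_0$, this yields smoothness of $\VVV'\to\CCC$.

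\textbf{Main obstacle.} The delicate step is the production of the clutching isomorphisms: one needs to know that the iso-locus of each dominating morphism contains a dense open subset. This follows from constructibility of the iso-locus (Chevalley) together with the presence of the generic point in it, but requires some care in the absence of properness for the sources. A more robust, if heavier, alternative would be a Beauville--Laszlo-type gluing directly across each closed point $p_i$, working with schemes over $\Spec\OOO_{\CCC,p_i}$ and their localisations at the generic point and thereby bypassing the need to go through $\VVV$.
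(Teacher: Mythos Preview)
Your proof is correct and follows essentially the same route as the paper's: spread out each local model to an open neighborhood via EGA IV limit arguments (the paper cites Théorème 8.8.2), use the dominations over $\VVV$ to manufacture identifications on overlaps, shrink so that the overlaps land inside the iso-locus, and glue. One minor quibble: the justification for the iso-locus containing a dense open is not really Chevalley but again spreading-out (``being an isomorphism'' descends through limits, EGA IV 8.10.5), which you already invoked in the first step; the conclusion is unaffected.
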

\begin{proof}
	Let $R_i$ be the local ring of $\CCC$ at the point $p_i \in \CCC \setminus \CCC_0$ for $i=1 , ... , s$. 
	The morphism $\VVV '_i \to \VVV_{R_i}$ uniquely extends to a $\CCC_i$-morphism for a certain open neighbourhood $\CCC_i$ of $p_i$ by \cite[Théorème 8.8.2]{EGA-IV-3}. 
	Since $( \VVV '_i )_F \simeq ( \VVV_{R_i} )_F \simeq V $, such a model coincides with $\VVV_0 ' \to \CCC_0$ above a non-empty open subset $\CCC_0 ' \subset \CCC_0$. Then, up to removing a finite number of points of $\CCC '_i$ so that $\CCC_i \cap ( \CCC - \CCC_0 '  ) = \{ s_i \} $, we can assume that they coincide above $\CCC_i \cap \CCC_0 ' $ and glue them above each $\CCC_i \cap \CCC '_0$, obtaining a model extending the starting data and dominating $\VVV \to \CCC$.   
\end{proof}

\index{weak Néron model!global}
For us, a weak Néron model of $V$ above $\CCC $ will be a smooth $\CCC$-scheme $\VVV$ of finite type, together with an isomorphism $\VVV_F \to V$
and satisfying the following property concerning étale integral points: 
for any closed point $p$ and any étale local $\OOO_{\CCC , p }$-algebra $R'$ with field of fractions $F '$,
the canonical map $\VVV ( R ' ) \to \VVV_F ( F ' ) $ is surjective (see p.7, Definition 1, as well as the end of p.12, and p.60-61 in \cite{bosch1990neron}).  

\begin{mycor}\label{cor:existence-global-wNm-dominating-2-global-wNm}
	Let $\VVV$ and $\VVV ' $ be two models of $V$ above $\CCC $. Then there exists a third model $\VVV '' $ of $V$ above $\CCC$ whose $\CCC$-smooth locus is a Néron smoothening of both $\VVV $ and $\VVV ' $ above $\CCC$. 
	\[
	\begin{tikzcd}
				& \VVV '' \arrow[dd] \arrow[dr]\arrow[dl]  & \\
		\VVV \arrow[dr]	& 									& 	\VVV ' \arrow[dl] \\
				& 			\CCC 							& 
	\end{tikzcd}
	\]
	Moreover, if $\VVV$ and $\VVV ' $ are proper, then $\VVV '' $ can be taken proper as well. 
\end{mycor}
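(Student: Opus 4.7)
The plan is to replace $\VVV$ and $\VVV'$ by a single common dominating model, and then smoothen it locally at the finitely many bad closed points of $\CCC$, invoking \cref{thm:existence-of-weak-neron-models} pointwise and \cref{ptn:gluing-dominating-models} (or, in the proper case, a global iterated blow-up) for the globalisation.

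First, I would form the fibre product $\VVV \times_\CCC \VVV'$ and take $\mathscr Z$ to be the schematic closure of the diagonal $V \hookrightarrow V \times_F V$ inside it. Then $\mathscr Z$ is a $\CCC$-model of $V$ equipped with two projections $\mathscr Z \to \VVV$ and $\mathscr Z \to \VVV'$ extending the identity on $V$. If $\VVV$ and $\VVV'$ are proper, so is $\VVV \times_\CCC \VVV'$, and hence so is its closed subscheme $\mathscr Z$. By generic smoothness, pick a dense open subset $\CCC_0 \subset \CCC$ above which $\mathscr Z$ is smooth, and let $p_1,\dots,p_s$ be the finitely many closed points of $\CCC \setminus \CCC_0$. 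For each $i$, \cref{thm:existence-of-weak-neron-models} produces a finite composition of admissible blow-ups $\mathscr Z_i \to \mathscr Z_{R_{p_i}}$ whose $R_{p_i}$-smooth locus $\mathscr Z_i^\circ$ is a Néron smoothening of $\mathscr Z_{R_{p_i}}$. Every blow-up center involved is supported on the special fibre at $p_i$, hence extends canonically to a closed subscheme of $\mathscr Z$.

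In the general case, I would then apply \cref{ptn:gluing-dominating-models} with ambient model $\mathscr Z$, smooth local pieces $\mathscr Z_i^\circ$ over $\Spec R_{p_i}$, and $\mathscr Z|_{\CCC_0}$ over $\CCC_0$; since all these pieces are smooth and dominate the corresponding restrictions of $\mathscr Z$, the output is a smooth $\CCC$-model $\VVV''$ together with a dominating morphism $\VVV'' \to \mathscr Z$, which composed with the projections dominates $\VVV$ and $\VVV'$. For the proper case, rather than discarding non-smooth loci, I would perform the same iterated sequence of admissible blow-ups \emph{globally} on $\mathscr Z$ itself, treating all bad points simultaneously and using the global extensions of the centers. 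Since each such center lies in a single closed fibre, flat base change along $\Spec R_{p_i} \to \CCC$ recovers the local sequence $\mathscr Z_i \to \mathscr Z_{R_{p_i}}$, so the resulting $\VVV''$ is proper over $\CCC$ (as a composition of proper blow-ups of the proper scheme $\mathscr Z$), restricts to $\mathscr Z|_{\CCC_0}$ over $\CCC_0$, and has $\CCC$-smooth locus $(\VVV'')^\circ$ locally equal to $\mathscr Z_i^\circ$ in a neighbourhood of each $p_i$.

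In either construction, $(\VVV'')^\circ$ is a Néron smoothening of $\mathscr Z$. To deduce that it is simultaneously a Néron smoothening of $\VVV$ and of $\VVV'$, for any closed point $p$ of $\CCC$ and any étale local $\OOO_{\CCC,p}$-algebra $R'$ with fraction field $F'$, the valuative criterion applied to the proper schemes $\VVV$, $\VVV'$ and $\mathscr Z$ gives
\[
\VVV(R') \;=\; V(F') \;=\; \VVV'(R') \;=\; \mathscr Z(R'),
\]
so the bijection $(\VVV'')^\circ(R') \to \mathscr Z(R')$ transports through the two projections to the required bijections. The main technical obstacle will be the proper case: the gluing proposition cannot be applied directly because removing the non-smooth loci would in general destroy properness, and the workaround is precisely to realise the Néron smoothening through global admissible blow-ups, which is feasible because every relevant center is concentrated in a single closed fibre of $\mathscr Z$ and therefore extends unambiguously.
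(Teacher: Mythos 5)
Your proof is correct in the proper case --- which is the only case the paper ever uses --- but it takes a genuinely different route from the paper's, and the non-proper case is where your argument is weaker.

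The paper does not form a global join: it applies spreading-out (\cite[Théorème 8.8.2]{EGA-IV-3}) at the generic point to find a dense open $\CCC'$ over which $\VVV$ and $\VVV'$ are already isomorphic, invokes the local two-model statement \cref{ptn:existence-wNm-dominating-2-wNm} at each remaining closed point to obtain a local model dominating \emph{both} restrictions, and then glues with \cref{ptn:gluing-dominating-models}. Your replacement of the first two steps by the schematic closure $\mathscr Z$ of the diagonal in $\VVV \times_\CCC \VVV'$, followed by the single-model smoothening \cref{thm:existence-of-weak-neron-models}, is cleaner and more self-contained, and your observation that the blow-up centers sit in single closed fibres and hence globalise is sound. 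Two remarks. First, your worry that the gluing proposition ``cannot be applied directly'' in the proper case rests on a misreading: the paper glues the \emph{full} local models produced by the smoothening algorithm (compositions of admissible blow-ups of a proper scheme, hence proper), not their smooth loci; the smooth locus is extracted only at the very end. So your global-blow-up workaround is valid but not actually needed. Second --- and this is the one genuine gap --- your transfer of the Néron-smoothening property from $\mathscr Z$ to $\VVV$ and $\VVV'$ invokes the valuative criterion, hence properness. Without properness, surjectivity of $\mathscr Z(R') \to \VVV(R')$ can fail: an $R'$-point of $\VVV$ restricts to an $F'$-point of $V$ that need not extend to an $R'$-point of $\VVV'$, hence need not lift to $\mathscr Z$, so a Néron smoothening of $\mathscr Z$ need not be one of $\VVV$. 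The paper sidesteps this by citing \cref{ptn:existence-wNm-dominating-2-wNm}, which asserts domination of both local models directly. Since the corollary is only applied to proper models downstream, this is a minor caveat, but as written your argument does not establish the statement in the generality in which it is posed.
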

\begin{proof}
	By spreading-out \cite[Théorème 8.8.2]{EGA-IV-3} applied to the generic point of $\CCC$, 
we know the existence of a non-empty open subset $\CCC '   \subset \CCC$ 
such that the restrictions of $\VVV $ and $\VVV ' $ above $\CCC' $ 
are isomorphic as $\CCC ' $-schemes.  
\[
\begin{tikzcd}
\VVV_{\CCC ' } \arrow[d] \arrow[dr] \arrow[rr,"\sim"] &       & \VVV_{\CCC ' } ' \arrow[d] \arrow[dl] \\
\VVV \arrow[dr] & \CCC ' \arrow[d] & \VVV ' \arrow[dl] \\
 		& \CCC & 
\end{tikzcd}
\]
Then, for each closed point $p$ in the complement of $\CCC ' $, by \cref{ptn:existence-wNm-dominating-2-wNm} we can find a weak Néron model which dominates both restrictions of $\VVV $ and $\VVV ' $ to $\Spec ( \OOO_{\CCC , p} )$. One can use \cref{ptn:gluing-dominating-models} to glue $\VVV_{\CCC ' }$ together with these models and get the desired new weak Néron model. 
These operations preserve properness, by the valuative criterion. 
\end{proof}

\subsection{Piecewise trivial fibrations}
In this subsection $S$ is a Noetherian scheme.
We recall definitions and properties of (classes in $\KVar S$) of piecewise trivial fibrations, following \cite[Chap. 2, \S 2.3]{chambert2018motivic}.

\begin{mydef}Let $F$ be an $S$-variety.
A \textit{piecewise trivial fibration with fibre $F$}
is a morphism of schemes $f : X \to Y$ between two $S$-varieties 
such that there exists a finite partition $(Y_i )_{i\in I}$ 
of $Y$ into locally closed subsets 
together with an isomorphism of $(Y_i)_{\text{red}} $-schemes between $(X\times_Y Y_i)_{\text{red}} $ and $( F \times_S Y_i )_{\text{red}}$ for all $i\in I$.  
\end{mydef}

\begin{myptn}\label{ptn:classes-of-piecewise-trivial-fibrations}
Let $F$ be an $S$-variety and $f: X \to Y$ a piecewise trivial fibration with fibre $F$. 
Then 
\[ 
[ X ] = [ F ] [ Y ] 
\]
in $\KVar S$.
\end{myptn}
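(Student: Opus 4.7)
The plan is to reduce the statement to the definition of the Grothendieck ring by means of scissors relations applied to the partition furnished by the piecewise trivial structure. Let $(Y_i)_{i\in I}$ be the finite partition of $Y$ into locally closed subsets given by the definition, with the corresponding isomorphisms $(X\times_Y Y_i)_{\text{red}} \simeq (F\times_S Y_i)_{\text{red}}$ of $(Y_i)_{\text{red}}$-schemes.

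First I would establish the additivity of the class over a finite partition into locally closed subsets: by induction on $|I|$, starting from a pair consisting of a closed subset and its open complement and iterating, one gets
\[
[Y] = \sum_{i\in I}[Y_i] \qquad \text{and} \qquad [X] = \sum_{i\in I}[X\times_Y Y_i]
\]
in $\KVar S$, where the second identity uses that pulling back the partition via $f$ yields a partition of $X$ into locally closed subsets.

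Next I would use the passage to the modified (``uh'') Grothendieck ring: since the reduction morphism $Z_{\text{red}}\to Z$ is a universal homeomorphism for any $S$-variety $Z$, the classes satisfy $[Z_{\text{red}}] = [Z]$. Applied to both $X\times_Y Y_i$ and $F\times_S Y_i$, together with the isomorphism of reduced schemes given by the piecewise trivial structure, this yields
\[
[X\times_Y Y_i] = [(X\times_Y Y_i)_{\text{red}}] = [(F\times_S Y_i)_{\text{red}}] = [F\times_S Y_i]
\]
in $\KVar S$ for every $i\in I$. The ring structure on $\KVar S$ gives $[F\times_S Y_i] = [F]\,[Y_i]$, since the product of classes is by definition the class of the fibre product over $S$ and $Y_i$ is an $S$-variety.

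Combining these steps yields
\[
[X] = \sum_{i\in I}[X\times_Y Y_i] = \sum_{i\in I}[F]\,[Y_i] = [F]\sum_{i\in I}[Y_i] = [F]\,[Y],
\]
which is the desired equality. There is no serious obstacle here: the only subtle point is the use of reducedness, which is precisely what forces one to work in the ``uh'' version of the Grothendieck ring (trivial in characteristic zero); otherwise the proof is a direct unwinding of the definitions.
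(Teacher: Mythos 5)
Your proof is correct and is essentially the standard argument that the paper delegates to Corollary 1.4.9 and Proposition 2.3.3 of \cite{chambert2018motivic}: additivity of classes over a finite partition into locally closed subsets (established by Noetherian induction, peeling off a piece that is open in $Y$), invariance of the class under passage to the reduced subscheme, and multiplicativity of the product over $S$. One small correction: the identity $[Z] = [Z_{\mathrm{red}}]$ does not force you into the modified ring $\KVaruh S$ --- it already holds in $\KVar S$, since $Z_{\mathrm{red}} \hookrightarrow Z$ is a closed immersion with empty open complement and the scissors relation gives $[Z] = [Z_{\mathrm{red}}] + [\varnothing]$; the quotient by universal homeomorphisms is needed in the paper for other identifications (radicial surjective morphisms such as Frobenius), not for reducedness.
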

\begin{proof}
See Corollary 1.4.9 and Proposition 2.3.3 of \cite[Chap. 2]{chambert2018motivic}.
\end{proof}

We will make extensive use of the following criterion.
\begin{myptn}\label{ptn:point-wise-criterion-for-piecewise-trivial-fibrations}
Let $F$ be an $S$-variety and $f: X \to Y$ a morphism of $S$-varieties.
Then $f$ is a piecewise trivial fibration with fibre $F$ if and only if, for every point $y \in Y$, the $\kappa ( y ) $-schemes
$f^{-1} ( y )_{\text{red}}$ and $( F \otimes_k \kappa ( y ) ) _{\text{red}}$ are isomorphic. 
\end{myptn}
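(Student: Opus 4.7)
The forward implication is straightforward: given a partition $(Y_i)_{i\in I}$ together with isomorphisms $\psi_i:(X\times_Y Y_i)_{\text{red}}\xrightarrow{\sim}(F\times_S Y_i)_{\text{red}}$ of $(Y_i)_{\text{red}}$-schemes, every $y\in Y$ lies in a unique $Y_i$, and pulling $\psi_i$ back along $\Spec(\kappa(y))\to(Y_i)_{\text{red}}$ and then reducing yields a $\kappa(y)$-isomorphism between $f^{-1}(y)_{\text{red}}$ and $(F\otimes_S\kappa(y))_{\text{red}}$, since taking reduced subschemes commutes with restriction to a field-valued fibre at the level of underlying topological spaces.

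For the converse, which is the substantive content, the plan is a Noetherian induction on the underlying topological space of $Y$. It will suffice to exhibit a non-empty open subset $U\subset Y$ on which the required identification $(X\times_Y U)_{\text{red}}\simeq (F\times_S U)_{\text{red}}$ holds as $U_{\text{red}}$-schemes; the induction hypothesis, applied to the reduced closed complement (which evidently inherits the pointwise assumption), then assembles the remaining pieces of the partition. By decomposing $Y_{\text{red}}$ into its finitely many irreducible components and treating each separately, one is reduced to the case where $Y$ is integral, with generic point $\eta$.

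The key step is then to spread out the isomorphism at $\eta$. Replacing $X$ and $F\times_S Y$ by their reduced closed subschemes does not affect any reduced fibre and keeps both schemes of finite type over $Y$. Since $Y$ is integral, the stalk $\OOO_{Y,\eta}=\kappa(\eta)$ is a localisation of $\OOO_Y$, and localisations of reduced rings are reduced; hence the generic fibres of these reduced models are already reduced and coincide respectively with $f^{-1}(\eta)_{\text{red}}$ and $(F\otimes_S\kappa(\eta))_{\text{red}}$. The hypothesis at $\eta$ thus provides a $\kappa(\eta)$-isomorphism between the generic fibres of two finite-presentation $Y$-schemes.

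The main obstacle, and the heart of the argument, is to extend this generic isomorphism to a $U$-isomorphism for some non-empty open $U\ni\eta$. This is precisely the content of the standard spreading-out principle of \cite[Théorème 8.8.2]{EGA-IV-3}: an isomorphism between finitely presented schemes over $\Spec(\OOO_{Y,\eta})$ is the pullback of an isomorphism of finitely presented $U$-schemes for some open neighbourhood $U$ of $\eta$. Taking the reductions of both sides then identifies them with $(X\times_Y U)_{\text{red}}$ and $(F\times_S U)_{\text{red}}$, delivering the trivialisation over $U$ and closing the induction.
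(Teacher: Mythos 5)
Your proof is correct and takes essentially the same route as the paper, whose own ``proof'' is just the citation of \cite[Chap.~2, Proposition 2.3.4]{chambert2018motivic} with the hint that one proceeds by Noetherian induction and spreads out from the generic point. The only cosmetic remark is that \cite[Théorème 8.8.2]{EGA-IV-3} by itself produces a morphism over some open $U$ inducing the given one at $\eta$; to shrink $U$ so that this morphism is an isomorphism one invokes \cite[8.10.5]{EGA-IV-3} (the reference the paper actually gives), which belongs to the same spreading-out package and does not change your argument.
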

\begin{proof}
See Proposition 2.3.4 of \cite[Chap. 2]{chambert2018motivic}
(one proceeds by Noetherian induction and applies \cite[8.10.5]{EGA-IV-3}). 
\end{proof}

\bigskip


\section{Motivic Euler products}

\subsection{The weight filtration} \label{subsection:weight-topology}
\label{section:weight-filtration}
In this paragraph $S$ is a variety over a subfield $\kb$ of the field $\CC$ of complex numbers. We fix once and for all an embedding of $\kb$ in $\CC$ and consider that $S$ is actually defined over $\CC$ by extension of scalars. We briefly recall the construction of a weight filtration on the Grothendieck ring of varieties over $S$.
We use \cite{peters2008mixed} as a general reference for Mixed Hodge Modules, as well as the summaries of \cite[Chapter 4]{bilu2018motivic} and \cite[Chap. 2, \S 3.1-3.3]{chambert2018motivic}. 

\subsubsection{Mixed Hodge modules}
\index{category $\MHM_S$ of mixed Hodge modules over $S$}
The category $\MHM_S$ of mixed Hodge modules over $S$ was introduced by Saito in \cite{saito1988modules,saito1990mixed}. It is an abelian category which provides a cohomological realization of the Grothendieck group $\KVar S$ of $S$-varieties. Its derived category is endowed with a six-functors formalism \textit{à la Grothendieck}. 
In case $S=\Spec ( \CC ) $ is a point, mixed Hodge modules over $S$ coincide with polarizable Hodge structures as defined by Deligne \cite{deligne1971theorie}, see \cite[Lemma 14.8]{peters2008mixed}.   

The Grothendieck group $K_0 ( \MHM_S ) $ of mixed Hodge modules over $S$ is the quotient of the free abelian group of isomorphism classes of mixed Hodge modules over $S$ by the relations 
\[
[E] - [F] + [G]
\]
whenever there is a split exact sequence 
\[ 
0 \to E \to F \to G \to 0 
\]
for $E$, $F$ and $G$ objects of $\MHM_S$.
There is a notion of weight of a mixed Hodge module, morphisms are strict for the weight filtration and the Grothendieck group $K_0 ( \MHM_S ) $ is generated by the classes of pure Hodge modules.

The tensor product operation in the bounded derived category of $\MHM_S$ provides a multiplicative structure on $K_0 ( \MHM_S ) $ as follows. 
The Grothendieck group $K_0^{\text{tri} } ( D^b ( \MHM_S))$ is defined similarly to $K_0 ( \MHM_S )$ by taking distinguished triangles 
\[
E^\bullet \to F^\bullet  \to G^\bullet \to E^\bullet [1] 
\]
of complexes as relations, in the place of exact sequences \cite[Exposé VIII]{SGA-V}. 
By the theorem of decomposition of mixed Hodge modules \cite[Cor. 14.4]{peters2008mixed},
there is an isomorphism of groups
\[
K_0 ( \MHM_S ) \to K_0^{\text{tri}} ( D^b ( \MHM_S )) 
\]
sending the class of a mixed Hodge module $M$ to the class of the complex with $M$ in degree zero. Indeed, the inverse is given by the morphism
\[
[M^{\bullet} ] \mapsto \sum_{i\in \ZZ} ( - 1 )^i [ \mathcal H^i ( M^\bullet ) ]
\]
from $K_0^{\text{tri} } ( D^b ( \MHM_S)) $ to $K_0 ( \MHM_S ) $. The tensor product on $D^b ( \MHM_S ) $ induces a ring structure on $K_0^{\text{tri}} ( D^b ( \MHM_S ) ) $ and on $K_0 ( \MHM_S ) $ through the previous isomorphism. 

The faithful and exact functor
\[
\rat_S : \MHM_S \to \Perv_S 
\]
to perverse sheaves on $S$
sends the six functors formalism of $\MHM_S$ to the one of the bounded derived category of constructible sheaves $D^b_c ( S ) $. 
In order to prove an isomorphism between two mixed Hodge modules, it will be enough to check it at the level of perverse sheaves.
Indeed, 
a mixed Hodge module $M$ is given by the data of filtrations 
on a $\mathcal D$-module isomorphic to $\CC \otimes_\QQ \rat_S ( M ) $ by the Riemann-Hilbert correspondence (comparison isomorphism, \cite[\S 14.1]{peters2008mixed}).  
Moreover, the Verdier duality functor $\mathbf D_S$ on $D^b_c ( S ) $ lifts to $\MHM_S$ so that $\rat_S \circ \mathbf D_S = \mathbf D_S \circ \rat_S$. 

\subsubsection{The Hodge realisation of $\KVar S$}For every integer $d$ 
we denote by $\QQ_S^\Hdg (-d)$ the complex of mixed Hodge modules 
obtained by pulling back to $S$ 
the Hodge structure $\QQ_{\text{pt}}^\Hdg ( -d )$ of type $(d,d)$ 
through the structure morphism $S\to \Spec ( \CC ) $.  
If $p : X\to S$ is an $S$-variety, 
let $\QQ_X^\Hdg$ be the complex $p^* \QQ_S^\Hdg$ of mixed Hodge modules.
\begin{mydef}
The \textit{Hodge realisation}
\index{Hodge realisation}
\[
\chi_S^\mathrm{Hdg} : \KVar S \to K_0 \MHM _S,
\]
sometimes called the \textit{motivic Hodge-Grothendieck characteristic},
sends a class 
$\left [X \overset{p}{\to} S \right ]$ 
to 
\[
\left [p_! \QQ_X^\Hdg \right ]
=
\sum_{i\in \mathbf Z} ( - 1 )^i \left [ \mathcal H^i \left ( p_{!} \QQ_X^\text{Hdg} \right ) \right ]
\]
where the equality comes from the isomorphism 
\[
K_0 ( \MHM_S) \overset{\sim}{\to} K_0^{\text{tri}} ( D^b ( \MHM_S )) 
\]
described in the previous paragraph. 
\end{mydef}
The perverse realisation $\KVar S \to K_0 ( D^b ( \Perv_S ) )$ factors through this morphism 
as $\rat_S \circ \chi_S^\Hdg$ 
\cite[Chap. 2, Proposition 3.3.7]{chambert2018motivic}. 
If $S= \Spec ( \CC ) $ this is the class,
in the Grothendieck group of mixed Hodge structures, 
of the cohomology with compact support of $X$ and rational coefficients, together with its Hodge structure. 
This homomorphism is well-defined (see \cite[Lemma 16.61]{peters2008mixed} or Proposition 3.3.7 in \cite[Chap. 2]{chambert2018motivic} for a proof).
Since $\chi_S^\Hdg$ 
sends $\LL_S$ 
to $\left [ \QQ_S^\Hdg ( -1 ) \right ]$,
which is invertible in $K_0 ( \MHM_S ) $, 
it is a morphism of rings compatible with the localisation $\KVar S \to \mathscr M_S$. 

\subsubsection{The weight filtration on $\KVar S$} In this paragraph we collect definitions and properties from \cite[\S 4.6]{bilu2018motivic} and develop a few useful examples about weights. 

\begin{mydef}
The weight function $w_S : \MMM_S \to \ZZ $ is given by the composition of $\chi^\Hdg_S$ together with the weight function on mixed Hodge modules. 	
\end{mydef}

\begin{myptn}\label{proposition:properties-of-the-weight}
 Let $S$ be a complex variety. The weight function $w_S : \MMM_S \to \mathbf Z $ satisfies the following properties.
\begin{enumerate}
\item $w_S ( 0 ) = - \infty  $.
\item $w_S ( \mathfrak a + \mathfrak a '  ) \leqslant \max ( w_S ( \mathfrak a ), w_S (   \mathfrak a ' ) )$ with equality if  $w_S ( \mathfrak a ) \neq w_S (   \mathfrak a ' ) $, for any $\mathfrak{a}, \mathfrak{a}' \in \MMM_S$. 
\item If $\YYY \to S$ is a variety over $S$ then 
\[
w_S  ( \YYY ) = 2 \dim_S ( \YYY ) + \dim ( S ) . 
\] 
\end{enumerate}
\end{myptn}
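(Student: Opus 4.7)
The plan is to deduce parts (1) and (2) from the existence of the weight filtration on $\MHM_S$ and its strictness, and to prove (3) by Noetherian induction, reducing to a smooth projective case to which Saito's decomposition theorem applies.

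For (1) and (2): strictness of morphisms in $\MHM_S$ for the weight filtration implies that the Grothendieck group $K_0(\MHM_S)$ decomposes as a direct sum $\bigoplus_w K_0(\MHM_S)_w$, where each summand is generated by classes of pure Hodge modules of weight $w$. Consequently, every class admits a finite decomposition $\sum_w c_w$ with $c_w$ in the weight-$w$ summand. Defining $w_S(\mathfrak a)$ to be the largest $w$ such that the corresponding component of $\chi_S^{\Hdg}(\mathfrak a)$ is non-zero (and $-\infty$ when $\chi_S^{\Hdg}(\mathfrak a) = 0$), part (1) is immediate, while (2) follows from componentwise additivity: the $w$-component of $\mathfrak a + \mathfrak a'$ is the sum of the $w$-components of each, so no new top weight can appear, and the top-weight contribution cannot cancel when the two weights differ.

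For (3), I would proceed by Noetherian induction on $\dim_S(\YYY)$. Stratifying $\YYY$ into smooth locally closed $S$-subvarieties and using the scissor relations in $\KVar S$ together with part (2), the claim reduces to the case where $\YYY$ is smooth over $\CC$. Hironaka's resolution of singularities then provides an open immersion of $\YYY$ into a smooth $\overline{\YYY}$ projective over $\CC$ whose complement has strictly smaller relative dimension, so by induction and (2) it suffices to establish the formula for $\overline{\YYY}$. For the resulting proper morphism $\pi : \overline{\YYY} \to S$, with $\overline{\YYY}$ smooth of absolute dimension $d = \dim_S(\YYY) + \dim(S)$, Saito's decomposition theorem asserts that $\pi_* ( \QQ_{\overline{\YYY}}^{\Hdg}[d] )$ is semi-simple and pure of weight $d$ in $D^b(\MHM_S)$. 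Writing $\chi_S^{\Hdg}(\overline{\YYY}) = \sum_i (-1)^i [\mathcal H^i \pi_* \QQ_{\overline{\YYY}}^{\Hdg}]$ and calibrating weights via $\chi_S^{\Hdg}(\LL_S) = [\QQ_S^{\Hdg}(-1)]$ of weight $\dim(S)+2$, a direct computation shows that the top weight appearing is $2\dim_S(\YYY) + \dim(S)$, realised by a Tate summand corresponding to the fundamental class of $\overline{\YYY}$. The main obstacle will be the weight-bookkeeping in this last step: converting between Saito's convention for purity on $D^b(\MHM_S)$ and the weight convention used on $K_0(\MHM_S)$ here, and verifying that the top-weight Tate summand genuinely survives the alternating sum of perverse cohomologies so that the inequality from (2) is realised as an equality.
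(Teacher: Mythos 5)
The paper does not actually prove this proposition: its ``proof'' is a citation to Lemmas 4.5.1.3, 4.6.2.1 and 4.6.3.1 of Bilu's thesis. What you have written is essentially a reconstruction of the standard arguments behind those lemmas, so the comparison is between your sketch and the expected content of the reference rather than with an argument in the text. Your treatment of (1) and (2) is correct: strictness of the weight filtration makes $\Gr^W_w$ exact on short exact sequences, so $[M]\mapsto([\Gr^W_w M])_w$ is well defined on $K_0(\MHM_S)$ and yields the weight-graded decomposition from which both properties follow formally. For (3), your skeleton (dévissage to the smooth case, relative compactification, decomposition theorem, identification of the top Tate piece) is the right one and matches what the paper itself uses later in \cref{ptn:weight-of-the-difference-smooth-irreducible-fibres}.

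Two points in (3) need repair. First, a compactification of $\YYY$ that is projective over $\CC$ need not admit a morphism to $S$ at all; you must compactify \emph{relatively to $S$} (Nagata, or the closure in $\PP^N\times S$ when $S$ is quasi-projective) and only then resolve, so that the boundary is again an $S$-variety of strictly smaller dimension. Second, the step you flag as ``the main obstacle'' is a genuine gap as written, but it closes as follows: for $p:\overline{\YYY}\to S$ smooth proper of relative dimension $d$ over a smooth connected $S$ of dimension $n$, one has $p^!\simeq p^*(d)[2d]$, and the adjunction map $\mathcal H^{2d+n}(p_!\QQ^{\Hdg}_{\overline{\YYY}})\to \mathcal H^{n}(\QQ^{\Hdg}_S)(-d)$ is an isomorphism on the top ($=2d+n$) weight-graded pieces; since this contribution sits in the single cohomological degree $2d+n$, it enters the alternating sum with the fixed sign $(-1)^n$, and because $\MHM_S$ has finite length (so $K_0(\MHM_S)$ is free on simple objects and a non-zero effective class is non-zero), the top-weight contributions of the various top-dimensional strata cannot cancel against one another. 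You should also be explicit that the dévissage only forces \emph{equality} because the boundary and lower strata have strictly smaller $\dim_S$, which requires the top-dimensional strata to dominate $S$; as stated, the equality in (3) is really for $\YYY\to S$ dominant of pure relative dimension (a closed point of a positive-dimensional $S$ has weight $0$, not $\dim(S)$), and this is the form in which the paper actually applies it.
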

For proofs of these properties, see  Lemmas 4.5.1.3, 4.6.2.1 and 4.6.3.1 of \cite[Chap. 4]{bilu2018motivic}. 
This weight function induces a filtration $(W_{\leqslant n} \MMM_S)_{n\in \mathbf Z}$ on $\MMM_S$ given by 
\[
W_{\leqslant n} \MMM_S = \{ \mathfrak a \in \MMM_S \mid w_S ( \mathfrak a ) \leqslant n \}
\]
for all $n\in \mathbf Z$.  
\begin{mydef}
\index{completion of $\MMM_S$!w.r.t the weight}
The \textit{completion of $\MMM_S$ with respect to the weight topology} is the projective limit 
\[
\widehat{\MMM_S}^w = \underset{\longleftarrow}{\lim} \left ( \MMM_S / W_{\leqslant n} \MMM_S \right ). 
\]
\end{mydef}

\bigskip 

\subsubsection{Useful examples and vanishing properties} Let $n$ be the dimension of the complex variety $S\overset{a_S}{\to}\Spec ( \CC ) $.
If $S$ is smooth, the 
complex 
$\QQ_S^\Hdg$
of mixed Hodge modules 
is concentrated in degree $n$
and 
$\mathcal H^n \QQ_S^\Hdg$ 
is given by the pure Hodge module of weight $n$ associated to the constant one dimensional variation of Hodge structure on $S$ of weight zero.  
Furthermore, we have the relation $a_S^! \cong a_S^* ( n ) [2n]  $, in particular ${\QQ_S^\Hdg ( n ) [2n] \cong a^!_S \QQ_{\Spec ( \CC ) }^\Hdg}$. 

The class of $\mathbf A^d_S = \mathbf A^d_{\CC} \times_\CC S$ is sent by $\chi_S^{\mathrm{Hdg}}$ to 
\[
\chi_S^\Hdg \left ( \LL_S^d \right ) 
= 
\left ( \chi_S^\Hdg  ( \LL_S ) \right )^{\otimes d} 
= 
\left ( \pr_! \QQ_{\mathbf A^1_S}^\Hdg  \right )^{\otimes d} 
= 
\QQ_S^\Hdg ( - d ). 
\]
More generally, we have the following proposition on top-graded parts. 
\begin{myptn}[{\cite[Lemma 4.6.3.4]{bilu2018motivic}}] \label{ptn:weight-of-the-difference-smooth-irreducible-fibres}
Let $S$ be a smooth and connected complex variety of dimension $n$. 
Let $p : \YYY \to S$ and $\ZZZ \to S $ be two smooth $S$-varieties with irreducible fibres of dimension $d\geqslant 0$. 
Then 
\[
w_S ( [\YYY ] - [\ZZZ ] ) \leqslant 2 d + n - 1 .
\]
\end{myptn}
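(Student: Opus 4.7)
The plan is to transport the problem to the Grothendieck group of mixed Hodge modules via the Hodge realisation $\chi_S^\Hdg$ and to compare the top-weight graded pieces of $\chi_S^\Hdg([\YYY])$ and $\chi_S^\Hdg([\ZZZ])$. By part (3) of \cref{proposition:properties-of-the-weight}, we already know that $w_S([\YYY]) = w_S([\ZZZ]) = 2d + n$, so the entire task is to show that the two classes have the same $\mathrm{Gr}^W_{2d+n}$-piece.

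I would then analyse $p_!\QQ_\YYY^\Hdg$ through its cohomology sheaves. Since $\QQ_\YYY^\Hdg$ is pure on the smooth variety $\YYY$ of dimension $n+d$, Saito's weight estimates for $p_!$ imply that each cohomology sheaf of $p_!\QQ_\YYY^\Hdg$ is a mixed Hodge module on $S$ of weight at most $2d + n$, and the maximum is attained only by the top one. The fibrewise application of Poincaré--Verdier duality identifies the underlying top direct image $R^{2d} p_! \underline{\QQ}_\YYY$ with the trivial rank-one local system $\underline{\QQ}_S(-d)$ — here the irreducibility of the fibres is essential, as it forces this local system to have rank one and to be globally constant. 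Saito's formalism then upgrades this to an isomorphism of pure Hodge modules
\[
\mathrm{Gr}^W_{2d+n}\bigl(p_!\QQ_\YYY^\Hdg\bigr) \;\simeq\; \QQ_S^\Hdg(-d),
\]
depending only on $S$ and $d$, and not on $\YYY$. Running the same analysis for $\ZZZ$ and subtracting, the two top-weight contributions cancel, yielding $w_S([\YYY] - [\ZZZ]) \leqslant 2d + n - 1$ as required.

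The main technical obstacle is to carry out this identification at the level of mixed Hodge modules rather than only at the level of the underlying constructible sheaves: it requires Saito's compatibility of the six-functor formalism with the weight filtration, along with careful bookkeeping of cohomological shifts and Tate twists in the conventions chosen for $\chi_S^\Hdg$. Dropping the irreducibility hypothesis would allow the rank of $R^{2d} p_! \underline{\QQ}_\YYY$ to differ between $\YYY$ and $\ZZZ$, so that the top-weight piece would no longer be a common invariant and the cancellation would fail.
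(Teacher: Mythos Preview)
Your proposal is correct and follows essentially the same route as the paper. The paper makes the identification of the top-weight graded piece via the counit of adjunction $p_! p^! \to \mathrm{id}$ combined with the smoothness relation $p^! \simeq p^*(d)[2d]$, which gives a morphism $\mathcal H^{2d+n}(p_!\QQ_\YYY^\Hdg) \to \mathcal H^{2d+n}(\QQ_S^\Hdg(-d)[-2d])$ inducing an isomorphism on $\Gr^W_{2d+n}$ (with a reference to \cite{bilu2018motivic}); this is the same Poincar\'e--Verdier/top-cohomology argument you describe, and the cancellation between $\YYY$ and $\ZZZ$ is identical.
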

\begin{proof}
Since $p$ is smooth, one has $p^! \simeq p^* (d) [2d] $ and there is a morphism of mixed Hodge modules
\[
\mathcal H^{2d+n} \left ( p_! \QQ_\YYY^\Hdg \right ) 
\to 
\mathcal H^{2d+n} \left ( \QQ^\Hdg_S (-d) [-2d] \right )
\]
which induces an isomorphism on the $(2d+n)$-th graded parts \cite[Remark 4.1.5.5]{bilu2018motivic}. This means that if $\ZZZ \to S$ is another smooth $S$-variety with irreducible fibres of dimension $d$, the corresponding top-weight graded parts cancel out and the weight of $\chi_S^\Hdg ( [\YYY] - [\ZZZ] )  $ is at most $2d + n - 1$. 
\end{proof}

\begin{myptn}\label{proposition:hodge-realisation-almost-fano-varieties}
Let $S$ be a smooth and connected complex variety of dimension $n$. 
Let ${p : \YYY \to S}$ be a proper smooth morphism 
whose fibres are smooth projective varieties of dimension $\dim_S ( \YYY) = d$.  
Assume that 
\[
\mathcal R^1 p_! \OOO_\YYY  = \mathcal R^2 p_! \OOO_\YYY = 0 
\]
and that $p$ has local sections. 

Then there exists an open subset  $S ' \subset S$
above which the relative Picard scheme exists, is smooth with discrete fibres of rank $r$,
and such that the class 
\begin{align*}
& \chi_{S'}^\Hdg ( \YYY ) -  \chi_{S'}^\Hdg ( \LL_{S'}^d )- \chi_{S'}^\Hdg ( r \LL_{S ' }^{d-1} ) \\
& = \left [ p_! \QQ_{\YYY_{| S' } }^\Hdg \right ] -\left  [ \QQ_{S'}^\Hdg (-d ) [-2d]\right  ] -  \left [  \QQ_{S'}^\Hdg ( -(d-1) )[-2(d-1)]^{\oplus r } \right ]
\end{align*}
has $S '$-weight at most $2d + n - 3 $.  
\end{myptn}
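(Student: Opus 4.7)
The plan is to apply the Beilinson--Bernstein--Deligne--Gabber decomposition theorem to the smooth proper morphism $p$ and to isolate the top weight contributions of $[p_! \QQ_\YYY^\Hdg]$, matching them with the two Tate terms appearing in the statement.

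First, by combining \cref{remark:local-triviality-of-derived-fuctor} with a generic smoothness argument for the Leray stratification, I choose a dense open $S' \subset S$ on which the relative Picard scheme $\Pic_{\YYY_{S'}/S'}$ is étale of rank $r$, and on which each higher direct image $R^i p_* \QQ_\YYY$ underlies a variation of pure Hodge structure of weight $i$. Hodge theory on the fibres $Y_s$ then yields the key identifications: the vanishing of $H^j(Y_s, \OOO_{Y_s})$ for $j = 1, 2$ together with Hodge symmetry forces $H^1(Y_s, \QQ) = 0$, while $H^2(Y_s, \QQ)$ is pure of type $(1,1)$ and therefore coincides rationally with $\NS(Y_s)$, of rank $r$. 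By Poincaré duality for the smooth proper family of relative dimension $d$, this gives
\[
R^1 p_* \QQ_\YYY \;=\; R^{2d-1} p_* \QQ_\YYY \;=\; 0, \qquad R^{2d} p_* \QQ_\YYY \;=\; \QQ_{S'}(-d),
\]
while $R^2 p_* \QQ_\YYY$ and $R^{2d-2} p_* \QQ_\YYY$ are Tate-type variations of rank $r$ and weights $2$ and $2(d-1)$ respectively.

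Next, after possibly shrinking $S'$ further, one identifies the Tate local systems with their constant counterparts: $R^2 p_* \QQ_\YYY^\Hdg \cong \QQ_{S'}^\Hdg(-1)^{\oplus r}$ and, by Poincaré duality, $R^{2d-2} p_* \QQ_\YYY^\Hdg \cong \QQ_{S'}^\Hdg(-(d-1))^{\oplus r}$. A basis of $\Pic$ of the geometric generic fibre spreads out to line bundles on $\YYY_{S''}$ over some dense open $S'' \subset S'$, whose cycle classes give $r$ global sections of $\Pic_{\YYY_{S''}/S''}$ that trivialize this étale rank-$r$ group scheme and hence the associated Tate local system. Once this is in place, the decomposition theorem
\[
p_* \QQ_\YYY^\Hdg \;\cong\; \bigoplus_{i=0}^{2d} R^i p_* \QQ_\YYY^\Hdg[-i]
\]
yields $[p_! \QQ_\YYY^\Hdg] = \sum_{i=0}^{2d} (-1)^i [R^i p_* \QQ_\YYY^\Hdg]$, in which the contributions from $i = 2d$ and $i = 2d-2$ match exactly the two Tate terms in the statement, while those from $i = 2d-1$ and $i = 1$ vanish. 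The remaining contributions, coming from indices $i \leqslant 2d-3$, are classes of pure Hodge modules of weight at most $i + n \leqslant 2d + n - 3$ by \cref{proposition:properties-of-the-weight}, giving the claimed bound.

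The main obstacle is the trivialization step: shrinking $S'$ alone does not kill monodromy on the Picard local system, so the argument crucially relies on the existence of extensions of a Picard basis to the total space in order to split $R^2 p_* \QQ_\YYY^\Hdg$ as a direct sum of copies of $\QQ_{S'}^\Hdg(-1)$ as pure Hodge modules, rather than merely as local systems up to isogeny.
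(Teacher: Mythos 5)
Your overall architecture is the same as the paper's: apply the decomposition theorem to the smooth proper map $p$, show that the degree-one cohomology of the fibres vanishes and that the degree-two cohomology is of Tate type of rank $r$ coming from the Picard group, transfer this to degrees $2d-1$ and $2d-2$ by duality, and bound the weight of everything in degrees $\leqslant 2d-3$ by purity. The variations are cosmetic: you get $H^1(Y_s,\QQ)=0$ and $H^2(Y_s,\QQ)=\NS(Y_s)\otimes\QQ$ from Hodge symmetry and Lefschetz $(1,1)$ where the paper runs the exponential exact sequence through $p_!$, and you use Poincaré duality for the variation of Hodge structure where the paper uses Verdier duality on mixed Hodge modules.

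The gap is exactly at the step you single out as the crux. A basis of $\Pic(\YYY_{\bar\eta})$ of the \emph{geometric} generic fibre is in general only defined over a finite extension of $\kb(S)$; it spreads out to line bundles on $\YYY\times_S T$ for some generically finite étale $T\to S$, not on $\YYY_{S''}$ for a dense open $S''\subset S$. Concretely, for a family of smooth quadric surfaces over $\GG_m$ whose monodromy exchanges the two rulings (e.g.\ $x_0^2-sx_1^2=x_2x_3$ in $\PP^3\times\GG_m$), the hypotheses of the proposition hold with $r=2$, but no dense open of the base carries two line bundles restricting to a basis of the Picard group of every fibre, and $R^{2}p_*\QQ_\YYY^\Hdg\simeq \QQ(-1)\oplus L(-1)$ for a non-trivial rank-one local system $L$. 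Since non-isomorphic semisimple pure objects of the same weight have distinct classes in $K_0(\MHM_{S''})$, the difference $[R^{2d-2}p_*\QQ_\YYY^\Hdg]-r[\QQ^\Hdg_{S''}(-(d-1))]$ is then a non-zero class of weight exactly $2d+n-2$, and the claimed bound fails. What is really needed is constancy of the Picard local system; in the paper's applications this is supplied by the Fano-like hypothesis $\Pic(V)=\Pic(\overline V)$ (which kills the monodromy at the generic point of $\CCC$) and by the global models $\LLL_1,\dots,\LLL_r$ of a Picard basis fixed in \cref{setting:main-setting}, not by spreading out from the geometric generic fibre. To be fair, the paper's own proof is also elliptical here — it only matches \emph{local ranks} of the top graded pieces — but your proposed repair does not close this gap.
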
 
\begin{proof}
Recall that if a complex $M$ is concentrated in degree $n$, then its shifting 
\[
{M[k]^\bullet = M^{\bullet + k}}
\] is concentrated in degree $n'$ such that $n' + k = n$, that is to say in degree $ n - k$
(in what follows, we will use this with $k=-2d$).

Since $S$ is smooth and connected, the complex $\QQ_S^\Hdg $ is concentrated in degree ${n = \dim (S)}$ and 
\[
\mathcal  H^{2d+n} ( \QQ^\Hdg_S (-d) [-2d] )
\]
is a pure Hodge module of weight $2d+n$. 
As complexes, $\QQ_\YYY^\Hdg $ and $\ZZ_\YYY^\Hdg$ are both concentrated in degree $d + n $, which will explain the shift in what follows. 
The morphism $p$ induces functors $p_! : D^b ( \YYY ) \to D^b ( S )$ and $p_*$ between the bounded derived categories of sheaves respectively over $\YYY$ and $S$, 
compatible with the ones
on mixed Hodge modules.  
Since $p$ is proper, $p_*$ and $p_!$ coincide. 
By \cite[Lemma 4.1.4.2]{bilu2018motivic}, 
the functor $p_! : D^b ( \MHM_\YYY ) \to D^b ( \MHM_S) $
has cohomological amplitude $\leqslant d$,
which means that 
the complex $p_! \QQ_\YYY^\Hdg $ 
only has cohomology in degree at most $(d+n)+d = 2d + n$. 
We deduce that only terms of weight $\leqslant 2d+n$
occur 
in $\chi_{S'}^\Hdg ( \YYY )$. Hence, to prove the proposition, we have to compute the terms of weights $2d+n$, $2d+n-1$ and $2d+n-2$. 
By \cref{ptn:weight-of-the-difference-smooth-irreducible-fibres} and its proof,
we already know what is the part of weight $2d+n$.

The exponential exact sequence of sheaves of abelian groups over $\YYY$
\[
0 \to \ZZ_{\YYY} ( 1 ) \to \mathbf G_{a , \YYY}  \to  \mathbf G_{m , \YYY} \to 0 
\]
gives rise to an exact sequence of cohomology sheaves over $S$
\begin{equation}\label{exact-seq:exp-cohomology-sheaf} 
\begin{tikzcd}
  \cdots \arrow[r] &  \mathcal H^n ( p_! \ZZ_\YYY (1) ) \rar & \mathcal H ^n ( p_! \OOO_\YYY [-n] ) \rar & \mathcal H^n ( p_! \OOO_\YYY^\times [-n] ) \ar[out=-30, in=150]{dll} \\ 
 & \mathcal H^{n+1} ( p_! \ZZ_\YYY (1)  ) \rar & \mathcal H^{n+1} ( p_! \OOO_{\YYY} [-n]) \rar & \mathcal H^{n+1} ( p_! \OOO_{\YYY}^\times [-n] ) \ar[out=-30, in=150]{dll} \\
 & \mathcal H^{n+2} ( p_! \ZZ_\YYY (1) ) \rar & \mathcal H^{n+2}( p_! \OOO_{\YYY} [-n] )  \rar & \cdots   &\\
\end{tikzcd}
\end{equation}
which on stalks specialises to the well-known exact sequence of abelian groups
\[
	\begin{tikzcd}
  0 \rar &  \ZZ ( 1 ) \rar & \CC \rar & \CC^* \ar[out=-30, in=150]{dll} \\ 
 & H^1 ( \YYY_s ( \CC ) , \ZZ (1) ) \rar & H^1 ( \YYY_s , \OOO_{\YYY_s}) \rar & H^1 ( \YYY_s , \OOO_{\YYY_s}^\times ) \ar[out=-30, in=150]{dll} \\
 & H^2 ( \YYY_s ( \CC ) , \ZZ (1 ) ) \rar &  H^2( \YYY_s , \OOO_{\YYY_s})  \rar & \cdots   &\\
\end{tikzcd}
\]
where $\ZZ ( 1 ) $ is the $\ZZ$-Hodge structure with underlying $\ZZ$-module $2i\pi \ZZ$ and Hodge type $(-1,-1)$. 
Since $\mathcal H^{ n + i} ( p_! \mathscr O_\YYY [-n] )  = \mathcal R^i p_! \mathscr O_\YYY = 0  $ for $i=1,2$, the map 
\[
\Pic_{\YYY / S }  = \mathcal H^{n+1} ( p_!  \OOO_\YYY^\times [-n] ) \to \mathcal H^{n+2}  (p_! \ZZ_\YYY  (  1 ) )
\]
is an isomorphism. 
In particular, the map it induces on stalks
\[ 
\Pic ( \YYY_s ) = H^1 ( \YYY_s , \OOO_{\YYY_s} ^\times ) \to H^2 ( \YYY_s ( \CC ) , \ZZ (1) ) 
\]
is an isomorphism. 
Since we assumed that $S$ and $\YYY \to S $ are smooth,
this means that 
\begin{equation}\label{eq:picard-scheme-Y->S}
	\Pic_{\YYY / S  } \otimes \QQ_S^\Hdg \simeq \mathcal H^{n+2} ( p_! \QQ_\YYY^\Hdg ( 1 )  ) 
\end{equation} is a variation of Hodge structure, of rank $r$, above $S$. 

By surjectivity of the exponential map,
the arrow $\CC^* \to H^1 ( \YYY_s ( \CC ) , \mathbf Z ( 1 ) )$ is trivial.
Thus $H^1 ( \YYY_s ( \CC ) , \mathbf Z ( 1 ) )$ injects into $H^1 ( \YYY_s , \OOO_{\YYY_s} ) $ 
which is trivial by assumption, 
thus $H^1 ( \YYY_s ( \CC ) , \mathbf Z ( 1 ) )$ is trivial as well for all $s \in S$, which means that $\mathcal H^{n+1} ( p_! \ZZ_\YYY ( 1 ) ) $ is trivial. 

We use the exact involutive dual functor 
\[
\mathbf D : \MHM  \to \MHM^{\mathrm{opp}}
\]
of Verdier duality on mixed Hodge modules,
which extends to the derived bounded category $D^b_c ( \MHM )$, 
by the formula
\[
\mathbf D M^\bullet = \HOM ( M^\bullet , \mathbf D \QQ^\Hdg ) 
\]
in $D^b_c ( \MHM )$, where $\mathbf D \QQ^\Hdg$ is the dualizing complex. 
Here since $S$ is smooth, 
\[
\mathbf D_S \QQ^\Hdg_S \simeq \QQ_S^\Hdg ( n ) [2n] ,\]
see for example \cite[Appendix A]{saito2016young}. 
It sends mixed Hodge modules of weight $w$ to mixed Hodge modules of weight $-w$, and interchanges $p_*$ and $p_!$. In our situation, it gives
\[
\mathbf D_S ( p_! \QQ_\YYY^\Hdg ) = p_*  (\mathbf D_\YYY \QQ_\YYY^\Hdg) = p_!  (\mathbf D_\YYY \QQ_\YYY^\Hdg) = p_! ( \QQ_\YYY^\Hdg ( n + d) [ 2 ( n+ d )] )
\]
thus 
\begin{align}
\label{equation-MHM-shift-D-operator}
\Gr_i^W \mathcal H^j ( p_! \mathbf D_\YYY \QQ_\YYY^\Hdg )  
& = 
\Gr_i^W \mathcal H^j ( p_! ( \QQ_\YYY^\Hdg ( n + d) [ 2 ( n+ d )] ) ) \notag \\
& =
\Gr_{i+2(n+d)}^W \mathcal H^{j+2(n+d)} ( p_! \QQ_\YYY^\Hdg )  
\end{align}
for all integers $i$ and $j$ 
(recall that the $(n+d)$th Tate twist translates into a double shift $2(n+d)$ of the weight).
On the other hand the decomposition theorem \cite[Corollary 14.4]{peters2008mixed}
\[
p_! \QQ_\YYY^\Hdg \simeq \oplus_{k} \mathcal H^k ( p_! \QQ_\YYY^\Hdg ) [ - k ] 
\]
in $D^b ( \MHM_S)$, gives
\[
\mathbf D_S ( p_! \QQ_\YYY^\Hdg ) \simeq \oplus_{k} ( \mathbf D_S \mathcal H^k ( p_! \QQ_\YYY^\Hdg ) ) [ k ] .
\]
We apply \cite[Prop. 2.6]{saito1990mixed}, which says that  
\[
\DD_S \Gr_i^W M = \Gr_{-i}^W \DD_S M 
\] 
for all $M\in \MHM_S$, which gives
\begin{align*}
\Gr_{i+2(n+d)}^W \mathcal H^{j+2(n+d)} ( p_! \QQ_\YYY^\Hdg )  & = \Gr_{i}^W  \mathcal H^j ( \mathbf D_S ( p_! \QQ_\YYY^\Hdg ) ) \tag{by \eqref{equation-MHM-shift-D-operator}} \\
& \simeq \Gr_{i} ^W  \mathbf D_S ( \mathcal H^{-j} p_! \QQ_\YYY^\Hdg ) \\
& \simeq \mathbf D_S \Gr_{-i}^W \mathcal H^{-j} ( p_! \QQ_\YYY^\Hdg ) 	
\end{align*}
which, in turn, for $ j \in \{ - ( n +1 ), - ( n+2 ) \}$ and $i = j $, specializes to
\begin{align*}
	\Gr_{n+2d - 1 }^W \mathcal H^{n+2d - 1} ( p_! \QQ_\YYY^\Hdg ) & \simeq \mathbf D_S\Gr_{n+1} ^W   \mathcal H^{n+1}  ( p_! \QQ_\YYY^\Hdg ) \\
\Gr_{n+2(d - 1) }^W \mathcal H^{n+2(d - 1)} ( p_! \QQ_\YYY^\Hdg ) & \simeq \mathbf D_S \Gr_{n+2} ^W  \mathcal H^{n+2} ( p_! \QQ_\YYY^\Hdg )  .
\end{align*}
We previously showed (in the paragraph right after \eqref{eq:picard-scheme-Y->S}) that $\mathcal H^{n+1} ( p_! \ZZ_\YYY^\Hdg (1) )$ is trivial, we have in particular
\[
\mathcal H^{n+1} ( p_!  \QQ_{\YYY}^\Hdg  ) = 0  
\]
and
the first line is zero. 
Finally, since by \eqref{eq:picard-scheme-Y->S}
\[
\mathcal H^{n+2} ( p_! \QQ_\YYY^\Hdg ) \simeq \Pic_{\YYY / S  } \otimes \QQ_S^\Hdg ( - 1 ) 
\]
is pure, the local rank of 
\begin{align*}
	\Gr_{n+2(d - 1) }^W \mathcal H^{n+2(d - 1)} ( p_! \QQ_\YYY^\Hdg ) & \simeq \mathbf D_S \Gr_{n+2} ^W  \mathcal H^{n+2} ( p_! \QQ_\YYY^\Hdg )  \\
	& \simeq \mathbf D_S \Gr_{n+2} ^W \left ( \Pic_{\YYY / S  } \otimes \QQ_S^\Hdg ( - 1 ) \right )
\end{align*}
is given by the one of $\Pic_{ \YYY / S' } \otimes \QQ_{S'}^\Hdg $ above an open subset $S' \subset S$, hence the result. 
\end{proof}

\subsection{Motivic Euler product} 
Formal motivic Euler products have been introduced by Margaret Bilu \cite{bilu2018motivic}, 
as a \textit{notation} generalizing the Kapranov zeta function and \textit{behaving like a product}. 
For our purpose we will only need a particular case of this construction, but we will state a few useful properties of this object in a general framework. 
We mostly follow the exposition one can find in the third and sixth section of \cite{bilu2021motivic}.

\subsubsection{Symmetric products and configuration spaces}
\label{subsection:symmetric-products}

Let $S$ be a $\kb$-variety and $X$ an $S$-variety.
The $m$-th symmetric product of $X$ relatively to $S$ is by definition the quotient 
\[
\Sym_{/S}^m ( X ) = ( \underbrace{ X \times_S ... \times_S X}_{m \text{ times}} ) / \mathfrak S_m. 
\]
 Let ${\XXX = (X_i)_{i  \in I}}$ be a family of quasi-projective varieties above $X$, where $I$ is an arbitrary set. Let  $\mu = (m_i)_{i\in I} \in \NN^{(I)} $ be a family of non-negative integers with finite support, which we call a \textit{partition} (if $I=\mathbf N^*$, then a partition of a non-negative integer $n$ is a family $(m_i)_{i\geqslant 1}$ such that $\sum_{i\geqslant 1} im_i = n$).\footnote{Note that such partitions admit holes and that this set $\NN^{(I)}$ of generalised partitions is denoted by $\mathcal P ( I )$
in \cite{vakil2015discriminants,howe2019motivic,bilu2020zeta,bilu2021motivic}, while the set of partitions with no hole is written $\mathcal Q ( I )$. We will adopt the notations $\mathcal P$ and $\mathcal Q$ only for partitions of integers, that is to say elements of $\NN^{(\NN^*)}$.} 
For such a partition, we define
\[
\Sym^\mu_{/S} ( X )  = \prod_{i\in I} \Sym^{m_i}_{/S} ( X ) 
\]
as well as 
\[
\Sym^\mu_{X/S} ( \XXX )  = \prod_{i\in I} \Sym^{m_i}_{X/S} ( X_i ) 
\]
which is a variety over $\Sym^\mu_{/S} ( X )$. 
These constructions extend to elements of $\KVar X$,
using Cauchy products; for details, see for example \cite[\S 6.1.1]{bilu2021motivic}. 

Given a partition $\mu \in \NN^{(I)}$, one can construct the restricted $\mu$-th symmetric product 
\[
\Sym^\mu_{X/S} ( \XXX )_* 
\]
as follows. 
If we write $\left (\prod_{i\in I} X^{m_i}\right )_{\ast , X /S }$ for the complement of the diagonal (points having at least two equal coordinates) in $\prod_{i\in I} X^{m_i} $,
then the restricted symmetric product
\[
\Sym^\mu_{/S} ( X )_*
\]
sometimes abbreviated 
\[
\Sy^\mu_{/S} ( X )_* 
\]
is by definition the image of $\left (\prod_{i\in I} X^{m_i}\right )_{\ast , X /S }$
in $\Sym^\mu_{/S} X$.
Furthermore, there is a Cartesian diagram
\begin{center}
\begin{tikzcd}
\left (\prod_{i\in I} X_i^{m_i}\right )_{\ast , X /S } \arrow[r , hook] \arrow[d] & \prod_{i\in I} X_i^{m_i}  \arrow[d] \\
\left (\prod_{i\in I} X^{m_i}\right )_{\ast , X /S } \arrow[r , hook]  &  \prod_{i\in I} X^{m_i} 
\end{tikzcd}
\end{center}
defining an open subset $\left (\prod_{i\in I} X_i^{m_i}\right )_{\ast , X /S }$ of points of $\prod_{i\in I} X_i^{m_i}  $ mapping to points of $\prod_{i\in I} X^{m_i} $ with pairwise distinct coordinates. 
Then one defines 
\footnote{Denoted by $S^\mu  ( \XXX / S )$ in \cite{bilu2018motivic}
and $C^\mu_{X/S} ( \XXX )$ or $\left ( \prod_{i\in I} \Sym^{m_i} X_i \right )_{\ast , X/ S}$ in \cite{bilu2021motivic,bilu2020zeta}.} 
\[
\Sy^\mu_{X/S} ( \XXX )_* 
= 
\Sym^\mu_{X/S} ( \XXX )_* = \left (\prod_{i\in I} X_i^{m_i}\right )_{\ast , X /S } \left / \prod_{i\in I} \mathfrak{S}_{m_i} \right.
\]
that is to say, the image of $\left (\prod_{i\in I} X_i^{m_i}\right )_{\ast , X /S }$ in $\Sym^\mu_{X/S} ( \XXX )$.

\begin{myexample}
In the case where $I$ is a singleton, and $\XXX = ( Y \to X )$, then any partition  $ \mu $  is given by a non-negative integer $  n  $ 
and $\Sym^\mu_{X / S} ( \XXX )_* = \Sym^n_{X / S} ( Y )_*$ 
is the scheme parametrizing étale zero-cycles of degree $n$ above $X$, with labels in $Y $.
\end{myexample}

\begin{myexample}
If $I = \mathbf N^r \setminus \{ \mathbf 0 \} $, 
then 
for any $\nn \in \NN^{r} \setminus \{ \mathbf 0 \}$
the disjoint union 
\[
\Sym^\nn_{X / S} ( \XXX )_* = \coprod_{\substack{\mu = ( n_\mm ) \in \NN^{( ( \NN^r)^*)} \\ \sum_\mm n_\mm \mm = \nn }} \Sym^\mu_{X / S} ( \XXX )_*
\]
parametrizes $r$-tuples of zero-cycles of degree $\nn$ with labels in $\XXX$.  
\end{myexample}

As well, this construction extends to families of elements of $\KVar X$ and $\mathscr M_X$ \cite[Definition 6.1.7]{bilu2021motivic}:
if $\AAA = ( \mathfrak a_i )_{i\in I}$ is such a family, then 
\[
\Sym^\mu_{X/S} ( \AAA ) = \boxtimes_{i\in I} \Sym^{m_i}_{X/S} ( \mathfrak a_i ) \in \KVar {\Sym^\mu_{/S} X }
\] 
and
\[
\Sy^\mu_{X/S} ( \AAA )_* 
= 
\Sym^\mu_{X/S} ( \AAA )_* \in \KVar {\Sym^\mu_{/S} ( X )_* }
\]
is the restriction to
$\Sy^\mu_{/S} ( X )_* = \Sym^\mu_{/S} ( X )_* \subset \Sym^\mu_{/S} ( X )$ of $\Sym^\mu_{X/S} ( \AAA ) $. 
More generally,
if $K$ is a class in $\KVar {\Sym^\mu_{/S} X}$,
we will denote by $K_\ast$ its image in $\KVar {\Sym^\mu_{/S} (X)_* }$
by the restriction morphism. 

\subsubsection{Formal and effective motivic Euler products} 

\begin{mynotation}[Formal motivic Euler product]
\label{def:abstract-euler-product} 
\index{motivic Euler product!formal}
Let  $X$ be a variety over $S$
and 
$\XXX = ( X_i )_{ i \in I }  $ be a family of elements of $\KVar X $ or $\MMM_X$ indexed by a set $I$.
Let $(t_i)_{i \in I}$ be a family of indeterminates.  
Then the product
\[
\prod_{x \in X / S } \left ( 1 + \sum_{i \in I} X_{i,x}  t_i \right ) 
\]
is defined as a \textit{notation} for the formal series
\[
\sum_{ \mu \in \NN^{(I)}} [  \Sym^\mu_{X / S} ( \XXX )_* ] \TT^\mu 
\]
where $\TT^\mu = \prod_{i\in I}t_i^{m_i} $ whenever $\mu = ( m_i )_{i\in I} \in \NN^{(I)}$. 
\end{mynotation}

\begin{myexample}[Formal motivic Euler product with one indeterminate over a curve]\label{def:abstract-euler-product-over-curves} 
\index{motivic Euler product!over a curve}
The simplest kind of motivic Euler products we are going to use is the following. Assume that $I$ is made of a single element. Then a family $\XXX$ is given by a single class $Y$ and $\mu = [ \underbrace{1 , ... , 1}_{m \text{ times}}]$ is the only relevant partition type for a given positive integer $m$. In this setting,  
\[
\prod_{p \in \CCC } \left ( 1 + Y t  \right ) 
\]
is the formal series
\[
 \sum_{m \in \NN} [ \Sym^m_{\CCC } ( Y )_* ] t^m 
 \]
 where $ \Sym^m_{\CCC } ( Y )_* = \Sym^{ [ 1 , ... , 1 ] }_{\CCC } ( Y )_*  $ parametrises étale zero cycles of $\CCC$, of degree $m$ with labels in $Y$ (whenever $Y$ is a variety). 
\end{myexample}  

 \begin{myptn}[{\cite[\S 3.8.1]{bilu2018motivic}}]\label{ptn:motivic-Euler-product-compatible-cut-past}
 \index{motivic Euler product!multiplicativity}
 The Euler product notation is compatible with the cut-and-paste relations: if $X=U\cup Y $ with $Y$ a closed subscheme of $X$ and $U$ its complement, then for any family $\XXX = ( X_i )_{i\in I}$ of elements of $\KVar X$ or $\MMM_X$
\[
\prod_{x \in X/S} \left ( 1 + \sum_{i\in I} X_{i,x}  t_i \right )  = \left ( \prod_{u \in U/S} \left ( 1 + \sum_{i\in I} X_{i,u}  t_i \right ) \right )\left ( \prod_{y\in Y/S} \left ( 1 + \sum_{i\in I} X_{i,y}  t_i \right ) \right )
\]
when considering the motivic Euler products of the restrictions 
\[
\mathscr Y =  (X_i \times_X Y )_{i\in I}
\] 
and 
\[
\UUU =  {( X_i \times_X U )_{i\in I}}.
\] 
 \end{myptn}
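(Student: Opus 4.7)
The plan is to reduce the identity to a coefficient-by-coefficient comparison in the formal variables $\TT^\mu = \prod_i t_i^{m_i}$, and then exhibit the relevant scissor decomposition at the level of restricted symmetric products. By \cref{def:abstract-euler-product}, the left-hand side equals the formal series $\sum_{\mu \in \NN^{(I)}} [\Sym^\mu_{X/S}(\XXX)_*]\TT^\mu$, while the product on the right, expanded as a Cauchy product, has coefficient in front of $\TT^\mu$ equal to
\[
\sum_{\substack{\mu^U + \mu^Y = \mu}} [\Sym^{\mu^U}_{U/S}(\UUU)_*] \cdot [\Sym^{\mu^Y}_{Y/S}(\mathscr Y)_*],
\]
where the product of classes lives in $\KVar{\Sym^{\mu^U}_{/S}(U)_* \times_S \Sym^{\mu^Y}_{/S}(Y)_*}$. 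So it suffices to establish, for every fixed partition $\mu \in \NN^{(I)}$, the identity
\[
[\Sym^\mu_{X/S}(\XXX)_*] = \sum_{\mu^U + \mu^Y = \mu} [\Sym^{\mu^U}_{U/S}(\UUU)_*] \cdot [\Sym^{\mu^Y}_{Y/S}(\mathscr Y)_*]
\]
in the Grothendieck group (over the appropriate base).

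To prove this identity, I would first handle the non-quotiented version. A point of $\bigl(\prod_{i\in I} X_i^{m_i}\bigr)_{*,X/S}$ is a tuple of labeled points $(x_{i,j})$ whose images in $X$ are pairwise distinct. Since $X = U \sqcup Y$ set-theoretically and pairwise distinctness across $U$ and $Y$ is automatic, the stratification according to which indices $(i,j)$ map into $U$ versus $Y$ gives a disjoint decomposition
\[
\Bigl(\prod_{i\in I} X_i^{m_i}\Bigr)_{*,X/S} = \coprod_{\mu^U+\mu^Y = \mu} \coprod_{\text{choices}} \Bigl(\prod_{i\in I}(X_i\times_X U)^{m_i^U}\Bigr)_{*,U/S} \times_S \Bigl(\prod_{i\in I}(X_i\times_X Y)^{m_i^Y}\Bigr)_{*,Y/S},
\]
where the inner disjoint union ranges over the $\prod_i \binom{m_i}{m_i^U}$ ways of selecting, for each $i$, which of the $m_i$ coordinates are sent to $U$. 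This is a locally closed decomposition because the conditions "$x_{i,j}$ maps to $U$" and "$x_{i,j}$ maps to $Y$" are open and closed respectively inside $X_i$.

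Next I would pass to the quotient by $\prod_i \mathfrak S_{m_i}$. On each stratum, the subgroup $\prod_i (\mathfrak S_{m_i^U} \times \mathfrak S_{m_i^Y})$ acts as the stabilizer of the chosen splitting, and the remaining coset representatives permute the $\binom{m_i}{m_i^U}$ sub-strata transitively. Consequently, taking the quotient identifies the $\prod_i \binom{m_i}{m_i^U}$ copies and yields, for each splitting $\mu^U + \mu^Y = \mu$, a locally closed stratum of $\Sym^\mu_{X/S}(\XXX)_*$ isomorphic to $\Sym^{\mu^U}_{U/S}(\UUU)_* \times_S \Sym^{\mu^Y}_{Y/S}(\mathscr Y)_*$. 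Applying scissor relations in $\KVar{\cdot}$ (and extending to $\MMM_\cdot$ by localization, if needed) gives the displayed class identity, which is exactly the Cauchy coefficient on the right-hand side.

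The only delicate point, which is where I expect the main effort to lie, is the bookkeeping when passing to the quotient: one must check carefully that the action of $\prod_i \mathfrak S_{m_i}$ on the coproduct permutes the $\binom{m_i}{m_i^U}$ labelings freely and transitively (so the binomial coefficients cancel) while simultaneously producing the expected quotient $(\prod_{j=1}^{m_i^U}(X_i\times_X U))_{*,U/S}/\mathfrak S_{m_i^U} = \Sym^{m_i^U}_{U/S}(X_i\times_X U)_*$ on each factor. Once this orbit analysis is in hand, the extension from families of $X$-schemes to arbitrary families $\XXX$ of classes in $\KVar X$ or $\MMM_X$ is formal, via the Cauchy-product definition of $\Sym^\mu_{X/S}$ on virtual classes recalled in \S\ref{subsection:symmetric-products}.
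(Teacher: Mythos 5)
Your proof is correct and follows the standard argument: the paper itself gives no proof of this proposition, deferring entirely to \cite[\S 3.8.1]{bilu2018motivic}, and your stratification of $\bigl(\prod_i X_i^{m_i}\bigr)_{*,X/S}$ by which coordinates land in $U$ versus $Y$, followed by the orbit analysis under $\prod_i \mathfrak S_{m_i}$, is essentially the argument given there. The only point deserving a remark is the final extension to virtual classes, which is slightly more than formal (one must know the extended $\Sym^\mu$ operations are themselves compatible with cut-and-paste, which is part of what Bilu establishes), but your identification of the geometric stratification as the core content is accurate.
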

 
We will need the following generalisation of \cite[Proposition 3.9.2.4]{bilu2018motivic}. 
\begin{myptn} \label{proposition:euler-product:multiplicativity}
 \index{motivic Euler product!of a formal product}
We assume that $I$ is of the form $I_0 \setminus \{ 0 \}$ where $I_0$ is a commutative monoid, and that $(t_i)_{i\in I}$ is a collection of indeterminates, such that\footnote{We refer the reader to the notion of \textit{algèbre large d'un monoïde} in \cite[Chap. III, p. 27]{bourbaki2007algebre}.}
\begin{enumerate}
	\item $I_0$ is endowed with a total order $<$ such that $p+q=i$, where $q\neq 0$, implies $p<i$;
	\item for all $i \in I $, the set $\{ p \in I \mid p < i \}$ is finite;
	\item the collection of indeterminates $(t_i)_{i\in I}$ satisfies $t_p t_q = t_{p+q}$. 
\end{enumerate}

Let $S$ be a variety, 
$X$ a variety over $S$,
${\AAA = ( A_i )_{i\in I}}$ 
and ${\BBB = ( B_i )_{i\in I}}$
\textit{any} pair of families of elements of $\KVar X$ or $\MMM_X$.
Then, 
under the above hypotheses on $I$
and $(t_i)_{i\in I}$,
\begin{align*}
& \prod_{x \in X/S} \left ( 
\left ( 
1 + \sum_{i\in I} A_{i,x} t_i
\right ) 
\left ( 
1 + \sum_{i\in I} B_{i,x} t_i
\right ) \right ) \\
& =
\prod_{x \in X/S}
\left ( 
1 + \sum_{i\in I} A_{i,x} t_i
\right ) 
\prod_{x \in X/S}
\left ( 
1 + \sum_{i\in I} B_{i,x} t_i
\right ) .
\end{align*}
\end{myptn}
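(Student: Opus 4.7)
The plan is to expand the inner product on the left-hand side factor-by-factor and then compare coefficients with the right-hand side, using the relation $t_p t_q = t_{p+q}$. For each closed point $x \in X$ one has
\[
\left(1 + \sum_{i \in I} A_{i,x}\, t_i\right)\left(1 + \sum_{i \in I} B_{i,x}\, t_i\right) = 1 + \sum_{k \in I} C_{k,x}\, t_k,
\]
with $C_{k,x} = A_{k,x} + B_{k,x} + \sum_{\substack{p + q = k \\ p, q \in I}} A_{p,x}\, B_{q,x}$ in $\KVar X$; hypotheses (1) and (2) on $I_0$ ensure that this sum over $p + q = k$ is finite. Writing $\mathfrak C = (C_k)_{k \in I}$, the left-hand side becomes, by \cref{def:abstract-euler-product}, the formal Euler product $\prod_{x \in X/S}\bigl(1 + \sum_k C_{k,x}\, t_k\bigr) = \sum_{\mu \in \NN^{(I)}} [\Sym^\mu_{X/S}(\mathfrak C)_*]\TT^\mu$.

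Next, I would compare coefficients of each $t_k$ (for $k \in I_0$) on both sides, where after using $\TT^\mu = t_{i(\mu)}$ with $i(\mu) = \sum_i i\, m_i$ these become
\[
\sum_{\substack{\mu \in \NN^{(I)} \\ i(\mu) = k}}[\Sym^\mu_{X/S}(\mathfrak C)_*] \quad \text{and} \quad \sum_{k_1 + k_2 = k}\;\sum_{\substack{i(\nu) = k_1 \\ i(\lambda) = k_2}}[\Sym^\nu_{X/S}(\AAA)_*]\cdot[\Sym^\lambda_{X/S}(\BBB)_*]
\]
respectively. To establish this equality, I would stratify each product $\Sym^\nu_{X/S}(\AAA)_* \times \Sym^\lambda_{X/S}(\BBB)_*$ according to the subset $R \subset X$ of common base points between the $\AAA$-configuration and the $\BBB$-configuration. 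On each stratum, points of the $\AAA$-configuration outside $R$ carry pure $A$-labels, points of the $\BBB$-configuration outside $R$ carry pure $B$-labels, and points of $R$ carry combined label pairs $(A_p, B_q)$ whose contribution to $\TT^{\nu + \lambda}$ is exactly $A_p B_q\, t_{p + q}$, matching precisely one of the mixed terms in $C_{p + q}$. Summing over all strata via the scissors relations in $\KVar X$ and \cref{ptn:classes-of-piecewise-trivial-fibrations} reassembles the total into $\sum_{\mu:\,i(\mu) = k}[\Sym^\mu_{X/S}(\mathfrak C)_*]$.

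The main obstacle will be executing this stratification rigorously at the level of varieties and classes rather than merely at the level of points. One must verify, for each stratum, that the natural projection to the relevant component of $\Sym^\mu_{X/S}(\mathfrak C)_*$ is a piecewise trivial fibration with the expected fibre, using the pointwise criterion of \cref{ptn:point-wise-criterion-for-piecewise-trivial-fibrations}, and one must keep careful track of the symmetric group actions on the factors as well as of the combinatorics of decompositions $k = p + q$ in $I_0$. Once these technicalities are handled, the argument reduces in the single-indeterminate case to the classical Cauchy-product identity $\Sym^n(A + B) = \sum_{i + j = n}\Sym^i(A)\Sym^j(B)$ applied fibrewise, essentially as in \cite[Proposition 3.9.2.4]{bilu2018motivic}; the novelty here lies in the infinite, monoid-graded indeterminate setting, in which condition (2) on $I_0$ is precisely what ensures all formal manipulations are well-defined.
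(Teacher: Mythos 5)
Your route is genuinely different from the paper's, but as written it has a gap at exactly the point this proposition is meant to address. The stratification of $\Sym^\nu_{X/S}(\AAA)_* \times \Sym^\lambda_{X/S}(\BBB)_*$ by the locus of common base points, with mixed labels $(A_p,B_q)$ over the overlap, is Bilu's original overlap-partition argument (\cite[\S 3.9.6--3.9.7]{bilu2018motivic}, recalled later in the paper as \cref{ptn:refined-multiplicativity-motivic-Euler-products}), and it only makes literal sense when the $A_i$ and $B_i$ are \emph{varieties} over $X$: a virtual class has no configurations and no support, so there is nothing to stratify. For arbitrary elements of $\KVar X$ or $\MMM_X$, the restricted symmetric products $\Sym^\mu_{X/S}(\AAA)_*$ are defined by the formal Cauchy-product and inverse-series extensions of the effective case, and since $\Sym^m$ is not additive in its argument, the identity does not extend from effective families "by linearity" — one has to re-verify it through that formalism. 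This is precisely the content of the remark following the statement: Bilu's \cite[Proposition 3.9.2.4]{bilu2018motivic} already covers the graded-indeterminate setting with one family effective, and the new content here is dropping effectiveness, not the monoid grading as your last paragraph suggests.

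The paper sidesteps all of this in three lines: it invokes \cite[Corollaire 3.9.2.5]{bilu2018motivic}, the double-product (transitivity) formula for a composition $X' \to X \to S$, which is already stated for arbitrary families of classes on $X'$, and applies it with $X'$ the disjoint union of two copies of $X$, putting $\AAA$ on one copy and $\BBB$ on the other; the inner product over $X'/X$ at a point $x$ is then exactly the pointwise product on your left-hand side, while the outer product over $X'/S$ factors as the product of the two Euler products. If you want to keep your direct approach, you would need to either restrict to effective families and then justify the extension to general classes through the Cauchy-product formalism of \cite[\S 6.1]{bilu2021motivic}, or simply quote the transitivity corollary as the paper does.
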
 

\begin{myremark}
	In \cite[Proposition 3.9.2.4]{bilu2018motivic}, the family $\AAA$ is assumed to be made of \emph{effective} elements.
	In order to check that the motivic Tamagawa number of a product of two Fano-like varieties is the product of the two motivic Tamagawa numbers, or, more generally, to compute the motivic Tamagawa number of a fibration, we need to drop the effectiveness assumption.
\end{myremark}

\begin{proof}[Proof of \cref{proposition:euler-product:multiplicativity}]
	I thank Margaret Bilu for having pointed out that it is actually a direct application
	of \cite[Corollary 3.9.2.5]{bilu2018motivic}. One proceeds exactly as in \cite[p. 89-90]{bilu2018motivic}, where \cref{proposition:euler-product:multiplicativity} is proved for families of varieties. 
	
	Indeed, \cite[Corollary 3.9.2.5]{bilu2018motivic} tells us that if $X'$ is a variety over $X$, and $\XXX '= ( X_i ' )_{i\in I}$ is a family of classes in $\KVar {X'}$, then 
\[
\prod_{x\in X / S } \left ( \prod_{x ' \in X ' /X} \left (  1 + \sum_{i\in I} X_{i,x'} ' t_i \right ) \right )_x
=
\prod_{x ' \in X ' / X } \left ( 1  + \sum_{i\in I} X_{i,u} ' t_i \right )
\]
(see \cite[\S 3.9.1]{bilu2018motivic} for the definition of this double-product notation).
Taking $X'$ to be the disjoint union of two copies of $X$,
and $\AAA$, $\BBB$ to be the restrictions of $\XXX$ respectively to the first and to the second copy, 
we get the expected identity. 	
\end{proof}

\begin{myexample}[{\cite[Proposition 3.7]{vakil2015discriminants}}]\label{example-kapranov-zeta}
The Kapranov zeta function of $X/S$ is defined as
	\[
	Z_{X/S}^{\text{Kapr}} ( t ) = \sum_{m \in \NN} \left [ \Sym_{/ S}^m X \right ] t^m .
	\]
If the characteristic of the base field is zero,
it can be rewritten 
	\[
	Z_{X/S}  ^{\text{Kapr}}  ( t )  = \prod_{x \in X/S } 
	\left ( 
	( 1 - t )^{-1} 
	\right )
	\]
and then by \cref{proposition:euler-product:multiplicativity}
\[
Z_{X/S}  ^{\text{Kapr}}  ( t ) ^{-1} = \prod_{x \in X / S  } 
	\left ( 
	 1 - t 
	\right ) .
\]
In positive characteristic, this equality only holds in the modified Grothendieck ring $\KVar S^{\text{uh}}$, see \cite[Example 6.1.11]{bilu2021motivic}.
\end{myexample}

\begin{mynotation}[Effective motivic Euler product]\label{def:euler-product-over-curves} 
\index{motivic Euler product!effective}
Let $Y$ be an element of $\KVar X $ or $\MMM_X$.
The motivic Euler product 
\[
\prod_{x \in X / S } \left  ( 1 + Y_x \right )
\]
is by definition the series
\[
\sum_{m\geqslant 0 } 
\left [ 
\Sym^m_{ X / S } ( Y )_* 
\right ] .
\]
In case this series converges in a convenient completion of $\KVar X $ or $\MMM_X$, 
its sum is written $\prod_{x \in X / S } \left  ( 1 + Y_x \right )$ as well. 
\end{mynotation}
 
\begin{myremark}
Since abstract motivic Euler products are compatible with changes of variable of the form $t' = \LL^a t$
\cite[\S 3.6.4]{bilu2018motivic},  
this notation can be seen as the specialization of 
\[
\prod_{x \in X / S} \left  ( 1 + Y_x \LL_x^a t \right )  = \sum_{m \geqslant 0}  [ \Sym^m_{X / S} ( Y \times_X \mathbf A^a_X )_* ] t^m  = \sum_{m \geqslant 0}  [ \Sym^m_{X / S} ( Y )_* ] \LL_S^{ma} t^m 
\]
at $t= \LL_S^{-a} $ for any non-negative integer $a$. 
 \end{myremark}

\subsubsection{Specialising products}
The previous multiplicative property specialises. 

\begin{myptn}\label{ptn:multiplicative-property-for-effective-Euler-products}
 \index{motivic Euler product!effective}
Assume that $	\prod_{x \in X/S} ( 1  + A_x ) 
$ and $	\prod_{x \in X/S} ( 1 + B_x )$ converge. 
Then 
\[
\prod_{x \in X/S} ( 1 + A_x ) ( 1 + B_x )
\]
converges and
	\[
	\prod_{x \in X/S} ( 1  + A_x ) 
	\prod_{x \in X/S} ( 1 + B_x )
	=
	\prod_{x \in X/S} ( 1 + A_x ) ( 1 + B_x ) . 
	\]
\end{myptn}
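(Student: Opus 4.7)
The strategy is to reduce the statement to the formal multiplicativity result \cref{proposition:euler-product:multiplicativity} applied to a single indeterminate, and then specialize at $t=1$ using the convergence hypothesis.

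First I would apply \cref{proposition:euler-product:multiplicativity} with $I_0 = \NN$ equipped with its usual total order and with a single family of indeterminates $(t_n)_{n\geqslant 1}$ satisfying $t_n = t^n$ for $t := t_1$. Taking $A_{n,x} = A_x$ if $n=1$ and $0$ otherwise, and similarly for $B$, I obtain the formal identity
\[
\prod_{x\in X/S}(1+A_x t)(1+B_x t) \; = \; \prod_{x\in X/S}(1+A_x t) \cdot \prod_{x\in X/S}(1+B_x t)
\]
in the ring of formal power series in $t$ with coefficients in $\KVar X$ or $\MMM_X$. Unwrapping the definition of the formal Euler product and reading off the coefficient of $t^m$, this formal identity says exactly that the $m$-th coefficient $c_m(A,B)$ of the left-hand side satisfies
\[
c_m(A,B) \; = \; \sum_{i+j=m} [\Sym^i_{X/S}(A)_*] \cdot [\Sym^j_{X/S}(B)_*].
\]
As pointed out in the remark following the definition of the effective Euler product, the effective product $\prod_{x\in X/S}(1+A_x)(1+B_x)$ is precisely the specialization at $t=1$ of the left-hand side above, i.e.\ the series $\sum_m c_m(A,B)$.

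Next I would specialize at $t = 1$. By hypothesis, both series $\sum_i [\Sym^i_{X/S}(A)_*]$ and $\sum_j [\Sym^j_{X/S}(B)_*]$ converge in the ambient completion of $\KVar X$ or $\MMM_X$ (with respect to the dimensional or weight filtration); in particular, their general terms tend to $0$ in the filtration. Since this filtration is compatible with multiplication, the ring is a complete topological ring and the standard Cauchy product argument applies: the double series $\sum_m \sum_{i+j=m} [\Sym^i(A)_*] [\Sym^j(B)_*]$ converges and its sum equals the product of the two limits. Combined with the coefficient identity above, this gives both the convergence of $\sum_m c_m(A,B) = \prod_{x\in X/S}(1+A_x)(1+B_x)$ and the desired equality with $\prod_{x\in X/S}(1+A_x)\cdot \prod_{x\in X/S}(1+B_x)$.

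\textbf{Main obstacle.} The only delicate point is the passage from the formal identity of power series in $t$ to an actual identity after setting $t=1$. The justification rests entirely on the fact that the ambient completion is a complete filtered ring in which the Cauchy product of two convergent series is convergent; once this is acknowledged, the convergence of both factors (which is exactly the hypothesis) does all the work, and no further geometric input about the classes $A$ and $B$ is needed.
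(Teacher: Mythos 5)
Your overall strategy is the same as the paper's: reduce to the formal multiplicativity statement of \cref{proposition:euler-product:multiplicativity} and then specialise at $t=1$, with the Cauchy-product argument in the complete filtered ring doing the convergence work. That part of your argument is sound: the coefficient identity
\[
c_m(A,B)=\sum_{i+j=m}\left[\Sym^i_{X/S}(A)_*\right]\left[\Sym^j_{X/S}(B)_*\right]
\]
is a legitimate reading of \cref{proposition:euler-product:multiplicativity} with $I_0=\NN$ and $t_n=t^n$, and since both coefficient sequences tend to $0$ in the (multiplicative) filtration, the Cauchy product $\sum_m c_m$ converges to the product of the two sums.

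The gap is in your second step, where you assert that $\prod_{x\in X/S}(1+A_x)(1+B_x)$ \emph{is} the series $\sum_m c_m(A,B)$, citing the remark after \cref{def:euler-product-over-curves}. That remark only covers changes of variable of the form $t'=\LL^a t$; it says nothing about the situation at hand. Per \cref{def:euler-product-over-curves}, the right-hand side of the proposition denotes the effective Euler product of the single class $A+B+AB$, i.e. $\sum_n[\Sym^n_{X/S}(A+B+A B)_*]$, whereas $\sum_m c_m(A,B)$ is $\sum_{\mu}[\Sym^{\mu}_{X/S}(A+B,\,AB)_*]$ summed over partitions $\mu=(m_1,m_2)$. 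Identifying these two requires the decomposition $\Sym^n(C_1+C_2)_*=\sum_{m_1+m_2=n}\Sym^{m_1,m_2}(C_1,C_2)_*$ together with a justified resummation — precisely the compatibility of motivic Euler products with partial specialisation and with the substitution $t_i\mapsto t^i$ that the paper's proof obtains from \cite[Proposition 6.4.5]{bilu2021motivic} and \cite[Lemma 6.5.1]{bilu2021motivic}, via the auxiliary three-variable product $P_{0,1,2}$. Without this, you have neither established the convergence of the series that the notation $\prod_x(1+A_x)(1+B_x)$ actually denotes, nor its equality with $\sum_m c_m(A,B)$. The fix is exactly the paper's: invoke those two compatibility results (or reprove the regrouping identity and check that the individual terms $[\Sym^{m_1,m_2}(A+B,AB)_*]$ tend to zero so that the rearrangement is licit).
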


\begin{proof}Let us consider the two formal motivic Euler products 
\[
	P ( t ) = \prod_{x \in X/S} 
	\left (
	1 + ( A_x + B_x + A_x B_x ) t^2 
	\right ) 
\]
and
\[
P_{1,2} (t_1 , t_2 ) = \prod_{x \in X/S} ( 1 + t_1 A_x ) ( 1 + t_2 B_x ) 
= \prod_{x \in X/S} 
\left ( 
1 + t_1  A_x + t_2 B_x + t_1 t_2 A_x B_x 
\right ).
\]
We have to check that 
\[
P ( 1 ) = P_{1,2} ( 1 , 1 ). 
\]
We introduce the following intermediate Euler product:
\begin{align*}
P_{0,1,2} ( t_0 , t_1 , t_2 ) 
& = \prod_{x \in X/S} 
\left ( 
1 + t_0 t_1 A_x + t_0 t_2 B_x + t_1 t_2 A_x B_x 
\right )   \\
& = \prod_{x \in X/S} 
\left ( 	1 + t_0 (t_1 A_x + t_2 B_x ) + t_1 t_2 A_x B_x  
\right ) . 
\end{align*}
Then by \cref{proposition:euler-product:multiplicativity},
\[
P_{1,2} ( t_1 , t_2 ) =
\prod_{x \in X/S} ( 1  + t_1 A_x ) 
\prod_{x \in X/S} ( 1 + t_2 B_x )
\]
and by \cite[Proposition 6.4.5]{bilu2021motivic} 
\[
P_{0,1,2} \left ( 1 , t_1 , t_2 \right ) 
=
\prod_{x \in X/S} 
\left ( 
1 + t_1 A_x + t_2 B_x + t_1 t_2 A_x B_x 
\right )
= 
P_{1,2} ( t_1 , t_2 ).
\]
Moreover, by \cite[Lemma 6.5.1]{bilu2021motivic}
\[
P_{0,1,2} ( t , t , t ) = P ( t ) 
\]
Taking $t = 1$ everywhere one gets
\[
P \left ( 1 \right ) 
= 
P_{0,1,2} \left ( 1 , 1 , 1 \right ) 
= 
P_{1,2} \left ( 1 , 1 \right ) 
\]
as expected.
\end{proof}
 
\subsubsection{Convergence criterion with respect to the weight filtration}
In case $\kb$ is a subfield of $\CC$, we have a convergence criterion for motivic Euler products of families over a curve $\mathscr C$. It is a particular case of \cite[Proposition 2.6]{faisant2022geometric} which itself is a multi-variable variant of \cite[Proposition 4.7.2.1]{bilu2018motivic}. 

\begin{myptn}\label{proposition:linear-conv-criterion}
Fix an integer $r\geqslant 1$ and an $r$-tuble $\rho \in  \left ( \mathbf N^* \right ) ^r$. 
For any tuple $\mm$ of non-negative integers, we write $ \langle \rho , \mm \rangle = \sum_{i=1}^r m_i \rho_i$.

Assume that $\XXX = ( X_{\mm} )_{\mm \in ( \NN^r )^*}$ is a family of elements of $\MMM_\CCC$ such that 
there exist an integer $M\geqslant 0$ 
and real numbers $\alpha < 1 $ and $\beta $ such that 
\begin{itemize}
\item $w_{\mathscr C} (  X_\mm ) \leqslant 2   \langle \rho , \mm \rangle - 2 $ whenever $1 \leqslant \langle \rho , \mm \rangle \leqslant  M$;
\item $w_\mathscr C ( X_{\mm}) \leqslant 2  \alpha \langle \rho , \mm \rangle + \beta - 1$	whenever $\langle \rho , \mm \rangle > M$.
\end{itemize}

Then there exists $\delta \in \left ]0,1\right ]$  and $\delta ' > 0 $ such that 
\[
w_\CC \left ( 
 \Sym_{\CCC / \kb}^\pi ( \XXX )_*
 \cdot 
 \mathfrak a_1^{\rho_1 m_1}  \cdots \mathfrak a_r^{\rho_r m_r}  
 \right ) \leqslant - \delta ' \langle \rho , \mm \rangle 
\] 
for all
$\mathbf{m}\in ( \NN^r )^* $,
all 
partitions $\pi $ of $\mm$ 
and
all 
elements
$\mathfrak a_1 , ... , \mathfrak a_r \in \MMM_\CC$ such that $w( \mathfrak a_i ) <  - 2 + \delta - \frac{\beta }{M+1} $ for all $1\leqslant i \leqslant r$.  
\end{myptn}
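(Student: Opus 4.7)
The argument is a multivariable adaptation of \cite[Proposition 4.7.2.1]{bilu2018motivic} (and of \cite[Proposition 2.6]{faisant2022geometric}). The strategy is to bound, uniformly in the partition $\pi$ and the multi-index $\mathbf{m}$, the weight of a single summand $\Sym^\pi_{\CCC/\kb}(\XXX)_* \cdot \mathfrak{a}_1^{\rho_1 m_1}\cdots \mathfrak{a}_r^{\rho_r m_r}$, then optimise the parameters $\delta,\delta'$.

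First I would decompose a partition $\pi = (m'_\mathbf{n})_\mathbf{n}$ of $\mathbf{m}$ (that is, with $\sum_\mathbf{n} m'_\mathbf{n}\mathbf{n}=\mathbf{m}$) and rewrite
\[
\Sym^\pi_{\CCC/\kb}(\XXX)_* = \boxtimes_\mathbf{n} \Sym^{m'_\mathbf{n}}_{\CCC/\kb}(X_\mathbf{n})_*.
\]
The fundamental input is the weight estimate for restricted symmetric products of a class over a curve,
\[
w_\CC\bigl(\Sym^n_{\CCC/\kb}(Y)_*\bigr) \leqslant n\bigl(w_\CCC(Y) + 1\bigr),
\]
valid for every $Y\in\MMM_\CCC$ and every $n\geqslant 1$. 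This bound, already used implicitly in \cite[Chap.~4]{bilu2018motivic}, is checked directly for smooth families (the dimension/weight computation gives equality, as in \cref{ptn:weight-of-the-difference-smooth-irreducible-fibres}) and extended to arbitrary classes by linearity in $\MMM_\CCC$. Combined with the multiplicativity $w_\CC(\mathfrak{b}\cdot\mathfrak{b}') \leqslant w_\CC(\mathfrak{b}) + w_\CC(\mathfrak{b}')$ of the weight under product, this yields
\[
w_\CC\bigl(\Sym^\pi_{\CCC/\kb}(\XXX)_*\bigr) \leqslant \sum_\mathbf{n} m'_\mathbf{n}\bigl(w_\CCC(X_\mathbf{n})+1\bigr).
\]

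Next I would add the weight contribution of the power factors: by the same subadditivity and the assumption on the $\mathfrak{a}_i$,
\[
w_\CC\bigl(\mathfrak{a}_1^{\rho_1 m_1}\cdots \mathfrak{a}_r^{\rho_r m_r}\bigr) \leqslant \sum_i \rho_i m_i\, w(\mathfrak{a}_i) < \Bigl(-2+\delta-\tfrac{\beta}{M+1}\Bigr)\langle\rho,\mathbf{m}\rangle.
\]
Then I would split the index set according to whether $\langle\rho,\mathbf{n}\rangle\leqslant M$ (``small'' parts, satisfying $w_\CCC(X_\mathbf{n})+1\leqslant 2\langle\rho,\mathbf{n}\rangle-1$) or $\langle\rho,\mathbf{n}\rangle>M$ (``large'' parts, satisfying $w_\CCC(X_\mathbf{n})+1\leqslant 2\alpha\langle\rho,\mathbf{n}\rangle+\beta$). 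Writing $\mathbf{m}_s,\mathbf{m}_\ell$ for the contributions of small and large parts to $\mathbf{m}$ and $|\pi|_s,|\pi|_\ell$ for the numbers of such parts, I would use the elementary combinatorial inequalities $|\pi|_s\geqslant \langle\rho,\mathbf{m}_s\rangle/M$ (each small part contributes at most $M$) and $|\pi|_\ell\leqslant \langle\rho,\mathbf{m}_\ell\rangle/(M+1)$ (each large part contributes at least $M+1$).

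Assembling these estimates, the total weight is bounded above by an expression of the form $C_s\langle\rho,\mathbf{m}_s\rangle + C_\ell\langle\rho,\mathbf{m}_\ell\rangle$ where, after absorbing the term $\beta|\pi|_\ell$ in either of the two cases $\beta\geqslant 0$ or $\beta<0$,
\[
C_s \leqslant \delta - \tfrac{1}{M} - \tfrac{\beta}{M+1}, \qquad C_\ell \leqslant 2(\alpha-1) + \delta + \max\bigl(0,-\tfrac{\beta}{M+1}\bigr).
\]
The decisive point is that the threshold $-2+\delta-\beta/(M+1)$ imposed on $w(\mathfrak{a}_i)$ has been tailored so that the $\beta$-terms cancel out after the splitting, leaving coefficients governed only by $\alpha<1$ and the discrete gap $1/M$. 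Choosing $\delta\in(0,1]$ small enough makes both $C_s,C_\ell$ strictly negative, and then any positive $\delta'<\min(|C_s|,|C_\ell|)$ works. The main obstacle in executing the proof is the uniformity in $\pi$: it must be checked that the bookkeeping of small and large parts and the application of the two combinatorial counts hold independently of how the partition is chosen, which is precisely why the factor $\beta/(M+1)$ (and not, say, $\beta$ alone) appears in the hypothesis.
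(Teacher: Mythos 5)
The paper itself gives no proof of this proposition -- it is imported verbatim from \cite[Proposition 2.6]{faisant2022geometric}, itself a multivariable version of \cite[Proposition 4.7.2.1]{bilu2018motivic} -- so your reconstruction can only be measured against that source. Your overall strategy is the right one and matches it: the key input $w_\CC\bigl(\Sym^n_{\CCC/\kb}(Y)_*\bigr)\leqslant n\bigl(w_\CCC(Y)+1\bigr)$ is exactly Bilu's weight bound for restricted symmetric products over a base of dimension $1$, the subadditivity of $w$ under products is correct, and the small/large bookkeeping with $N_s\geqslant S/M$ and $N_\ell\leqslant L/(M+1)$ gives precisely the coefficients $C_s$ and $C_\ell$ you display. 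One caveat on the key input: it does not extend to arbitrary classes ``by linearity'', since $\Sym^n$ is not additive; the extension from effective classes to $\MMM_\CCC$ goes through the Cauchy-product expansion of $\Sym^n$ of a difference, which is the technical heart of \cite[\S 4.6--4.7]{bilu2018motivic}. Citing that result is fine, but the justification you give is not the right one.

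The genuine gap is the last step. You assert that $\delta$ can be chosen so that both $C_s$ and $C_\ell$ are strictly negative, but your own formulas show this fails when $\beta$ is sufficiently negative: $C_\ell\leqslant 2(\alpha-1)+\delta+\max\bigl(0,-\tfrac{\beta}{M+1}\bigr)$ is bounded below by $2(\alpha-1)-\tfrac{\beta}{M+1}$, which is positive as soon as $\beta<-2(1-\alpha)(M+1)$, and similarly $C_s\leqslant\delta-\tfrac1M-\tfrac{\beta}{M+1}$ cannot be made negative once $\beta<-\tfrac{M+1}{M}$. The claim that ``the $\beta$-terms cancel out'' is only true for $\beta\geqslant 0$, where $\max(0,-\beta)=0$ and $-\beta/(M+1)\leqslant 0$, in which case any $\delta<\min(1/M,\,2(1-\alpha))$ works. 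For $\beta<0$ the threshold $-2+\delta-\beta/(M+1)$ is \emph{more} permissive than $-2+\delta$, while the bonus $\beta N_\ell$ from the large parts is only $O(1)$ (one may have a single huge part, so $N_\ell$ can equal $1$) and cannot offset a gain proportional to $L$; concretely, with $r=1$, $M=1$, $X_1=\LL_\CCC^{-1}$, $X_m=0$ for $m\geqslant 2$ and $\beta=-100$, the class $\Sym^m(X_1)_*$ has weight $0$ and the stated threshold admits $\mathfrak a=1$, for which no $\delta'>0$ exists. So either the statement carries an implicit assumption $\beta\geqslant 0$ (the only regime in which the correction $-\beta/(M+1)$ is a genuine restriction), and you should say so and restrict to it, or for $\beta<0$ you should replace $\beta$ by $0$ in the hypotheses and prove the conclusion only for the threshold $-2+\delta$. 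As written, your final optimisation step does not close.
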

If we specialize the previous proposition to the polynomial $F ( T) = 1 + Y \LL T$ and consider the convergence at $T=\LL^{-1}$ we get the following criterion.

\begin{myptn}\label{proposition:convergence-criterion-motivic-Euler-product-over-curve}
Assume that $Y \in \MMM_\CCC$ is such that $w_\CCC ( Y ) \leqslant -2 $. 
Then the series $\sum_{m\geqslant 0 } \left [ \Sym^m_{\CCC} ( Y )_* \right ] $ converges in $\widehat{\mathscr M_\kb}^w$ and the Euler product 
\[
\prod_{p\in \CCC} \left  ( 1 + Y_p \right ) 
\]
is well-defined. 
\end{myptn}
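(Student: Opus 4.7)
My plan is to invoke \cref{proposition:linear-conv-criterion} with the one-variable family concentrated at index one. Concretely, I would take $r = 1$, $\rho = (1)$, and $\XXX = (X_m)_{m \in \NN^*}$ with $X_1 = Y$ and $X_m = 0$ for every $m \geq 2$. Setting $M = 0$ makes the first set of inequalities in the hypothesis vacuous; the second becomes $w_\CCC(X_m) \leq 2\alpha m + \beta - 1$ for all $m \geq 1$. Choosing $\alpha = 1/2$ and $\beta = -2$ reduces the condition for $m = 1$ to $w_\CCC(Y) \leq -2$, which is exactly the hypothesis, while for $m \geq 2$ it is trivially satisfied since $X_m = 0$.

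\Cref{proposition:linear-conv-criterion} will then supply $\delta \in (0, 1]$ and $\delta' > 0$ such that, for every $m \geq 1$, every partition $\pi$ of $m$, and every $\mathfrak a_1 \in \MMM_\CC$ with $w(\mathfrak a_1) < -2 + \delta - \beta/(M+1) = \delta$,
\[
w_\CC\bigl(\Sym_{\CCC/\kb}^\pi(\XXX)_* \cdot \mathfrak a_1^m\bigr) \leq -\delta' m.
\]
Because $X_i$ vanishes for $i \geq 2$, the only partition contributing a nonzero term is the one with $\pi_1 = m$ and $\pi_i = 0$ otherwise, for which $\Sym_{\CCC/\kb}^\pi(\XXX)_* = \Sym_\CCC^m(Y)_*$. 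I would then specialise to $\mathfrak a_1 = 1$, whose weight is $0 < \delta$, to obtain $w_\CC([\Sym_\CCC^m(Y)_*]) \leq -\delta' m$. These classes therefore tend to zero in the weight filtration as $m \to \infty$, so the series $\sum_{m \geq 0}[\Sym_\CCC^m(Y)_*]$ is Cauchy in $\widehat{\MMM_\kb}^w$ and converges; by \cref{def:euler-product-over-curves} its limit is exactly $\prod_{p \in \CCC}(1 + Y_p)$, which is thus well-defined.

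The only delicate point is the calibration of $M$, $\alpha$ and $\beta$ in the linear criterion. The key observation is that the strength of the hypothesis $w_\CCC(Y) \leq -2$ allows the choice $\beta = -2$ with $M = 0$, which makes the admissible weight threshold $-2 + \delta - \beta/(M+1)$ for $\mathfrak a_1$ collapse to $\delta$. Being strictly positive, this threshold accommodates the unit class $\mathfrak a_1 = 1$, and that in turn yields a direct bound on $[\Sym_\CCC^m(Y)_*]$ itself rather than on some Tate twist of it, which would be insufficient to control convergence of the bare series.
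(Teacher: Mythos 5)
Your proof is correct and follows essentially the same route as the paper, which also obtains the statement by specialising \cref{proposition:linear-conv-criterion}; the only cosmetic difference is the normalisation (the paper takes $X_1 = Y\LL$ and evaluates at $\mathfrak a_1 = \LL^{-1}$, whereas you take $X_1 = Y$ and $\mathfrak a_1 = 1$, compensating via $\beta = -2$). Your calibration checks out: with $M=0$, $\alpha = 1/2$, $\beta = -2$ the hypothesis reduces exactly to $w_\CCC(Y)\leqslant -2$ and the admissible threshold $-2+\delta-\beta/(M+1)=\delta>0$ indeed accommodates $w(1)=0$.
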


\begin{myexample}
Recall that  
$w_\CCC\left  ( \LL_\CCC^{-2} \right  )  = -  2 \dim_\CCC (  \mathbf A^2_\CCC ) + \dim ( \CCC ) = - 3 $ thus 
\[
\prod_{p\in \CCC } \left ( 1 - \LL^{-2}_{p } \right ) 
\]
converges in $\widehat{\MMM_\kb}^w$. 
Moreover, one can show that this convergence actually holds in $\widehat{\MMM_\kb}^{\dim}$ for any $\kb$. 

Let $p : Y \to \CCC$ be a smooth $\CCC$-variety with irreducible fibres of dimension $d\geqslant 0$. 
Then,
\[
\prod_{p\in \CCC } \left ( 1 + \left ( [ Y_p ] - \LL^{d}_p \right )\LL^{-(d+1)}_{p} \right ) 
\]
converges in $\widehat{\MMM_\kb}^w$, since 
\[ 
w_\CCC  \left ( [ Y ] - \LL^d_\CCC \right )  \leqslant 2d 
\]
by \cref{ptn:weight-of-the-difference-smooth-irreducible-fibres}, thus $w_\CCC \left ( \left ( [ Y ] - \LL^d_\CCC \right )\LL^{-(d+1)}_\CCC  \right )\leqslant -2$. 

\end{myexample}

The following little lemma will help us to explicitly control error terms when studying the case of toric varieties. One can replace the dimension by the weight or any other filtration compatible with finite sums. 
\begin{mylemma}\label{lemma:controlling-error-terms-convolution}
Let $(\mathfrak c_\mm)_{\mm \in \NN^r}$ be a family of elements of $\mathscr M_\kb$.
	Assume that there exists a constant $a > 0 $ such that 
	\[
	\dim ( \mathfrak c_\mm ) \leqslant - a | \mm | 
	\]
	for every $\mm \in \NN^r $, 
	where $| \mm | = \sum_{i=1}^r m_i$. 
	
	Then, for all non-empty subset $A \subset \{ 1 , ... , r \} $, and non-negative integer $b$,
	\[
	\dim \left ( \sum_{\mm ' \leqslant \mm - \mathbf b  } \mathfrak c_{\mm ' } \LL^{\mm ' _A - \mm _A } \right  ) 
	\leqslant - \frac{\min ( 1 , a )}{2}  \underset{\alpha \in A}{\min}( m _\alpha ) - \frac{b}{2} \min \left ( 1 , 2| A |  - a  \right )
	\]
	for all $\mm \in \NN^r_{\geqslant b}$,
	where $\mm_A $ is the restriction of $\mm$ to $A$ and $\mathbf b = ( b , ... , b ) $. 
\end{mylemma}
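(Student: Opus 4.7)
My plan is to reduce the estimate to a discrete optimisation over the summation domain, using that the dimensional filtration satisfies $\dim(\mathfrak a + \mathfrak a') \leqslant \max(\dim\mathfrak a,\dim\mathfrak a')$. Writing $\mm_A = \sum_{\alpha \in A} m_\alpha$ (and similarly for $\mm'_A$), the basic identity $\dim(\mathfrak c_{\mm'}\LL^{\mm'_A - \mm_A}) = \dim(\mathfrak c_{\mm'}) + \mm'_A - \mm_A$ combined with the hypothesis on $\dim \mathfrak c_{\mm'}$ yields
\[
\dim\bigl(\mathfrak c_{\mm'}\LL^{\mm'_A - \mm_A}\bigr) \;\leqslant\; -a|\mm'| + \mm'_A - \mm_A \;=\; (1-a)\mm'_A - a\mm'_{A^c} - \mm_A,
\]
after decomposing $|\mm'| = \mm'_A + \mm'_{A^c}$ with $A^c = \{1,\dots,r\}\setminus A$. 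Since $a>0$, the term $-a\mm'_{A^c}$ is maximised at $\mm'_{A^c}=0$, so I would reduce to optimising $(1-a)\mm'_A - \mm_A$ over $0 \leqslant \mm'_A \leqslant \mm_A - b|A|$. This splits into two cases according to the sign of $1-a$.

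In the case $a \geqslant 1$, the maximum is reached at $\mm'_A = 0$ and equals $-\mm_A$. I would then exploit the hypothesis $m_\alpha \geqslant b$ for all $\alpha$ and $|A| \geqslant 1$ to write $\mm_A \geqslant \tfrac{1}{2}\min_{\alpha\in A}(m_\alpha) + \tfrac{|A|b}{2} \geqslant \tfrac{1}{2}\min_{\alpha\in A}(m_\alpha) + \tfrac{b}{2}$. As $\min(1,a)=1$ and $\min(1,\,2|A|-a) \leqslant 1$, this is the announced bound.

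In the case $0 < a < 1$, the maximum is reached at $\mm'_A = \mm_A - b|A|$ and equals $-a\mm_A - (1-a)b|A|$. Using $\mm_A \geqslant \min_{\alpha\in A}(m_\alpha) \geqslant b$, I would split $a\mm_A \geqslant \tfrac{a}{2}\min_{\alpha\in A}(m_\alpha) + \tfrac{ab}{2}$, so the upper bound becomes
\[
-\tfrac{a}{2}\min_{\alpha\in A}(m_\alpha) - b\Bigl(\tfrac{a}{2} + (1-a)|A|\Bigr).
\]
Since $a < 1$ and $|A| \geqslant 1$ give $2|A| - a > 1$, one has $\min(1,\,2|A|-a)=1$; so it remains to check the elementary inequality $\tfrac{a}{2} + (1-a)|A| \geqslant \tfrac{1}{2}$. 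This is immediate: the left-hand side is linear in $a$ with slope $\tfrac{1}{2}-|A| \leqslant -\tfrac{1}{2}$ on $[0,1]$ and takes the value $\tfrac{1}{2}$ at $a=1$.

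The only real obstacle here is bookkeeping: checking that the $b/2$ constant is correctly produced in each regime, and noticing that $\min(1,\,2|A|-a) = 1$ whenever $0 < a < 1$ so that no separate subcase on $|A|$ is needed. No deep input beyond the three properties of the weight/dimension listed in \cref{proposition:properties-of-the-weight} is required.
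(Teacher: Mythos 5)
Your argument is correct — every step checks out, including the endpoint evaluations and the verification that $\tfrac a2+(1-a)|A|\geqslant\tfrac12$ for $0<a<1$ — but it is organised differently from the paper's. The paper partitions the \emph{summation domain} at the midpoint: for the indices with $\mm'_A\nleqslant\tfrac12(\mm_A-\mathbf b)$ it uses that $|\mm'|>\tfrac12\min_{\alpha\in A}(m_\alpha-b)$, so the term $-a|\mm'|$ carries the estimate, while for $\mm'_A\leqslant\tfrac12(\mm_A-\mathbf b)$ it uses $|\mm'_A|-|\mm_A|\leqslant-\tfrac12(|\mm_A|+b|A|)$ and discards $-a|\mm'|$ altogether; the two $\min$'s of the statement then arise as the maximum of the two regional bounds, and no discussion of the size of $a$ is needed. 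You instead discard $-a|\mm'_{A^c}|\leqslant 0$, observe that what remains is affine in the scalar $|\mm'_A|$, and maximise it exactly over $[0,|\mm_A|-b|A|]$, so your case split is on the parameter $a\gtrless 1$ and the two $\min$'s record which endpoint realises the maximum. Your route gives a slightly sharper coefficient of $b$ (namely $\tfrac a2+(1-a)|A|$ when $a<1$ and $\tfrac{|A|}{2}$ when $a\geqslant 1$) and makes the origin of the factors $\tfrac12$ transparent, at the cost of the small extra inequality above; the paper's midpoint splitting is blunter but avoids any endpoint analysis and would survive a per-term bound that is not affine in $|\mm'_A|$. Both arguments rest only on $\dim(\mathfrak a+\mathfrak a')\leqslant\max(\dim\mathfrak a,\dim\mathfrak a')$ and $\dim(\mathfrak a\LL^k)\leqslant\dim(\mathfrak a)+k$.
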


\begin{proof}
	If $\mm_A ' \nleqslant \frac{1}{2} (  \mm_A - \mathbf b ) $ then the coarse upper bound
	\[
	\dim ( \mathfrak c_{\mm ' } \LL^{\mm ' _A - \mm _A } ) \leqslant - a | \mm '  |  - b | A |
	\]
	for $\mm ' \leqslant \mm - b$
	gives 
	\[
	\dim ( \mathfrak c_{\mm ' } \LL^{\mm ' _A - \mm _A } ) < - \frac{a}{2}  \underset{\alpha \in A}{\min}(  m _\alpha - b )  - b | A | = - \frac{a}{2}  \underset{\alpha \in A}{\min}(  m _\alpha ) - \frac{b}{2} ( 2| A | - a  ) 
	\]
	while if $\mm_A ' \leqslant \frac{1}{2} ( \mm_A - \mathbf b ) $ then $\mm_A ' - \mm_A \leqslant - \frac{1}{2} ( \mm_A  + \mathbf b_A ) $ and 
	\[
	\dim ( \mathfrak c_{\mm ' } \LL^{\mm ' _A - \mm _A } )  \leqslant - a | \mm ' | - \frac{1}{2} | \mm_A | - \frac{b}{2} \leqslant - \frac{1}{2} \underset{\alpha \in A}{\min}( m _\alpha )- \frac{b}{2} . 
	\]
\end{proof}

\bigskip

 
\section{Batyrev-Manin-Peyre principle for curves} 
\label{section:BMP-general}

 \subsection{A convergence lemma in characteristic zero} 
 In this paragraph we assume that $\kb$ is a subfield of $\CC$ and choose once and for all an embedding $\kb \hookrightarrow \CC$, as in \cref{section:weight-filtration}.
 
\begin{mylemma}\label{lemma:convergence-formal-motivic-Tamagawa-number} Let $\VVV \to \CCC$ be a proper model of a Fano-like variety $V$. 
Then for any dense open subset $\CCC ' \subset \CCC$,
the motivic Euler product
\[
\prod_{p\in \CCC ' } \left ( \frac{\left [\VVV_p  \right ]}{\LL_p^{\dim  ( V ) }} \left ( 1 - \LL_p^{-1} \right )^{\rk ( \Pic ( V ) )} \right ) 
\]
converges in the completion of $\mathscr M_\kb$ with respect to the weight filtration. 
\end{mylemma}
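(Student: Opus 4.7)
The plan is to reduce the question to the convergence criterion of \cref{proposition:convergence-criterion-motivic-Euler-product-over-curve}: I will exhibit a dense open $\CCC_0 \subset \CCC$ together with a class $Y \in \MMM_{\CCC_0}$ of weight at most $-2$ for which the local factor at each $p \in \CCC_0$ equals $1 + Y_p$, and then dispose of the finitely many remaining closed points of $\CCC'$ by multiplicativity of Euler products.

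Set $d = \dim(V)$ and $r = \rk \Pic(V)$. The first step is to shrink $\CCC$ to a dense open $\CCC_0$ on which the structural morphism $\pi : \VVV \to \CCC$ is smooth, the higher direct images $R^1\pi_! \OOO_\VVV$ and $R^2\pi_! \OOO_\VVV$ both vanish (possible by \cref{remark:local-triviality-of-derived-fuctor}, since $V$ is Fano-like), and $\pi$ admits local sections (a consequence of $V(F) \neq \emptyset$, given by the density hypothesis, combined with the valuative criterion of properness). These are exactly the conditions needed to apply \cref{proposition:hodge-realisation-almost-fano-varieties} above $\CCC_0$; using $\dim(\CCC_0) = 1$, it furnishes the key weight estimate
\[
w_{\CCC_0}\!\bigl( [\VVV_{|\CCC_0}] - \LL_{\CCC_0}^{d} - r\,\LL_{\CCC_0}^{d-1} \bigr) \leqslant 2d - 2.
\]

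Accordingly, I write $\tfrac{[\VVV_{|\CCC_0}]}{\LL_{\CCC_0}^{d}} = 1 + r\LL_{\CCC_0}^{-1} + F$ with $w_{\CCC_0}(F) \leqslant -2$ and expand
\[
Y := \frac{[\VVV_{|\CCC_0}]}{\LL_{\CCC_0}^{d}}\,(1 - \LL_{\CCC_0}^{-1})^r - 1 = \bigl[(1 + r\LL_{\CCC_0}^{-1})(1 - \LL_{\CCC_0}^{-1})^r - 1\bigr] + F\,(1 - \LL_{\CCC_0}^{-1})^r.
\]
A routine binomial identity shows that the bracket is a polynomial in $\LL_{\CCC_0}^{-1}$ whose constant and $\LL_{\CCC_0}^{-1}$ coefficients cancel; it therefore starts at order $\LL_{\CCC_0}^{-2}$ and has weight at most $-3$. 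The second summand has weight at most $-2$ by the subadditivity $w_{\CCC_0}(AB) \leqslant w_{\CCC_0}(A) + w_{\CCC_0}(B) - 1$ (read off from property (3) of \cref{proposition:properties-of-the-weight}), combined with $w_{\CCC_0}((1 - \LL_{\CCC_0}^{-1})^r) \leqslant 1$. Hence $w_{\CCC_0}(Y) \leqslant -2$, and \cref{proposition:convergence-criterion-motivic-Euler-product-over-curve} yields convergence of $\prod_{p \in \CCC_0}(1 + Y_p)$ in $\widehat{\MMM_\kb}^w$.

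For a general dense open $\CCC' \subset \CCC$, the complement $\CCC' \setminus (\CCC' \cap \CCC_0)$ is a finite set of closed points, so \cref{ptn:motivic-Euler-product-compatible-cut-past} decomposes the product over $\CCC'$ as the convergent product over $\CCC' \cap \CCC_0$ times an explicit finite product, which is trivially convergent. The one genuinely substantial ingredient is the Hodge-theoretic determination of the $\LL^{d-1}$-coefficient of $[\VVV_{|\CCC_0}]$ provided by \cref{proposition:hodge-realisation-almost-fano-varieties}; all subsequent steps are formal bookkeeping with the weight filtration.
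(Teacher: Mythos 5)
Your proof is correct and follows essentially the same route as the paper's: restrict to a dense open where \cref{proposition:hodge-realisation-almost-fano-varieties} applies to extract the coefficients of $\LL^{d}$ and $\LL^{d-1}$ in $[\VVV]$, check by direct expansion that the local factor minus $1$ has weight at most $-2$, invoke \cref{proposition:convergence-criterion-motivic-Euler-product-over-curve}, and dispose of the finitely many remaining points by multiplicativity. The only (harmless) difference is bookkeeping: you group the remainder as a binomial bracket plus $F\,(1-\LL^{-1})^r$, whereas the paper isolates an explicit term $\mathscr R$; note also that the product inequality $w(AB)\leqslant w(A)+w(B)-\dim(S)$ is not literally item (3) of \cref{proposition:properties-of-the-weight} but a separate standard property of the weight.
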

\begin{proof}
By the multiplicative property of motivic Euler products \cref{ptn:motivic-Euler-product-compatible-cut-past},
it is enough to find some dense open subset $\CCC ' \subset \CCC$ such that the product above converges.  
Let $r$ be the Picard rank of $V$.
In $\mathscr M_\CCC$, we have \begin{align*}
	& [\VVV ]\LL _\CCC ^{-\dim_\CCC \VVV } ( 1 - \LL_\CCC^{-1})^r \\
	& = \left ( 1 + \left ( [\VVV ] - \LL_\CCC^{\dim_\CCC ( \VVV ) } \right )\LL^{-\dim_\CCC (\VVV )}_\CCC  \right )  \left ( 1 + \sum_{k=1}^r  {  r \choose k } (-1)^k \LL^{-k} _\CCC \right ) \\
& = 1 + \frac{[\VVV ] - \LL^{\dim_\CCC (\VVV) } _\CCC - r \LL^{\dim_\CCC ( \VVV ) - 1 } _\CCC}{\LL^{\dim_\CCC ( \VVV ) } _\CCC} + \mathscr R
\end{align*}
where 
\[
\mathscr R = -r \frac{[\VVV] - \LL _\CCC ^{\dim_\CCC ( \VVV ) }}{ \LL^{\dim_\CCC ( \VVV ) +1 } _\CCC}   + \frac{[\VVV]}{\LL^{\dim_\CCC (\VVV) } _\CCC}\sum_{k=2}^r {r \choose k} ( -1 )^k \LL^{-k} _\CCC . 
\]
By \cref{def:euler-product-over-curves} of the motivic Euler product, we are interested in the series 
\[
\sum_{m\geqslant 0 } [ S^m_\ast ( \YYY ) ] 
\]
where 
\[
\YYY = \frac{[\VVV ] - \LL _\CCC ^{\dim_\CCC (\VVV) } - r \LL^{\dim_\CCC ( \VVV ) - 1 } _\CCC}{\LL^{\dim_\CCC ( \VVV ) } _\CCC} + \RRR \in \mathscr M_\CCC 
\]  
and in order to prove convergence, 
it is enough to check that $w_\CCC ( \YYY ) \leqslant -2 $. 
Since the motivic Euler product is compatible with finite products, 
up to replacing $\CCC$ by a non-empty open subset 
we can assume that $\VVV \to \CCC$ is smooth with irreducible fibres. 
Then by \cref{ptn:weight-of-the-difference-smooth-irreducible-fibres} 
we have a first bound on the weight 
\[ 
w_\CCC  \left ( \left ( [\VVV ] - \LL^{\dim_\CCC ( \VVV ) } _\CCC \right ) \LL^{-\dim_\CCC ( \VVV) - 1  } _\CCC \right ) \leqslant -2 
\]
and the expression of $\RRR$ shows that 
\[
w_\CCC ( \RRR ) \leqslant -2
\] 
as well. 
In order to show that $w_\CCC ( \YYY - \RRR ) \leqslant -2 $ 
 we use the fact that $V$ is Fano-like. 
By \cref{remark:local-triviality-of-derived-fuctor}, we are in the situation of \cref{proposition:hodge-realisation-almost-fano-varieties}: up to restricting to an open subset of $\CCC$, we have a decomposition  
\[
\chi_\CCC^\Hdg ( [\VVV ] ) = (\chi_\CCC^\Hdg ( \LL_\CCC ))^{\dim_\CCC ( \VVV ) } + r (\chi_\CCC^\Hdg ( \LL_\CCC ) )^{\dim_\CCC ( \VVV )-1} + \WWW \in K_0 ( \MHM_\CCC ) 
\]
with $w_\CCC ( \WWW ) \leqslant 2 (\dim_\CCC ( \VVV ) - 1 )$. 
Hence $w_\CCC ( \YYY - \RRR ) \leqslant -2$ and by the property of the weight one has ${w_\CCC ( \YYY ) \leqslant -2} $. 
By the convergence criterion of \cref{proposition:convergence-criterion-motivic-Euler-product-over-curve}, 
this shows that the motivic Euler product we are considering converges in $\widehat{\mathscr M_\kb}^w$.  
\end{proof}

\subsection{Motivic Tamagawa number of a model}
Let $V$ be a Fano-like $F$-variety of dimension $n$
and $\VVV$ a proper model of $V$.   
We are now able to give a precise meaning to the motivic analogue of the Tamagawa number. 

Up to replacing $\VVV$ by a dominating model,
we can always assume that it is a \textit{good model},
that is to say a proper model of $V$ 
whose smooth locus $\VVV^\circ $ is a weak Néron model of $V$,
by \cref{thm:existence-of-weak-neron-models} and \cref{ptn:gluing-dominating-models}.

Furthermore, we assume that $\CCC$ admits a divisor of degree one, which is the case for example if $k = \mathbf F_q$ by the Lang-Weil estimate, or more trivially if $\kb = \mathbf \CC$ or if $\CCC$ is the projective line.

\begin{mydef}
\label{def:motivic-Tamagawa-number-of-a-model}
\index{motivic Tamagawa number!of a model}
The motivic constant $\tau_{\LLL} ( \VVV  ) $ of the proper model ${\VVV \to \CCC}$ 
with respect to a model $ \LLL $ of $\omega_V^{-1}$
is 
the effective element of $\widehat{\mathscr M_\kb}^{\dim}$ or $\widehat{\mathscr M_\kb}^w$
given by the motivic Euler product
\[
\LL_\kb^{( 1 - g ) \dim ( V )}  
(\res_{ t = \LL_\kb^{-1} }  Z^\text{Kapr}_\CCC ( t ) )^{\rg ( \Pic ( V ) )}
\prod_{p\in \CCC} 
\left  (1-\LL_p^{-1} \right )^{\rg (\Pic (V))} \mu_{\LLL_{ | \VVV_\Rp } } ^ * ( \Gr ( \VVV_{\Rp}^\circ )  ) 
\]
where the motivic density \[
\mu_{\LLL_{ | \VVV_\Rp } } ^ *
\] is given by \cref{def:motivic-measure-with-respect-model-line-bundle}
and 
\[\res_{t=\LL_\kb^{-1}} ( Z^\text{Kapr}_\CCC ( t ) )  = \left (  ( 1 - \LL_\kb t )  Z^\text{Kapr}_\CCC ( t )  \right )_{t= \LL^{-1}_\kb }
\]
with  $Z^\text{Kapr}_\CCC ( t )  $ being the Kapranov zeta function of $\CCC$ as defined in \cref{example-kapranov-zeta}.

In case $\VVV  $ is a constant model of the form $\VVV= V \times_\kb \CCC$, 
with $V$ a nice variety defined over $\kb$,
$\LLL = \pr_1^* ( \omega_V^{-1} )$ 
and $\pi = \pr_2$, 
then 
the constant $\tau_\LLL ( \VVV )  $ 
will be written $\tau ( V / \CCC ) $. If moreover $\CCC$ is clear from the context, it will be simply written $\tau ( V  ) $.  
\end{mydef}

\begin{myremark}
	The use of the motivic Euler product notation in this definition is licit. Indeed, 
	there exists a dense open subset $\CCC ' $ of $\CCC$ on which $\pi$ is smooth, 
	$\left ( \Omega^n_{\VVV / \CCC } \right )^\vee $ is invertible and isomorphic to $\LLL$. 
	For points $p$ in $\CCC ' $,  
	\[
	\mu_\LLL^* ( \Gr ( \VVV_{\Rp}^\circ )  ) =  \vol_{\VVV_\Rp} ( \Gr ( \VVV_{\Rp})) = [ \VVV_\kp ]\LL^{-\dim ( V )} .
	\]
	Then the motivic Euler product 
	\[
	\prod_{p\in \CCC ' } 
 \left  (1-\LL_p^{-1} \right )^{\rg (\Pic (V))} \vol_{\VVV_\Rp} ( \Gr ( \VVV_{\Rp} ) )
	\]
	is obtained by applying \cref{def:euler-product-over-curves} to the family 
	\[
	\left (1 - \LL^{-1}_{\CCC '} \right )^{\rk ( \Pic ( V ))} [ \VVV_{\CCC ' } ]\LL_{\CCC '}^{-\dim (V ) } - 1 \in \MMM_{\CCC '} .
	\]
\end{myremark}

\begin{myremark}
	By \cref{lemma:convergence-formal-motivic-Tamagawa-number}, when $\kb$ is a subfield of $\CC$, this class is well-defined in $\widehat{\MMM_\kb}^w$. 
	When $V$ is a smooth split projective toric variety, the convergence holds in $\widehat{\MMM_\kb}^{\dim}$, without any assumption on $\kb$ (see \cref{thm:motivic-BMP-toric}). 
\end{myremark} 

\begin{myremark}
	Since we assume that $\CCC$ admits a divisor of degree one,
	we know
	that the series
	\[
	(1 - t ) ( 1 - \LL_\kb t ) Z^\text{Kapr}_\CCC ( t )
	\]
	is actually 
	a polynomial $P_\CCC ( t ) $ of degree $2 g $ 
	such that 
	\[
	P_\CCC ( \LL_\kb^{-1} ) = [ \Pic^0 ( \CCC ) ] \LL_\kb^{-g}
	,
	\]
	see e.g. \S 1.3 in \cite[Chap. 6]{chambert2016motivic}.
	Hence in that case 
	\[
	\res_{ t = \LL_\kb^{-1} }  Z^\text{Kapr}_\CCC ( t )
	=
	\left ( 
	\frac{P_\CCC ( t ) }{1 - t} 
	\right )_{t= \LL^{-1}_\kb }
	=
	\frac{[ \Pic^0 ( \CCC ) ] \LL_\kb^{-g}}{1-\LL^{-1} } . 
	\]
	In general, 
	Daniel Litt showed in 
	\cite{litt2015zeta}
	that if 
	$d$ is the minimum degree of a divisor on $\CCC$,
	then $(1 - t^d ) ( 1 - \LL_\kb^d t^d ) Z^\text{Kapr}_\CCC ( t )$ 
	is a polynomial. 
	Note that if $d>1$, then in general 
	$(1 - t ) ( 1 - \LL_\kb t ) Z^\text{Kapr}_\CCC ( t )$ 
	is \emph{not} a polynomial,
	see e.g. Remark 1.3.4 of \cite[Chap.~7]{chambert2016motivic}.
	Thus one should modify the previous definitions according to that fact. 
	This more general situation is beyond the scope of this paper. 
\end{myremark}

It is possible to define variants of the motivic constant $\tau_{\LLL} ( \VVV  ) $, related to components of the moduli space.
\begin{mydef}
Let $\beta $ be a choice of vertical components $E_\beta$ of multiplicity one, that is to say, over a finite number of closed points $p$, the choice of an irreducible component of $\VVV_p $ of multiplicity one. 

If $E_{\beta_p}^\circ = E_{\beta_p} \cap \VVV^\circ $ for all $ p $, then
\[
\tau_\LLL ( \VVV  ) ^\beta
\]
is the motivic Tamagawa number 
\[
\LL_\kb^{( 1 - g ) \dim ( V )  } 
\left (
\frac{[ \Pic^0 ( \CCC ) ] \LL_\kb^{-g}}{1-\LL^{-1} } 
\right )
^{\rg ( \Pic ( V ) ) }
 \prod_{p\in \CCC} 
 \left  (1-\LL_p^{-1} \right )^{\rg (\Pic (V))} \mu_{\LLL_{ | \VVV_\Rp } } ^ * (  \Gr ( E_{\beta_p}^\circ ) ) . 
\]
\end{mydef}
Note that $\tau_\LLL ( \VVV ) $ equals the finite sum of the $\tau_\LLL ( \VVV  ) ^\beta $'s, for $\beta $ running over the finite set of choices of vertical components $E_\beta$ above each closed point of $\CCC$. 

\begin{mydef}
If $\CCC ' $ is a non-empty open subset of $\CCC$, we will call restriction of $\tau_\LLL ( \VVV )^\beta $ to $\CCC ' $, written $\tau_\LLL ( \VVV )^\beta_{|\CCC ' } $, the element 
\[
\LL_\kb^{( 1 - g ) \dim ( V )}  ( \res_{t=\LL_\kb^{-1}} Z_{\CCC '} ( t ) )^{\rg ( \Pic ( V ))} 
 \prod_{p\in \CCC ' } 
 \left  (1-\LL_p^{-1} \right )^{\rg (\Pic (V))} \mu_{\LLL_{ | \VVV_\Rp } } ^ *  (  \Gr ( E_{\beta_p}^\circ ) )   .
\]
\end{mydef}

\subsection{Motivic Batyrev-Manin-Peyre principle for curves} In order to deal with different good models of a projective variety $V$ over $F$, we need a refined version of {\cref{BMP-motivic-weak}}. The previous tools make the adaptation straightforward. 

We use the notations introduced in \cref{setting:main-setting}: we fix a finite set $L_1 , ... , L_r $ of invertible sheaves on $V$ whose linear classes form a $\ZZ$-basis of $\Pic ( V) $, as well as invertible sheaves $\LLL_1 , ... , \LLL_r$  on $\VVV$ extending respectively $L_1 , ... , L_r$.

There exists a unique $r$-tuple of integers $( \lambda_1 , ... , \lambda _r ) $ such that
\[
\omega_V^{-1} 
\simeq 
\otimes_{i=1}^r L_i^{\otimes \lambda_i} .
\]
Consequently, we have a model of $\omega_V^{-1}$ given by
\begin{equation}\label{equation:def-of-LLL-V-model-of-omega-V-dual}
	\LLL_\VVV = \otimes_{i=1}^r \LLL_i^{\otimes \lambda_i} .
\end{equation}

For any choice $\beta$ of irreducible vertical components of multiplicity one of $\VVV$,
and for every non-empty open subset $U$ of $V$,
the space 
\[
\Hom_\CCC^{ \degg_{\underline{\LLL}} = \mdeg  } ( \VVV , \CCC )_U ^\beta 
\]
parametrizing curves of multidegree $\degg_{\underline{\LLL}} = \mdeg$ 
intersecting the components given by $\beta$,
exists as a quasi-projective scheme, and the space 
\[
\Hom_\CCC^{ \degg_{\underline{\LLL}} = \mdeg  } ( \VVV , \CCC )_U \tag{\ref{lemma:representability-Hom-functor}}
\]
is then the finite disjoint union over $\beta $ of these subspaces.

Recall that $\CEff ( V )_\ZZ^\vee$ is the intersection of $\CEff ( V )^\vee$
and $\Pic ( V )^\vee$ in $\Pic ( V )^\vee \otimes_\ZZ \QQ $.
\begin{myquest}[Relative Geometric Batyrev-Manin-Peyre]
\index{motivic Batyrev-Manin-Peyre principle}
\label{conj:BMP-motivic-strong-weight-top} Let $\VVV$ and $\underline{\LLL}$ be as in \cref{setting:main-setting} \cpageref{setting:main-setting}.
 Does the symbol 
\[
\left [ \Hom_\CCC^{ \degg_{\underline{\LLL}} = \mdeg  } ( \VVV , \CCC )_U  \right ] \LL_\kb ^{-   \, \mdeg  \scdot  \omega_V^{-1}  }  \in \MMM_\kb
\]
converge to $\tau_{\LLL_\VVV} (  \mathscr V  ) $ in $\widehat{\MMM_\kb}^w$ (or even more optimistic, in $\widehat{\MMM_\kb}^{\dim} $),
as $\mdeg \in  \CEff (V) ^\vee_\ZZ $ 
goes arbitrarily far away from the boundaries of the dual cone $ \CEff (V) ^\vee $ ? 
\end{myquest}

\begin{myremark}\label{remark:BMP-motivic-strong-weight-top-with-components}
We can refine the previous question as follows: given a choice $\beta$ of vertical components of multiplicity one: 
does the symbol 
\[
\left [ \Hom_\CCC^{ \degg_{\underline{\LLL}} = \mdeg  } ( \VVV , \CCC )_U ^\beta 
  \right ] \LL_\kb ^{-   \, \mdeg  \scdot  \omega_V^{-1}  }  \in \MMM_\kb
\]
converge to $\tau (  \mathscr V    ) ^\beta$ in  $\widehat{\MMM_\kb}^w$, or even $\widehat{\MMM_\kb}^{\dim} $,
as $\mdeg \in  \CEff (V) ^\vee_\ZZ $ 
goes arbitrarily far away from the boundaries of the dual cone $ \CEff (V) ^\vee $ ? 
\end{myremark}

\begin{myexample}\label{example:compact-ev}
Starting from previous works by Chambert-Loir-Loeser \cite{chambert2016motivic} and Bilu \cite{bilu2018motivic},
we show in \cite{faisant2022geometric} that the conjecture of \cref{remark:BMP-motivic-strong-weight-top-with-components} is true when $V$ is an equivariant compactification of a vector space 
and $\kb$ algebraically closed with characteristic zero. 

\end{myexample}

\begin{myexample}
Bilu and Browning show in \cite{bilu2023circle}	
that the answer to \cref{conj:BMP-motivic-strong-weight-top} 
is positive 
whenever 
$\CCC = \PP^1_\CC$,
$V\subset \PP^{n-1}_\CC$ is a hypersurface of degree $d\geqslant 3$
such that $n> 2^d ( d - 1 ) $, and $\VVV = \PP^1_\CC \times_\CC V$.
\end{myexample}


\subsection{Products of Fano-like varieties}

\index{motivic Tamagawa number!of a product}
\begin{myptn}
	Let $V_1$ and $V_2$ be two Fano-like varities over $F$. 
Let $\VVV_1$ (respectively $\VVV_2$)
be a model of $V_1$ above $\CCC$
and $\LLL_1$
be a model of $\omega_{V_1}^{-1}$ (resp. $V_2$, $\LLL_2$ and $\omega_{V_2}^{-1}$). 

Then $\VVV_1 \times_\CCC \VVV_2$ 
is a model of $V_1 \times_F V_2 $ 
above $\CCC$,
$( \pr_1^* \LLL_1 ) \otimes ( \pr_2^* \LLL_2 )$
is a model of $( \pr_1^* \omega_{V_1}^{-1} ) \otimes ( \pr_2^* \omega_{V_2}^{-1} ) $ 
and
\[
\tau_{ (\pr_1^* \LLL_1 ) \otimes  ( \pr_2^* \LLL_2 ) } ( \VVV_1 \times_\CCC \VVV_2 ) = \tau_{\LLL_1 } ( \VVV_1 ) \tau_{\LLL_2 } ( \VVV_2 ) .
\] 
\end{myptn}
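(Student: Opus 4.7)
The strategy is to unwind the formula of \cref{def:motivic-Tamagawa-number-of-a-model} on both sides and match the three pieces of which $\tau$ is built: the global prefactor (depending only on $\dim$, $g$, $\Pic^0(\CCC)$ and $\rk(\Pic)$), the local motivic densities at each closed point of $\CCC$, and the motivic Euler product assembling them. Each of these three pieces will turn out to be multiplicative under $\VVV_1 \mapsto \VVV_1 \times_\CCC \VVV_2$, the last one being an application of \cref{ptn:multiplicative-property-for-effective-Euler-products}.

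For the prefactor, the Fano-like hypothesis on $V_1$ and $V_2$ ensures that $\Pic$ equals geometric $\Pic$ with no torsion and trivial Brauer obstruction, from which the Künneth-type splitting $\Pic(V_1 \times_F V_2) \simeq \Pic(V_1) \oplus \Pic(V_2)$ gives $\rk(\Pic(V_1\times_F V_2)) = r_1 + r_2$; combined with $\dim(V_1 \times_F V_2) = \dim V_1 + \dim V_2$, the constant $\LL_\kb^{(1-g)\dim}[\Pic^0(\CCC)]^{\rk(\Pic)}/(1-\LL_\kb^{-1})^{\rk(\Pic)}$ factors as a product of the two analogous constants for $V_1$ and $V_2$.

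Next I would handle the local factor at a closed point $p\in\CCC$. Products commute with base change, with taking smooth loci, and with the Greenberg functor, so one obtains a canonical isomorphism
\[
\Gr\bigl((\VVV_1\times_\CCC\VVV_2)^\circ_{R_p}\bigr)\simeq \Gr(\VVV^\circ_{1,R_p})\times_{\kp}\Gr(\VVV^\circ_{2,R_p}).
\]
On smooth $R_p$-schemes, $\Omega^1_{\XXX\times_{R_p}\YYY/R_p}\simeq \pr_1^*\Omega^1_{\XXX/R_p}\oplus\pr_2^*\Omega^1_{\YYY/R_p}$, hence the top exterior power splits as a tensor product of pulled back top exterior powers. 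Since $\pr_1^*\LLL_1\otimes\pr_2^*\LLL_2$ is by hypothesis a model of $\pr_1^*\omega_{V_1}^{-1}\otimes\pr_2^*\omega_{V_2}^{-1}\simeq\omega_{V_1\times V_2}^{-1}$, and since the order function $\varepsilon_{\LLL'-\LLL}$ of \cref{def:local-order-with-respect-line-bundle} is additive in the difference of models and turns tensor products into sums, we get on $\Gr(\VVV^\circ_{1,R_p})\times\Gr(\VVV^\circ_{2,R_p})$ the identity
\[
\varepsilon_{\pr_1^*\LLL_1\otimes\pr_2^*\LLL_2-(\Lambda^{\mathrm{top}}\Omega^1)^\vee}=\varepsilon_{\LLL_1-(\Lambda^{\mathrm{top}}\Omega^1_1)^\vee}\circ\pr_1+\varepsilon_{\LLL_2-(\Lambda^{\mathrm{top}}\Omega^1_2)^\vee}\circ\pr_2.
\]
Combined with the multiplicativity of the motivic volume $\mu_\XXX$ on Greenberg schemes of smooth products (an immediate consequence of $\mu_\XXX(\Gr(\XXX))=[\XXX_0]\LL^{-\dim}$ and Fubini-type additivity of the motivic integral), this yields the factorisation
\[
\mu^{*}_{(\pr_1^*\LLL_1\otimes\pr_2^*\LLL_2)_{|}}\bigl(\Gr((\VVV_1\times_\CCC\VVV_2)^\circ_{R_p})\bigr)
=\mu^{*}_{\LLL_{1|}}(\Gr(\VVV^\circ_{1,R_p}))\cdot\mu^{*}_{\LLL_{2|}}(\Gr(\VVV^\circ_{2,R_p})).
\]

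Finally, distributing the exponent $r_1+r_2$ in $(1-\LL_p^{-1})^{r_1+r_2}$ and invoking \cref{ptn:multiplicative-property-for-effective-Euler-products} — whose convergence hypotheses are granted by the assumed convergence of each $\tau_{\LLL_i}(\VVV_i)$ — splits the Euler product over $p\in\CCC$ into a product of two Euler products, producing exactly $\tau_{\LLL_1}(\VVV_1)\,\tau_{\LLL_2}(\VVV_2)$ after recombining with the prefactor. The only delicate point is the additivity of $\varepsilon$ with respect to the tensor product of models of the anticanonical sheaf; once that is recorded, the rest is a routine bookkeeping of the three factors.
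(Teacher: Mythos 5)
Your proposal is correct and follows essentially the same route as the paper: both reduce the statement to the pointwise additivity of the order functions $\varepsilon_{\LLL-(\Lambda^{\mathrm{top}}\Omega^1)^\vee}$ on the Greenberg scheme of the product (checked on generators of the pulled-back lattices) and then invoke \cref{ptn:multiplicative-property-for-effective-Euler-products} to split the Euler product. The extra bookkeeping you supply for the prefactor, via $\rk(\Pic(V_1\times_F V_2))=r_1+r_2$ and the additivity of dimensions, is left implicit in the paper but is consistent with it.
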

 
 \begin{proof}
 	In order to apply \cref{ptn:multiplicative-property-for-effective-Euler-products},
 	we have to check that for all closed points $p\in \CCC$,
 	\[
 \mu_{( \pr_1^* \LLL_1 ) \otimes ( \pr_2^* \LLL_2 )_{ | ( \VVV_ 1 \times_\CCC \VVV_2 )_\Rp } } ^ * ( \Gr ( ( \VVV_ 1 \times_\CCC \VVV_2 )_\Rp^\circ )  )  = \mu_{{\LLL_1}_{ | ( \VVV_1 )_\Rp } } ^ * ( \Gr ( \VVV_{\Rp}^\circ )  ) \mu_{{\LLL_2}_{ | (\VVV_2 )_\Rp } } ^ * ( \Gr ( \VVV_{\Rp}^\circ )  ). 
 	\]
 	Going back to \cref{def:motivic-measure-with-respect-model-line-bundle}, 
 	and by functoriality of Greenberg schemes,
 	it is enough to check that on $\Gr ( ( \VVV_ 1 \times_\CCC \VVV_2 )_\Rp^\circ )  $ one has  
 	\begin{equation}
 		\label{eq:product-varieties-difference-of-degree}
\varepsilon_{( \pr_1^* \LLL_1 ) \otimes ( \pr_2^* \LLL_2 ) - ( \Lambda^{n_1+n_2 } \Omega^1_{\VVV_1\times \VVV_2 / \Rp})^\vee}
= 
\varepsilon_{\LLL_1 - ( \Lambda^{n_1 } \Omega^1_{\VVV_1 / \Rp })^\vee}
+
\varepsilon_{\LLL_2 - ( \Lambda^{n_2 } \Omega^1_{\VVV_2 / \Rp })^\vee} .
 	\end{equation}
We can assume that $\VVV_1$ and $\VVV_2$ are $\Rp$-smooth. 	
Let $R_p'$ be an unramified extension of $R_p$,
 $\tilde x = (  \tilde x_1 , \tilde x_2 )  \in ( \VVV_1 \times_{R_p} \VVV_2 ) ( R_p ') \simeq  \VVV_1 ( R_p ') \times_{\kappa ( p )'} \VVV_2  ( R_p ')	$
and take $y_1$, $y_2$, $\omega_1$ and $\omega_2$ to be
generators respectively of 
$\tilde x_1 ^* \LLL_1$, $\tilde x_2^* \LLL_2$, $\tilde x_1^* (\Lambda^{n_1 } \Omega^1_{\VVV_1 / \Rp })^\vee$
and $\tilde x_2^* (\Lambda^{n_2 } \Omega^1_{\VVV_2 / \Rp })^\vee$.
Then 
$y_1  y_2$ is a generator of $\tilde x ^* ( \pr_1^* \LLL_1 ) \otimes ( \pr_2^* \LLL_2 )$ 
and $\omega_1 \omega_2 $ a generator of $\tilde x^* ( \Lambda^{n_1+n_2 } \Omega^1_{\VVV_1\times \VVV_2 / \Rp})^\vee$. 
Now \eqref{eq:product-varieties-difference-of-degree} applied to $\tilde x$
is the identity $v_{R_p'} \left ( { \omega_1 \omega_2  \over y_1 y_2 } \right ) = v_{R_p'} \left ( {\omega_1 \over y_1} \right ) + v_{R_p'} \left ( { \omega_2 \over y_2 } \right )$. 
 \end{proof}

\bigskip 


\section{Equidistribution of curves}
\label{section:equidistribution}

The goals of this section are
\begin{itemize}
	\item 
to introduce and provide a definition of the equidistribution principle,
which will be done with \cref{def:equidistribution-products-constructible-subsets},
\item and then
to prove \cref{thm-intro:change-of-model},
which will be restated as \cref{thm:equidistribution-and-models},
telling that this principle does not depend on the choice of models me made in \cref{setting:main-setting}.
\end{itemize} 

\smallskip 

The intuition behind this result is that the equidistribution hypothesis encodes (among many other things)
the information one needs to switch from a multidegree coming from a given model to another one. 
Indeed, while the Batyrev-Manin-Peyre principle \cref{conj:BMP-motivic-strong-weight-top} describes the expected asymptotic behaviour of the motivic measure of a certain moduli space of sections,
the equidistribution principle describes the asymptotic behaviour of the sequence of motivic measures itself. 
In particular, it measures the motivic distribution of sections for which two different models give two different multidegrees. 

\smallskip 

In this section we assume that 
\begin{itemize}
	\item $\VVV \to \CCC$ is a proper model over $\CCC$ of a Fano-like variety $V$ together with a model $\underline{\LLL} = ( \LLL_i )$ of a family of invertible sheaves $(L_i)$ on $V$ whose classes form a $\ZZ$-basis of $\Pic ( V )$ (see \cref{def:Fano-like} and \cref{setting:main-setting});
	\item $U$ is a dense open subset of $V$;
	\item the motivic Tamagawa number $\tau_{\LLL_\VVV} ( \VVV ) $, see \cref{def:motivic-Tamagawa-number-of-a-model} and \eqref{equation:def-of-LLL-V-model-of-omega-V-dual}, is well-defined in either $\widehat{\MMM_\kb}^{\dim}$ or $\widehat{\MMM_\kb}^w$. The following discussion will not depend on the choice of the filtration. 
\end{itemize}

\subsection{First approach}

Let $\SSS $ be a zero-dimensional subscheme of the smooth projective curve $\CCC$, $| \SSS |$ its set of closed points and $\CCC '$ the complement of $|\SSS|$.
This subscheme $\SSS$ is given by a disjoint union of spectra of the form
\[
\Spec \left  ( \mathscr O_{\CCC , p} / (\mathfrak m_p^{m_p+1} ) \right  )
\simeq  
\Spec ( \kappa (p ) [[t]] / t^{m_p+1} ) 
\]
for $p\in  | \SSS |$.
Its length is
\[
\ell ( \SSS ) = \sum_{p\in | \SSS |} ( m_p + 1 ) [ \kappa ( p ) : \kb ] .
\]
Then for every $\CCC$-morphism $\varphi : \SSS \to \VVV$ and every $\mdeg \in \Pic ( V ) ^\vee $ we define 
\[
\Hom_\CCC^{ \degg_{\underline{\LLL}} \, = \,  \mdeg} ( \CCC , \VVV \mid \varphi )_U
\]
as being the schematic fibre above $\varphi $ of 
the restriction morphism 
\[
\res_\SSS^{\VVV}
: 
\Hom_\CCC^{ \degg_{\underline{\LLL}} \, = \,  \mdeg} ( \CCC , \VVV )_U \longrightarrow \Hom_\CCC ( \SSS , \VVV ). 
\]

We assume temporally that $\LLL_\VVV$ is isomorphic to $( \Lambda^n \Omega^1_{\VVV / \CCC })^\vee$
and that $\VVV \to \CCC$ is smooth above an open subset containing the closed points of $\SSS$.
Then we say that there is weak equidistribution for $\SSS $ 
if for every $\CCC$-morphism $\varphi :  \SSS \to \VVV $, the normalized class 
\[
 \left [ \Hom ^{ \degg_{\underline{\LLL}} \, = \,  \mdeg}_\CCC ( \CCC , \VVV  \mid \varphi )_U \right  ] 
 \LL_\kb ^{-\, \mdeg  \scdot  \omega_V^{-1} }
 \in 
 \mathscr M_\kb 
\]
converges to 
\[
\tau ( \VVV  )_{| \CCC '} 
\times 
\prod_{p\in | \SSS |} \LL_p  ^{- ( m_p + 1 ) \dim  ( V ) } \in \widehat{\mathscr M_\kb} 
\] 
when the multidegree $\mdeg$ tends to infinity
-- again, by this we will always mean $\mdeg \in \CEff ( V )_\ZZ^\vee$
and $d ( \mdeg , \partial \CEff ( V )^\vee ) \to \infty $. 
This definition may be seen as a first extension of Peyre's definition \cite[$5.8$]{peyre2021beyond} to non-constant families $\VVV \to \CCC$. 

\begin{myremark} 
 $\Hom_\SSS ( \SSS , \VVV_\SSS )  $ can be interpreted as the product of spaces of jets 
 \[ 
 \prod_{p\in | \SSS |} \Gr_{ m_p  } ( \VVV_{\Rp} ) .
 \] 
Since 
\[
\Gr_{ m_i  } ( \VVV_{\Rp} ) \to \Gr_0 ( \VVV_\Rp ) \simeq  \VVV_p
\]
is a Zariski locally trivial fibration over $\VVV_p$ with fibre an affine space of dimension $ m_p  \dim ( V )$,
the class of the space of $m_p$-jets of $\VVV_\Rp$ in $\KVar {\VVV_p } $ is $\LL^{m_p \dim ( V )}_p [ \VVV_p ] $. 
Finally the class
 \[
 \left  [ \Hom_\CCC ( \SSS , \VVV ) \right  ] \in \KVar {\prod_{p\in | \SSS |} \VVV_p }
 \]
is sent to the finite product
 \[ \prod_{p\in | \SSS |}   \frac{[\VVV_p]}{\LL_p^{ \dim ( V) } } \LL_p^{ ( m_p +1 ) \dim ( V ) } \in \KVar \kb .
 \]
 Thus weak equidistribution for $\SSS$ implies that
 \[
 \LL_\kb ^{-\, \mdeg  \scdot  \omega_V^{-1} } \left [ \Hom ^{ \degg_{\underline{\LLL}} \, = \,  \mdeg}_\CCC ( \CCC , \VVV  \mid \varphi )_U \right  ]  \left  [ \Hom_\CCC ( \SSS , \VVV ) \right  ] 
 \]
 tends to $\tau ( \VVV )$ when $\delta \to \infty$.
 \end{myremark}

\subsection{Equidistribution and arcs} 
Actually, the equidistribution hypothesis 
can be reformulated 
in terms of constructible sets of arcs. This reformulation is natural since we already interpreted the local factors of the motivic Tamagawa number as motivic densities of spaces of arcs. In this paragraph $ \mathsf S $ is any finite set of closed points of $\CCC$. We drop as well the previous assumption on $\LLL_\VVV$.

The restriction to $\Spec ( \widehat{\OOO_{\CCC,p}}) $  provides a morphism 
\[
\res_{\mathsf S}^\VVV : \Hom ^{ \degg_{\underline{\LLL}} \, = \,  \mdeg}_\CCC ( \CCC , \VVV ) \to \prod_{p\in \mathsf S} \Gr_\infty ( \VVV _\Rp )
\]
for every multidegree $\mdeg \in \CEff ( V )^\vee_\ZZ $.
If ${ \varphi = ( \varphi_p)_{p\in \mathsf S}}$ is a finite collection of jets such that ${\varphi_p \in \Gr_{m_p} ( \VVV_{\Rp}  ) }$ for every $p$ in $\mathsf S$, 
the schematic fibre of 
\[
 \prod_{p\in \mathsf S}  \theta_{m_p}^\infty  \circ \res_{\mathsf S}^\VVV 
\]
above $\varphi $ is written  
\[
\Hom ^{ \degg_{\underline{\LLL}} \, = \,  \mdeg}_\CCC ( \CCC , \VVV \mid  \varphi )_U .
\]
\begin{mydef}
We say that there is weak equidistribution above $\mathsf S$ at level $(m_p)_{p\in \mathsf S}$ if for every collection $\varphi =  ( \varphi_p )_{p\in \mathsf S} \in \prod_{p\in \mathsf S} \Gr_{m_p} ( \VVV_{\Rp}  )  $ of jets above $\mathsf S$, the class 
\[
 \left [ \Hom ^{  \degg_{\underline{\LLL}} \, = \,  \mdeg }_\CCC ( \CCC , \VVV \mid  \varphi )_U \right  ] \LL ^{- \, \mdeg  \scdot  \omega_V^{-1} }_\kb
 \]
tends to the non-zero effective element
\[
\tau_{\LLL_\VVV } ( \VVV \mid \varphi  )
=  
\tau_{\LLL_\VVV } ( \VVV )_{| \CCC \setminus \mathsf S }  \times \prod_{p\in \mathsf S} \mu_{\LLL_{ | \VVV_\Rp } } ^ * ( ( \theta_{m_p}^\infty )^{-1} (  \varphi_p )    \cap   \Gr ( \VVV_{\Rp}^\circ )  )   
\] 
of $\widehat{ \mathscr M_\kb} $,
when $\mdeg$ becomes arbitrarily large. 
\end{mydef}

This definition is consistent with the previous one since we have the factorisation 
\[
\res_\SSS^\VVV : \Hom^\dd _\CCC ( \CCC , \VVV ) 
\overset{\res_{\mathsf S}^\VVV}{\longrightarrow} 
\prod_{p\in \mathsf S} \Gr_{\infty} ( \VVV_\Rp  ) 
\longrightarrow 
\prod_{p\in \mathsf S}\Gr_{m_p} ( \VVV_\Rp) \simeq \Hom_\CCC \left ( \SSS  , \VVV \right ) 
\]
for every $\mathsf S $-tuple $ (m_p) \in \NN^{\mathsf S} $ and corresponding zero-dimensional subscheme $\SSS \subset \CCC $ with support $|\SSS | = \mathsf S$. 

\smallskip 

More generally, if $W $ is a product $\prod_{p\in \mathsf S} W_p$ of constructible subsets $W_p$ of $\Gr_\infty ( \VVV_\Rp ) $, 
\[
\Hom ^{ \degg_{\underline{\LLL}} \, = \,  \mdeg}_\CCC ( \CCC , \VVV \mid  W )_U
\]
is defined as the schematic fibre of $\res_{\mathsf S}^\VVV$ over $W$. Recall that each constructible set $W_p$ of arcs is nothing else but the preimage by a projection morphism of a certain constructible subset of jets. 
\begin{mydef}\label{def:equidistribution-products-constructible-subsets}
	We will say that there is 
	\textit{equidistribution with respect to $W=\prod_{p\in \mathsf S} W_p$ and the multidegree $\degg_{\underline{\LLL}}$},
	where each $W_p$ is a constructible subset of arcs, if 
\[
\left [ \Hom ^{ \degg_{\underline{\LLL}} \, = \,  \mdeg}_\CCC ( \CCC , \VVV \mid  W )_U \right  ] 
\LL ^{- \, \mdeg  \scdot  \omega_V^{-1} }_\kb
\]
tends to 
\[
\tau_{\LLL_\VVV } ( \VVV \mid W  ) = 
\tau_{\LLL_\VVV } ( \VVV )_{ | \CCC \setminus S } 
\times 
\prod_{p\in \mathsf S} 
\mu_{\LLL_{ | \VVV_\Rp } } ^ * ( W_p \cap \Gr ( \VVV_{\Rp}^\circ ) )
\]
when the multidegree becomes arbitrarily large. 

We will say that there is \textit{equidistribution of curves, with respect to the multidegree $\degg_{\underline{\LLL}}$} if the previous statement holds for every such $W$. 
\index{equidistribution of curves}
\index{motivic Tamagawa number!of a model!associated to constructible constraints}
\end{mydef}

\begin{myremark}
Note that the notion of equidistribution of curves is stronger that the motivic Batyrev-Manin-Peyre principle for curves we formulate in \cref{conj:BMP-motivic-strong-weight-top}. 
\end{myremark}

\begin{myremark} 
	In \cref{def:equidistribution-products-constructible-subsets} 
	one may ask if it would be possible to replace \textit{contructible} by \textit{measurable} 
	to get a more general notion of equidistribution, 
	or consider constructible subsets which are not products over $S$ of constructible sets, as in \cref{thm-equidistribution-toric}, but this higher level of generality would be mostly useless in the present work.  
\end{myremark}

\subsection{Checking equidistribution pointwise}\label{subsection:check-equidistribution-pointwise}
\label{subsection:checking-equidistribution-pointwise}

Let $\SSS $ be a zero-dimensional subscheme of $\CCC$. 
Assume that for every $\mdeg \in \Pic ( V )^\vee$,
there exists a $\kb$-scheme $F_\mdeg $ (which depends on $\SSS$) such that 
\[
 \Hom^{ \degg_{\underline{\LLL}} \, = \,  \mdeg }_\CCC  ( \CCC  , \VVV \mid \varphi )_U  \simeq ( F_\mdeg \otimes \kappa ( x ) )_{\text{red}}
\]
for every point $x \in \Hom_\CCC ( \SSS , \VVV )$ corresponding to a $\CCC$-morphism $\varphi : \SSS \to \VVV$.  
Then by \cref{ptn:point-wise-criterion-for-piecewise-trivial-fibrations},
the reduction map 
\[
 \Hom^{ \degg_{\underline{\LLL}} \, = \,  \mdeg}_\CCC  ( \CCC  , \VVV \mid \varphi )_U   \to \Hom_\CCC ( \SSS , \VVV )
\]
is a piecewise trivial fibration with fibre $F_\mdeg$ and by \cref{ptn:classes-of-piecewise-trivial-fibrations},
for every constructible subset $W$ of $ \Hom_\CCC ( \SSS , \VVV )$, 
\[
\left [ \Hom^{ \degg_{\underline{\LLL}} \, = \,  \mdeg}_\CCC  ( \CCC  , \VVV \mid W )_U \right ] = [ F_\mdeg ] [ W ] 
\]
in $\KVar \kb$.
Hence, if one wishes to show that there is equidistribution of curves on $\VVV$ above $\SSS$,
a strategy is to prove the existence of such an $F_\mdeg$
and then study the convergence of the normalised class 
$[ F_\mdeg ]  \LL_\kb^{- \, \mdeg  \scdot  \omega_V^{-1} } $. 
In general the situation is not as simple,
but we use a similar argument in   \cref{section:equidistribution-toric} in order to prove \cref{thm-intro:BMP-toric-varieties}.

\subsection{Equidistribution and models} 
Our goal for the end of this section is to prove the following main result, which does not depend on the choice of the filtration (dimensional or by the weight) on $\MMM_\CCC$.
\begin{mythm}[Change of model]
\label{thm:equidistribution-and-models} 
Let $\VVV $ and $\VVV ' $ be two proper models over $\CCC$ 
of the same Fano-like $F$-variety $V$, together with models
$\underline{\LLL} = ( \LLL_i)$ and $\underline{\LLL ' } = ( \LLL_i ' )$,
respectively on $\VVV$ and $\VVV '$,
of a family $(L_i)$ of invertible sheaves forming a $\ZZ$-basis of $\Pic ( V) $,
as in
\cref{setting:main-setting}. 


Then there is equidistribution of curves for $(\VVV , \underline{\LLL} )$, in the sense of \cref{def:equidistribution-products-constructible-subsets},
if and only if there is equidistribution of curves for $( \VVV ' , \underline{\LLL ' } )$. 

\end{mythm}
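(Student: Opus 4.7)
The plan is to reduce first to a single dominating morphism between models, using \cref{cor:existence-global-wNm-dominating-2-global-wNm} to find a proper model $\VVV''$ of $V$ above $\CCC$ with $\CCC$-morphisms $\pi : \VVV'' \to \VVV$ and $\pi' : \VVV'' \to \VVV'$ whose restrictions to the smooth locus $\VVV''^{\circ}$ are Néron smoothenings. It will then be enough to prove the equivalence of equidistribution for $(\VVV,\underline{\LLL})$ and for $(\VVV'',\pi^*\underline{\LLL})$; combining this with the symmetric statement on the $\pi'$-side and with the invariance under the choice of line bundle models established below will yield the theorem.

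I would first establish the invariance under the choice of line bundle models on a fixed variety model. Two families $\underline{\LLL}$ and $\widetilde{\underline{\LLL}}$ on $\VVV$ agree outside a finite set $\mathsf T$ of closed points of $\CCC$, and \cref{remark:global-difference-is-bounded} together with \cref{lemma:local-difference-is-constructible} imply that the difference $\degg_{\widetilde{\underline{\LLL}}}-\degg_{\underline{\LLL}}$ is a bounded $\mathbf Z^r$-valued constructible function on $\prod_{p\in\mathsf T}\Gr(\VVV_{R_p})$. Partitioning the moduli space according to its finitely many values $\mathbf c$ gives a decomposition
\[
\Hom_\CCC^{\degg_{\widetilde{\underline{\LLL}}}=\mdeg}(\CCC,\VVV\mid W)_U
= \coprod_{\mathbf c} \Hom_\CCC^{\degg_{\underline{\LLL}}=\mdeg-\mathbf c}(\CCC,\VVV\mid W \cap W_{\mathbf c})_U
\]
for suitable constructible loci $W_{\mathbf c}$; applying the assumed equidistribution along $\underline{\LLL}$ to each piece and resumming, the identity $\mu^*_{\widetilde{\LLL}_\VVV} = \LL^{-\varepsilon_{\widetilde{\LLL}_\VVV - \LLL_\VVV}}\,\mu^*_{\LLL_\VVV}$ (a direct consequence of \cref{def:motivic-measure-with-respect-model-line-bundle}) guarantees that the individual limits reassemble into $\tau_{\widetilde{\LLL}_\VVV}(\VVV\mid W)$.

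It will then remain to compare $(\VVV,\underline{\LLL})$ and $(\VVV'',\pi^*\underline{\LLL})$. The valuative criterion of properness, applied to both proper models, implies that composition with $\pi$ induces an isomorphism of moduli functors $\Hom_\CCC(\CCC,\VVV'')\simeq\Hom_\CCC(\CCC,\VVV)$ that preserves multidegrees -- since $(\sigma'')^*\pi^*\LLL_i=(\pi\circ\sigma'')^*\LLL_i$ -- and carries a constructible constraint $W=\prod_p W_p$ on the $\VVV$-side to its preimage $W''=\prod_p\Gr(\pi_{R_p})^{-1}(W_p)$ on the $\VVV''$-side. The two corresponding moduli classes would therefore coincide. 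To match the limits it would suffice to show the corresponding local identification of Tamagawa factors $\mu^*_{\LLL_\VVV}(W_p\cap\Gr(\VVV^{\mathrm{sm}}_{R_p})) = \pi_{0,!}\,\mu^*_{\pi^*\LLL_\VVV}(W''_p\cap\Gr(\VVV''^{\circ}_{R_p}))$, obtained point-by-point by applying the smooth change-of-variable formula \cref{ptn:change-of-variable-smooth-case} to the Néron smoothening $\pi:\VVV''^{\circ}_{R_p}\to\VVV_{R_p}$ and using the identity of \cref{ptn:ordjac-Neron-smoothening} to rewrite the integrand produced by the formula as that defining $\mu^*_{\pi^*\LLL_\VVV}$.

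The hardest part will be this last local identification: the combination of the change-of-variable factor $\LL^{-\ordjac_\pi}$ with several additivity relations between the local differences $\varepsilon$ of line bundle models must be carried out carefully -- keeping track of the comparison between the relative canonical sheaves of $\VVV$ and $\VVV''^{\circ}$ -- so that the integrand produced by the formula coincides exactly with that of $\mu^*_{\pi^*\LLL_\VVV}$. Once this is done, equidistribution on the $\VVV$-side along $W$ translates directly into equidistribution on the $\VVV''$-side along the preimage constraint $W''$, and the equivalence stated in the theorem follows.
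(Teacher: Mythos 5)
Your plan is correct and follows essentially the same route as the paper: a common proper model whose smooth locus is a Néron smoothening of both $\VVV$ and $\VVV'$ (via \cref{cor:existence-global-wNm-dominating-2-global-wNm}), transfer of equidistribution along the smoothening through the change-of-variable formula combined with \cref{ptn:ordjac-Neron-smoothening}, and a finite stratification of the moduli space by the bounded constructible difference of multidegrees — the paper merely performs the degree switch on the dominating model rather than packaging it as a separate line-bundle-model invariance lemma. The one detail to make explicit is that your preimage construction $W''=\prod_p\Gr(\pi_{R_p})^{-1}(W_p)$ only yields one direction of the constraint correspondence, so for an arbitrary constructible $\widetilde W$ on $\VVV''$ you must also use that the image of a constructible subset of the Greenberg scheme under $\Gr(\pi)$ is again constructible (Theorem 3.2.2 of Chapter 5 of the book of Chambert-Loir, Nicaise and Sebag), which is exactly what the paper invokes in \cref{lemma:lifting-equidistribution-and-convergence-speed}.
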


The remainder of this section is devoted to the proof of \cref{thm:equidistribution-and-models}.
We take $\VVV$, $\underline \LLL$, $\VVV ' $ and $\underline \LLL '$ as in \cref{setting:main-setting}. 
As before, 
we know the existence of a non-empty open subset $\CCC '  \subset \CCC$ 
above which we have an isomorphism 
of $\CCC ' $-schemes.  
By \cref{cor:existence-global-wNm-dominating-2-global-wNm}, we can find a 
proper model $\widetilde{\VVV } \to \CCC$ of $V$ whose $\CCC$-smooth locus 
is a Néron smoothening of both $\VVV ' $ and $\VVV$. Above $\CCC ' $, the three models are isomorphic. 
\[
\begin{tikzcd}
				& \widetilde{\VVV} \arrow[dl,"f",swap]\arrow[dr,"f'"]\arrow[dd,"{\widetilde \pi }"] & 							\\
\VVV \arrow[dr,"\pi",swap] &   & \VVV ' \arrow[dl,"{\pi '}"] \\
 		& \CCC & 
\end{tikzcd}
\]
This diagram induces morphisms
\[
\begin{tikzcd}
	& \Hom_\CCC \left ( \CCC , \widetilde{\VVV } \right )_U \arrow[dl,"{f_*}",swap] \arrow[dr,"{f_* '}"]& \\
	 \Hom_\CCC ( \CCC , \VVV )_U &    						& \Hom_\CCC \left  ( \CCC , \VVV ' \right )_U
\end{tikzcd}
\]
between moduli spaces of sections. 

Let 
$\widetilde{\LLL_i}$
and 
$\widetilde{\LLL_i '} $ be respectively the pull-backs of the sheaves $\LLL_i$ and $\LLL_i ' $ to $\widetilde{\VVV }$
for all $i$.  
Then both 
$\widetilde{\LLL_i}$
and 
$\widetilde{\LLL_i' }$
are models of
 $L_i$ on $\widetilde{\VVV}$. 
Up to shrinking $\CCC ' $, we can assume that they are isomorphic above $\CCC ' $. 
If $\widetilde \sigma  $ is a section of $\widetilde{\VVV } $ and $\sigma = f \circ \widetilde \sigma  $, one has the relation 
\[
\deg \left  ( ( \widetilde \sigma  )^* \widetilde{\LLL_i}
 \right ) =  \deg  \left ( ( \widetilde \sigma  )^*  f^* \LLL_i \right )  =  \deg ( ( f \circ \widetilde \sigma  )^* \LLL_i ) = \deg ( \sigma^* \LLL_i ) 
\]
for all $i \in \{ 1 , ... , r \}$,
so that $f_*$ bijectively sends points of 
\[
\Hom_\CCC^{\degg_{\underline{\widetilde{\LLL }}} \, = \,  \mdeg } \left ( \CCC , \widetilde{\VVV } \right )_U
\]
to points of 
\[
\Hom_\CCC^{\degg_{\underline{\LLL }} \, = \,  \mdeg } ( \CCC , \VVV )_U ,
\] 
and similarly for $f'_*$ when considering the multidegrees given by $\underline{\widetilde{\LLL '}}$ and $\underline{\LLL '}$.
By \cref{ptn:point-wise-criterion-for-piecewise-trivial-fibrations} this implies equality of the corresponding classes in $\KVar \kb$.

We are going to compare the multidegrees $\degg_{\underline{\widetilde{\LLL}}}$ and $\degg_{\underline{\widetilde{\LLL '}}} $. 

\subsubsection{Lifting equidistribution}
As an application of the change-of-variable formula \cref{ptn:change-of-variable-smooth-case},
we show that equidistribution of curves holds 
for $(\VVV , \underline{\LLL} )$
if and only if it holds for $( \widetilde{\VVV } , \underline{\widetilde{\LLL}} ) $.

Let $\mathsf S$ be the complement of $\CCC ' $ in $\CCC$. 
\begin{mylemma}\label{lemma:lifting-equidistribution-and-convergence-speed}
	Let $\widetilde W$ be a finite product of constructible subsets  $\widetilde{W}_p \subset \Gr ( \widetilde{\VVV}_\Rp ) $ for  $p\in \mathsf S$, and let $W$ be its image by $f$.
Then 
\[
\tau_{ \widetilde{\LLL_{\VVV } } } ( \widetilde{\VVV } \mid \widetilde{W} ) = \tau_{ \LLL_{\VVV }  } ( \VVV  \mid W )
\]
and 
\[ 
\left [ \Hom_\CCC^{\degg_{\underline{\widetilde{\LLL }}} \, = \,  \mdeg } \left ( \CCC , \widetilde \VVV \mid \widetilde{W} \right )_U \right ]
= 
\left [ \Hom_\CCC^{\degg_{\underline{\LLL }} \, = \,  \mdeg }  ( \CCC , \VVV \mid  W )_U  \right ] 
\]
for every $\mdeg \in \Pic ( V ) ^\vee$. 

In particular, for every $m\in \ZZ$,
\[
\tau_{ \widetilde{\LLL_{\VVV } } } \left ( \widetilde{\VVV } \mid \widetilde{W} \right )  - 	\left [ \Hom_\CCC^{\degg_{\underline{\widetilde{\LLL }}} \, = \,  \mdeg } \left ( \CCC , \widetilde \VVV \mid \widetilde{W} \right ) _U \right ]\LL^{- \, \mdeg \scdot \omega_V^{-1}} \in \mathcal F^m \MMM_\kb 
\]
if and only if 
\[
\tau_{ \LLL_{\VVV }  } ( \VVV  \mid W )  - 	\left [ \Hom_\CCC^{\degg_{\underline{\LLL }} \, = \,  \mdeg }  ( \CCC , \VVV \mid  W )_U \right ]\LL^{- \, \mdeg \scdot \omega_V^{-1}} \in \mathcal F^m \MMM_\kb . 
\]
\end{mylemma}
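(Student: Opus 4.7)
The plan is to verify the two stated equalities separately; the ``in particular'' conclusion will then be immediate because the two differences compared by the filtration coincide exactly in $\MMM_\kb$.

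For the equality of moduli space classes I would exploit the fact that $f:\widetilde{\VVV}\to\VVV$ is a proper birational $\CCC$-morphism which by construction is a Néron smoothening on the smooth locus $\widetilde{\VVV}^\circ$. Because we chose $\widetilde{\LLL_i} = f^*\LLL_i$, composition with $f$ induces a $\kb$-morphism $f_*$ between the two moduli spaces which preserves the multidegree, and the identity $\widetilde{W} = f^{-1}(W)$ ensures that $f_*$ restricts to the fibres above $\widetilde{W}$ and $W$. Combining the Néron property with the valuative criterion of properness applied at each closed point of $\CCC$, I would verify that $f_*$ induces a bijection on $\kb'$-points for every field extension $\kb'/\kb$. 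This lets \cref{ptn:point-wise-criterion-for-piecewise-trivial-fibrations} identify $f_*$ with a piecewise trivial fibration with fibre a point, and \cref{ptn:classes-of-piecewise-trivial-fibrations} then yields the equality of classes in $\KVar \kb$.

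The equality of motivic Tamagawa numbers is the main content. Unpacking \cref{def:motivic-Tamagawa-number-of-a-model}, the prefactor is the same on both sides, and the Euler product factors at points of $\CCC \setminus \mathsf S$ coincide because the two proper models (with their chosen line bundles) are canonically isomorphic there. It remains to establish, for each $p \in \mathsf S$, the local identity
\[
\mu_{\LLL_\VVV | \VVV_{R_p}}^{\ast}\bigl( W_p \cap \Gr(\VVV_{R_p}^\circ)\bigr) = \mu_{\widetilde{\LLL_\VVV} | \widetilde{\VVV}_{R_p}}^{\ast}\bigl( \widetilde{W}_p \cap \Gr(\widetilde{\VVV}_{R_p}^\circ)\bigr).
\]
I would obtain this by applying the smooth change-of-variable formula \cref{ptn:change-of-variable-smooth-case} to the Néron smoothening $f:\widetilde{\VVV}^\circ_{R_p} \to \VVV_{R_p}$, with $\alpha_p := \varepsilon_{\LLL_\VVV - (\Lambda^n\Omega^1_{\VVV/R_p})^\vee}$. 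The pulled-back integrand $\LL^{-(\alpha_p\circ f + \ordjac_f)}$ then rewrites using the functoriality of $\varepsilon$ (which gives $\alpha_p\circ f = \varepsilon_{\widetilde{\LLL_\VVV} - f^*(\Lambda^n\Omega^1_{\VVV/R_p})^\vee}$), the cocycle relation for $\varepsilon$ of \cref{lemma:local-difference-is-constructible}, and the key identity $\ordjac_f = \varepsilon_{(\Lambda^n\Omega^1_{\widetilde{\VVV}/R_p})^\vee - f^*(\Lambda^n\Omega^1_{\VVV/R_p})^\vee}$ of \cref{ptn:ordjac-Neron-smoothening}. These combine so that the exponent telescopes to $-\varepsilon_{\widetilde{\LLL_\VVV} - (\Lambda^n\Omega^1_{\widetilde{\VVV}/R_p})^\vee}$, which is exactly what defines the right-hand side local factor.

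The ``in particular'' assertion is then formal: since the two sides of each difference are separately equal in $\MMM_\kb$, so are the differences, and they belong to $\mathcal{F}^m \MMM_\kb$ simultaneously. The main technical obstacle in this plan is the careful tracking of signs in the chain-rule identity of the second paragraph, namely ensuring that the Jacobian term $\ordjac_f$ arising from \cref{ptn:change-of-variable-smooth-case} and the comparison $\varepsilon$ between the two canonical sheaf models (via \cref{ptn:ordjac-Neron-smoothening}) cancel precisely rather than accumulate.
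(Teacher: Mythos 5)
Your proposal follows essentially the same route as the paper: the equality of classes via the pointwise bijection induced by $f_*$ together with \cref{ptn:point-wise-criterion-for-piecewise-trivial-fibrations}, and the equality of local Tamagawa factors via the change-of-variable formula \cref{ptn:change-of-variable-smooth-case} applied to $\varepsilon_{\LLL_\VVV - (\Lambda^n\Omega^1_{\VVV/R_p})^\vee}$, with the cocycle relation for $\varepsilon$ and \cref{ptn:ordjac-Neron-smoothening} making the Jacobian term cancel. The sign bookkeeping you flag is indeed the only delicate point, and it resolves exactly as in the paper's computation.
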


\begin{proof}
	Up to shrinking $\CCC ' $ and adding trivial conditions, one can assume that $\mathsf S$ is contained in the complement of $\CCC' $. 
	By Theorem 3.2.2 of \cite[Chap. 5]{chambert2018motivic},
	the image of $\widetilde{W}_p$ 
	in $\Gr ( \VVV_\Rp )$ is a constructible subset $W_p  $. 
Then $f_*$ 
bijectively sends points of 
$\Hom_\CCC^{\degg_{\underline{\widetilde{\LLL }}} \, = \,  \mdeg } \left ( \CCC , \widetilde \VVV \mid \widetilde{W} \right )_U$ 
to points of $\Hom_\CCC^{\degg_{\underline{\LLL }} \, = \,  \mdeg }  ( \CCC , \VVV \mid  W )_U $ and 
 by \cref{ptn:point-wise-criterion-for-piecewise-trivial-fibrations},	
	\[
	\left [ \Hom_\CCC^{\degg_{\underline{\widetilde{\LLL }}} \, = \,  \mdeg } \left ( \CCC , \widetilde \VVV \mid \widetilde{W} \right ) _U \right ] = \left [ \Hom_\CCC^{\degg_{\underline{\LLL }} \, = \,  \mdeg }  ( \CCC , \VVV \mid  W )_U \right ]
	\]
	so that the only thing to show is the equality of the motivic Tamagawa numbers. 
	
	Up to shrinking $\CCC ' $ again, we only have to show the equality of local factors 
	\[
	\mu_{\LLL_{ | \VVV_\Rp } } ^ * 
	\left ( W_p \cap \Gr \left ( \VVV_{\Rp}^\circ \right ) \right )
	=
 \mu_{\widetilde{\LLL}_{ | \widetilde{\VVV}_\Rp } } ^ * 
 \left ( \widetilde{W}_p \cap \Gr \left ( \widetilde{\VVV}_{\Rp}^\circ \right ) \right )
	\]
	above closed points $p\in \mathsf S$. 
	By assumption $V$ is smooth, both $\widetilde{\VVV } $ and $\VVV  $ are models of $V$, thus by Corollary 3.2.4 of \cite[Chap. 5]{chambert2018motivic}
	$\ordjac_{f_{\Rp}}$ only takes a finite number of values. 
	By the change of variable formula, \cref{ptn:change-of-variable-smooth-case}, applied to the constructible function 
	\[
\varepsilon_{\LLL_{\VVV} - \left ( \Lambda^{n } \Omega^1_{\VVV   / \Rp } \right )^\vee }  
	\tag{see \cref{def:motivic-measure-with-respect-model-line-bundle}}
	\] 
	one has the following relation between local factors 
	\begin{align*}
	& \mu_{\LLL_{ | \VVV_\Rp } } ^ * \left ( W_p \cap \Gr \left ( \VVV_{\Rp}^\circ \right ) \right )\\
	& =\int_{ W_p \cap \Gr \left ( \VVV_\Rp^\circ \right )}  \LL^{- \varepsilon_{\LLL_{\VVV} - ( \Lambda^{n } \Omega^1_{\VVV   / \Rp } )^\vee }} \mathrm d \mu_{\VVV_\Rp }\\
	& = 
		\int_{ W_p \cap \Gr \left ( \VVV_\Rp^\circ \right )} 	\LL^{- \ord_{\LLL_{\VVV }} + \ord_{( \Lambda^{n } \Omega^1_{\VVV  / \Rp })^\vee}
} \mathrm d \mu_{\VVV_\Rp } \\
	& =  \int_{\widetilde{W}_p  \cap \Gr \left ( \widetilde{\VVV}_\Rp^\circ \right )} 	\LL^{ - f^* \ord_{\LLL_{\VVV }} + f^* \ord_{( \Lambda^{n } \Omega^1_{\VVV  / \Rp })^\vee} - \ordjac_{f} }  \mathrm d \mu_{\widetilde{\VVV}_\Rp }   \\
	& =  \int_{ \widetilde{W}_p  \cap \Gr \left ( \widetilde{\VVV}_\Rp^\circ \right )} \LL^{- \ord_{ \widetilde{\LLL_{\VVV }}} + \ord_{( \Lambda^{n } \Omega^1_{\widetilde{\VVV}  / \Rp })^\vee} } \mathrm d \mu_{\widetilde{\VVV}_\Rp } \\
	& =  \mu_{\widetilde{\LLL}_{ | \widetilde{\VVV}_\Rp } } ^ * \left ( \widetilde{W}_p \cap \Gr \left ( \widetilde{\VVV}_{\Rp}^\circ \right ) \right )
	\end{align*}
in $\MMM_{\VVV_\Rp }$,
where we used the relations
\[
\ord_{\widetilde{\LLL_\VVV }} - f^* \ord_{\LLL_\VVV} = 0
\]
and
\[
f^* \ord_{\left ( \Lambda^{n } \Omega^1_{\VVV  / \Rp } \right )^\vee} - \ord_{\left ( \Lambda^{n } \Omega^1_{\widetilde{\VVV} / \Rp }\right )^\vee} = \ordjac_{f}
\tag{by \cref{ptn:ordjac-Neron-smoothening}}
\]
above the smooth $R_p$-locus.
Taking the product over $\mathsf S$, one gets
	\[
	\tau_{\LLL_{\VVV  }}  ( \VVV \mid W  ) = \tau_{\widetilde{\LLL_{\VVV  }}} ( \widetilde{\VVV} \mid \widetilde{W}  ) 
	\]
hence the lemma.
\end{proof}

\subsubsection{Switching the degree}

The difference of degrees on $\Gr \left ( \widetilde{\VVV}_\Rp \right )$ is given by the map
\[
\varepsilon_{ \underline{\widetilde{\LLL '}} - \underline{\widetilde{\LLL }}  } : \left \{ 
\begin{array}{rll}
	 \Gr \left ( \widetilde{\VVV}_\Rp  \right )  &  \longrightarrow &  \Pic ( V ) ^ \vee  \\
	x & \mapsto & \displaystyle \left ( \otimes_i L_i^{\otimes d_i} \longmapsto \sum_{i=1}^r d_i  \varepsilon_{\widetilde{\LLL_i '} - \widetilde{\LLL_i}  } ( x )  \right )
\end{array}
\right. 
\]
which is trivial for all $p\notin \mathsf S$. 
For any $\varepsilon_p \in \Pic (V)^\vee $,
let 
\[
\widetilde{\WWW}_p  ( \varepsilon_p  ) 
=
\varepsilon_{ \underline{\widetilde{\LLL '}} - \underline{\widetilde{\LLL }}  }^{-1} ( \{ \varepsilon_p \} ).
\]
As a direct consequence of \cref{lemma:local-difference-is-constructible}, we have the following.

\begin{mylemma}
\label{lemma:difference-of-degrees-takes-finite-nb-of-values}
The map $\varepsilon_{ \underline{\widetilde{\LLL '}} - \underline{\widetilde{\LLL }}  } $
is constructible and 
there is only a finite number of values of $\varepsilon_p  \in \Pic ( V )^\vee $ 
for which 
$\widetilde{\WWW}_p  ( \varepsilon_p  ) $
is non-empty. 
\end{mylemma}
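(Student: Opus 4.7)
My plan is to reduce the statement to the scalar case already handled by \cref{lemma:local-difference-is-constructible}, exploiting the fact that $L_1, \ldots, L_r$ form a $\ZZ$-basis of $\Pic(V)$ (as stipulated in \cref{setting:main-setting}). Concretely, I would observe that any element of $\Pic(V)^\vee$ is determined by its values on this basis, so the linear form $\varepsilon_{\underline{\widetilde{\LLL'}} - \underline{\widetilde{\LLL}}}(x)$ is entirely encoded by the $r$-tuple $\left(\varepsilon_{\widetilde{\LLL_i'} - \widetilde{\LLL_i}}(x)\right)_{i=1}^r \in \ZZ^r$. This factors the map of interest as a composition $\Gr(\widetilde{\VVV}_\Rp) \to \ZZ^r \hookrightarrow \Pic(V)^\vee$, so everything reduces to the joint behaviour of the $r$ coordinate functions.

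Next, by \cref{lemma:local-difference-is-constructible} applied to each pair $(\widetilde{\LLL_i}, \widetilde{\LLL_i'})$, every coordinate function $\varepsilon_{\widetilde{\LLL_i'} - \widetilde{\LLL_i}}$ is a constructible function on $\Gr(\widetilde{\VVV}_\Rp)$ taking only finitely many integer values, say in a finite subset $E_i \subset \ZZ$. Consequently, the image of the joint map lies in the finite box $E_1 \times \cdots \times E_r$, which proves that the set of $\varepsilon_p \in \Pic(V)^\vee$ with $\widetilde{\WWW}_p(\varepsilon_p) \neq \emptyset$ is finite. For constructibility, I would then remark that each fibre $\widetilde{\WWW}_p(\varepsilon_p)$ is the finite intersection $\bigcap_{i=1}^r \varepsilon_{\widetilde{\LLL_i'} - \widetilde{\LLL_i}}^{-1}\bigl(\{\varepsilon_p \cdot [L_i]\}\bigr)$ of constructible subsets of $\Gr(\widetilde{\VVV}_\Rp)$, hence is itself constructible.

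There is no real obstacle: the substantive content is already packaged in \cref{lemma:local-difference-is-constructible}, and the present statement is merely the vectorial repackaging obtained by unfolding a $\ZZ$-basis of $\Pic(V)$. The only care needed is to make sure we use a true $\ZZ$-basis, so that the $r$-tuple of integer values faithfully recovers the entire linear form in $\Pic(V)^\vee$, but this is guaranteed by \cref{setting:main-setting}.
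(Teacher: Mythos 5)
Your proof is correct and is exactly the argument the paper intends: the paper simply declares the lemma to be "a direct consequence of" \cref{lemma:local-difference-is-constructible}, and your coordinate-wise unfolding via the $\ZZ$-basis $(L_i)$ — finite image in $E_1 \times \cdots \times E_r$ and fibres as finite intersections of constructible sets — is the straightforward way to make that deduction explicit.
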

Now, for every $\varepsilon = ( \varepsilon_{s} ) \in \left ( \Pic ( V ) ^\vee \right )^{\mathsf S}$,
let 
\[
\widetilde{\WWW}  ( \varepsilon ) 
= 
\prod_{s \in \mathsf S}  \widetilde{\WWW}_s  ( \varepsilon_s  ) 
\subset 
\prod_{s \in \mathsf S} \Gr_\infty \left ( \widetilde{\VVV}_{R_s} \right )  
\]
and let $\widetilde W $ be any finite product $\prod_{s\in \mathsf S} \widetilde W_s $ of constructible subsets ${\widetilde{W}_s \subset \Gr \left ( \widetilde{\VVV}_{R_s} \right )}$. Let $W$, $W'$, $W_s$ and $W_s '$ be the corresponding images by $f$ and $f'$.
By the previous lifting \cref{lemma:lifting-equidistribution-and-convergence-speed},
\[
 \left [ \Hom^{\degg_{\underline{\widetilde{\LLL }}} \, = \,  \mdeg }_\CCC  (\CCC , \widetilde{\VVV} \mid  \widetilde{W} \cap \widetilde{\WWW}  ( \varepsilon )  )_U \right ] 
 \LL^{- \, \mdeg   \scdot \omega_V^{-1} }
 \longrightarrow 
 \tau_{\widetilde{\LLL_\VVV}} \left ( \widetilde \VVV \mid  \widetilde{W} \cap  \widetilde{\WWW}  ( \varepsilon ) \right )
\]
when $\mdeg \in  \CEff ( V )^\vee_\ZZ $ becomes arbitrarily large.

Let $\WWW ' ( \varepsilon )$ 
be the image of $\widetilde \WWW ( \varepsilon ) $ in $\prod_{s\in \mathsf S} \Gr ( \VVV_{R_s}' )$.
We decompose our classes as follows:
\begin{align*}
\left [\Hom^{ \degg_{\underline{\LLL ' }} \, = \,  \mdeg ' }_\CCC \left ( \CCC , \VVV ' \mid W ' \right ) \right ] & = \sum_{ \varepsilon \in \left ( \Pic ( V ) ^\vee \right )^{\mathsf S} } 
\left [
\Hom^{\degg_{\underline{\LLL ' }} \, = \, \mdeg '}_\CCC \left ( \CCC , \VVV ' \mid W ' \cap \mathscr W ' ( \varepsilon ) \right ) 
\right ] \\
& = \sum_{ \varepsilon \in \left ( \Pic ( V ) ^\vee \right )^{\mathsf S} } 
\left [
\Hom^{\degg_{\underline{\widetilde{\LLL '}}} \, = \,  \mdeg ' }_\CCC 
\left ( \CCC , \widetilde{\VVV} \mid  \widetilde{W} \cap \widetilde{\WWW}  ( \varepsilon ) \right ) 
\right ]
\end{align*}
these sums being finite by \cref{lemma:difference-of-degrees-takes-finite-nb-of-values}. Normalising, we get 
\begin{align*}
& \left [
\Hom^{ \degg_{\underline{\LLL ' }} \, = \,  \mdeg ' }_\CCC \left ( \CCC , \VVV ' \mid W ' \right ) 
\right ]  \LL^{-\, \mdeg ' \scdot  \omega_V^{-1}  } \\
& = \sum_{ \varepsilon \in \left ( \Pic ( V ) ^\vee \right )^{\mathsf S} } 
\left [
\Hom^{\degg_{\underline{\widetilde{\LLL '}}} \, = \,  \mdeg ' }_\CCC \left ( \CCC , \widetilde{\VVV} \mid \widetilde{W} \cap \widetilde{\WWW}  ( \varepsilon ) \right ) 
\right ] \LL^{-\, \mdeg  ' \scdot  \omega_V^{-1}  } \\
& = \sum_{\varepsilon \in \left ( \Pic ( V ) ^\vee \right )^{\mathsf S} } 
\left [\Hom^{\degg_{\underline{\widetilde{\LLL }}} \, = \,  \mdeg ' - | \varepsilon | }_\CCC \left ( \CCC , \widetilde{\VVV} \mid \widetilde{W} \cap  \widetilde{\WWW}  ( \varepsilon ) \right ) \right ] 
\LL^{-\, ( \mdeg ' - | \varepsilon | ) \scdot \omega_V^{-1}} \LL^{ - |\varepsilon | \scdot \omega_V^{-1}} \\
\end{align*}
where $|\varepsilon |$ stands for $\sum_{s\in \mathsf S} \varepsilon_{s} \in \Pic ( V )^\vee$. 
Then 
\begin{align*}
& \left [\Hom^{ \degg_{\underline{\LLL ' }} \, = \,  \mdeg ' }_\CCC \left ( \CCC , \VVV ' \mid  W ' \right ) \right ]  \LL^{-\, \mdeg ' \scdot  \omega_V^{-1}  }
\\
& \underset{d \left ( \mdeg ' , \partial \CEff ( V ) ^\vee  \right ) \to \infty }{\longrightarrow} \sum_{\varepsilon \in \left ( \Pic ( V ) ^\vee \right )^{\mathsf S} } \tau_{\widetilde{\LLL_\VVV}} \left ( \widetilde \VVV \mid \widetilde{W} \cap  \widetilde{\WWW}  ( \varepsilon ) \right )
 \LL^{ - | \varepsilon | \scdot \omega_V^{-1}} = \tau_{\widetilde{\LLL_\VVV '}} \left (   \widetilde \VVV \mid \widetilde{W} \right ) = \tau_{\LLL_\VVV '} \left ( \VVV '  \mid W ' \right )  
\end{align*}
and \cref{thm:equidistribution-and-models} is proved.

\bigskip 


\section{Rational curves on smooth split projective toric varieties}

\label{section-BMP-toric}
In this section, 
we prove equidistribution of rational curves on smooth split projective toric varieties over any base field.

As a warm-up, we start with
proving that the motivic Batyrev-Manin-Peyre principle holds for rational curves on this class of varieties, 
in line with the works of Bourqui \cite{bourqui2003fonctions,bourqui2009produit}, Bilu \cite{bilu2018motivic} and Bilu-Das-Howe \cite{bilu2020zeta}, see \cref{thm:motivic-BMP-toric}. 

Then we generalize this result, by proving equidistribution of rational curves on smooth split projective toric varieties, see \cref{thm-equidistribution-toric}. 

\subsection{Geometric setting}
General references for toric varieties are \cite{oda1988convex,fulton1993introduction,cox2011toric}.
Let $U$ be  a split torus of dimension $n$ over $\kb$.
Let 
\[
\mathcal X^* (  U )  = \Hom (  U , \mathbf G_m ) 
\]
be its group of characters and $\mathcal X_* (  U ) = \Hom_\ZZ ( \mathcal X^* ( U ) , \ZZ )  $ its dual as a $\ZZ$-module. 
Let $\Sigma$ be a complete and regular fan of $\mathcal X_* (  U ) $, which defines a smooth projective toric variety $V_\Sigma$ over $\kb$, with open orbit isomorphic to $U$. 
Let 
\[
r  = \rg \left ( \Pic \left ( V_\Sigma \right ) \right )
\]
be its Picard number,
$\Sigma ( 1 ) $ the set of rays of the fan $\Sigma $ 
and $(D_\alpha )_{\alpha \in \Sigma ( 1 ) }$ 
the set of its $U$-invariant divisors.
Each ray $\alpha \in \Sigma ( 1 ) $ admits a minimal generator $\rho_\alpha \in \mathcal X_* ( U ) $ 
and
the map sending a character $\chi \in \mathcal X^* ( U ) $ to the divisor 
\[
\sum_{\alpha \in \Sigma ( 1 ) } \langle \chi , \rho_\alpha \rangle D_\alpha 
\]
is part of the exact sequence \cite[Theorem 4.1.3]{cox2011toric}
\begin{equation}\label{exact-sequence:torus}
	0 \to \mathcal X^* ( U ) \to \oplus_{\alpha \in \Sigma ( 1 ) } \ZZ D_\alpha \to \Pic \left ( V_\Sigma \right ) \to  0 
\end{equation}
which provides, in particular, the equality 
\[ 
| \Sigma ( 1 ) | = n + r .
\]
If $\sigma$ is an element of the fan $\Sigma$, let $\sigma ( 1 ) \subset \Sigma ( 1 ) $ be the subset of rays which are faces of $\sigma$. 

\subsection{Möbius functions}
Let $B_\Sigma \subset \{ 0 , 1 \}^{\Sigma ( 1 )}$ be the complement of the image of
\[
\begin{array}{rll}
	\Sigma & \longrightarrow & \{ 0 , 1 \}^{\Sigma ( 1 )}\\
	\sigma & \longmapsto & ( \mathbf 1_{\sigma ( 1 )} ( \alpha ) )_{\alpha \in \Sigma ( 1 )} . \\
\end{array}
\]
In \cite[\S 3.5]{bourqui2009produit}, this set $B_\Sigma$ is described explicitly as
\[
B_\Sigma = \{ \nn \in \{ 0 , 1 \}^{\Sigma ( 1 ) } \mid \forall \, \sigma \in \Sigma , \, \exists \, \alpha \in \Sigma ( 1 ), \, \alpha \notin \sigma ( 1), \text{ and } n_\alpha = 1 \} .
\]
It has a geometric interpretation in terms of the effective divisors $D_\alpha$: it corresponds to the subsets $I\subset \Sigma ( 1 ) $ such that 
\[
\cap_{\alpha \in I} D_\alpha = \varnothing . 
\]
Then, the universal torsor of $V_\Sigma$
admits an explicit description
which goes back to Salberger \cite{salberger1998tamagawa}: 
\[
\mathcal T_\Sigma 
=
\mathbf A^{\Sigma ( 1 )}
\setminus 
\left ( 
\cup_{J\in B_\Sigma}
\cap_{\alpha \in J} \{ x_\alpha = 0 \}
\right ) .
\]
\index{universal torsor of a toric variety}

\subsubsection{Local Möbius function}
Bourqui inductively defines a local Möbius function 
\[
 \mu_{B_\Sigma}^0 : \{ 0 , 1 \}^{\Sigma ( 1 ) } \to \ZZ
 \]
through the relation 
\[
\mathbf 1_{ \{ 0 , 1 \}^{\Sigma ( 1 )} \setminus B_\Sigma } ( \nn ) = \sum_{0\leqslant \nn ' \leqslant \nn } \mu_{B_\Sigma}^0 ( \nn '  ) 
\]
for every $\nn \in \{ 0 , 1 \}^{\Sigma ( 1 ) }$. It comes with a generating polynomial 
\[
P_{B_\Sigma} ( \TT )  = \sum_{\nn \in \{ 0 , 1 \}^{\Sigma ( 1 )}} \mu_{B_\Sigma}^0 ( \nn ) \TT^\nn 
\]
and a series
\[
Q_{B_\Sigma} ( \TT )  = \frac{P_{B_\Sigma} ( \TT ) }{\prod_{\alpha \in \Sigma ( 1  ) }  ( 1 - t_\alpha ) } . 
\]
Let 
\[
A \left ( B_\Sigma \right ) \subset \NN^{\Sigma ( 1 ) } 
\label{definition-ABsigma}
\] be the set of tuples $\nn\in \NN^{\Sigma ( 1 ) }$ 
such that there is no $\nn ' \in B_{\Sigma} $ with $\nn \geqslant \nn ' $. In particular, ${\mathbf 0 \in A \left ( B_\Sigma \right )}$. 
It is the set of elements of $\NN^{\Sigma ( 1 ) }$ \textit{not lying above} $B_\Sigma $ in the sense of \cite[\S 4.4]{bilu2020zeta}.
Let 
\[
\mu_{B_\Sigma } : \NN^{\Sigma ( 1 ) } \to \ZZ 
\] be the local Möbius function defined by the relation 
\begin{equation}
\label{def-eq:toric-local-mobius-function}
	\mathbf 1_{A ( B_\Sigma )  } ( \nn ) = \sum_{0\leqslant \nn ' \leqslant \nn} \mu_{ B_\Sigma } ( \nn ' ) .
\end{equation}

As Bilu-Das-Howe point out in \cite[\S 5.2]{bilu2020zeta},  $\mu_{B_\Sigma } $ coincides with $\mu_{B_\Sigma}^0 $ on $\{ 0 , 1 \}^{\Sigma ( 1 ) }$ 
and is zero outside of this set.
Hence
\[
Q_{ B_\Sigma } ( \TT ) =  \sum_{\nn \in A \left ( B_\Sigma \right )}  \TT^\nn  .
\]

\subsubsection{Global motivic Möbius function}
For any $\ee \in \NN^{\Sigma ( 1 ) }$, let $( \PP^1_\kb )_{B_\Sigma}^\ee$ be the open subset of 
\[
\Sym^\ee_{/ \kb} ( \PP^1_\kb ) 
\] parametrizing 
$\Sigma ( 1  ) $-tuples of effective zero-cycles of degree $e_\alpha$ having disjoint supports with respect to $B_\Sigma $. 
More precisely,
\[
( \PP^1_\kb )_{B_\Sigma}^\ee = \left  \{ ( C_\alpha ) \in \Sym^\ee_{/ \kb} ( \PP^1_\kb ) 
 \; \left | \; \forall \, J \in B_\Sigma \quad \bigcap_{\alpha \in J } \Supp ( C_\alpha )    = \varnothing \right \} \right. . 
\]
Up to a tuple of multiplicative factors, this set corresponds to $\Sigma ( 1 )$-tuples of homogeneous polynomials $P(T_0,T_1)$ of degree $e_\alpha$ with coefficients in $\kb$ such that 
for all 
$J \in B_\Sigma$
the polynomials $ (P_\alpha )_{\alpha \in J} $ 
have no common root in any finite extension of $\kb$,
see \cite[Lemme 5.10]{bourqui2009produit}. 

Then, applying the definition of the motivic Euler product in Bilu's sense \cite{bilu2018motivic}, 
we get 
\[
\prod_{p\in \PP^1_\kb } Q_{B_\Sigma } ( \TT  ) = \sum_{\dd \in \NN^{\Sigma ( 1 )}} \left [ \left ( \PP^1_\kb \right )^\dd_{B_\Sigma} \right ] \TT^\dd .
\]
The formalism of pattern-avoiding zero cycles 
allows Bilu, Das and Howe to work with Bilu's motivic Euler product 
and to give a positive answer to a technical question of Bourqui 
\cite[Question 5]{bourqui2009produit}, 
which in turn provides a lift of Bourqui's main theorem \cite[Théorème 1.1]{bourqui2009produit} 
from the localised Grothendieck ring of Chow motives $\mathscr M_\kb^\chi$
to the localised Grothendieck ring of varieties $\mathscr M_\kb$, see \cite[Lemma 4.5.4 \& Remark 4.5.7]{bilu2020zeta}. 
Indeed, since 
\[
Q_{B_\Sigma } ( \TT )  = P_{B_\Sigma } ( \TT )  \prod_{\alpha \in \Sigma ( 1 ) } ( 1 - t_\alpha )^{-1} 
\]
one obtains, by taking motivic Euler products (in Bilu's sense)
\begin{equation}\label{equation:mobius-inversion-P1-in-terms-of-series}
\sum_{\dd \in \NN^{\Sigma ( 1 )}} \left [ \left ( \PP^1_\kb \right )^\dd_{B_\Sigma} \right ] \TT^\dd 
 = 
 \prod_{p\in \PP^1_\kb} Q_{B_\Sigma} ( \TT )  = \prod_{p\in \PP^1_\kb} P_{B_\Sigma } ( \TT ) \times  \prod_{\alpha \in \Sigma ( 1 )} Z_{\PP^1_\kb} ( t_\alpha ) 
\end{equation}
where 
$Z_{\PP^1_\kb} ( t ) $ is Kapranov's zeta function of the projective line 
\[
Z_{\PP^1_\kb} ( t )  = \sum_{e \geqslant 0} \left [ \Sym_{/ \kb}^e  ( \PP^1_\kb  ) \right ] t^e .
\]
Bourqui's construction \cite[Section 3.3]{bourqui2009produit}, applied to the projective line over $\kb$ and the set $B_\Sigma$, 
provides a motivic global Möbius function 
\[
\mu_\Sigma : \NN^{\Sigma ( 1  )} \to \mathscr M_\kb
\]
given by the relation
 \[
 \left [ ( \PP^1_\kb )_{B_\Sigma}^\ee \right ] 
 = 
 \sum_{
 \substack{\ee \in \NN^{\Sigma ( 1 )}
 \\
 \ee ' \leqslant \ee}
 }
 \mu_{\Sigma } ( \ee  '  ) \left [ \Sym_{/ \kb}^{\ee - \ee '} ( \PP^1_\kb  )\right ]  
 \]
for all $\ee \in \NN^{\Sigma ( 1 )}$,
which is nothing else than what one obtains by considering the coefficient of multidegree $\ee$ in expression \eqref{equation:mobius-inversion-P1-in-terms-of-series}. 

It follows from the definitions that the motivic global Möbius function is linked to the local one by the relation 
\[
\prod_{p\in \PP^1_\kb } \left ( \sum_{\mm \in \NN^{\Sigma ( 1 )}} \mu_{B_\Sigma } ( \mm ) \TT^\mm \right ) = \sum_{\ee \in \NN^{\Sigma ( 1  ) }} \mu_\Sigma  ( \ee ) \TT^\ee . 
\]

\subsection{Motivic Tamagawa number}
By the following Proposition and Remark, the constant $\tau ( V_\Sigma )$ is well-defined in $\widehat{\mathscr M_\kb}^{\dim}$. 
\begin{myptn}\label{ptn:convergence-Euler-product-toric-varieties}
The motivic Euler product 
\[
 \left ( \prod_{p\in \PP^1_\kb } P_{B_\Sigma } ( \TT  ) \right ) \left ( \LL^{-1}_\kb \right ) = 
\prod_{p\in \PP^1_\kb } \left ( \sum_{\mm \in \NN^{\Sigma (1) } } \mu_{B_\Sigma} ( \mm ) \LL_p^{ - | \mm |} \right ) = \sum_{\ee \in\NN^{\Sigma ( 1 )}} \mu_\Sigma ( \ee ) \LL_\kb^{- | \ee |}
\]
is well-defined in $\widehat{\mathscr M_\kb}^{\dim}$.
\end{myptn}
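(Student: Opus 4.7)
The plan is to expand the formal Euler product using Bilu's symmetric-product formalism and then derive convergence from a direct dimensional estimate on $\PP^1_\kb$; the essential input is the vanishing of the low-order coefficients of $P_{B_\Sigma}$.

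First, I would verify directly from \cref{def-eq:toric-local-mobius-function} that $\mu_{B_\Sigma}(\mathbf{0}) = 1$ and that $\mu_{B_\Sigma}(\mm) = 0$ whenever $|\mm| = 1$. This is immediate: the zero cone lies in $\Sigma$, so $\mathbf{0}$ is in the image of $\Sigma \to \{0,1\}^{\Sigma(1)}$ and hence not in $B_\Sigma$; likewise each ray $\alpha$ is itself a cone of $\Sigma$, so no weight-one tuple belongs to $B_\Sigma$. Hence $P_{B_\Sigma}(\TT) = 1 + \sum_{|\mm|\geqslant 2}\mu_{B_\Sigma}(\mm)\,\TT^\mm$.

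Setting $I = \{0,1\}^{\Sigma(1)}\setminus\{\mathbf{0}\}$, regarding each $\mu_{B_\Sigma}(\mm) \in \ZZ$ as an element of $\MMM_{\PP^1_\kb}$ pulled back from the base, and viewing $(\TT^\mm)_{\mm \in I}$ as a collection of indeterminates indexed by the commutative monoid $\NN^{\Sigma(1)}$ (with the convention $\TT^{\mm}\TT^{\mm'} = \TT^{\mm + \mm'}$ in the sense of \cref{proposition:euler-product:multiplicativity}), the expansion from \cref{def:abstract-euler-product} yields
\[
\prod_{p \in \PP^1_\kb} P_{B_\Sigma}(\TT) = \sum_{\pi \in \NN^{(I)}}\left[\Sym^\pi_{\PP^1_\kb/\kb}(\mu_{B_\Sigma})_\ast\right] \TT^{\sum_\mm \pi_\mm \mm}.
\]
Matching with $\sum_\ee \mu_\Sigma(\ee)\,\TT^\ee$ and specialising $t_\alpha = \LL_\kb^{-1}$ yields
\[
\mu_\Sigma(\ee)\,\LL_\kb^{-|\ee|} = \sum_{\substack{\pi \in \NN^{(I)}\\ \sum_\mm \pi_\mm \mm\, =\, \ee}}\left[\Sym^\pi_{\PP^1_\kb/\kb}(\mu_{B_\Sigma})_\ast\right] \LL_\kb^{-\sum_\mm \pi_\mm |\mm|},
\]
a finite sum for each $\ee$. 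Because the $\mu_{B_\Sigma}(\mm)$ are scalars, each class $\Sym^\pi_{\PP^1_\kb/\kb}(\mu_{B_\Sigma})_\ast$ is an integer multiple of the class of a labelled configuration space inside $\prod_\mm \Sym^{\pi_\mm}\PP^1_\kb \simeq \prod_\mm \PP^{\pi_\mm}_\kb$; its dimension over $\kb$ is therefore at most $\sum_\mm \pi_\mm$.

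Combining this with the vanishing $\mu_{B_\Sigma}(\mm) = 0$ for $|\mm|\leqslant 1$ — which forces $|\ee| = \sum_\mm \pi_\mm|\mm| \geqslant 2\sum_\mm \pi_\mm$ on the support of any contributing $\pi$ — we obtain
\[
\dim\!\left(\mu_\Sigma(\ee)\,\LL_\kb^{-|\ee|}\right) \leqslant \sum_\mm \pi_\mm - |\ee| \leqslant -\tfrac{1}{2}|\ee|,
\]
which tends to $-\infty$ as $|\ee| \to \infty$. This is precisely convergence in $\widehat{\MMM_\kb}^{\dim}$. The step most likely to require care is identifying $\Sym^\pi_{\PP^1_\kb/\kb}(\mu_{B_\Sigma})_\ast$ with (an integer multiple of) a configuration space on $\PP^1_\kb$ sitting inside the product of symmetric products; this is nothing more than unwinding the definitions from \cref{subsection:symmetric-products} together with the fact that scalar coefficients pass through the Cauchy-product construction, but deserves to be written out.
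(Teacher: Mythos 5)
Your proof is correct and follows the same route as the paper's: the paper simply cites \cite[Lemme 3.8]{bourqui2009produit} for the fact that $P_{B_\Sigma}(\TT)-1$ has valuation at least $2$ (your direct check that $\mu_{B_\Sigma}$ vanishes on tuples of weight $\leqslant 1$) and \cite[Lemma 4.2.5]{bilu2020zeta} for the resulting convergence (your dimension estimate $\dim(\mu_\Sigma(\ee)\LL_\kb^{-|\ee|})\leqslant -\tfrac{1}{2}|\ee|$, which is exactly the bound the paper later extracts from that lemma). The only difference is that you unwind both citations into a self-contained argument; the mathematical content is identical.
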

\begin{proof}
By \cite[Lemme 3.8]{bourqui2009produit}, the valuation of $P_{B_\Sigma } ( T ) -1 $ is at least equal to $2$. Thus by \cite[Lemma 4.2.5]{bilu2020zeta} the formal motivic Euler product $\prod_{p\in \PP^1_\kb } P_{B_\Sigma } ( \TT  )$ converges at $\TT=\LL^{-1}$ for the dimensional filtration.  
\end{proof}

\begin{myremark}\label{remark:local-factor-Euler-product-toric-varieties}
The local factor of the motivic Euler product above is actually 
\[
\left ( P_{B_\Sigma} ( \TT ) \right ) _{| t_\alpha = \LL_p^{-1} } 
= 
\sum_{\mm \in \NN^{\Sigma (1) } } \mu_{B_\Sigma} ( \mm ) \LL_p^{-| \mm |} =  \frac{[V_\Sigma \otimes \kappa ( p )] }{\LL_p^{\dim ( V_\Sigma ) }} \left ( 1 - \LL_p^{-1} \right )^{\rg  \left ( \Pic ( V_\Sigma ) \right  )} = {[ \mathcal T_\Sigma \otimes \kappa ( p ) ] \over \LL_p^{| \Sigma ( 1 ) |}}. 
\]
Indeed, 
we can interpret $[ \mathcal T_\Sigma \otimes \kappa ( p )] \LL_p^{-| \Sigma ( 1 ) |}$
as the motivic density of $\kappa ( p )$-arcs of $\mathbf A_\kb^{\Sigma ( 1 )}$
with origin in $\mathcal T_\Sigma$,
and $\LL_p^{- | \mm |}$
as the motivic density of the subspace $V_\mm$ 
of $\kappa ( p )$-arcs with $\Sigma ( 1 )$-tuple of valuations greater than $\mm$:
\[
V_\mm  = \{ x \in \mathbf A_\kb^{\Sigma ( 1 )} ( \kappa ( p ) [[t]] ) \mid \forall \, \alpha \in \Sigma ( 1 ) , \; x_\alpha \in t^{m_\alpha} \kappa ( p ) [[t ]] \} .
\]
We consider as well the subspace of arcs with given valuation:
\begin{align*}
V_\mm^\circ  & = \{ x \in \mathbf A_\kb^{\Sigma ( 1 )} ( \kappa ( p ) [[t]] ) \mid \forall \, \alpha \in \Sigma ( 1 ) , \; x_\alpha \in t^{m_\alpha} \kappa ( p ) [[t ]] 
\text{ and } x_\alpha \notin t^{m_\alpha +1 } \kappa ( p ) [[t ]] 
\} \\
& = V_{\mm } \setminus \bigcup_{ \alpha \in  \Sigma ( 1 )} V_{\mm  + \mathbf 1_\alpha } .
\end{align*}
The arc-space of $\mathcal T_\Sigma$
is the space of arcs whose $\Sigma ( 1 )$-tuple of valuations 
does not lie above $B_\Sigma$:
\[
\Gr_\infty \left ( \mathcal T_\Sigma \otimes \kappa ( p ) \right ) = \bigsqcup_{\mm  \in A ( B_\Sigma )} V_{\mm }^\circ 
=
\bigsqcup_{\mm  \in A ( B_\Sigma )} 
\left ( 
V_{\mm } \setminus \bigcup_{ \alpha \in  \Sigma ( 1 )} V_{\mm  + \mathbf 1_\alpha } 
\right )
\]
and thus 
\begin{align*}
	\left [ 
\mathcal T_\Sigma \otimes \kappa ( p )
\right ]
\LL_p^{-|\Sigma ( 1 )|} 
& =
\sum_{\mm  \in A ( B_\Sigma )} \mu_V \left ( 
V_{\mm } \setminus \bigcup_{\alpha \in  \Sigma ( 1 )} V_{\mm  + \mathbf 1_\alpha } 
\right ) \\
& = \sum_{\mm  \in A ( B_\Sigma )} 
\sum_{J\subset \Sigma ( 1 )} ( - 1 )^{|J|}
\mu_V \left ( 
\bigcap_{\alpha \in J } V_{\mm  + \mathbf 1_\alpha } 
\right ) \\
& = \sum_{\mm  \in A ( B_\Sigma )}
\LL_p^{-|\mm |} \prod_{\alpha \in \Sigma ( 1 )} \left ( 1 - \LL_p^{-1} \right ) \\
& = \left ( \left ( Q_{B_\Sigma} ( \TT ) \right ) \prod_{\alpha \in \Sigma ( 1 )} ( 1 - t_\alpha ) \right )_{| t_\alpha = \LL_p^{-1} } \\
& = \left ( P_{B_\Sigma} ( \TT ) \right ) _{| t_\alpha = \LL_p^{-1} } 
\end{align*}
as expected.

By compatibility of formal motivic Euler products with changes of variables of the form $t'_\alpha = \LL_\kb^a t _\alpha$
with $a$ an integer \cite[\S 3.6.4]{bilu2018motivic}, together with the compatibility with partial specialisation \cite[Lemma 6.5.1]{bilu2021motivic}, 
what we get by taking the corresponding motivic Euler product is exactly the motivic Tamagawa number of $V_\Sigma$ given by \cref{def:motivic-Tamagawa-number-of-a-model} and \cref{def:euler-product-over-curves}:
\[
\left ( \prod_{p\in \PP^1_\kb }  P_{B_\Sigma} ( \TT )  \right ) _{| t_\alpha = \LL_\kb^{-1} } 
= 
\left ( 
\prod_{p\in \PP^1_\kb } \left ( 1 + \left ( {[ \mathcal T_\Sigma \otimes \kappa ( p ) ] \over \LL_p^{| \Sigma ( 1 ) |}} - 1  \right ) t \right ) 
\right )_{| t = 1 } 
= 
\tau ( V_\Sigma ) .
\]
\end{myremark}

\subsection{Non-constrained rational curves}

Let 
\[
\Hom_\kb^\mdeg ( \PP^1_\kb , V_\Sigma )_U 
\]
be the quasi-projective scheme 
parametrizing morphisms $\PP^1_\kb \to V_\Sigma $ of multidegree ${\mdeg \in \Pic ( X )^\vee}$ 
intersecting the dense open subset $U \subset V_\Sigma$ (see \cref{def:multidegree-relative} and \cref{lemma:representability-Hom-functor}). 
It is empty whenever $\mdeg \notin \CEff ( V_\Sigma ) ^\vee _\ZZ$
(see the remark after Notation 5.4 in 
\cite{bourqui2009produit}),
so we will always assume that $\mdeg \in \CEff ( V_\Sigma ) ^\vee _\ZZ$ in the remaining of this section.

Through the injection $\Pic ( V_\Sigma ) ^\vee \hookrightarrow \ZZ^{\Sigma ( 1 )}$ given by the exact sequence \eqref{exact-sequence:torus},
$\CEff ( V_\Sigma ) ^\vee _\ZZ$ can be see as the submonoid of tuples $(d_\alpha )_{\alpha \in \Sigma ( 1 )} \in \NN^{\Sigma ( 1 )}$ such that 
\[
\sum_{\alpha \in \Sigma ( 1 ) }d_\alpha  \langle \chi , \rho_\alpha \rangle  = 0
\]
for all $ \chi \in \mathcal X^* ( U ) $. Note that this submonoid is denoted by $\NN^{\Sigma ( 1 ) }_{ ( * ) }$ in Bourqui's work \cite[Notation 5.3]{bourqui2009produit}. 

For every $\dd \in \CEff ( V_\Sigma )_\ZZ ^\vee$, 
let 
\[
\widetilde{\left ( \PP^1_\kb \right )_{B_\Sigma }^\dd }  
\]
be the inverse image of the open subset 
\[
\left ( \PP^1_\kb \right )_{B_\Sigma }^\dd \subset \Sym_{/\kb}^{\dd} \left ( \PP^{1}_\kb \right )  \simeq \prod_{\alpha \in \Sigma ( 1 )} \PP^{d_\alpha}_\kb 
\]
through the $\mathbf G_m ^{\Sigma ( 1 ) }$-torsor 
\[
\prod_{\alpha \in \Sigma ( 1 )} \left (  \mathbf A^{d_\alpha +1 }_\kb \setminus \{ 0 \}  \right )
\longrightarrow
\prod_{\alpha \in \Sigma ( 1 )} \PP^{d_\alpha}_\kb . 
\]
One of Bourqui's key results is the following proposition.
\begin{myptn}[{\cite[Proposition 5.14]{bourqui2009produit}}]\label{ptn:repres-moduli-space-toric}
For every $\dd \in \CEff ( V_\Sigma )_\ZZ ^\vee $, 
\[
\widetilde{\left ( \PP^1_\kb \right )_{B_\Sigma }^\dd } / T_{\NS} 
\]	
represents the functor $\mathbf{Hom}_\kb^{\dd} ( \PP^1 , V_\Sigma )_U $. 
\end{myptn}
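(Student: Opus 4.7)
The plan is to follow the Cox homogeneous-coordinate strategy, in the spirit of Bourqui's original argument. First, I would dualize the exact sequence \eqref{exact-sequence:torus} to exhibit the Néron-Severi torus $T_{\NS} := \Hom(\Pic(V_\Sigma), \mathbf{G}_m)$ as the kernel of $\mathbf{G}_m^{\Sigma(1)} \twoheadrightarrow U$, and recall that the Cox presentation realizes $V_\Sigma$ as the geometric quotient $\mathcal{T}_\Sigma / T_{\NS}$, with $T_{\NS}$ acting on $\mathbf{A}^{\Sigma(1)}$ by the character $[D_\alpha] \in \Pic(V_\Sigma)$ on the $\alpha$-th coordinate. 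The quotient morphism $\mathcal{T}_\Sigma \to V_\Sigma$ is, of course, the universal torsor.

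Next, for a $\kb$-scheme $T$, I would describe a $T$-point $f \colon \PP^1_T \to V_\Sigma$ of $\mathbf{Hom}_\kb^{\dd}(\PP^1_\kb, V_\Sigma)_U$ through the associated pairs $(\mathcal{L}_\alpha, s_\alpha) := (f^* \OOO(D_\alpha), f^* \mathbf{1}_{D_\alpha})$, where $\deg(f_t^* D_\alpha) = d_\alpha$ for every $t \in T$, and subject to the constraint that at each geometric point $x \in \PP^1_T$ the subset $\{\alpha : s_\alpha(x) = 0\}$ is the ray-set of a cone of $\Sigma$; equivalently, its complement contains no $J \in B_\Sigma$. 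This is the \emph{$B_\Sigma$-avoidance} condition cutting out $\widetilde{(\PP^1_\kb)_{B_\Sigma}^{\dd}}$ inside $\prod_\alpha (\mathbf{A}^{d_\alpha+1}_\kb \setminus \{0\})$. The fact that all the $s_\alpha$ are nonzero as sections automatically forces the image of the generic point of $\PP^1_T$ into the open orbit $U$.

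Then, by the Seesaw principle, $\Pic(\PP^1_T) \simeq \Pic(T) \oplus \ZZ$, and the multidegree fixes a decomposition $\mathcal{L}_\alpha \simeq \OOO_{\PP^1}(d_\alpha) \boxtimes \mathcal{M}_\alpha$ for a unique $\mathcal{M}_\alpha \in \Pic(T)$. The compatibility of the collection $(\mathcal{L}_\alpha)_\alpha$ with the Cox structure translates into the statement that the assignment $\chi \mapsto \sum_\alpha \langle \chi, \rho_\alpha \rangle \mathcal{M}_\alpha$ vanishes on $\mathcal{X}^*(U)$, i.e., that the $(\mathcal{M}_\alpha)$ assemble into a $T_{\NS}$-torsor on $T$ (the restriction of $f^*\mathcal{T}_\Sigma$ to any constant section $T \hookrightarrow \PP^1_T$). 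After Zariski-locally trivializing this torsor, the sections $s_\alpha$ become tuples of homogeneous polynomials $(P_\alpha)$ of bidegree $d_\alpha$ satisfying $B_\Sigma$-avoidance, i.e. a $T$-point of $\widetilde{(\PP^1_\kb)_{B_\Sigma}^{\dd}}$, and two such local trivializations produce the same $f$ iff they differ by a $T$-point of $T_{\NS}$, yielding the passage to the quotient.

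Finally, to verify that this quotient represents the functor as a scheme, I would check that $T_{\NS}$ acts freely on $\widetilde{(\PP^1_\kb)_{B_\Sigma}^{\dd}}$: if $t \in T_{\NS}$ fixes a tuple $(P_\alpha)$, then $[D_\alpha](t) = 1$ for every $\alpha$ with $P_\alpha \neq 0$; the $B_\Sigma$-avoidance condition forces this index set to contain the complement of some maximal cone $\sigma(1)$, and since $V_\Sigma$ is smooth, $\{\rho_\alpha : \alpha \in \sigma(1)\}$ is a $\ZZ$-basis of $\mathcal{X}_*(U)$, whence by \eqref{exact-sequence:torus} $\{[D_\alpha] : \alpha \notin \sigma(1)\}$ generates $\Pic(V_\Sigma)$, forcing $t = 1$. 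The main technical obstacle is the compatibility check in the third paragraph: matching the combinatorial $B_\Sigma$-avoidance condition with the set-theoretic requirement that the universal morphism factor through $\mathcal{T}_\Sigma \subset \mathbf{A}^{\Sigma(1)}$ modulo $T_{\NS}$, and then descending the rigidification through fppf base change so as to obtain a scheme-theoretic isomorphism of functors rather than merely an equivalence of stacks.
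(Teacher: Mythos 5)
The paper gives no proof of this proposition; it is quoted verbatim from Bourqui \cite{bourqui2009produit}*{Proposition 5.14}, and your sketch is essentially a reconstruction of Bourqui's argument via the Cox functor of points, the identification of $T_{\NS}$ as the kernel of $\GG_m^{\Sigma(1)}\twoheadrightarrow U$ dual to \eqref{exact-sequence:torus}, and the freeness of the $T_{\NS}$-action (which, on $\widetilde{(\PP^1_\kb)^{\dd}_{B_\Sigma}}$, follows immediately from the fact that \emph{all} coordinates $P_\alpha$ are nonzero there and the $[D_\alpha]$ generate $\Pic(V_\Sigma)$ — your detour through maximal cones is not needed). One slip: the condition cutting out $\mathcal T_\Sigma$, resp.\ $\widetilde{(\PP^1_\kb)^{\dd}_{B_\Sigma}}$, is that the zero-set $\{\alpha : s_\alpha(x)=0\}$ \emph{itself} contains no $J\in B_\Sigma$ (equivalently, is contained in $\sigma(1)$ for some cone $\sigma$), not that its complement contains no such $J$; your first formulation is the correct one and the ``equivalently'' clause should be fixed. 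Otherwise the outline, including the seesaw decomposition of the $\mathcal L_\alpha$ and the descent of the rigidification, matches the cited proof.
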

\begin{mythm}
\label{thm:motivic-BMP-toric}
The normalised class 
\[
\left [
\Hom_\kb^\mdeg ( \PP^1_\kb , V_\Sigma )_U 
\right ] 
\LL_\kb^{- \, \mdeg \scdot \omega_{V_\Sigma }^{-1} }  
\]
tends to the non-zero effective element 
\[
\tau ( V_\Sigma ) =  \frac{\LL_\kb^{\dim ( V_\Sigma )}}{\left ( 1 - \LL_\kb^{-1} \right )^{\rg ( \Pic ( V_\Sigma ) )} } 
\prod_{p\in \PP^1_\kb } 
\frac{ \left [ V_\Sigma  \otimes_\kb \kappa ( p ) \right ] }{\LL_p^{\dim ( V_\Sigma ) }} \left ( 1 - \LL_p^{-1} \right )^{\rg ( \Pic ( V_\Sigma ))}
\in \widehat{\MMM_\kb}^{\dim}
\]
when $\mdeg \in \CEff ( V_\Sigma )^\vee_\ZZ  $ goes arbitrarily far away from the boundary of the dual of the effective cone of $V_\Sigma$.

Moreover the error term 
\[
\tau ( V_\Sigma ) 
- 
\left [
\Hom_\kb^\mdeg ( \PP^1_\kb , V_\Sigma )_U 
\right ] 
\LL_\kb^{- \, \mdeg \scdot \omega_{V_\Sigma }^{-1} }  
\]
has virtual dimension at most 
\[
-  \frac{1}{4} \min_{\alpha \in \Sigma ( 1 )} ( \mdeg_\alpha ) + \dim ( V_\Sigma ) .
\]
\end{mythm}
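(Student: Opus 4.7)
The plan is to combine Bourqui's representability result (\cref{ptn:repres-moduli-space-toric}) with the motivic Möbius inversion underlying \eqref{equation:mobius-inversion-P1-in-terms-of-series} and the convergence of the associated Euler product (\cref{ptn:convergence-Euler-product-toric-varieties}), extracting $\tau(V_\Sigma)$ as the main term and controlling the remainder with \cref{lemma:controlling-error-terms-convolution}.

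\textbf{Step 1: Reduction to $(\PP^1_\kb)_{B_\Sigma}^\mdeg$.} By \cref{ptn:repres-moduli-space-toric}, the moduli space is the free quotient of the Zariski-locally trivial $\mathbf G_m^{\Sigma(1)}$-torsor $\widetilde{(\PP^1_\kb)_{B_\Sigma}^\mdeg} \to (\PP^1_\kb)_{B_\Sigma}^\mdeg$ by the Néron--Severi torus $T_{NS}$ of dimension $r$. Since $\omega_{V_\Sigma}^{-1}$ is linearly equivalent to $\sum_\alpha D_\alpha$, one has $\mdeg \scdot \omega_{V_\Sigma}^{-1} = |\mdeg| := \sum_\alpha d_\alpha$, and $[T_{NS}]=(\LL_\kb-1)^r$, so
\[
\bigl[\Hom_\kb^\mdeg(\PP^1_\kb, V_\Sigma)_U\bigr] \LL_\kb^{-|\mdeg|} = (\LL_\kb - 1)^n \, \LL_\kb^{-|\mdeg|} \bigl[(\PP^1_\kb)_{B_\Sigma}^\mdeg\bigr],
\]
where $n = \dim V_\Sigma = |\Sigma(1)| - r$.

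\textbf{Step 2: Möbius expansion.} By the definition of the global motivic Möbius function,
\[
\bigl[(\PP^1_\kb)_{B_\Sigma}^\mdeg\bigr] = \sum_{\ee \leq \mdeg} \mu_\Sigma(\ee) \prod_{\alpha \in \Sigma(1)} [\PP^{d_\alpha - e_\alpha}_\kb].
\]
Using the identity $\LL_\kb^{-k}[\PP^k_\kb] = (1-\LL_\kb^{-1})^{-1}(1 - \LL_\kb^{-k-1})$ and expanding the resulting product over $\alpha$, I obtain
\[
(\LL_\kb - 1)^n \LL_\kb^{-|\mdeg|} \bigl[(\PP^1_\kb)_{B_\Sigma}^\mdeg\bigr] = \frac{\LL_\kb^n}{(1-\LL_\kb^{-1})^r} \sum_{A \subset \Sigma(1)} (-1)^{|A|} \LL_\kb^{-|A|} \!\!\sum_{\ee \leq \mdeg} \mu_\Sigma(\ee) \LL_\kb^{-|\ee|} \LL_\kb^{e_A - d_A},
\]
with $e_A = \sum_{\alpha \in A} e_\alpha$ and $d_A = \sum_{\alpha \in A} d_\alpha$.

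\textbf{Step 3: Main term.} The $A = \emptyset$ contribution is $\tfrac{\LL_\kb^n}{(1-\LL_\kb^{-1})^r}\sum_{\ee \leq \mdeg}\mu_\Sigma(\ee)\LL_\kb^{-|\ee|}$, whose limit as $\mdeg$ recedes from $\partial \CEff(V_\Sigma)^\vee$ is, by \cref{ptn:convergence-Euler-product-toric-varieties} and \cref{remark:local-factor-Euler-product-toric-varieties},
\[
\frac{\LL_\kb^n}{(1-\LL_\kb^{-1})^r}\prod_{p \in \PP^1_\kb} \frac{[V_\Sigma \otimes \kappa(p)]}{\LL_p^n} (1-\LL_p^{-1})^r = \tau(V_\Sigma).
\]

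\textbf{Step 4: Error estimate.} The discrepancy with $\tau(V_\Sigma)$ decomposes into (i) the tail $\sum_{\ee \not\leq \mdeg}\mu_\Sigma(\ee)\LL_\kb^{-|\ee|}$ of the $A = \emptyset$ contribution and (ii) the contributions with $A \neq \emptyset$, each weighted by $\LL_\kb^{-|A|+e_A-d_A}$. I set $\mathfrak c_\ee = \mu_\Sigma(\ee) \LL_\kb^{-|\ee|}$; Bourqui's bound $\mathrm{val}(P_{B_\Sigma}(\TT)-1) \geq 2$ used in \cref{ptn:convergence-Euler-product-toric-varieties} gives $\dim(\mathfrak c_\ee) \leq -a |\ee|$ for a suitable $a > 0$. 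The $A \neq \emptyset$ contributions fit exactly the hypothesis of \cref{lemma:controlling-error-terms-convolution}, which bounds each of them by $-\tfrac{\min(1,a)}{2}\min_{\alpha \in A}d_\alpha$; the tail of type (i) is handled by the same lemma after writing $\{\ee \not\leq \mdeg\} = \bigcup_\alpha \{e_\alpha > d_\alpha\}$. Calibrating $a$ so that $\min(1,a)/2 \geq 1/4$ and reinstating the prefactor $\LL_\kb^n/(1-\LL_\kb^{-1})^r$ (of dimension $n$) produces the announced bound $-\tfrac{1}{4}\min_\alpha \mdeg_\alpha + n$.

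\textbf{Main obstacle.} The delicate point will be Step~4: tracking the exact decay rate of $\mathfrak c_\ee$ arising from Bourqui's valuation estimate, and summing the $2^{|\Sigma(1)|}$ subset contributions in a way that preserves the uniform $\tfrac{1}{4}$-coefficient in front of $\min_\alpha \mdeg_\alpha$. Once this estimate is in place, the rest of the argument is essentially formal manipulations of the Möbius expansion.
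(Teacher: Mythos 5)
Your proposal follows essentially the same route as the paper's proof: reduction via \cref{ptn:repres-moduli-space-toric}, the Möbius expansion (your direct expansion of $\prod_\alpha[\PP^{d_\alpha-e_\alpha}_\kb]$ is literally the paper's decomposition of $\prod_\alpha Z^{\mathrm{Kapr}}_{\PP^1_\kb}(t_\alpha)$ into the $Z_A(\TT)$), extraction of the $A=\varnothing$ main term, and control of the remainder by \cref{lemma:controlling-error-terms-convolution}. The only point you leave open — the value of $a$ — is settled in the paper by the bound $\dim(\mu_\Sigma(\ee)\LL_\kb^{-|\ee|})\leqslant-\tfrac12|\ee|$ from the proof of \cite[Lemma 4.2.5]{bilu2020zeta}, i.e. $a=1/2$, which yields exactly the announced $\tfrac14$-coefficient with $b=0$ in the lemma.
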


\begin{proof}
First note that since $\sum_{\alpha \in \Sigma ( 1 )} D_\alpha$ is an anticanonical divisor of $V_\Sigma$, 
the anticanonical degree of a curve of multidegree $\dd \in \CEff ( V_\Sigma )_\ZZ ^\vee$
is $| \dd | = \sum_{\alpha \in \Sigma ( 1 )} d_\alpha$. 
By \cref{ptn:repres-moduli-space-toric}, we have the relation 
\[
[ \Hom_\kb^\dd  ( \PP^1_\kb , V_\Sigma )_U ]  
= ( \LL_\kb - 1 )^{\dim ( V_\Sigma ) }  \left [ \left ( \PP^1_\kb \right )_{B_\Sigma }^\dd   \right ] 
\]
for every $\dd \in \CEff ( V_\Sigma )_\ZZ ^\vee$. 
Therefore studying the asymptotic behaviour of $[ \Hom_\kb^\dd  ( \PP^1_\kb , V_\Sigma )_U ] \LL_\kb^{-|\dd |} $ 
when $\min_\alpha ( d_\alpha ) \to \infty$
goes back to studying the one of $\left [ \left ( \PP^1_\kb \right )_{B_\Sigma }^\dd   \right ]  \LL_\kb^{-|\dd |}$. Moreover it is convenient to drop the assumption $\dd \in \CEff ( V_\Sigma )_\ZZ ^\vee$. 

Then \eqref{equation:mobius-inversion-P1-in-terms-of-series} gives
\[
\sum_{\dd \in \NN^{\Sigma ( 1 )}} \left [ \left ( \PP^1_\kb \right )^\dd_{B_\Sigma} \right ] \TT^\dd 
 = \prod_{p\in \PP^1_\kb} P_{B_\Sigma } ( \TT ) \times  \prod_{\alpha \in \Sigma ( 1 )} Z_{\PP^1_\kb}^{\text{Kapr}} ( t_\alpha ) 
\]
and we proceed as in \cite[\S 4.1]{faisant2022geometric}. 
First, recall that 
\[
Z^{\text{Kapr}}_{\PP^1_\kb} ( t ) = ( 1 - t )^{-1} \left ( 1 - \LL_\kb t \right )^{-1} .
\]
For any $A\subset \Sigma ( 1 ) $,
set
\[
Z_A ( \TT ) = \prod_{\alpha \in A } ( 1 - t_\alpha )^{-1} \prod_{\alpha \notin A} ( 1  - \LL_\kb t_\alpha )^{-1} 
\]
so that 
\begin{equation}\label{equation:decomposition-product-Kapr-P1}
\prod_{\alpha \in \Sigma ( 1 )} Z^{\text{Kapr}}_{\PP^1_\kb} ( t_\alpha ) 
= 
\prod_{\alpha \in \Sigma ( 1 )} \frac{1}{1-\LL_\kb}  \left ( \frac{1}{1- t_\alpha } - \frac{\LL_\kb}{1-\LL_\kb t_\alpha } \right )
=
\sum_{A\subset \Sigma ( 1 )} \frac{( - \LL_\kb )^{| \Sigma ( 1 ) | - | A |}}{ ( 1 - \LL_\kb )^{| \Sigma ( 1 ) |}} Z_A ( \TT ) .
\end{equation}
The coefficient of order $\dd \in \NN^{\Sigma ( 1 )} $ of 
\[
Z_A ( \TT ) 
\times 
\prod_{p\in \PP^1_\kb} P_{B_\Sigma } ( \TT ) 
\]
for a given $A\subset \Sigma ( 1 )$ is 
the finite sum 
\[
\sum_{\substack{ \ee \in \NN^{\Sigma ( 1 ) } \\ \ee \leqslant \dd }} \mu_\Sigma ( \ee ) \LL_\kb^{ \sum_{\alpha \in \Sigma ( 1) \setminus A } d_\alpha - e_\alpha  }   
=
 \LL_\kb^{  |  \dd_{\Sigma ( 1 ) \setminus A}  | } 
\sum_{\substack{ \ee \in \NN^{\Sigma ( 1 ) } \\ \ee \leqslant \mdeg }}   \mu_\Sigma ( \ee ) \LL_\kb^{- \left | \ee_{\Sigma ( 1 ) \setminus A } \right |}
\]
where we use again the notation of \cref{lemma:controlling-error-terms-convolution} for the restriction of $\ee \in \NN^{\Sigma ( 1 )}$ to a subset of $\Sigma ( 1 )$. 
Normalizing by $ \LL_\kb^{|\dd |}$ 
and writing $\ee_{\Sigma ( 1 ) \setminus A } = \ee - \ee_A $
one gets the sum 
\begin{equation}\label{symbol:normalised-coeff-ZZZ-A}
\sum_{\substack{ \ee \in \NN^{\Sigma ( 1 ) } \\ \ee \leqslant \dd }}   \mu_\Sigma ( \ee ) \LL_\kb^{- | \ee|} \LL_\kb^{| \ee_A | - | \dd_A |} .
\end{equation}
If $A = \varnothing $, this is exactly the $\dd$-th partial sum
\[
\sum_{\substack{ \ee \in \NN^{\Sigma ( 1 ) } \\ \ee \leqslant \dd }}   \mu_\Sigma ( \ee ) \LL_\kb^{- | \ee |} 
\]
and when $\min_{\alpha \in \Sigma ( 1 )} d_\alpha $ goes to infinity,
we know by 
\cref{ptn:convergence-Euler-product-toric-varieties}
that
this sum converges to 
\[
 \sum_{ \ee \in \NN^{\Sigma ( 1 ) } }   \mu_\Sigma ( \ee ) \LL_\kb^{- | \ee | } = \prod_{p\in \PP^1_\kb }  \frac{ \left [ V_\Sigma \otimes \kappa ( p ) \right ] }{\LL_p^n} \left ( 1 - \LL_p^{-1} \right )^r 
\tag{by \cref{remark:local-factor-Euler-product-toric-varieties}}
\]
in $\widehat{\mathscr M_\kb}^{\dim }$.
By the proof of \cite[Lemma 4.2.5]{bilu2020zeta}
we have $\dim (  \mu_\Sigma ( \ee ) \LL_\kb^{- | \ee |}  ) \leqslant - \frac{1}{2} | \ee |$
for any $\ee \in \NN^{\Sigma ( 1 ) } $.
Thus the error term
\[
\sum_{\substack{ \ee \in \NN^{\Sigma ( 1 ) } \\ \ee \nleqslant \dd }}   \mu_\Sigma ( \ee ) \LL_\kb^{- | \ee |} 
\]
has virtual dimension at most $- \frac{1}{2} \min_{\alpha \in \Sigma ( 1 )} ( d_\alpha +1 )$.
\\
If $A \neq \varnothing $, 
then by Lemma
\ref{lemma:controlling-error-terms-convolution} (taking $\mm = \ee$, $\mathfrak{c}_\mm = \mu_\Sigma ( \ee ) \LL_\kb^{-\ee}$, $a = 1/2$ and $b=0$),
the sum (\ref{symbol:normalised-coeff-ZZZ-A}) has virtual dimension at most
\[
-  \frac{1}{4} \min_{\alpha \in A} ( d_\alpha )
 \]
hence it becomes negligible in comparison with the term given by $A = \varnothing$
as $\min ( d_\alpha ) \to \infty $.

Finally, putting everything together, one concludes that the normalised class
\[
\left [\Hom ^\mdeg_\kb ( \PP^1_\kb, V_\Sigma )_U \right ] \LL_\kb^{- \, \mdeg \scdot  \omega_{V_\Sigma }^{-1} }  
\]
tends to 
\[
\frac{\LL_\kb^{\dim ( V_\Sigma )}}{\left ( 1 - \LL_\kb^{-1} \right )^{\rg ( \Pic ( V_\Sigma ) )} } \prod_{p\in \PP^1_\kb } \frac{ \left [ V_\Sigma \otimes \kappa ( p ) \right ] }{\LL_p^{\dim ( V_\Sigma ) }} \left ( 1 - \LL_p^{-1} \right )^{\rg ( \Pic ( V_\Sigma ))}
\]
in $\widehat{\mathscr M_\kb}^{\dim }$
when $\mdeg \in \CEff ( V_\Sigma )^\vee_\ZZ $ goes arbitrarily far away from the boundary $\partial \CEff ( V_\Sigma )^\vee$, with error term of virtual dimension bounded by
$
-  \frac{1}{4} \min_{\alpha \in \Sigma ( 1 )} ( \mdeg_\alpha ) + \dim ( V_\Sigma ) .
$
\end{proof}

\index{motivic height zeta function!of smooth split projective toric varieties}
\begin{mycor}
Let 
\[
\FFF ( \TT  ) = \sum_{\mdeg \in \CEff ( V_\Sigma )^\vee_\ZZ} 
\left [ \Hom_\kb^\mdeg  ( \PP^1_\kb , V_\Sigma )_U \right ]  \TT^\mdeg \prod_{\alpha \in \Sigma ( 1 ) } ( 1 - \LL_\kb t_\alpha )  .
\]
Then $\FFF ( \TT  ) $ converges at $t_\alpha = \LL_\kb^{-1}$ to $\tau_{\PP^1_\kb} ( V_\Sigma ) $. 
\end{mycor}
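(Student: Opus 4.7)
My approach is to derive the corollary directly from Bourqui's identity \eqref{equation:mobius-inversion-P1-in-terms-of-series}, combined with the representability \cref{ptn:repres-moduli-space-toric}, which gives the identity $[\Hom_\kb^\mdeg(\PP^1_\kb, V_\Sigma)_U] = (\LL_\kb - 1)^{\dim V_\Sigma}\,[(\PP^1_\kb)^\mdeg_{B_\Sigma}]$ for every $\mdeg \in \CEff(V_\Sigma)^\vee_\ZZ$. The key algebraic observation is that the polynomial factor $\prod_{\alpha \in \Sigma(1)}(1 - \LL_\kb t_\alpha)$ appearing in the definition of $\FFF$ is tailored precisely to cancel the $(1 - \LL_\kb t_\alpha)^{-1}$ factors contained in the Kapranov zeta functions $Z_{\PP^1_\kb}^{\mathrm{Kapr}}(t_\alpha) = ((1 - t_\alpha)(1 - \LL_\kb t_\alpha))^{-1}$.

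Multiplying Bourqui's identity by $\prod_\alpha (1 - \LL_\kb t_\alpha)$ yields
\[
\sum_{\dd \in \NN^{\Sigma(1)}} [(\PP^1_\kb)^\dd_{B_\Sigma}]\, \TT^\dd \prod_{\alpha}(1-\LL_\kb t_\alpha) = \prod_{p\in \PP^1_\kb} P_{B_\Sigma}(\TT)\cdot \prod_\alpha (1-t_\alpha)^{-1}.
\]
Specialising at $t_\alpha = \LL_\kb^{-1}$, the right-hand side converges in $\widehat{\MMM_\kb}^{\dim}$ by \cref{ptn:convergence-Euler-product-toric-varieties}, and its limit, computed via \cref{remark:local-factor-Euler-product-toric-varieties}, equals
\[
\prod_{p\in \PP^1_\kb} \frac{[V_\Sigma \otimes \kappa(p)]}{\LL_p^{\dim V_\Sigma}}(1-\LL_p^{-1})^{r} \cdot (1-\LL_\kb^{-1})^{-|\Sigma(1)|},
\]
where I write $n = \dim V_\Sigma$ and $r = \rk(\Pic(V_\Sigma))$, so that $|\Sigma(1)| = n + r$. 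Multiplying by $(\LL_\kb - 1)^{n}$ and using the identity $(\LL_\kb - 1)^n (1 - \LL_\kb^{-1})^{-(n+r)} = \LL_\kb^n (1 - \LL_\kb^{-1})^{-r}$ reproduces precisely the explicit expression of $\tau(V_\Sigma)$ recorded in \cref{thm:motivic-BMP-toric}.

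The hard part will be to handle the restriction of the sum from $\NN^{\Sigma(1)}$ down to $\CEff(V_\Sigma)^\vee_\ZZ$, and to make precise the sense in which $\FFF(\TT)$ converges upon substitution $t_\alpha = \LL_\kb^{-1}$. Since $[(\PP^1_\kb)^\dd_{B_\Sigma}]$ is generally nonzero for $\dd$ outside the effective cone, Bourqui's full identity and the restricted series defining $\FFF$ differ by the contributions of those extraneous $\dd$. I would show that, after multiplication by $\prod_\alpha(1-\LL_\kb t_\alpha)$ and specialisation at $t_\alpha = \LL_\kb^{-1}$, these extra contributions have virtual dimension tending to $-\infty$ and hence vanish in $\widehat{\MMM_\kb}^{\dim}$. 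Re-running the dimensional bookkeeping from the proof of \cref{thm:motivic-BMP-toric} -- in particular the decomposition \eqref{equation:decomposition-product-Kapr-P1} and the estimate of \cref{lemma:controlling-error-terms-convolution} -- applied to the complement $\NN^{\Sigma(1)} \setminus \CEff(V_\Sigma)^\vee_\ZZ$ should yield the required negligibility and pin down the convergence.
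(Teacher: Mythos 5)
Your ``easy part'' is fine: multiplying Bourqui's identity \eqref{equation:mobius-inversion-P1-in-terms-of-series} by $\prod_\alpha(1-\LL_\kb t_\alpha)$, specialising, and using \cref{ptn:convergence-Euler-product-toric-varieties} and \cref{remark:local-factor-Euler-product-toric-varieties} does give $(\LL_\kb-1)^{n}\,\bigl(\prod_p P_{B_\Sigma}\bigr)(\LL_\kb^{-1})\,(1-\LL_\kb^{-1})^{-|\Sigma(1)|}=\tau(V_\Sigma)$. The gap is in the step you defer. The extraneous contributions of $\dd\in\NN^{\Sigma(1)}\setminus\CEff(V_\Sigma)^\vee_\ZZ$ are \emph{not} dimensionally negligible coefficient by coefficient, so ``re-running the dimensional bookkeeping'' will not close the argument. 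Concretely, the coefficient of $\TT^\mdeg$ in the difference between $(\LL_\kb-1)^n\sum_{\dd\in\NN^{\Sigma(1)}}[(\PP^1_\kb)^\dd_{B_\Sigma}]\TT^\dd\prod_\alpha(1-\LL_\kb t_\alpha)$ and $\FFF(\TT)$ is $(\LL_\kb-1)^n\sum_{J}(-\LL_\kb)^{|J|}[(\PP^1_\kb)^{\mdeg-\mathbf 1_J}_{B_\Sigma}]$ restricted to those $J\subset\Sigma(1)$ with $\mdeg-\mathbf 1_J\notin\CEff(V_\Sigma)^\vee_\ZZ$. The decay $\dim(\mu_\Sigma(\ee)\LL_\kb^{-|\ee|})\leqslant-\tfrac12|\ee|$ used in \cref{thm:motivic-BMP-toric} comes precisely from the \emph{complete} alternating sum over all $J$; once you delete the terms with $\mdeg-\mathbf 1_J$ in the cone, each surviving term has dimension $|\mdeg|+O(1)$ and the cancellation is destroyed. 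For $V_\Sigma=\PP^1_\kb$ (so $\CEff^\vee_\ZZ$ is the diagonal of $\NN^2$) one computes that the normalised coefficient of the difference at $\mdeg=(d,d-1)$ is $\LL_\kb^2-1$ up to lower-order terms: it has virtual dimension $2$ for every $d$. The difference does tend to $0$ along partial sums, but by an exact telescoping between neighbouring truncation points, not by any estimate of the type of \cref{lemma:controlling-error-terms-convolution}; and proving that telescoping in general essentially amounts to proving the convergence of $\FFF(\LL_\kb^{-1})$ itself, which is what you set out to show.

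The paper's proof avoids the lattice-versus-cone issue entirely and is much shorter: writing $\mathfrak b_\ee$ for the coefficients of $\FFF$, the identity $\FFF(\TT)\prod_\alpha(1-\LL_\kb t_\alpha)^{-1}=\sum_\mdeg[\Hom^\mdeg_\kb(\PP^1_\kb,V_\Sigma)_U]\TT^\mdeg$ gives $\sum_{\ee\leqslant\mdeg}\mathfrak b_\ee\LL_\kb^{-|\ee|}=[\Hom^\mdeg_\kb(\PP^1_\kb,V_\Sigma)_U]\LL_\kb^{-\mdeg\scdot\omega_{V_\Sigma}^{-1}}$, i.e.\ the partial sums of $\FFF(\LL_\kb^{-1})$ \emph{are} the normalised classes, and the corollary is then word-for-word equivalent to \cref{thm:motivic-BMP-toric}. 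If you want to keep your route, you should replace the dimensional estimate on the complement by this exact identity (or by the observation that $[\Hom^\mdeg]$ vanishes off the cone together with the Abel-summation manipulation above); as proposed, the ``hard part'' rests on a claim that is false.
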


\begin{proof}
Let $\mathfrak b_\dd $ be the coefficient of multidegree $\dd$ of $\FFF ( \TT ) $. 
Since 
\[
\FFF ( \TT )  \prod_{\alpha \in \Sigma ( 1 ) } ( 1 - \LL_\kb t_\alpha )^{-1} = \FFF ( \TT )  \sum_{\dd \in \NN^{\Sigma ( 1 )}} \LL_\kb^{|\dd |}\TT^\dd 
\]
we have the relation 
\[
\left [ \Hom_\kb^\mdeg ( \PP^1_k , V_\Sigma )_U \right ]  
=
\sum_{\dd + \dd ' = \mdeg} \mathfrak b_\dd \LL_\kb^{ | \dd ' |} 
\]
for every $\mdeg \in \CEff ( V_\Sigma )^\vee_\ZZ$,
which becomes after normalisation 
\[
\left [ \Hom_\kb^\mdeg ( \PP^1_k , V_\Sigma )_U \right ]
\LL_\kb^{-\, \mdeg \scdot \omega_{V_\Sigma}^{-1}  } = \sum_{\ee \leqslant \mdeg } \mathfrak{b}_{\ee } \LL_\kb^{-| \ee |} . 
\]
This is exactly the $\mdeg$-th partial sum of the series $\FFF ( \LL_\kb^{-1} ) $. 
Since $\left [ \Hom_\kb^\mdeg ( \PP^1_k , V_\Sigma )_U \right ] \LL_\kb^{-\, \mdeg \scdot \omega_{V_\Sigma}^{-1}  }$ converges to $\tau_{\PP^1_\kb} ( V_\Sigma )$ when $\min ( \mdeg_\alpha ) $ tends to infinity, the claim follows. 
\end{proof}


\subsection{Equidistribution}
\label{section:equidistribution-toric}

In the remainder of this section, we prove equidistribution of rational curves on smooth split projective toric varieties. 

\begin{mythm}
\label{thm-equidistribution-toric}
\index{equidistribution of curves!on smooth split projective toric varieties}
Let 
\[
\SSS = \coprod_{p\in | \SSS |} \SSS_p
\]
be a zero-dimensional subscheme of $\PP^1_\kb$,
$(m_p)_{p\in | \SSS |}$ non-negative integers such that 
\[
\ell ( \SSS_p ) = ( m_p +1 ) [ \kappa ( p ) : \kb ]
\]
for all $p\in |\SSS |$,
and $W$ a constructible subset of $\Hom_\kb ( \SSS , V_\Sigma ) \simeq \prod_{p\in |\SSS |} \Gr_{m_p} ( V_\Sigma \otimes \kappa ( p ) )$. 

Then, when ${\mdeg \in \CEff ( V_\Sigma )_\ZZ ^\vee }$
goes arbitrarily far away from the boundary of the dual of the effective cone, the normalised class
\[
\left [ \Hom_\kb^\mdeg ( \PP^1_\kb , V_\Sigma \mid W ) _U \right ] \LL_\kb^{- \, \mdeg \, \cdot \, \omega_{V_\Sigma}^{-1} }   \in \MMM_\kb
\]
tends to the non-zero effective element
\begin{align*}
\tau ( V_\Sigma \mid W ) 
= &
\frac{\LL_\kb^{\dim ( V_\Sigma ) }}{\left ( 1  -\LL_\kb^{-1} \right )^{\rg ( \Pic ( V_\Sigma ) ) }} 
\prod_{p \notin | \SSS | } \left ( 1  - \LL_p^{-1} \right )^{\rg ( \Pic ( V_\Sigma ) ) } \frac{\left [ V_\Sigma \otimes \kappa ( p ) \right ]}{\LL_p^{\dim \left ( V_\Sigma \right )}} \\
& 
\times [ W ] 
\prod_{p\in | \SSS | } \left ( 1  - \LL_p^{-1} \right )^{\rg ( \Pic ( V_\Sigma ) ) } \LL_p^{- (m_p +1 ) \dim \left ( V_\Sigma \right )  }
\end{align*}
in the completion $\widehat{\mathscr M_\kb}^{\dim}$.

Moreover,
the error term
\[
\tau ( V_\Sigma \mid W ) 
 - 
\left [ \Hom^\mdeg_\kb ( \PP^1_\kb , V_\Sigma \mid W )_U \right ] \LL_\kb^{- \, \mdeg \, \cdot \, \omega_{V_\Sigma}^{-1} } 
\in \widehat{\mathscr M_\kb}^{\dim}
\]
has virtual dimension smaller than 
\[
 - \frac{1}{4} \min_{\alpha \in \Sigma ( 1 ) } ( \mdeg_\alpha ) + \ell ( \SSS ) + ( 1 -  \ell ( \SSS )  ) ( \dim ( V_\Sigma ) - 1 ) + \dim ( W ) 
\]
for all ${\mdeg \in \CEff ( V_\Sigma )_\ZZ ^\vee }$. 
\end{mythm}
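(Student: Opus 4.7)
The plan is to adapt the proof of \cref{thm:motivic-BMP-toric} by introducing the jet constraints at the points of $|\SSS|$ as local modifications of the Möbius inversion machinery. The key extra ingredient is the piecewise-triviality criterion of \cref{subsection:checking-equidistribution-pointwise}, which will reduce the problem of equidistribution to a computation on a single fibre of the restriction map.

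The first step is to analyze the restriction map
\[
\res_\SSS : \Hom_\kb^\mdeg ( \PP^1_\kb , V_\Sigma )_U \longrightarrow \prod_{p\in |\SSS|} \Gr_{m_p} ( V_\Sigma \otimes \kappa ( p ) ) .
\]
By \cref{ptn:repres-moduli-space-toric}, a section of multidegree $\dd$ is parametrized (modulo the Néron--Severi torus action) by a $\Sigma(1)$-tuple of homogeneous polynomials $(P_\alpha)$ of degrees $d_\alpha$ whose root configuration avoids $B_\Sigma$; reducing each $P_\alpha$ modulo the $(m_p+1)$-th power of the uniformizer at $p$ recovers the $m_p$-jet at $p$, again up to the torus action and up to a local $B_\Sigma$-avoidance condition. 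When $\min_\alpha ( \mdeg_\alpha )$ is large compared to $\ell ( \SSS ) $, this reduction is surjective onto each fibre, and the homogeneity of the construction shows that the fibre over a jet $\varphi = ( \varphi_p )$ in the smooth locus has, after base change to $\kappa ( x ) $ for any scheme-theoretic point $x$ of the space of jets, the same reduced class. By \cref{ptn:point-wise-criterion-for-piecewise-trivial-fibrations}, $\res_\SSS$ is therefore a piecewise trivial fibration over $\Hom_\kb ( \SSS , V_\Sigma ) $ with fibre class depending only on $\mdeg$ and $\SSS $, which gives
\[
\left [ \Hom_\kb^\mdeg ( \PP^1_\kb , V_\Sigma \mid W )_U \right ] = [ W ] \cdot [ F_\mdeg^\SSS ]
\]
in $\KVar \kb$, where $F_\mdeg^\SSS$ denotes the generic fibre.

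The next step is to compute the asymptotic behaviour of $[ F_\mdeg^\SSS ] \LL_\kb^{- \mdeg \scdot \omega_{V_\Sigma}^{-1}}$ by repeating the Möbius inversion argument of \cref{thm:motivic-BMP-toric}, but with the local Möbius series $Q_{B_\Sigma}$ replaced at points of $|\SSS|$ by a truncated variant $Q_{B_\Sigma , m_p}$ that enforces a fixed $m_p$-jet: the corresponding generating series factorises as
\[
\sum_{\dd } [ F_\mdeg^\SSS ] \TT^\dd = \prod_{p\notin |\SSS|} Q_{B_\Sigma} ( \TT ) \, \times \, \prod_{p\in |\SSS|} R_{B_\Sigma , m_p} ( \TT ) ,
\]
where $R_{B_\Sigma , m_p}$ encodes the Möbius inversion constrained by the jet. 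Under the change of variables $t_\alpha = \LL_\kb^{-1}$, the local factor at $p\in |\SSS|$ specialises, via the interpretation of \cref{remark:local-factor-Euler-product-toric-varieties} adapted to the subspace $( \theta^\infty_{m_p} )^{-1} ( \varphi_p ) \cap \Gr ( V_\Sigma^\circ \otimes \kappa ( p ) ) $, to $( 1 - \LL_p^{-1} )^r \LL_p^{-(m_p+1)\dim ( V_\Sigma ) } $, which yields the expected constrained motivic Tamagawa number $\tau ( V_\Sigma \mid W ) $ after combining with the $[W]$ prefactor.

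Finally, the convergence and the quantitative error estimate proceed exactly as in the proof of \cref{thm:motivic-BMP-toric}: one expands $\prod_\alpha Z_{\PP^1_\kb}^{\mathrm{Kapr}} ( t_\alpha ) $ through \eqref{equation:decomposition-product-Kapr-P1}, isolates the dominant $A = \varnothing $ contribution, and bounds the remaining $A \neq \varnothing $ terms using \cref{lemma:controlling-error-terms-convolution}. The multiplicative factor $[W]$ contributes $\dim ( W ) $ to the error exponent, while the local modifications at points of $|\SSS|$ contribute at most $\ell ( \SSS ) + ( 1 - \ell ( \SSS ) ) ( \dim ( V_\Sigma ) - 1 ) $, matching the claimed bound. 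The main obstacle I expect is the verification in the first step that the restriction map is genuinely a piecewise trivial fibration: this requires checking that the Bourqui parametrization interacts uniformly with jet truncation, in particular that the local $B_\Sigma$-avoidance condition at points of $|\SSS|$ does not introduce stratum-dependent pathologies in the fibre class, and that the local factor $R_{B_\Sigma , m_p}$ can indeed be written as a motivic Möbius series convergent at $t_\alpha = \LL_\kb^{-1}$ with the same quality of estimate as the unconstrained factor.
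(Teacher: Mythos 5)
Your overall architecture (Euclidean division at $\SSS$, a constrained Möbius inversion, then the Kapranov expansion and convolution bounds) is broadly the right one, but your first step contains a genuine gap on which everything else rests. You claim that
\[
\res_\SSS : \Hom_\kb^\mdeg ( \PP^1_\kb , V_\Sigma )_U \longrightarrow \prod_{p\in |\SSS|} \Gr_{m_p} ( V_\Sigma \otimes \kappa ( p ) )
\]
is a piecewise trivial fibration once $\min_\alpha (\mdeg_\alpha) \gg \ell ( \SSS )$, so that $\left [ \Hom^\mdeg_\kb ( \PP^1_\kb , V_\Sigma \mid W )_U \right ] = [ W ] \cdot [ F^\SSS_\mdeg ]$ holds \emph{exactly} in $\KVar \kb$. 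The ``homogeneity of the construction'' does not give this. In Bourqui's parametrization a curve is a tuple $(P_\alpha)$ with Euclidean decomposition $P_\alpha = Q_\alpha \varpi + R_\alpha$, and the jet fixes the remainders $R_\alpha$; the only available identification of fibres over two different jets multiplies the $R_\alpha$ by units $g_\alpha$ of $\OOO_\SSS$ (the map $\tau_g$ of \cref{lemma:euclidian-division-in-univ-torsor}). This identification preserves the linear spaces $\Hom^\chi ( \mathbf A^2_\kb \setminus \{0\} , \mathbf A^{\Sigma(1)}_\kb \mid \cdot )$ but \emph{not} the $B_\Sigma$-avoidance condition away from $|\SSS|$: since $g$ is only defined on $\SSS$, the tuple $Q_\alpha \varpi + g_\alpha R_\alpha$ has a different common-zero locus on $\PP^1_\kb \setminus |\SSS|$ than $P_\alpha$. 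Nor is the jet space homogeneous under any group acting compatibly on the whole moduli space (already for $m_p \geqslant 1$, or for jets meeting the boundary divisors), so \cref{ptn:point-wise-criterion-for-piecewise-trivial-fibrations} cannot be invoked. Indeed the exact factorization through $[W]$ is not what the theorem asserts: only the leading term factors through $[W]$, and the defect is precisely the error term of controlled virtual dimension.

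The repair — and this is how the paper proceeds — is to reverse the order of your first two steps. One first performs the Möbius inversion on the level of equivariant morphisms $\mathbf A^2_\kb \setminus \{0\} \to \mathbf A^{\Sigma(1)}_\kb$ carrying the constraint $\widetilde W$ (the $T_{\NS , \SSS}$-torsor over $W$), so as to write the constrained \emph{non-degenerate} count as an integer combination of constrained counts \emph{without} the non-degeneracy condition (\cref{ptn:classes-of-constrained-morphisms-fixed-degen-degree}); for those the restriction map is genuinely linear, hence piecewise trivial, and the detwisting applies whenever the residual degree $\chi'$ satisfies $\chi' \geqslant \ell ( \SSS ) - 1$. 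The decompositions with $\chi ' \ngeqslant \ell ( \SSS ) - 1$ are exactly where your piecewise triviality fails; they must be estimated separately via \cref{lemma:euclidian-division-in-univ-torsor} and account for part of the stated error bound. Your truncated local factor $R_{B_\Sigma , m_p}$ then never needs to be constructed: the constraint is carried by $\widetilde W$ through the inversion, and the factor $\left ( 1 - \LL_p^{-1} \right )^{\rg ( \Pic ( V_\Sigma ))} \LL_p^{-(m_p+1)\dim ( V_\Sigma )}$ at $p \in |\SSS|$ emerges at the very end from the class of the torsor $\widetilde W \to W$.
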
 

\begin{mycor}
For any non-zero multidegree $\mdeg \in \CEff ( V_\Sigma )_\ZZ^\vee$
such that 
\[
\min_{\alpha \in \Sigma ( 1 )} (  \mdeg_\alpha ) \geqslant 8 \ell ( \SSS ) - 4,
\]
the moduli space  
$\Hom^\mdeg_\kb ( \PP^1_\kb , V_\Sigma \mid W )_U $
has dimension
\[
\mdeg \cdot \omega_{V_\Sigma}^{-1} + \dim ( V_\Sigma ) ( 1 - \ell ( \SSS ) ) + \dim ( W ) 
\]
as expected. 
\end{mycor}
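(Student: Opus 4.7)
The plan is to extract the exact dimension of the moduli space from the asymptotic expansion of \cref{thm-equidistribution-toric} by comparing the virtual dimension of the leading term $\tau(V_\Sigma \mid W)$ with the upper bound on the error.

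First, I would compute the virtual dimension $D$ of $\tau(V_\Sigma \mid W)$ using its explicit Euler product form. The prefactor $\LL_\kb^{\dim(V_\Sigma)}(1 - \LL_\kb^{-1})^{-r}$, with $r = \rg(\Pic(V_\Sigma))$, has virtual dimension $\dim(V_\Sigma)$, since the series expansion of $(1-\LL_\kb^{-1})^{-r}$ has leading term $1$. The convergent Euler product over $p \notin |\SSS|$ has virtual dimension $0$: it is built from factors of the form $1 + (\text{a class of strictly negative virtual dimension})$, and the dimensional estimates used in the proof of \cref{thm:motivic-BMP-toric} (via the bound from \cite[Lemma 4.2.5]{bilu2020zeta}) ensure that no higher-order contribution inflates the dimension beyond that of the constant term $1$. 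The class $[W]$ contributes $\dim(W)$, and for each closed point $p \in |\SSS|$ the local factor $(1 - \LL_p^{-1})^r \LL_p^{-(m_p+1)\dim(V_\Sigma)}$ contributes virtual dimension $-(m_p+1)[\kappa(p):\kb]\dim(V_\Sigma) = -\ell(\SSS_p)\dim(V_\Sigma)$. Summing, one finds
\[
D = \dim(V_\Sigma)\bigl(1 - \ell(\SSS)\bigr) + \dim(W),
\]
and since $\tau(V_\Sigma \mid W)$ is effective and non-zero, its virtual dimension is exactly $D$.

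Next, I would evaluate the error bound
\[
E(\mdeg) = - \tfrac{1}{4} \min_{\alpha \in \Sigma(1)} ( \mdeg_\alpha ) + \ell ( \SSS ) + \bigl( 1 - \ell ( \SSS ) \bigr) \bigl( \dim ( V_\Sigma ) - 1 \bigr) + \dim ( W )
\]
given by \cref{thm-equidistribution-toric} at the threshold $\min_\alpha(\mdeg_\alpha) = 8\ell(\SSS) - 4$. A direct calculation shows $E(\mdeg) = D$ at this threshold. Since $E$ is strictly decreasing in $\min_\alpha(\mdeg_\alpha)$, for every $\mdeg$ with $\min_\alpha(\mdeg_\alpha) \geqslant 8\ell(\SSS) - 4$ one has $E(\mdeg) \leqslant D$; combined with the \emph{strict} inequality in \cref{thm-equidistribution-toric}, this guarantees that the error
\[
\tau(V_\Sigma \mid W) - [\Hom_\kb^{\mdeg}(\PP^1_\kb, V_\Sigma \mid W)_U] \, \LL_\kb^{-\mdeg \scdot \omega_{V_\Sigma}^{-1}}
\]
has virtual dimension strictly less than $D$.

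Finally, I would invoke the ultrametric-like property of the dimensional filtration, namely $\dim(a) = \dim(b)$ whenever $\dim(a - b) < \dim(b)$. Applied to the identity above, this forces the normalised class $[\Hom_\kb^{\mdeg}(\PP^1_\kb, V_\Sigma \mid W)_U] \LL_\kb^{-\mdeg \scdot \omega_{V_\Sigma}^{-1}}$ to have virtual dimension exactly $D$, which, after multiplying back by $\LL_\kb^{\mdeg \scdot \omega_{V_\Sigma}^{-1}}$, is the stated formula. The main subtle point is the claim that the infinite Euler product over $p \notin |\SSS|$ has virtual dimension exactly $0$; this requires controlling each term of the series expansion simultaneously with its size, and is precisely the content of the convergence argument already developed in the proof of \cref{thm:motivic-BMP-toric}.
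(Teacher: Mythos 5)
Your argument is correct and supplies exactly the deduction the paper leaves implicit: compute the virtual dimension $D=\dim(V_\Sigma)(1-\ell(\SSS))+\dim(W)$ of the effective leading term $\tau(V_\Sigma\mid W)$ (the convergent Euler product over $p\notin|\SSS|$ being $1$ plus terms of strictly negative dimension), observe that the error bound of \cref{thm-equidistribution-toric} equals $D$ precisely at $\min_\alpha(\mdeg_\alpha)=8\ell(\SSS)-4$, and conclude by the ultrametric property of the dimensional filtration. The only delicate point is that at the threshold one needs the inequality in \cref{thm-equidistribution-toric} to be strict (or the slightly sharper bound actually obtained in its proof, which is better by $\ell(\SSS)$), and you address this correctly.
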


\begin{myremark}
	The upper bound on the dimension of the error term we give can be made uniform in the set $W$ of conditions, since the dimension of $W$ is bounded by $\ell ( \SSS ) \dim ( V_\Sigma )$. 
\end{myremark}

The remainder of this section is devoted to the proof of \cref{thm-equidistribution-toric}.
We see $\mathbf A^2_\kb \setminus \{ 0 \}$
as the universal $\GG_m$-torsor 
\[
\mathbf A^2_\kb \setminus \{ 0 \} \to \PP^1_\kb
.
\]
Given a cocharacter $\chi : \GG_m \to \GG_m^{\Sigma ( 1 )}$,
or equivalently a tuple $\dd = (d_\alpha ) \in \ZZ^{\Sigma ( 1 )}$ (we will switch freely between both notations),
we consider the functor from $\kb$-schemes to sets
\[
\HHom^{\chi } \left ( \mathbf A^2_\kb \setminus \{ 0 \} , \mathbf A^{\Sigma ( 1 )}_\kb \right )
: 
S \rightsquigarrow \Hom_S^{\chi } \left ( \mathbf A^2_S \setminus \{ 0 \} , \mathbf A^{\Sigma ( 1 )}_S \right ),
\]
of $\chi$-equivariant morphisms,
as well as its restriction
\[
\HHom^{\chi } \left ( \mathbf A^2_\kb \setminus \{ 0 \} , \mathbf A^{\Sigma ( 1 )}_\kb \right )^* :
S \rightsquigarrow \Hom_S^{\chi } \left ( \mathbf A^2_S \setminus \{ 0 \} , \mathbf A^{\Sigma ( 1 )}_S \right ) ^* ,
\]
to $\chi$-equivariant morphisms \textit{with no trivial coordinate}
and its second restriction
\[
\HHom^{\chi } \left ( \mathbf A^2_\kb \setminus \{ 0 \} , \mathcal T_\Sigma \right )^* :
S \rightsquigarrow \Hom_S^{\chi } \left ( \mathbf A^2_S \setminus \{ 0 \} , \mathcal T_{\Sigma } \times_\kb S \right )^*
\]
to \textit{non-degenerate} $\chi$-equivariant morphisms with no trivial coordinate.
These functors are represented 
respectively by 
the product 
\[
\prod_{\alpha \in \Sigma ( 1 )} \mathbf A_\kb ^{d_\alpha +1 }  ,
\]
its restriction
\[
\prod_{\alpha \in \Sigma ( 1 )} 
\left ( 
\mathbf A_\kb ^{d_\alpha +1 } \setminus \{ 0 \} 
\right ) ,
\]
and
the open subset 
\[
\widetilde{(\PP^1_\kb)^\dd_{B_\Sigma}} \subset \prod_{\alpha \in \Sigma ( 1 )} 
\left ( 
\mathbf A_\kb ^{d_\alpha +1 } \setminus \{ 0 \}  
\right ) 
\]
(defined \cpageref{ptn:repres-moduli-space-toric} just before \cref{ptn:repres-moduli-space-toric}).

If $\chi $ lies in $\CEff ( V_\Sigma )_\ZZ^\vee$, composition by $\pi : \mathcal T_\Sigma \to V_\Sigma$
provides a map of functors 
\[
\pi_* : \HHom^{\chi } \left ( \mathbf A^2_\kb \setminus \{ 0 \} , \mathcal T_\Sigma \right )^* \longrightarrow \HHom^{\dd}_\kb \left ( \PP^1_\kb , V_\Sigma \right )
\]
and we recover the $T_{\NS}$-torsor 
\[
\widetilde{(\PP^1_\kb)^\dd_{B_\Sigma}} 
\longrightarrow
\Hom^{\dd}_\kb \left ( \PP^1_\kb , V_\Sigma \right )_U
\]
of \cref{ptn:repres-moduli-space-toric}.

\subsubsection{Restricting to $\SSS$}
Let $\iota : \SSS \hookrightarrow \PP^1_\kb$ be a zero-dimensional subscheme of $\PP^1_\kb$.
In what follows, we will work with the restriction to $\SSS$ of the universal torsor $\mathbf A^2 \setminus \{ 0 \} \to \PP^1$, defined over any base-scheme $S$ by the following Cartesian square. 
\[
\begin{tikzcd}
(\mathbf A_S^{2}  \setminus \{ 0 \} )_{|\SSS}  \arrow[d,"\pr_\SSS"] \arrow[r] \arrow[dr, phantom, "\ulcorner", very near start] &  \mathbf A_S^{2}  \setminus \{ 0 \}  \arrow[d]  \\
\SSS \times_k S \arrow[r ,"\iota_S", hook]  & \PP^1_S
\end{tikzcd}
\]
For any cocharacter $\chi \in \mathcal X_* ( \GG_m^{\Sigma ( 1 )} ) $,
we consider the set of $\chi$-equivariant $S$-morphisms
\[
\Hom_S^\chi \left ( (\mathbf A_S^{2}  \setminus \{ 0 \} )_{|\SSS} , \mathbf A^{\Sigma ( 1 )}_S \right ) 
\]
as well as its subset of morphisms landing in the universal torsor $\mathcal T_\Sigma \subset \mathbf A_\kb^{\Sigma ( 1 )}$
\[
\Hom_S^\chi \left ( (\mathbf A_S^{2}  \setminus \{ 0 \} )_{|\SSS} , \mathcal T_{\Sigma , S} \right ). 
\]
The groups $\GG_m^{\Sigma ( 1 )} ( S ) $ and 
$T_{\NS} ( S ) $ act on these sets \textit{via} their action on the target.

\begin{mylemma}	
\label{lemma:representation-restricted-equivariant-morphisms-as-jets}
For all $\chi \in \mathcal X_* ( \GG_m^{\Sigma ( 1 )})$, 
	the functors 
	\[
	S \rightsquigarrow \Hom_S^\chi \left ( (\mathbf A_S^{2}  \setminus \{ 0 \} )_{|\SSS} , \mathbf A^{\Sigma ( 1 )}_S \right )
	\]
and	
\[
	S \rightsquigarrow \Hom_S^\chi ( (\mathbf A_S^{2}  \setminus \{ 0 \} )_{|\SSS} , \mathcal T_{\Sigma, S} )
	\]
	are respectively represented by the finite products
	\[
	\prod_{p\in |\SSS|} \Gr_{m_p} \left ( \mathbf A_{\kappa ( p )}^{\Sigma ( 1 )} \right )
	\]
	and  
	\[
	\prod_{p\in |\SSS|} \Gr_{m_p} (  \mathcal T_\Sigma \times_\kb \kappa ( p ) )
	\]
	of jet schemes. 
\end{mylemma}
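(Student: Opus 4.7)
The plan is to establish the two functor representations in parallel, reducing first to the case of a single closed point of $\SSS$ and then exploiting local triviality of the tautological $\GG_m$-torsor on $\PP^1_\kb$. Since $\SSS = \coprod_{p\in |\SSS|} \SSS_p$ is a disjoint union of local schemes and the restriction $(\mathbf{A}^2_S \setminus \{0\})_{|\SSS}$ decomposes accordingly into $\coprod_p (\mathbf{A}^2_S \setminus \{0\})_{|\SSS_p}$, both $\chi$-equivariant morphism functors factor as products indexed by $|\SSS|$. It therefore suffices to treat a single $\SSS_p = \Spec(\kappa(p)[t]/t^{m_p+1})$.

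Fix $p \in \PP^1_\kb$ and choose an affine chart $\mathbf{A}^1_\kb \subset \PP^1_\kb$ containing $p$; over this chart the $\GG_m$-torsor $\mathbf{A}^2_\kb \setminus \{0\} \to \PP^1_\kb$, corresponding to $\OOO_{\PP^1}(1)$, is trivializable. After fixing such a trivialization, there is a natural isomorphism $(\mathbf{A}^2_S \setminus \{0\})_{|\SSS_p} \simeq (\SSS_p \times_\kb S) \times_\kb \GG_m$ for every $\kb$-scheme $S$. A $\chi$-equivariant $S$-morphism $f : (\SSS_p \times_\kb S) \times \GG_m \to \mathbf{A}^{\Sigma(1)}_S$ is by definition given by a $\Sigma(1)$-tuple of functions $f_\alpha$ on the source, each homogeneous of weight $d_\alpha$ in the $\GG_m$-variable; by $\chi$-equivariance $f_\alpha$ is determined uniquely by its restriction $f_\alpha(\cdot, 1) \in \Gamma(\SSS_p \times_\kb S, \OOO)$. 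This identifies the set of $\chi$-equivariant morphisms with the $\Sigma(1)$-th power of $\Gamma(\SSS_p \times_\kb S, \OOO)$, which is precisely the set of $S$-points of the jet scheme $\Gr_{m_p}(\mathbf{A}^{\Sigma(1)}_{\kappa(p)})$ (viewed as a $\kb$-scheme), by its defining functor of points.

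For the second representability claim, I would use the closed immersion $\mathcal T_\Sigma \hookrightarrow \mathbf{A}^{\Sigma(1)}_\kb$ and the fact, from the cartesian square defining $\Gr_{m_p}(\mathcal T_\Sigma \times_\kb \kappa(p)) \hookrightarrow \Gr_{m_p}(\mathbf{A}^{\Sigma(1)}_{\kappa(p)})$, that this inclusion parametrizes exactly those jets factoring through $\mathcal T_\Sigma$. Under the identification of the previous paragraph, a $\chi$-equivariant $S$-morphism lands in $\mathcal T_{\Sigma, S}$ if and only if the corresponding jet lies in the subfunctor $\Gr_{m_p}(\mathcal T_\Sigma \times_\kb \kappa(p))$, yielding the second representability statement.

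The only mild subtlety, rather than a genuine obstacle, is the non-canonical nature of the trivialization of $\OOO_{\PP^1}(1)|_{\SSS_p}$: different trivializations differ by multiplication by a unit in $\kappa(p)[t]/t^{m_p+1}$, inducing a $\GG_m$-action on both sides with respect to which the constructed isomorphism of functors is equivariant. In particular, the representing scheme is well-defined independently of the choice, and functoriality in $S$ is automatic from the construction. Globalizing over $|\SSS|$ then yields the stated product of jet schemes.
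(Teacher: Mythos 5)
Your argument is correct and essentially the paper's own: both exploit the triviality of the universal torsor restricted to $\SSS$ to identify a $\chi$-equivariant morphism with its restriction along a chosen section (your ``$u=1$'' slice is exactly such a section), which is an $(\SSS\times_\kb S)$-point of $\mathbf A^{\Sigma(1)}$, i.e.\ an $S$-point of the product of jet schemes, with independence of the choice of trivialization noted in both treatments. One small correction: $\mathcal T_\Sigma \hookrightarrow \mathbf A^{\Sigma(1)}_\kb$ is an \emph{open} immersion, not a closed one, but your characterisation of $\Gr_{m_p}(\mathcal T_\Sigma\otimes\kappa(p))$ as the locus of jets whose underlying morphism factors through $\mathcal T_\Sigma$ is still the correct one and the argument goes through unchanged.
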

\begin{proof}
	Since the restriction of $\mathbf A^2_\kb \setminus \{ 0 \} \to \PP^1_\kb$
	to $\SSS$ is a trivial bundle, 
	we may fix a section $s:\SSS \to ( \mathbf A^2_\kb \setminus \{ 0 \} )_{|\SSS}$.
	Then composition by $s$ induces a map 
\[
\begin{array}{rll}
\Hom_S^\chi \left ( (\mathbf A_S^{2}  \setminus \{ 0 \} )_{|\SSS} , \mathbf A^{\Sigma ( 1 )}_S \right ) & \longrightarrow & \mathbf A^{\Sigma ( 1 )}_\kb ( \SSS \times_\kb S ) \\
f & \longmapsto &  f \circ ( s , \mathrm{id}_S ) 
\end{array}
\]	
which is functorial in $S$. 
	
	Now remark that an element of $\Hom_S^\chi \left ( (\mathbf A_S^{2}  \setminus \{ 0 \} )_{|\SSS} , \mathbf A^{\Sigma ( 1 )}_S \right )$ is entirely determined by its restriction to the image of $s$: indeed,
	for all $y \in ( \mathbf A^2_S \setminus \{ 0 \})_{|\SSS} ( S )$, one has the relation
	\[
	f ( y ) = \chi \left ( y  \cdot ( s \circ \pr_\SSS ( y ))^{-1} \right ) f ( s \circ \pr_\SSS ( y ) )
	\]
	where $ y  \cdot ( s \circ \pr_\SSS ( y ))^{-1} \in \GG_{m,\SSS} ( S ) $ is given by the $\GG_{m,\SSS}$-torsor structure. 
	An $\SSS \times_\kb S$-point of $\mathbf A^{\Sigma ( 1 )}_S$ is the datum of such a restriction, hence it provides a unique $\chi$-equivariant morphism. 
	By definition of Greenberg schemes, the conclusion follows.
\end{proof}

\index{universal torsor of a toric variety}
Composition by $s$ and $\pi : \mathcal T_\Sigma \to V_\Sigma$ provides a map (functorial in $S$)
\[
\begin{array}{rll}
\Hom_S^\chi \left ( (\mathbf A_S^{2}  \setminus \{ 0 \} )_{|\SSS} , \mathcal T_{\Sigma, S} \right ) & \longrightarrow & V_\Sigma ( \SSS \times_\kb S ) \\
f & \longmapsto &  ( \pi , \mathrm{id}_S )  \circ f \circ ( s , \mathrm{id}_S ) . 
\end{array}
\]	
Two $\chi$-equivariant morphisms $\tilde \varphi , \tilde \varphi ' : ( \mathbf A_S^{2}  \setminus \{ 0 \} )_{|\SSS} \to  \mathbf A^{\Sigma ( 1 )}_S$ induce the same $(\SSS \times_\kb S )$-point of $V_\Sigma$ if and only if 
	there is an element $a \in T_{\NS} ( \SSS \times_\kb S ) $ such that $ \tilde \varphi  =  a \cdot \tilde \varphi '  $. 		
	This map defines a $T_{\NS  , \SSS}$-torsor over $\Hom ( \SSS , V_\Sigma )$.  

\begin{mydef}
For any constructible subset $\widetilde{W} \subset \Hom_S^\chi \left ( (\mathbf A_S^{2}  \setminus \{ 0 \} )_{|\SSS} , \mathbf A^{\Sigma ( 1 )}_S \right )$,
we denote by 
\[
\Hom_S^\chi \left ( \mathbf A_S^{2}  \setminus \{ 0 \} , \mathbf A_S^{\Sigma ( 1 )} \mid \widetilde{W} \right )^{(*)}
\]
the subset of $\Hom_S^\chi \left ( \mathbf A_S^{2}  \setminus \{ 0 \} , \mathbf A_S^{\Sigma ( 1 )} \right )^{(*)}$ of morphisms
whose restriction to $ ( \mathbf A^2_S \setminus \{ 0 \} )_{|\SSS} $ belongs to $\widetilde W$
(in the sequel the exponent $\, ^{(*)}$ will be a convenient notation to say that one can restrict to morphisms having no trivial coordinate).

We will say that a $\chi$-equivariant $S$-morphism
\textit{is non-degenerate} above $\SSS$
if its pull-back to $( \mathbf A^2_S \setminus \{ 0 \}  )_{|\SSS}$ has image in $\mathcal T_{\Sigma , S } \subset \mathbf A^{\Sigma ( 1 )}_S$. 

This defines subfunctors of $\HHom^\chi \left ( \mathbf A_\kb^{2}  \setminus \{ 0 \} , \mathbf A^{\Sigma ( 1 )}_\kb \right )$.
\end{mydef}

\subsubsection{Euclidean division}
Fixing coordinates on $\mathbf A_\kb^2$, 
we can see $\chi$-equivariant morphisms $ \mathbf A_\kb^{2}  \setminus \{ 0 \} \to  \mathbf A_\kb^{\Sigma ( 1 )} $ 
as $\Sigma ( 1 ) $-tuples of homogeneous polynomials in two indeterminates $t_0$ and $t_1$. 
Let us temporarily choose a generator $\varpi$ of the ideal defining $\SSS$ in $\PP^1_\kb$, that is to say a non-trivial homogeneous polynomial of degree $\ell ( \SSS )$ in two indeterminates. 
Up to changing coordinates on $\PP^1_\kb$ we may
assume furthermore that $[ 0 : 1 ]$ does not belong to $\SSS$ (that is to say, $t_0$ does not divide $\varpi$).

Then, the Euclidean division of a polynomial $P ( t )$ of degree at most $d$ by $\varpi ( t ) = \varpi ( 1 , t )$, is the unique decomposition of the form
\[
P ( t )= Q ( t ) \varpi ( t )  + R ( t ) 
\]
where $Q(t)$ is of degree at most $ d-\ell ( \SSS )$ and $R$ of degree strictly smaller than $\ell ( \SSS )$.
This provides a Euclidean division of $P ( t_0 , t_1 ) = t_0 ^d P ( t_1 / t_0 )$ of the form 
\begin{align*}
P ( t_0 , t_1 )
& = t_0^d Q ( t_1 / t_0 ) \varpi ( t_1 / t_0 )  + t_0^d R ( t_1 / t_0 ) \\
& = Q ( t_0 , t_1 ) \varpi ( t_0 , t_1 ) + R ( t_0 , t_1 )
\end{align*}
where 
\begin{align*}
	Q ( t_0 , t_1 ) & = t_0^{d - \ell ( \SSS ) } Q ( t_1 / t_0 ) , \\ 
	\varpi ( t_0 , t_1 ) & = t_0^{\ell ( \SSS ) } \omega ( t_1 / t_0 ), \\
\text{and } R(t_0 , t_1 ) & = t_0^d R ( t_1 / t_0 ).
\end{align*} 
Note that $Q ( t_0 , t_1 ) \varpi ( t_0 , t_1 )$ 
and $R ( t_0 , t_1 )$
are homogeneous polynomials of degree $d$ 
and that they do not depend on the choice of $\varpi$. 
The first one uniquely defines an element of 
\[
\Hom^d \left ( \mathbf A_\kb ^{2}  \setminus \{ 0 \} , \mathbf A_\kb^1 \mid \res_\SSS = 0  \right ) 
\]
while the second one uniquely defines an element of 
\[
\Hom^d \left ( \left ( \mathbf A_\kb ^{2}  \setminus \{ 0 \} \right )_{|\SSS}, \mathbf A_\kb^1 \right ),
\]
since in that case the $\kb$-vector space
\[
\kb [ t ] / ( \varpi ( 1 , t )) \simeq \prod_{p\in | \SSS |} \mathbf A_{\kappa ( p )}^{\Sigma ( 1 )} (  \kappa ( p ) [t] / (  t^{m_p +1} )   ) \simeq \prod_{p\in | \SSS |} \Gr_{m_p} \left ( \mathbf A_{\kappa ( p )}^{\Sigma ( 1 )} \right ) ( \kappa ( p ) )
\]
provides a concrete incarnation of such a space of morphisms. 
Remember as well that taking the restriction to $\SSS$ is a linear operation.

One can perform this Euclidean division simultaneously for all the coordinates of an element of $\Hom^\chi \left ( \mathbf A_\kb ^{2}  \setminus \{ 0 \} , \mathbf A_\kb ^{\Sigma ( 1 )} \right ) $.
Recall that 
we fixed a section $s:\SSS \to ( \mathbf A^2 \setminus \{ 0 \} )_{|\SSS}$
in \cref{lemma:representation-restricted-equivariant-morphisms-as-jets}
so that 
\[
\prod_{p\in |\SSS|} \Gr_{m_p} \left ( \mathbf A_{\kappa ( p )}^{\Sigma ( 1 )} \right )
\]
represents the functor 
$S 
\rightsquigarrow 
\Hom_S^\chi \left ( (\mathbf A_S^{2}  \setminus \{ 0 \} )_{|\SSS} , \mathbf A^{\Sigma ( 1 )}_S \right )
$.
We can see elements of this product of arc spaces 
as tuples
$(r_\alpha ( t ))_{\alpha \in \Sigma ( 1 )}$
of polynomials of degree at most $\ell ( \SSS ) - 1 $. 
From these remarks, 
we deduce the following lemma.
\begin{mylemma} 
\label{lemma:euclidean-division-in-univ-torsor}
\label{lemma:restriction-toric-torsor-locally-trivial}
For every $\chi \in \NN^{\Sigma ( 1 )} \subset \mathcal X_* ( \GG_m^{\Sigma ( 1 )} ) \simeq \ZZ^{\Sigma ( 1 )}$,
Euclidean decomposition 
corresponds to the exact sequence of vector spaces over $\kb$
\[
\begin{tikzcd}
   0  \rar & \ker ( \res_\SSS )  \rar &  \Hom^\chi \left ( \mathbf A_\kb ^{2}  \setminus \{ 0 \} , \mathbf A_\kb ^{\Sigma ( 1 )} \right ) \ar[out=-30, in=150,"\res_\SSS",swap]{dl} \\ 
  & \Hom^\chi \left ( (\mathbf A_\kb^{2}  \setminus \{ 0 \} )_{|\SSS} , \mathbf A^{\Sigma ( 1 )}_\kb \right ) \rar & 0   
\end{tikzcd}
\]
and $\res_\SSS$ is a piecewise trivial fibration. 
Moreover the $\alpha$-th coordinate of $\res_\SSS$ is surjective if $\chi_\alpha \geqslant \ell ( \SSS ) - 1$, injective if $\chi_\alpha \leqslant \ell ( \SSS ) - 1$,
and non-empty fibres of $\res_\SSS$
have dimension 
\[
\sum_{\alpha \in \Sigma ( 1 )} \min ( 0 , \chi_\alpha - \ell ( \SSS ) + 1) .
\]

Hence for any constructible subset $\widetilde W \subset \prod_{p\in |\SSS|} \Gr_{m_p} \left ( \mathbf A_{\kappa ( p )}^{\Sigma ( 1 )} \right ) $
we have the dimensional upper bound
\begin{equation}
\label{equation:toric-dim-bound-under-constraints}
	\dim_\kb \left (  \Hom^\chi \left ( \mathbf A_\kb^{2}  \setminus \{ 0 \} , \mathbf A_\kb^{\Sigma ( 1 )} \mid \widetilde{W} \right ) \right ) \leqslant \dim \left ( \widetilde W \right ) 
+ \sum_{\alpha \in \Sigma ( 1 )} \min ( 0 , \chi_\alpha - \ell ( \SSS ) + 1) .
\end{equation}

Assume that $\chi \geqslant \ell ( \SSS ) - 1$. Then
for all $g\in \GG_{m}^{\Sigma ( 1 )} ( \SSS )$ and constructible subset $\widetilde W \subset \prod_{p\in |\SSS|} \Gr_{m_p} \left ( \mathbf A_{\kappa ( p )}^{\Sigma ( 1 )} \right ) $ there is a commutative diagram
\[
\begin{tikzcd}
	\Hom^\chi \left ( \mathbf A_\kb^{2}  \setminus \{ 0 \} , \mathbf A_\kb^{\Sigma ( 1 )} \mid \widetilde{W} \right )^{(*)} \rar["\res_\SSS"] \dar["\tau_g",swap] 
	& \Hom^\chi \left ( (\mathbf A_\kb^{2}  \setminus \{ 0 \} )_{|\SSS} , \mathbf A^{\Sigma ( 1 )}_\kb \right ) \dar["g \scdot",swap]   \\
	   	\Hom^\chi \left ( \mathbf A_\kb^{2}  \setminus \{ 0 \} , \mathbf A_\kb^{\Sigma ( 1 )} \mid g \cdot \widetilde{W} \right )^{(*)} \rar["\res_\SSS"]  &  \Hom^\chi \left ( (\mathbf A_\kb^{2}  \setminus \{ 0 \} )_{|\SSS} , \mathbf A^{\Sigma ( 1 )}_\kb \right ) \\\ 
\end{tikzcd}
\]
in which the vertical arrows are isomorphisms.
The first one, $\tau_g$, sends a morphism of Euclidean decomposition
\[
(q_\alpha ,   r_\alpha )_{\alpha \in \Sigma ( 1 )}
\] 
to 
\[
( q_\alpha  ,   g_\alpha \cdot r_\alpha )_{\alpha \in \Sigma ( 1 )}.
\]
\end{mylemma}

Up to a multiplicative constant, rational curves of degree $d\in \NN$ in $\PP^n_\kb$
are given by $(n+1)$-tuples of degree $d$ homogeneous polynomials with no common factor. Conversely, the parameter space of $(n+1)$-tuples of degree $d$ homogeneous polynomials admits a decomposition into disjoint subspaces corresponding to the degree of the common factor. 
The Möbius inversion we performed for non-constrained curves generalises this remark.

\begin{mydef}\label{def:degeneracy-avoiding-S-and-restriction-of-curly-P1}
We will say that a zero-cycle \textit{avoids $\SSS$} if 
it has support outside $|\SSS|$. 
This terminology naturally extends to $\chi$-equivariant morphisms
by considering their image through the $\GG_m^{\Sigma ( 1 )}$-torsor
\[
\Hom^\chi \left ( \mathbf A^2_\kb \setminus \{ 0 \}  , \mathbf A^{\Sigma ( 1 )} \right ) ^* \to \prod_{\alpha \in \Sigma ( 1 )}  \PP^{\chi_\alpha}_\kb  
 \simeq  \Sym_{/\kb}^\chi ( \PP^1_\kb ) . \]
\end{mydef}

\subsubsection{The motivic Möbius inversion in details}

Equivariant morphisms with image contained in $\mathcal T_\Sigma$
are the morphisms  ${\mathbf A^2_\kb \setminus \{ 0 \} \to \mathbf A^{\Sigma ( 1 )}_\kb}$ with no forbidden zeros, what we also call having no degeneracies.
Our goal now is to adapt the Möbius inversion technique 
to the context 
of constrained curves. 
More precisely, we provide a relation 
between the classes of equivariant morphisms 
and the ones obtained by adding degeneracies to non-degenerate morphisms. 
This relation relies heavily on the following piecewise identification. Then we specialise this relation to morphisms with constraints. 
Finally, we approximate it and conclude. 

\medskip

\begin{myidentification}
\label{construction:piecewise-id-hom-zero}
Let $G = \GG_m^{\Sigma ( 1 )}$.
We stratify $\Hom^\chi ( \mathbf A^2\setminus \{ 0 \} , \mathbf A^{\Sigma ( 1 )} )^*$
	with respect to the vanishing order at $\infty$.
	Each stratum can be viewed as the subspace of tuples of homogeneous polynomials of the form
	\[
	P_\alpha ( T_0 , T_1 ) = a_\alpha T_1^{v_\infty ( P_\alpha )} Q_\alpha ( T_0 , T_1 )
	\]
	where $a_\alpha \in \kb^\times$, $v_\infty ( P_\alpha )$ is the vanishing order of $P_\alpha$ at $\infty$, and the coefficient of $T_1^{\chi_\alpha - v_\infty ( P_\alpha ) }$
	in $Q_\alpha$ is $1$, hence $Q_\alpha$ can be identified with a zero-cycle avoiding $\infty$.	
	In this decomposition, $ a_\alpha $ is the leading coefficient of $P_\alpha ( 1 , T )$. 
	Then, each stratum can be identified with ${G \times \Sym_{/ \kb }^{\chi - v_\infty } ( \PP^1_\kb \setminus \{ \infty \} ) }$
	by sending $(P_\alpha ) $ to $((a_\alpha ) , (Q_\alpha ))$.
	This piecewise correspondence identifies 
	\[
	\Hom^\chi ( \mathbf A_\kb^2\setminus \{ 0 \} , \mathbf A_\kb^{\Sigma ( 1 )}  )^*
	\]
	with 
	\[
	G 
	\times_\kb 
	\Sym_{ / \kb }^\chi
\left 
(  \PP^1_\kb   
\right )
\simeq 
\Sym_{ / G }^\chi
\left 
(  \PP^1_G  
\right )  
\]
	and 
	\[
	\Hom^\chi ( \mathbf A_\kb^2\setminus \{ 0 \} , \mathcal T_\Sigma )
	\] with 
	\[
	G 
	\times_\kb 
	\Sym_{\PP^1_\kb / \kb }^\chi
\left 
( \mathbf 1_{A ( B_\Sigma )} \PP^1_\kb   
\right )
\simeq 
\Sym_{\PP^1_G / G }^\chi
\left 
( \mathbf 1_{A ( B_\Sigma )} \PP^1_G  
\right ) 
\] 
(recall the definition of $A ( B_\Sigma ) $ given \cpageref{definition-ABsigma}).

In particular,
tuples such that $a_\alpha = 1 $
for every $\alpha \in \Sigma ( 1 )$ 
deserve to be called $\textit{unitary}$
and are identified with their tuple of zero-divisors
living in 
\[
\Sym_{\PP^1_\kb / \kb }^\chi
\left 
( \PP^1_\kb   
\right )
\simeq \{ 1_G \} \times \Sym_{\PP^1_\kb / \kb }^\chi
\left 
( \PP^1_\kb   
\right )
\simeq 
\Sym_{\PP^1_G / G }^\chi
\left 
( \PP^1_{\{1_G\}}  
\right ) . 
\]
\end{myidentification}

The proof of the following lemma is left to the reader. 
\begin{mylemma}\label{lemma:piecewise-iso-induce-same-k0}
	Let $S$ be a scheme, and $X$, $Y$ be two $S$-varieties. Assume that there exists a piecewise isomorphism $f=(f_j : X_j \to Y_j )$ between $X$ and $Y$,
	that is to say, a finite number of 
	pairwise disjoint locally closed subsets $X_j$ and $Y_j$, respectively of $X$ and $Y$, 
	such that $X=\sqcup_j X_j$ and $Y= \sqcup_j Y_j$,
	together with isomorphisms $f_j : X_j \to Y_j$.
	
	Then 
	\[
	\begin{array}{rll}
		[ Z \to Y ] & \longmapsto &  [ f^* ( Z \to Y ) ] =  \sum_j \left [ f_j^* \left ( Z \times_Y Y_j  \to Y_j \right ) \right ]
	\end{array}
	\]
	defines a ring isomorphism 	$\KVar{Y} \simeq \KVar{X} $. 
\end{mylemma}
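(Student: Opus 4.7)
The plan is to check three things: well-definedness (compatibility with scissors relations), compatibility with the ring structure (multiplicativity and preservation of the unit), and bijectivity, the last point being immediate from the symmetry of the construction.

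First, I would verify well-definedness. Given an $S$-morphism $Z \to Y$ with a closed subscheme $W \hookrightarrow Z$ and open complement $U = Z \setminus W$, the fibered products with each $Y_j$ give a closed/open decomposition of $Z \times_Y Y_j$ into $W \times_Y Y_j$ and $U \times_Y Y_j$. Since $f_j : X_j \to Y_j$ is an isomorphism, pullback via $f_j$ preserves this decomposition at the level of $X_j$-schemes, so after summing over $j$ the scissors relation $[Z] = [W] + [U]$ in $\KVar{Y}$ maps to the corresponding relation in $\KVar{X}$. The fact that the $X_j$ (resp.\ $Y_j$) form a finite partition of $X$ (resp.\ $Y$) into locally closed subschemes ensures that each $[f_j^*(Z \times_Y Y_j) \to X]$ is a well-defined class.

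Second, for the ring structure, the identity $[\mathrm{id}_Y]$ is sent to $\sum_j [X_j]$, which equals $[X]$ by scissors relations applied to the partition $X = \sqcup_j X_j$. For multiplicativity, one has
\[
f^*\bigl([Z_1]\cdot [Z_2]\bigr) = f^*[Z_1 \times_Y Z_2] = \sum_j \left[f_j^*\bigl(Z_1 \times_Y Z_2 \times_Y Y_j\bigr)\right],
\]
whereas
\[
f^*[Z_1]\cdot f^*[Z_2] = \sum_{i,j}\left[f_i^*(Z_1 \times_Y Y_i) \times_X f_j^*(Z_2 \times_Y Y_j)\right].
\]
The cross terms $i \neq j$ vanish since $X_i \cap X_j = \varnothing$ forces the fibered product over $X$ to be empty; on the diagonal, the isomorphisms $f_j$ identify $f_j^*(Z_1 \times_Y Y_j) \times_X f_j^*(Z_2 \times_Y Y_j)$ with $f_j^*(Z_1 \times_Y Z_2 \times_Y Y_j)$ because pullback commutes with fibered products.

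Third, the inverse map is induced by the piecewise inverse $f^{-1} = (f_j^{-1} : Y_j \to X_j)$, which is again a piecewise isomorphism. The same construction yields a ring homomorphism $(f^{-1})^* : \KVar{X} \to \KVar{Y}$, and the composition $(f^{-1})^* \circ f^*$ sends $[Z \to Y]$ to $\sum_j [Z \times_Y Y_j]$, which equals $[Z]$ by the scissors relation for the partition of $Y$. The composition in the other direction is handled symmetrically. No step is genuinely difficult; the verification is essentially a bookkeeping exercise ensuring that pullback commutes with fibered products and that the two partitions interact correctly.
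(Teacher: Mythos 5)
Your argument is correct and complete; the paper itself leaves this lemma to the reader, and your verification (descent through the scissors relations, multiplicativity via the vanishing of the off-diagonal fibered products over $X$, and invertibility via the piecewise inverse $(f_j^{-1})$) is exactly the expected one. The only step worth making explicit is that $[X]=\sum_j [X_j]$ for a finite partition into locally closed subsets follows from the single closed/open scissors relation by a standard refinement argument, e.g.\ \cite[Chap.~2, Cor.~1.4.9]{chambert2018motivic}, which the paper already invokes elsewhere.
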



\begin{mynotation}
Let $W$ be a constructible subset of $\Hom ( \SSS , V_\Sigma )$.
Its preimage through the map 
\[
\Hom^\chi \left ( (\mathbf A_S^{2}  \setminus \{ 0 \} )_{|\SSS} , \mathcal T_{\Sigma, S} \right )  \longrightarrow \Hom ( \SSS , V_\Sigma ) 
\]
will be written $\widetilde W$.
It is a Zariski-locally trivial $T_{\NS , \SSS }$-torsor above $W$. 
\end{mynotation}

For every $d\in \NN$,
we fix  a section of 
	\[
	\Hom^d \left ( \mathbf A^2_\kb \setminus \{ 0 \} , \mathbf A^1_\kb \right ) 
	\overset{\res_\SSS}{\longrightarrow} \im ( {\res_\SSS} ) \subset \Hom^d \left  ( \left ( \mathbf A^2_\kb \setminus \{ 0 \} \right )_{|\SSS} , \mathbf A^1_\kb \right )
	\]
	so that we are able to see reduction modulo $\SSS$ as morphisms in $\Hom^d ( \mathbf A_\kb^2 \setminus \{ 0 \} , \mathbf A^1_\kb )$.

\begin{myremark}
	Up to replacing $\SSS$ by $\SSS \cup \{ [1:0] \}$
	and $W$ by $W\times V_\Sigma$,
	we can always assume that 
	$[1:0]=\infty$ is a closed point of $\SSS$.
\end{myremark}

\begin{mynotation}
	We will freely use the convenient notation
\[
\overline{g}^{-1} = \left ( \overline{g_\alpha}^{-1} \right )_{\alpha \in \Sigma ( 1 )}
\]
for the inverse of the restriction to $\SSS$ 
of any $\chi$-equivariant morphism $g : \mathbf A^2_\kb \setminus \{ 0 \} \to \mathbf A^{\Sigma ( 1 )}_\kb$ avoiding $|\SSS|$.
\end{mynotation}
 
 \begin{mydef} \label{def-phi-chi-chi-prime}
For any $\chi , \chi '  \in \NN^{\Sigma ( 1 )} $ such that $\chi ' \leqslant \chi$,
let $\Phi^\chi_{\chi ' }$ the morphism given by the coordinatewise multiplication 
\[
\begin{array}{rll}
\Phi^\chi_{\chi ' } : 
\Hom^{\chi '} 
\left ( 
\mathbf A_\kb^2 \setminus \{ 0 \} , \mathbf A_\kb^{\Sigma ( 1 )}
\right ) 
\times \Hom^{\chi - \chi '} 
\left ( 
\mathbf A_\kb^2 \setminus \{ 0 \} , \mathbf A_\kb^{\Sigma ( 1 )}
\right ) 
& \longrightarrow 
& \Hom^\chi
\left ( 
\mathbf A_\kb^2 \setminus \{ 0 \} , \mathbf A_\kb^{\Sigma ( 1 )} \right ) \\
\left ( (f_\alpha )_{\alpha \in \Sigma ( 1 )} 
, ( g _\alpha )_{\alpha \in \Sigma ( 1 )}
\right ) 
& \longmapsto 
& \left ( 
f_\alpha  g_\alpha 
\right )_{\alpha \in \Sigma ( 1 )}  .
\end{array}
\]
For any $(\chi - \chi ')$-equivariant morphism $g$, consider the induced map
\label{def:morphism-adding-a-fixed-coarse-degeneration}
\[
\begin{array}{rll}
\Phi_g^\chi : \Hom^{\chi '} 
\left ( 
\mathbf A_\kb^2 \setminus \{ 0 \} , \mathbf A_\kb^{\Sigma ( 1 )}
\right ) 
\otimes \kappa ( g ) 
& \longrightarrow 
& \Hom^\chi
\left ( 
\mathbf A_\kb^2 \setminus \{ 0 \} , \mathbf A_\kb^{\Sigma ( 1 )} \right ) \otimes \kappa ( g ) 
 \\
(f_\alpha )_{\alpha \in \Sigma ( 1 )} 
& \longmapsto 
& \left ( 
f_\alpha  g_\alpha 
\right )_{\alpha \in \Sigma ( 1 )}  .
\end{array}
\]
\end{mydef}


\smallskip 

Our goal is to give an interpretation of the motivic Möbius inversion in terms of the $\Phi_{\chi '}^\chi$'s,
in a way compatible with the stratification and piecewise identification of $\Hom^\chi ( \mathbf A^2 \setminus \{ 0 \} , \mathbf A^{\Sigma ( 1 ) })$ from \cref{construction:piecewise-id-hom-zero}.

\smallskip 

As a first step, instead of working over $\kb$, we are going to work 
with zero-cycles on $\PP^1_G$ relatively to 
$G = \GG_m^{\Sigma ( 1 )}.$
In what follows, 
we are going to exploit the details of the proof of the multiplicative property of motivic Euler products as it is done in \cite[\S 3.9]{bilu2018motivic}. 
We take $I_0=\NN^{\Sigma ( 1 )} $ 
and $I = I_0 \setminus \{ \mathbf 0 \}$, but in general one can take $I$ to be any abelian semi-group and $I_0 = I \cup \{ 0 \}$, see \cite[\S 3.9.3]{bilu2018motivic}.

\medskip

Let $X$ be a variety above a certain base scheme $S$ and
take $\nn , \nn ' , \nn '' \in \NN^{\Sigma ( 1 ) }$ such that 
\[
\nn = \nn ' + \nn  '' .
\] 
Let $\kappa ' $ and $\kappa '' $ 
be partitions
respectively
of $\nn '  $ and $\nn ''$.
The previous relation becomes 
\[
\sum_{\ii\in I} \ii \kappa_\ii ' + \sum_{\ii\in I} \ii \kappa_\ii '' = \nn .
\]
Let $\gamma = (n_{\pp , \qq})_{(\pp , \qq) \in I_0^2 \setminus \{ 0 \}}$ 
be a collection of integers such that for all 
$\pp , \qq \in I$,
\[
\sum_{\qq\in I_0} n_{\pp , \qq} = \kappa_\pp ' 
\qquad 
\sum_{\pp\in I_0} n_{\pp , \qq} = \kappa_\qq '' . 
\]
Let $\kappa = \kappa ( \gamma ) $ be the ``overlap partition'' 
of $\nn$
given by 
\[
\kappa_i = \sum_{\pp+\qq = \ii} n_{\pp , \qq}
\]
for all $\ii \in I$. 
It is important to note that $\kappa , \kappa ' $ and $ \kappa ''$ are entirely determined by $\gamma$.
Let 
\[
\Sym_{ / S }^{\kappa '} ( X )_*
\times_\gamma \Sym_{ / S }^{\kappa ''} ( X  )_*
\]
be the locally closed subset of 
\[
\Sym_{ / S }^{\kappa '} ( X  )_*
\times \Sym_{ / S }^{\kappa ''} ( X  )_*
\]
given by points whose 
$\Sym_{ / S }^{\kappa_\pp '} ( X )$-component 
and $\Sym_{ / S }^{\kappa_\qq ''} ( X  )$-component
overlap exactly above an effective zero-cycle of degree $n_{\pp , \qq}$ \cite[\S 3.9.6]{bilu2018motivic}.
One can show by induction
\cite[\S 3.9.7.3]{bilu2018motivic}
that 
there is a canonical isomorphism
\[
\Sym_{ / S }^{\kappa '} ( X )_*
\times_\gamma \Sym_{ / S }^{\kappa ''} ( X  )_*
\simeq 
\left ( 
\prod_{\ii\in I}
\prod_{\substack{(\pp , \qq) \in I_0^2 \setminus \{ 0 \} \\ \pp + \qq = \ii }} \Sym^{ n_{\pp , \qq} }_{ / S } ( X )
\right )_* .
\]
The right-hand side will be denoted by $\Sym_{ / S }^{\gamma} ( X )_*$.
Then, there is a canonical morphism 
\begin{equation}\label{def-phi-gamma}
	 \Phi^\gamma : \Sym_{ / S }^{\kappa '} ( X )_*
\times_\gamma \Sym_{ / S }^{\kappa ''} ( X  )_*
\simeq 
\Sym_{ / S }^{\gamma} ( X )_* 
\longrightarrow 
\Sym^{\kappa ( \gamma )}_{ / S } ( X )_*
\end{equation}
induced 
by the morphisms
\[
\prod_{\pp+\qq = \ii} \Sym^{ n_{\pp , \qq} }_{ / S } ( X )
\to \Sym^{ \kappa_\ii ( \gamma ) }_{ / S } ( X ) 
\qquad
\ii \in I
.
\]
If $\AAA = ( A_\pp )_{\pp \in I}$
and $\BBB = ( B_\qq )_{\qq\in I}$
are two families of $X$-varieties,
then 
\[
\Sym_{X/S}^{\kappa ' ( \gamma )} ( \AAA ) \times_\gamma 
\Sym_{X/S}^{\kappa '' ( \gamma )} ( \BBB )
\]
is the class above $\Sym_{ / S }^{\kappa '} ( X )_*
\times_\gamma \Sym_{ / S }^{\kappa ''} ( X  )_*$ 
obtained from 
\[
\Sym_{X/S}^{\kappa ' ( \gamma )} ( \AAA ) \boxtimes
\Sym_{X/S}^{\kappa '' ( \gamma )} ( \BBB )
\]
by pull-back,
while 
$\Sym_{ / S }^{\gamma} ( \AAA \boxtimes_X \BBB )_*$
is defined as
\[
\left ( 
\prod_{\ii\in I} \prod_{\pp+\qq = \ii}
\Sym_{X/S}^{n_{\pp , \qq}} ( A_\pp \boxtimes_X B_\qq )
\right )_* . 
\]
\begin{myexample}
Taking $S=G$ and $X=\PP^1_G$,
\[
\Sym_{\PP^1_G / G }^{\kappa '} ( \PP^1_G )_*
\times_\gamma \Sym_{\PP^1_G / G }^{\kappa ''} ( \PP^1_G  )_*
\]
the canonical morphism $\Phi^\gamma$
\begin{align*}
	& \Sym^{\kappa ' }_{ / G } 
( \PP^1_G )_*
\times_\gamma
\Sym^{\kappa '' }_{ / G }
( \PP^1_G )_*
\simeq 
\left ( 
\prod_{(\pp , \qq) \in I_0 \setminus \{ 0 \} } \Sym^{ n_{\pp , \qq} }_{ / G } ( \PP^1_G )
\right )_* 
\\
& \qquad 
\overset{\Phi^\gamma} {\longrightarrow }
\Sym^{\kappa ( \gamma )}_{  / G } ( \PP^1_G )_*
\end{align*}
sends couples of tuples of zero-cycles to their coordinate-wise sum.
This is the zero-cycle version of the map $\Phi^{\chi}_{\chi '}$ we previously defined.
\end{myexample}

\begin{myptn}[Multiplicativity of motivic Euler products -- refined version]
\label{ptn:refined-multiplicativity-motivic-Euler-products}
Let $X $ be a variety above a base scheme $S$. 
Let 
$\AAA$ and $\BBB$ be families of varieties above $X$ indexed by $I$. 
Let $\gamma = (n_{\pp , \qq})_{(\pp , \qq) \in I_0^2 \setminus \{ 0 \}}$ 
be a collection of integers as above.
	Then
	\[
\Sym_{ / S }^{\gamma} ( \AAA \boxtimes_X \BBB )_*
= 
\Sym_{X/S}^{\kappa ' ( \gamma )} ( \AAA )_* \times_\gamma 
\Sym_{X/S}^{\kappa '' ( \gamma )} ( \BBB )_*
	\]
	in $\KVar{\Sym_{/S}^{\kappa ( \gamma )} ( X ) } $. 
\end{myptn}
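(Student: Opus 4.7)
The strategy is to reduce the identity to the analogous (already established) identity on bases, namely the canonical isomorphism
\[
\Sym^{\kappa '}_{/S}( X )_*
\times_\gamma \Sym^{\kappa ''}_{/S}( X )_*
\simeq \Sym^{\gamma}_{/S}( X )_*
\]
recalled in the discussion above the statement, and then to lift it to the level of classes over $X$ one overlap slot at a time. The point is that, thanks to the overlap decomposition indexed by $\gamma$, the fibered product $\times_\gamma$ is literally a product over pairs $(p,q)$, so it is enough to check a single component.

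\medskip

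\noindent\textbf{Unwinding both sides.} First I would expand the left-hand side using the definition
\[
\Sym_{/S}^{\gamma}( \AAA \boxtimes_X \BBB )_{*}
= \Bigl( \prod_{i \in I} \prod_{p+q=i}
\Sym_{X/S}^{n_{p,q}} ( A_p \boxtimes_X B_q ) \Bigr)_{*} ,
\]
viewed as a class in $\KVar{\Sym_{/S}^{\kappa(\gamma)}(X)_{*}}$ through $\Phi^\gamma$. For the right-hand side, by definition of $\times_\gamma$ one restricts $\Sym_{X/S}^{\kappa'}(\AAA)_{*} \boxtimes \Sym_{X/S}^{\kappa''}(\BBB)_{*}$ to the overlap stratum $\Sym_{/S}^{\kappa'}(X)_{*} \times_\gamma \Sym_{/S}^{\kappa''}(X)_{*}$, which by the canonical isomorphism becomes $\Sym_{/S}^{\gamma}(X)_{*}$. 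Expanding $\Sym_{X/S}^{\kappa'}$ and $\Sym_{X/S}^{\kappa''}$ as products over $I$ and rearranging according to the bijection $(p,q) \in I_0^2 \setminus \{0\} \leftrightarrow (i = p+q, (p,q))$, the right-hand side rewrites as
\[
\Bigl( \prod_{i \in I} \prod_{p+q=i}
\Sym_{X/S}^{n_{p,q}}( A_p )
\times_{ \Sym_{/S}^{n_{p,q}}( X ) }
\Sym_{X/S}^{n_{p,q}}( B_q ) \Bigr)_{*} .
\]

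\medskip

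\noindent\textbf{The single-slot identity.} So everything reduces to the identity, for every pair $(p,q)$ with $m = n_{p,q} \geqslant 0$,
\[
\Sym_{X/S}^{m}( A_p \boxtimes_X B_q )
\;=\;
\Sym_{X/S}^{m}( A_p ) \times_{ \Sym_{/S}^{m}( X ) } \Sym_{X/S}^{m}( B_q )
\]
in $\KVar{\Sym_{/S}^{m}(X)}$. This I would establish by the pointwise criterion \cref{ptn:point-wise-criterion-for-piecewise-trivial-fibrations}: above a geometric point corresponding to an étale zero-cycle $[x_1 + \cdots + x_m]$ of $X$, the fibre of the left-hand side is the product $\prod_{j=1}^{m} \bigl( (A_p)_{x_j} \times_{\kappa(x_j)} (B_q)_{x_j} \bigr)$, while the fibre of the right-hand side is $\bigl( \prod_j (A_p)_{x_j} \bigr) \times_{\prod_j \kappa(x_j)} \bigl( \prod_j (B_q)_{x_j} \bigr)$; these are canonically isomorphic because the fibered product over $X$ commutes with products in $X$. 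The symmetric group actions on both sides are by simultaneous permutation of the $x_j$, so the quotients agree as well.

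\medskip

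\noindent\textbf{Main obstacle.} The technical heart is the bookkeeping of the restricted loci (subscripts ${}_\ast$) and of the products of symmetric groups that enter when one forms $\Sym^{n_{p,q}}$. The pointwise criterion bypasses this, but one must be careful that the stratification by $\gamma$ refines the diagonal stratification on both sides so that restricting to $\Sym_{/S}^{\gamma}(X)_{*}$ yields the same locally closed subscheme of the ambient product of symmetric powers; this is precisely what the canonical isomorphism recalled above the statement expresses, so assembling the slot-by-slot identities via \cref{ptn:classes-of-piecewise-trivial-fibrations} yields the desired equality in $\KVar{\Sym_{/S}^{\kappa(\gamma)}(X)_{*}}$.
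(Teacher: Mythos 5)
Your strategy is sound and is, in substance, the argument of the reference that the paper itself defers to (the paper's ``proof'' is only the citation to Bilu, \S 3.9.7): decompose the overlap stratum slot by slot via $\Phi^\gamma$ and compare fibres over étale cycles. Two points need tightening. First, the single-slot identity
\[
\Sym^m_{X/S}(A_p\boxtimes_X B_q)=\Sym^m_{X/S}(A_p)\times_{\Sym^m_{/S}(X)}\Sym^m_{X/S}(B_q)
\]
is false as stated in $\KVar{\Sym^m_{/S}(X)}$: over a non-reduced cycle such as $2[x]$ the left-hand fibre is $\Sym^2\bigl((A_p)_x\times(B_q)_x\bigr)$ while the right-hand fibre is $\Sym^2\bigl((A_p)_x\bigr)\times\Sym^2\bigl((B_q)_x\bigr)$, and these already differ in cardinality when $(A_p)_x$ and $(B_q)_x$ are two-point sets (ten versus nine degree-two cycles). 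The identity you need, and the one you actually verify, lives over the restricted locus $\Sym^m_{/S}(X)_*$; since every point of $\Sym^\gamma_{/S}(X)_*$ has all of its slot cycles étale and pairwise disjoint, this restricted version is all that is required, but the subscript $*$ must be carried through the reduction. Second, \cref{ptn:point-wise-criterion-for-piecewise-trivial-fibrations} is a criterion for a morphism to be a piecewise trivial fibration with a \emph{fixed} fibre $F$; here the fibres vary with the base point, so it does not apply as invoked. The clean fix is to observe that the two projections from $A_p\boxtimes_X B_q$ induce a canonical comparison morphism from the left-hand side to the fibred product, and to check that over $\Sym^m_{/S}(X)_*$ it is an isomorphism by descending along the étale $\mathfrak{S}_m$-torsor $(X^m)_*\to\Sym^m_{/S}(X)_*$, where both sides pull back to the same scheme $\prod_{j}(A_p\times_X B_q)$; equality of classes then follows from \cref{lemma:piecewise-iso-induce-same-k0} rather than from the fibration criterion. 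With these repairs the slot-by-slot assembly goes through as you describe.
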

 
\begin{proof}
	See \cite[\S 3.9.7]{bilu2018motivic}.
\end{proof} 
 
In the following remarks,
the use of the subscript ``${}_{(*)}$''
means that the claims remain valid 
when one takes the restriction of the symmetric products
to the complement of the diagonal.  
 
\begin{myremark}
\label{rmk:toric-mobius-decomposition-classes-linear-combination-elementary-pieces}
	Let $\mathfrak a$ be a class in the Grothendieck ring of varieties over a certain $S$-variety $X$.
	Then for any $k \in \NN^*$,
	\begin{align*}
	\Sym^k_{X/S} ( 2 \mathfrak a )_{(*)}
	& 
	= \sum_{k_1+k_2=k} \Sym^{k_1,k_2} (  \mathfrak a , \mathfrak a )_{(*)}
	\end{align*}
	in $\KVar{\Sym_{/S}^k (  X )_{(*)}}$,
	and more generally for any $k, \ell \in \NN^*$,
	\begin{align*}
		\Sym^k_{X/S}  ( \ell \mathfrak a  )_{(*)}
		& 
		= \sum_{k_1 + ... + k_\ell = k } \Sym^{k_1,...,k_\ell} _{X/S} ( \mathfrak a , ... , \mathfrak a )_{(*)}
	\end{align*}
	It is important to recall that if $\mathfrak a = [ Y \to X ]$ is an effective class,
	and $Y_1, ... , Y_\ell$ are $\ell$ copies of $Y$,
	these identities 
	come from the canonical decomposition of 
	\[
	\Sym^k_{X/S} \left ( Y_1 \coprod ... \coprod Y_\ell \right )_{(*)}
	\] 
	into the $\Sym^{k_1,...,k_\ell} _{X/S}  \left ( Y_1 , ... , Y_\ell \right )_{(*)} $ relatively to $\Sym_{/S}^k (  X )_{(*)}$. 
\end{myremark}

\begin{myremark}	
	Denoting by $\mathcal Q \subset \NN^{(\NN^*)}$ the set of partitions of integers (in the usual sense) without holes, 
	then for any $k \in \NN^*$ 
	we have the relation 
	\begin{align*}
		\Sym^k_{X/S} ( - \mathfrak a )_{(*)}
		= 
		\sum_{
		\substack{
		\kappa = ( k_i )_{i\in \NN} \in \mathcal Q
		\\
		\sum_i k_i = k }
		}
		(-1)^{|\{  i \in \NN \mid k_i > 0 \}|} \Sym^\kappa_{X/S} ( \mathfrak a )_{(*)}
	\end{align*}
	above $\Sym^k_{/S} ( X )_{(*)}$  \cite[Example 6.1.4]{bilu2021motivic}.
	Again, 
	it is important to remember where this relation comes from.
	
	If $\mathfrak a = [ Y \to X ]$ is an effective class in $ \KVar{X}$,
	then 
	$\Sym^k_{X/S} ( - \mathfrak a )$
	is \textit{by definition}
	the degree $k$ part 
	of the inverse of (the class of)
	\[
	\Sym^\bullet_{X/S} ( Y )
	= ( 1 , Y , \Sym^2_{X/S} ( Y ) , ... ) 
	\]
	viewed above 
	\[
	\Sym^\bullet_{/S} ( X ) = \prod_{\ell \in \NN} \Sym^\ell_{/S} ( X ) .
	\] 
	Indeed, 
	$\KVar{\Sym_{/S}^\bullet ( X )  }$
	admits a natural structure of graded $ \KVar{X}$-algebra 
	for which the product law
	is 
	induced by the natural maps
	\[
\Sym_{/S}^i ( X )  \times \Sym_{/S}^{k-i} ( X ) \to  
\Sym_{/S}^k ( X ) ,
	\]
	see \cite[\S 6.1.1]{bilu2021motivic}. 
	Then, one writes
	\begin{align*}
		\frac{1}{	\Sym^\bullet_{X/S} ( Y )} 
		& =
				\frac{1}{ 1 + \left (	\Sym^\bullet_{X/S} ( Y ) - 1 \right ) } \\
		& = \sum_{k=0}^\infty 
		( - 1 )^k 
		\left ( 0 , Y ,  \Sym^2_{X/S} ( Y ) , ... \right ) ^k 
		\\
	\end{align*}
	and takes the degree $k$ part. 
	This provides an explicit definition of $\Sym^k_{X/S} ( -[Y] )$
	in terms of the classes of the $\Sym^{\kappa}_{X/S} ( Y ) $ for $\kappa \in \mathcal Q$ partition of $k$ and above $\Sym_{/S}^k ( X )$. In particular, the arrows are explicit: they are the natural ones. 	
\end{myremark}

Now, 
starting from the relation
\[
Q_{B_\Sigma} ( \TT ) 
= 
\frac{
P_{B_\Sigma } ( \TT )
}
{\prod_{\alpha \in \Sigma ( 1 )} ( 1 - t_\alpha )}
\]
in $\ZZ [[ \TT ]]$,
that is to say,
\[
\sum_{\nn \in \NN^{\Sigma ( 1 )}} \mathbf 1_{ A ( B_\Sigma ) } ( \nn ) \TT^\nn 
= 
\left ( 
\sum_{\nn \in \NN^{\Sigma ( 1 )}} \TT^\nn 
\right ) 
\left ( 
\sum_{\nn \in \NN^{\Sigma ( 1 )}} \mu_{B_\Sigma} ( \nn ) \TT^\nn 
\right ) 
\]
(the definition of $A ( B_\Sigma ) $ was given \cpageref{definition-ABsigma}),
we take the motivic Euler products associated to the corresponding 
$\PP^1_G$-families
\[
\left ( 
\mathbf 1_{A ( B_\Sigma )} ( \nn ) [ \PP^1_G ]
\right ) ,
\qquad
\left ( 
\PP^1_G
\right ) ,
\qquad
\left ( 
\mu_{B_\Sigma} ( \nn ) \left [ \PP^1_{\{1_G\}} \right ]
\right ) ,
\]
all three of them being indexed by $\nn \in \NN^{\Sigma ( 1 )} \setminus \{ 0 \}$
(an index we will drop in the following computations).
More precisely,
we apply 
\cref{ptn:refined-multiplicativity-motivic-Euler-products}
\cpageref{ptn:refined-multiplicativity-motivic-Euler-products}
to
$\AAA = \left ( 
\PP^1_G
\right ) $
and 
${\BBB =\left ( 
\mu_{B_\Sigma} ( \nn ) \left [ \PP^1_{\{1_G\}} \right ]
\right ) } $.
If $\varpi$ is any generalized partition of a certain non-zero tuple $\nn \in \NN^{\Sigma ( 1 )}$,
then one gets the relation in $\KVar{\Sym^\varpi_{/ G} ( \PP^1_G )}$
\begin{align*}
	& \Sym_{\PP^1_G / G }^\varpi 
\left 
( \mathbf 1_{A ( B_\Sigma )} [ \PP^1_G ] 
\right )_*
\\ 
& = 
\sum_{\substack{\gamma = ( n_{\pp , \qq} ) \\ \kappa ( \gamma ) = \varpi }} 
 \Sym_{\PP^1_G / G }^\gamma
 \left ( \PP^1_G
 \boxtimes_{\PP^1_G} 
 \mu_{ B_\Sigma } \left [ \PP^1_{ \{1_G\} } \right ] 
 \right )\\
& = 
\sum_{\substack{\gamma = ( n_{\pp , \qq} ) \\ \kappa ( \gamma ) = \varpi }} 
\Sym_{\PP^1_G / G }^{\kappa ' ( \gamma )}
\left 
( \PP^1_G
\right )_*
\times_\gamma
\Sym_{/ G }^{\kappa '' ( \gamma )}
\left 
(  \mu_{ B_\Sigma } \left [ \PP^1_{ \{1_G\} } \right ]
\right ) _* .
\end{align*}
Each 
\[
\Sym_{\PP^1_G / G }^{\kappa ' ( \gamma )}
\left 
( \PP^1_G
\right )_*
\times_\gamma
\Sym_{/ G }^{\kappa '' ( \gamma )}
\left 
(  \mu_{ B_\Sigma } \left [ \PP^1_{ \{1_G\} } \right ]
\right ) _* .
\]
can be explicitly decomposed above 
\[
\Sym_{/ G }^{\kappa ' ( \gamma )}
\left 
( \PP^1_G
\right )_*
\times_\gamma 
\Sym_{/ G }^{\kappa '' ( \gamma )}
\left 
( \PP^1_{ \{ 1_G \} }
\right ) _*
\]
as a linear combination of effective classes. 
Indeed,
using the previous two remarks, we get that 
	for all $\pp \in \NN^{\Sigma ( 1 )}\setminus \{ 0 \}$
	the class 
	\[
	\Sym_{\PP^1_G / G }^{\kappa_\pp '' ( \gamma ) }
\left 
( \mu_{ B_\Sigma } ( \pp ) \left [ \PP^1_{ \{1_G\} } \right ]
\right )_*
	\]
(which is zero if $\pp \notin B_\Sigma $)
	is 
\[
	\sum_{k_1 + ... + k_{| \mu_{B_\Sigma} ( \pp)  |} = \kappa_\pp '' ( \gamma )  } 
	\Sym^{ k_1 , ... , k_{| \mu_{B_\Sigma} ( \pp)  |} }_{\PP^1_G / G } 
	\left ( 
	\mathrm{sign}( \mu_{B_\Sigma} ( \pp ) 
	 )
	\left [\PP^1_{\{ 1_G \}} \right ] 
	\right )
\]	
above $\Sym^{\kappa_\pp '' ( \gamma ) }_{/ G } ( \PP^1_G  )$. 
If $\mu_{B_\Sigma} ( \pp )$ is negative, 
we replace 
\[
\Sym^{ \kappa_\pp '' ( \gamma )  }_{\PP^1_G / G } 
	\left ( 
-	\left [
	\PP^1_{\{ 1_G \}} 
	\right ]  
	\right )
\]
by its definition
\[
\sum_{
		\substack{
		\lambda  = ( \lambda_\iota )_{\iota \in \NN^*} \in \mathcal Q
		\\
		\sum_\iota \lambda_\iota = \kappa_\pp '' ( \gamma )  }
		}
		(-1)^{|\{  \iota \in \NN^* \mid \lambda_\iota  > 0 \}|} 
		\left [
		\Sym^\lambda_{/ G } ( \PP^1_{\{ 1_G \}} )
		\right ]
\]
in the expression above
and pull it back to $\Sym^{ k_1 , ... , k_{| \mu_{B_\Sigma} ( \pp)  |} }_{\PP^1_G / G} 
	\left ( 
	\PP^1_{\{ 1_G \}}  
	\right )$.
Finally, as always one restricts to the complement of the diagonal. 
This shows explicitly that 
\[
\Sym_{\PP^1_G / G }^{\kappa ' ( \gamma )}
\left 
( \PP^1_G
\right )_*
\times_\gamma
\Sym_{/ G }^{\kappa '' ( \gamma )}
\left 
(  \mu_{ B_\Sigma } \left [ \PP^1_{ \{1_G\} } \right ]
\right ) _* .
\]
is a linear combination with integer coefficients of restrictions of 
\[
\Sym_{/ G }^{\kappa ' ( \gamma )}
\left 
( \PP^1_G 
\right )_*
\times_\gamma 
\Sym_{\PP^1_G / G }^{\kappa '' ( \gamma )}
\left 
(  \PP^1_{ \{1_G\} }
\right )_*
\overset{\Phi^\gamma} {\longrightarrow }
\Sym^{\kappa ( \gamma )}_{ / G } ( \PP^1_G )_*
\] 
to constructible subsets
of the form
\[
\Sym_{/ G }^{\kappa ' ( \gamma )}
\left 
( \PP^1_G 
\right )_*
\times_\gamma
\left ( \prod_\pp \prod_{\iota \geqslant 1} \Sym_{/G}^{\lambda_{ \pp , \iota ,  1 } , ... , \lambda_{ \pp , \iota ,  | \mu_{B_\Sigma} ( \pp ) |}} ( \PP^1_{\{ 1_G \}} ) 
\right )_* 
\]
with 
\[
\sum_\iota  ( \lambda_{ \pp , \iota ,  1 } + ... + \lambda_{ \pp , \iota , | \mu_{B_\Sigma} ( \pp )|}) = \kappa_\pp '' ( \gamma ) 
\] for all $\pp\in  I = \NN^{\Sigma ( 1 )} \setminus \{ 0 \} $
and 
\[{( \lambda_{ \pp , \iota ,  1 } + ... + \lambda_{ \pp , \iota , | \mu_{B_\Sigma} ( \pp )|})_{\iota \in \NN^*} \in \mathcal Q}
\]
In case $\mu_{B_\Sigma} ( \pp )$ is positive, as a convention, we will always take  
\[
\lambda_\pp =  {( \lambda_{ \pp , \iota ,  1 } + ... + \lambda_{ \pp , \iota , | \mu_{B_\Sigma} ( \pp )|})_{\iota \in \NN^*} \in \mathcal Q} 
\]
to be the trivial partition $\lambda_\pp = ( \kappa_\pp '' ( \gamma ) , 0 , ... )$
and we will only have to perform a sum over all possible partitions (in the usual sense)
of $ \kappa_\pp '' ( \gamma )$ of length $\mu_{B_\Sigma} ( \pp )$. 

\medskip 

	For all $\chi$, 
	we apply \cref{lemma:piecewise-iso-induce-same-k0}
	to the 
	$G$-equivariant piecewise isomorphism 
	between 
	\[
	\Hom^\chi ( \mathbf A^2\setminus \{ 0 \} , \mathbf A^{\Sigma ( 1 )} )^*
	\]
	and 
	\[
	G \times \Sym^\chi_{/ \kb} ( \PP^1_\kb ) 
	\simeq \Sym^\chi_{/ G } ( \PP^1_G ) 
	\]
	from \cref{construction:piecewise-id-hom-zero} \cpageref{construction:piecewise-id-hom-zero}.
	In particular,
	in 
	\[
	\KVar {\Hom^{\chi  } \left ( \mathbf A_\kb ^2  \setminus \{ 0 \} , \mathbf A_\kb^{\Sigma ( 1 )} \right )^* } \simeq \KVar {\Sym_{/G}^\chi ( \PP^1_G )}
	\]
	we identify the class of the space of unitary $\chi$-equivariant morphisms with the one of 
	\[
	\Sym^\chi_{/ G } \left ( \PP^1_{ \{1_G\} } \right ) \simeq \Sym^\chi_{/ \kb } ( \PP^1_\kb ) .
	\]
	Recall that this induces a piecewise identification between
	$\Hom^\chi ( \mathbf A^2\setminus \{ 0 \} , \mathcal T_\Sigma )$ 
	and $\Sym_{\PP^1_G / G }^\chi
\left 
( \mathbf 1_{A ( B_\Sigma ) } \PP^1_G  
\right )
$. 
	These piecewise isomorphisms commute with addition of cycles (the $\Phi^\gamma$'s of \eqref{def-phi-gamma} \cpageref{def-phi-gamma}) and multiplication of the corresponding polynomials (the $\Phi^\chi_{\chi '}$'s of \cref{def-phi-chi-chi-prime} \cpageref{def-phi-chi-chi-prime}). 
	Putting all this together, we get that 	
\begin{align}
	& \left [
	\Hom^{\chi  } ( \mathbf A_\kb ^2  \setminus \{ 0 \} , \mathcal T_\Sigma )
	\right ] \notag \\
	& = 
	\sum_{ \substack{\varpi \text{ partition of } \chi  \\ \gamma = ( n_{\qq,\pp} ) \\ \kappa ( \gamma ) = \varpi } }
	\Sym_{\PP^1_G / G }^{\kappa ' ( \gamma )}
\left 
( \PP^1_G
\right )_*
\times_\gamma
\Sym_{/ G }^{\kappa '' ( \gamma )}
\left 
(  \mu_{ B_\Sigma } \left [ \PP^1_{ \{1_G\} } \right ]
\right ) _*  \notag \\
& = \sum_{ \substack{\varpi \text{ partition of } \chi  \\ \gamma = ( n_{\qq,\pp} ) \\ \kappa ( \gamma ) = \varpi } } 
\quad 
\sum_{\substack{
		( ( \lambda_{ \pp,\iota} )_{\iota \in \NN^*} )  \in \mathcal Q \times \{ \pp \in I \mid \mu_{B_\Sigma} ( \pp ) < 0 \}
		\\
		\sum_\iota \lambda_{ \pp,\iota} = \kappa_\pp '' ( \gamma ) 
		\\  
		\lambda_{ \pp , \iota ,  1 } + ... + \lambda_{ \pp , \iota , | \mu_{B_\Sigma} ( \pp )|}
		= \lambda_{ \pp,\iota} }
		} \notag  \\
& \qquad 
(-1)^{\sum_{\substack{ \pp \in I \\ \mu_{B_\Sigma} ( \pp ) < 0 } } |\{  \iota \in \NN^* \mid \lambda_{ \pp,\iota}  > 0 \}|} 
\Sym_{/ G }^{\kappa ' ( \gamma )}
\left 
( \PP^1_G 
\right )_*
\times_\gamma
\left ( \prod_{\pp \in I} \prod_{\iota \geqslant 1} \Sym_{/G}^{\lambda_{ \pp , \iota ,  1 } , ... , \lambda_{ \pp , \iota ,  | \mu_{B_\Sigma} ( \pp ) |}} ( \PP^1_{\{ 1_G \}} ) 
\right )_* 	
\label{equation:refined-Möbius-inversion}
\end{align}
	in 
	$\KVar {\Hom^{\chi  } \left ( \mathbf A_\kb ^2  \setminus \{ 0 \} , \mathbf A_\kb^{\Sigma ( 1 )} \right )^* } \simeq \KVar {\Sym_{/G}^\chi ( \PP^1_\GG )}$,
	with structure morphisms given by the $\Phi^\gamma$'s, seen as restrictions of the $\Phi^\chi_{\chi'}$'s. 


\begin{myexample}
	If $V = \PP_\kb^n$, then $B_\Sigma = \{ ( 1 , ... , 1 ) \} = \{ \mathbf 1 \}$ and 
	\begin{align*}
		\mu_{B_\Sigma} ( \mathbf 0 ) & = 1,\\
		\mu_{B_\Sigma} ( \mathbf 1 ) & = - 1, \\
		\mu_{B_\Sigma} ( \nn ) & = 0 \text{ whenever } \nn \notin \{ \mathbf 0 , \mathbf 1 \}
	\end{align*}
	so in this case \eqref{equation:refined-Möbius-inversion} is the sum
	over all $\gamma = (( n_{\qq,\mathbf 0} ) , (n_{\qq,\mathbf 1} ), n_{\mathbf 0 , \mathbf 1}) \in \NN^{(I)} \times \NN^{(I)} \times \NN$  
	and all possible partitions $\lambda \in \mathcal Q$ of the number $\kappa '' ( \gamma )_\mathbf 1 = n_{\mathbf 0 , \mathbf 1 } + \sum_{\qq\in I} n_{\mathbf q , 1 }$,
	of the class 
	\[
	(-1)^{| \{ \iota \in \mathbf N^* \mid  \lambda_\iota > 0 \} |} 
	\Sym_{/G}^{\kappa ' ( \gamma )} ( \PP^1_G )_* 
	\times_\gamma  
	\Sym_{/G}^{\lambda } ( \PP^1_{\{ 1_G \}} ) _* 
	\]
	the overlap partition being 
	$\kappa  ( \gamma ) = ( n_{\qq,\mathbf 0} ) + (n_{\qq - \mathbf 1,\mathbf 1} ) $.
	The relation one obtains corresponds to the removal of certain spaces of common divisors
	of the coordinates of the morphisms. 
\end{myexample}

\begin{myexample}
	Let $V$ be the blow-up in one point of $\PP^2_\kb$. Let $L_1 , L_2$ and $L_3$ be strict transforms of three distincts lines,
	 such that the intersection of 
		$L_1$ and $L_3$
	 is the point we blew up and the third one does not contain it. Let $E=L_0$ be the exceptional line. Since $L_2 \cap L_0 = \varnothing $ and $L_1 \cap L_3 = \varnothing$ are the only \emph{empty} possible intersections between two of these four divisors, the minimal elements of $B_\Sigma$ are $(1,0,1,0)$ and $(0,1,0,1)$. 
	Then one checks that 
	\begin{align*}
		&\mu_{B_\Sigma} ( \mathbf 0 )  = 1,\\
		&\mu_{B_\Sigma} ( ( 1 , 0 , 1 , 0 ) )  = \mu_{B_\Sigma} ( (0 , 1 , 0 , 1 ) ) =  -1,  \\
		&\mu_{B_\Sigma} ( \mathbf 1 )  = 1 \\
		&\mu_{B_\Sigma} ( \nn )  = 0 \text{ otherwise } 
	\end{align*}
and hence 
\[
\sum_{\nn \in \{ 0 , 1 \}^4} \mu_{B_\Sigma} ( \nn ) \TT^\nn
= 1 - (t_0t_2 + t_1t_3) + t_0t_1t_2t_3 . 
\]
Since $|\mu_{B_\Sigma} ( \nn )| \leqslant 1$ for all $\nn$, the sets of partitions of the $\lambda_{ \pp,\iota}$'s are trivial in \eqref{equation:refined-Möbius-inversion}.
The product over $I$ 
becomes 
\[
\left ( 
\left (
\prod_{\iota \geqslant 1}
\left ( 
\Sym_{/G}^{\lambda_{(1,0,1,0),\iota}} ( \PP^1_{\{ 1_G \}} ) 
\times  
\Sym_{/G}^{\lambda_{(0,1,0,1),\iota}} ( \PP^1_{\{ 1_G \}} )
\right )
\right )
\times 
\Sym_{/G}^{\kappa_{(1,1,1,1)}'' ( \gamma )}  ( \PP^1_{\{ 1_G \}} ) 
\right )_* .
\]
\end{myexample}

In general, the combinatorics involved in our computation 
quickly become complicated, as the following second example shows. 
In particular, $|\mu_{B_\Sigma} |$ can take values different from $0$ or $1$.

\begin{myexample}\label{example-blow-up-3-pts}
Let $V$ be the blow-up of $\PP^2_\kb$ in three general points $p_1$, $p_2$ and $p_3$.
This case (for which $n+r = 6$) was already considered in the arithmetic setting in an unpublished work of Peyre from 1993 \cite{peyre1993DP6}.
	
	Let $L_1$, $L_2$ and $L_3$ be the exceptional lines in $V$ lying above the three general points, respectively $p_1$, $p_2$ and $p_3$,
	and $L_4$, $L_5$ and $L_6$ be the strict transforms of the lines $(p_2,p_3)$, $(p_1 , p_3 )$ and $(p_1 , p_2 )$.
	
	Then $\mu_{B_\Sigma}$ 
	can be more easily computed by considering the so-called graph $\GGG$ of \emph{nonintersection} of these six  exceptional lines,
	whose vertices are labeled by the $L_i$'s, $i \in \{ 1 , ... , 6\} = \Sigma ( 1 )$,
		and edges are $(L_1 , L_2 ), (L_2,L_3), (L_3, L_1 )$, $(L_4,L_5 ) , ( L_5 , L_6 ) , ( L_6 , L_4 )$ and $(L_1, L_4),(L_2,L_5),(L_3,L_6)$.

		Indeed, the set $B_\Sigma$ can be identified with the set of induced subgraphs of $\GGG$
		by sending $\nn \in B_\Sigma$ to the subgraph of $\GGG$ induced by $\{ L_i \mid n_i = 1\}$.
		Actually it is enough to compute the values of $\mu_{B_\Sigma}$ for connected ones, $\mu_{B_\Sigma}$ being multiplicative for pairs of elements having disjoint supports.
		For the convenience of the reader, we included an exhaustive list of these connected subgraphs (for $| \nn | \geqslant 2$) together with the corresponding values of $\mu_{B_\Sigma}$, in an appendix \cpageref{annexe}. 
		In the end, the polynomial $P_{B_\Sigma} ( \TT ) $ equals
	\begin{align*}
	\sum_{\nn \in \{ 0 , 1 \}^6} \mu_{B_\Sigma} ( \nn ) \TT^\nn
	= & 
	1 - (t_1t_2 + t_2t_3 + t_3t_1 + t_4t_5 + t_5t_6 + t_6t_4 + t_1t_4 + t_2t_5 + t_3t_6) \\
	& + 2 ( t_1t_2t_3 + t_4t_5t_6 ) \\
	& + (t_2t_4t_5 + t_1t_4t_2 + t_1t_3t_4 + t_3t_4t_6 + t_2t_3t_5 + t_3t_5t_6)\\ 
	& + (t_1t_4t_5 + t_1t_2t_5 + t_1t_3t_6 + t_1t_4t_6 + t_2t_3t_6 + t_2t_5t_6)\\ 
	& - ( t_1t_2t_4t_5 + t_1t_3t_4t_6 + t_2t_3t_5t_6 ) \\
	& - ( t_1t_2t_3t_6 + t_3t_4t_5t_6 + t_1t_2t_3t_4 + t_1t_4t_5t_6 + t_1t_2t_3t_5 + t_2t_4t_5t_6) \\
	& + t_1t_2t_3t_4t_5t_6.
	\end{align*}
	In particular, the monomials $t_1t_2t_3$ and $t_4t_5t_6$ contribute twice (with a positive coefficient), hence we have to sum over partitions of $\kappa_{(1,1,1,0,0,0)}''(\gamma )$ and $\kappa_{(0,0,0,1,1,1)}'' ( \gamma )$ of length two. 
	
	Taking symmetric products, one gets this sort of exclusion-inclusion relation 
	\eqref{equation:refined-Möbius-inversion}
	between our class of interest, $[\Hom^{\chi  } ( \mathbf A_\kb ^2  \setminus \{ 0 \} , \mathcal T_\Sigma )]$,
	and classes of subspaces of ${\Hom^{\chi  } ( \mathbf A_\kb ^2  \setminus \{ 0 \} , \mathbf A_\kb^6 )}$
	consisting of equivariant morphisms which 
	may take values in some non-allowed intersection of hyperplanes.
\end{myexample}

\subsubsection{Detwisting}	
	In what follows $\mu$ is any partition of $\chi - \chi '$
	\emph{whose support is contained in $B_\Sigma $}.
	We identify $\Sym^\mu_{/ \kb} 
\left (  \PP^1_{\kb} \right )_*$
with the space of unitary $(\chi - \chi ')$-equivariant morphisms whose zeros have multiplicities prescribed by $\mu$. 
	We are going to use the subscript notation
\[
( \; \cdots \; )_{\widetilde W} 
\] 
for the preimages of $\Hom^{\chi  } ( \mathbf A_\kb ^2  \setminus \{ 0 \} , \mathbf A_\kb^{\Sigma ( 1 )} \mid \widetilde W ) $
by the $\Phi^\chi_{\chi '}$'s,
as well as the concise notations
\[
S^\mu_{\widehat{|\SSS |} \, *} 
\qquad
\text{ and }
\qquad 
H^\chi
\]
respectively for $\Sym^\mu_{/ \kb} 
\left (  \PP^1_{\kb} \setminus | \SSS | \right )_*$ 
and $\Hom^\chi$, when necessary.

Let
\begin{align*}
&	\left ( \mathrm \Hom^{\chi '} ( \mathbf A^2_\kb \setminus \{ 0 \} , \mathbf A_\kb^{\Sigma ( 1 )} )^* \times \Sym^\mu_{/ \kb} 
 (  \PP^1_{\kb}  )_*\right )_{\widetilde W} \\
& = \left ( \mathrm \Hom^{\chi '} ( \mathbf A^2_\kb \setminus \{ 0 \} , \mathbf A_\kb^{\Sigma ( 1 )} )^* \times \Sym^\mu_{/ \kb} 
\left (  \PP^1_{\kb} \setminus | \SSS | \right )_*\right )_{\widetilde W} \\
& = 
\left ( \mathrm H^{\chi '} ( \mathbf A^2_\kb \setminus \{ 0 \} , \mathbf A_\kb^{\Sigma ( 1 )} )^* \times S^\mu_{\widehat{|\SSS |} \, *}  \right )_{\widetilde W}
\end{align*}
be the 
preimage of $
\mathrm \Hom^{\chi }
\left ( 
\mathbf A_\kb^2 \setminus \{ 0 \} , \mathbf A_\kb^{\Sigma ( 1 )}
\mid \widetilde W
\right )^* $
by 
\[
\Phi_{\mu}^\chi : 
 \mathrm \Hom^{\chi '} ( \mathbf A^2_\kb \setminus \{ 0 \} , \mathbf A_\kb^{\Sigma ( 1 )} )^*
 \times 
\Sym^\mu_{/\kb} ( \PP^1_\kb ) _* 
\overset{\Phi^\chi_{\chi '}}{\longrightarrow} \mathrm \Hom^{\chi } ( \mathbf A^2_\kb \setminus \{ 0 \} , \mathbf A_\kb^{\Sigma ( 1 )} )^*  ,
\]
where the first equality comes from the fact that one cannot add common zeros above $\SSS$ since $\widetilde W$ is a constructible subset of $\Hom ( \SSS , \mathcal T_\Sigma ) $.
The projection onto the second factor endows it with the structure of a {$\Sym_{/\kb}^\mu
( \mathbf P^1_\kb )_*$-variety}. 
We are going to compare its class with the one of 
\[
\Hom^{\chi '}
\left ( 
\mathbf A_\kb^2 \setminus \{ 0 \} , \mathbf A_\kb^{\Sigma ( 1 )} 
\, \left | \,
\widetilde W  
\right.
\right )^* \times \Sym_{/\kb}^\mu
( \mathbf P^1_\kb \setminus | \SSS |)_*. 
\]
The relation between all the spaces involved is summarised by the commutative diagram given \cpageref{cd:relation-moduli-spaces-reduction-conditions}. 

\begin{figure} 
	\begin{sideways} \label{cd:relation-moduli-spaces-reduction-conditions}
\begin{tikzcd} 
&
\left ( \mathrm H^{\chi '} ( \mathbf A^2_\kb \setminus \{ 0 \} , \mathbf A_\kb^{\Sigma ( 1 )} )^* \times S^\mu_{\widehat{|\SSS |} \, *}  \right )_{\widetilde W}
\ar[hook']{dl}[swap, sloped, near start]{}
\ar{rr}{\Phi^\chi_{\mu}}
\ar[]{dd}[near end]{(\res_\SSS , \, \pr_2)}
& &  \mathrm H^\chi ( \mathbf A^2_\kb \setminus \{ 0 \} , \mathbf A_\kb^{\Sigma ( 1 )} \mid \widetilde W )^* 
\ar{dd}{\res_\SSS}
\ar[hook']{dl}[swap, sloped, near start]{}
\\
\mathrm H^{\chi '} ( \mathbf A^2_\kb \setminus \{ 0 \} , \mathbf A_\kb^{\Sigma ( 1 )} )^* \times S^\mu_{\widehat{|\SSS |} \, *}  
\ar[crossing over]{rr}[near start]{\Phi^\chi_{\mu}}
\ar{dd}[swap]{(\res_\SSS , \, \pr_2)}
& & \mathrm H^\chi ( \mathbf A^2_\kb \setminus \{ 0 \} , \mathbf A_\kb^{\Sigma ( 1 )} )^* 
\\
&
\left ( 
\Hom ( \SSS , \mathbf A_\kb^{\Sigma ( 1 )} ) \times S^\mu_{\widehat{|\SSS |} \, *}  
\right )_{\widetilde W}
\ar[near start]{rr}{ ( \varphi , D ) \mapsto \varphi \overline D}
\ar[sloped, near end]{dl}{}
& & \widetilde W
\ar[hook']{dl}
\\
\Hom ( \SSS , \mathbf A_\kb^{\Sigma ( 1 )} ) \times S^\mu_{\widehat{|\SSS |} \, *}  
\ar{rr}{ ( \varphi , D ) \mapsto \varphi \overline D}
& & \Hom ( \SSS , \mathbf A_\kb^{\Sigma ( 1 )} )
\ar[crossing over, leftarrow, near start]{uu}{}
\end{tikzcd}
\end{sideways}
\end{figure}

For any point $D\in \Sym^\mu_{/ \kb} 
\left (  \PP^1_{\kb} \setminus | \SSS | \right )_*$,
the preimage of 
$\Hom^{\chi }
\left ( 
\mathbf A_\kb^2 \setminus \{ 0 \} , \mathbf A_\kb^{\Sigma ( 1 )} \mid \widetilde W \right )^* $
by $\Phi_D^\chi$
is the subspace
\[
\Hom^{\chi ' }
\left ( 
\mathbf A_{\kappa ( D )}^2 \setminus \{ 0 \} , \mathbf A_{\kappa ( D )}^{\Sigma ( 1 )} \mid 
\overline D ^{-1} 
\widetilde W \right )^*
\]
of $\chi '$-equivariant morphisms whose reduction modulo $\SSS$ lies in 
$
\overline{D}^{-1} \widetilde W 
$.
Moreover, 
when $\chi ' \geqslant \ell ( \SSS ) $,
this preimage is isomorphic to 
\[
\Hom^{\chi '}
\left ( 
\mathbf A_{\kappa ( D )}^2 \setminus \{ 0 \} , \mathbf A_{\kappa ( D )}^{\Sigma ( 1 )} \mid  \widetilde W \right )^* ,
\] the isomorphism being explicitly given in \cref{lemma:restriction-toric-torsor-locally-trivial}: it sends an element of Euclidean decomposition modulo $\varpi$
\[  
\left (
q_\alpha ,  \overline{D_\alpha}^{-1} w_\alpha 
\right ) _{\alpha \in \Sigma ( 1 )}
\]
with $(w_\alpha )\in \widetilde W$,
to 
\[
\left (
q_\alpha , w_\alpha 
\right ) _{\alpha \in \Sigma ( 1 )} .
\]
Now we can prove the following version of the motivic Möbius inversion.
\begin{myptn}\label{ptn:classes-of-constrained-morphisms-fixed-degen-degree}
\label{proposition:toric-stratification}
For all $\chi \in \NN^{\Sigma ( 1 )}$, we have
\begin{align*}
&	\left [
\Hom^\chi 
\left ( 
\mathbf A_\kb^2 \setminus \{ 0 \} , \mathcal T_\Sigma  
\left | \; 
\widetilde W
\right. 
\right ) ^* 
\right ]\\
& = 
\sum_{\chi ' , \, \mu } 
\left [ 
\Hom^{\chi '}
\left ( 
\mathbf A_\kb^2 \setminus \{ 0 \} , \mathbf A_\kb^{\Sigma ( 1 )}
\, \left | \,
\widetilde W  
\right.
\right )^* 
\right ]
\Sym^\mu_{\PP^1_\kb} 
\left ( \mu_{B_\Sigma} \cdot \left [ \PP^1_{\kb} \setminus | \SSS | \right ] 
\right )_*\\
& \quad + E^\chi 
\end{align*}
in
	$\KVar {\kb}$,
with the error term $E^\chi $ of bounded dimension
\[
\dim_{\kb} ( E^\chi ) 
\leqslant 
- \min_{\alpha \in \Sigma ( 1 ) } ( \chi_{\alpha } ) 
+ | \chi | 
+ \left ( 1 - | \Sigma ( 1 ) | \right )\left ( \ell ( \SSS ) - 1 \right )
+ \dim \left ( \widetilde W \right ) .
\]
\end{myptn}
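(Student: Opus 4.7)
The plan is to deduce the stated formula by specialising the refined Möbius inversion \eqref{equation:refined-Möbius-inversion} to the constraint $\widetilde W$ and then separating a main term from a negligible error by tracking whether the degeneracy zero-cycles avoid the support $|\SSS|$. Recall from \eqref{equation:refined-Möbius-inversion} that $\left[\Hom^\chi(\mathbf A^2_\kb \setminus \{0\}, \mathcal T_\Sigma)^*\right]$ decomposes as a signed sum, indexed by data $\gamma$ with $\kappa(\gamma)=\varpi$ a partition of $\chi$, of pieces living above $\Sym_{/G}^{\kappa'(\gamma)}(\PP^1_{\{1_G\}})_* \times_\gamma \Sym_{/G}^{\kappa''(\gamma)}(\PP^1_G)_*$, with structure morphisms given by the coordinatewise multiplication maps $\Phi^\chi_{\chi'}$ of \cref{def:morphism-adding-a-fixed-coarse-degeneration}. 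Regrouping the data $\gamma$ according to the pair $(\chi',\mu) = (|\kappa''(\gamma)|, \kappa'(\gamma))$, each summand becomes the pull-back by $\Phi^\chi_{\chi'}$ of a product of a $\chi'$-equivariant morphism with a unitary morphism of multidegree $\chi-\chi'=|\mu|$, itself identified with a zero-cycle $D \in \Sym^\mu_{/\kb}(\PP^1_\kb)$ weighted by $\mu_{B_\Sigma}$.

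The next step is to impose the constraint $\widetilde W$ by taking the preimage under $\Phi^\chi_{\chi'}$, and to split the resulting sum according to whether the degeneracy cycle $D$ lies in $\Sym^\mu_{/\kb}(\PP^1_\kb \setminus |\SSS|)_*$ or has some support in $|\SSS|$. In the first case, when moreover $\chi' \geqslant \ell(\SSS)-1$ coordinatewise, the detwisting isomorphism $\tau_{\overline{D}^{-1}}$ of \cref{lemma:restriction-toric-torsor-locally-trivial} identifies the fibre of the preimage above $D$ with $\Hom^{\chi'}(\mathbf A^2_\kb \setminus \{0\}, \mathbf A_\kb^{\Sigma(1)} \mid \widetilde W)^*$ independently of $D$. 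Applying the pointwise criterion for piecewise trivial fibrations (\cref{ptn:point-wise-criterion-for-piecewise-trivial-fibrations}) yields the product structure of the main term
\[
\left[\Hom^{\chi'}(\mathbf A^2_\kb \setminus \{0\}, \mathbf A_\kb^{\Sigma(1)} \mid \widetilde W)^*\right] \cdot \Sym^\mu_{\PP^1_\kb}\!\left(\mu_{B_\Sigma} \cdot [\PP^1_\kb \setminus |\SSS|]\right)_*
\]
summed over all admissible $(\chi', \mu)$.

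The remaining contributions, forming $E^\chi$, split into two families: terms where $D$ has nontrivial support in $|\SSS|$, and terms where some coordinate $\chi'_\alpha$ is strictly smaller than $\ell(\SSS)-1$. For the first family, each closed point of $|\SSS|$ lying in $\Supp(D)$ reduces the dimension of the stratum of $\Sym^\mu_{/\kb}(\PP^1_\kb)$ by the degree of that point, and \cref{lemma:restriction-toric-torsor-locally-trivial} then still provides a piecewise trivialisation of $\res_\SSS$ restricted to the $\widetilde W$-fibre, albeit of smaller dimension; combined with the fact that $\mu_{B_\Sigma}$ vanishes outside $\{0,1\}^{\Sigma(1)}$ (after \cite[\S 5.2]{bilu2020zeta}), this forces the dimension to drop by at least $\ell(\SSS)-1$ per coordinate hit. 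For the second family, the uniform bound \eqref{equation:toric-dim-bound-under-constraints} provides a penalty $\sum_\alpha \min(0, \chi'_\alpha - \ell(\SSS)+1)$ on the dimension of $\Hom^{\chi'}(\mathbf A^2_\kb \setminus\{0\}, \mathbf A_\kb^{\Sigma(1)} \mid \widetilde W)^{(*)}$ that is precisely activated in this regime. The main obstacle will be combining these estimates uniformly over all admissible $\gamma$ and subordinate partitions to produce the single clean bound claimed; this requires a careful accounting of the competing contributions across coordinates $\alpha \in \Sigma(1)$, where one loses $\chi_\alpha$ in the degree budget but saves at least $\ell(\SSS)-1$ per coordinate through the constraint, ultimately yielding $-\min_\alpha(\chi_\alpha) + |\chi| + (1 - |\Sigma(1)|)(\ell(\SSS)-1) + \dim(\widetilde W)$.
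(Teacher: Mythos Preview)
Your overall strategy coincides with the paper's: pull back the refined M\"obius decomposition \eqref{equation:refined-Möbius-inversion} to the $\widetilde W$-constrained locus, trivialise fibre by fibre over the degeneracy cycle $D$ using $\tau_{\overline D}$ from \cref{lemma:restriction-toric-torsor-locally-trivial}, and bound the failure of this trivialisation when $\chi' \not\geqslant \ell(\SSS)-1$ via \eqref{equation:toric-dim-bound-under-constraints}. There are, however, two points where your account diverges from the paper's and where your argument is either unnecessary or incomplete.

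First, your ``first family'' of error terms --- degeneracy cycles $D$ meeting $|\SSS|$ --- is in fact empty, and the paper uses this to avoid the case split altogether. The reason is that $\mu_{B_\Sigma}$ is supported on $B_\Sigma \cup \{0\}$, and $B_\Sigma$ is upward-closed (if $J'\subset J$ and $\cap_{\alpha\in J'} D_\alpha = \varnothing$ then a fortiori $\cap_{\alpha\in J} D_\alpha = \varnothing$). So every point in the support of the unitary factor carries a zero-pattern in $B_\Sigma$; if such a point lay in $|\SSS|$, the product $fg$ would acquire a forbidden zero-pattern there, contradicting $\res_\SSS(fg)\in \widetilde W\subset \Hom(\SSS,\mathcal T_\Sigma)$. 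The paper notes this just after \eqref{equation:refined-Möbius-inversion} (``since $\widetilde W$ has image in $\mathcal T_\Sigma$, one can remove $|\SSS|$''). Your proposed mechanism for this family (a dimension drop by the degree of each point, with a residual piecewise trivialisation) is therefore not what actually happens; the preimage is simply empty.

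Second, your final accounting for the dimension bound is too vague to close the argument. The paper's crucial step, which you do not mention, is again that every $J\in B_\Sigma$ has $|J|\geqslant 2$. Writing $\mu = (\delta_J)_{J\in B_\Sigma}$ and $\varphi(\underline\delta)_\alpha = \sum_{J\ni\alpha}\delta_J$, so that $\chi = \chi' + \varphi(\underline\delta)$ and $\dim \Sym^\mu_{/\kb}(\PP^1_\kb\setminus|\SSS|)_* = |\underline\delta|$, one has $|\varphi(\underline\delta)| = \sum_J |J|\delta_J \geqslant 2|\underline\delta|$ while $\max_\alpha \varphi(\underline\delta)_\alpha \leqslant |\underline\delta|$. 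These two inequalities are exactly what is needed to absorb the $|\underline\delta|$ coming from the symmetric product and to pass from $-\min_\alpha(\chi'_\alpha)$ to $-\min_\alpha(\chi_\alpha)$. Without this, the ``competing contributions across coordinates'' you allude to do not visibly cancel to the stated bound.
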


\begin{proof}
	First, note that the class $\Sym^\mu_{\PP^1_\kb} 
\left ( \mu_{B_\Sigma} \cdot \left [ \PP^1_{\kb} \setminus | \SSS | \right ] 
\right )_*$ is zero if the support of $\mu$ is not contained in $B_\Sigma$.	
 Then, using the decomposition 
 \eqref{equation:refined-Möbius-inversion} \cpageref{equation:refined-Möbius-inversion}
 of the motivic Möbius inversion 
 we made explicit in the previous pages (starting from \cpageref{def-phi-gamma} concerning the notations), 
 and pulling it back to $\Hom^\chi 
\left ( 
\mathbf A_\kb^2 \setminus \{ 0 \} , \mathbf A_\kb^{\Sigma ( 1 )}  
\left | \; 
\widetilde W
\right. 
\right ) ^* $,
 we get that 
 $\left [
\Hom^\chi 
\left ( 
\mathbf A_\kb^2 \setminus \{ 0 \} , \mathcal T_\Sigma  
\left | \; 
\widetilde W
\right. 
\right ) ^* 
\right ]$
is a linear combination with 
integer coefficients of restrictions of the $\Phi^\gamma$'s.
More explicitly,
	\begin{align}
	& \left [
	\Hom^{\chi  } \left ( \mathbf A_\kb ^2  \setminus \{ 0 \} , \mathcal T_\Sigma \mid \widetilde W \right )^*
	\right ] \notag \\
	& = 
	\sum_{ \substack{\varpi \text{ partition of } \chi  \\ \gamma = ( n_{\qq,\pp} ) \\ \kappa ( \gamma ) = \varpi } }
	\left (
\Sym_{/ G }^{\kappa ' ( \gamma )}
\left 
( \PP^1_G 
\right ) _* 
\times_\gamma
\Sym_{\PP^1_G / G }^{\kappa '' ( \gamma )}
\left 
( \mu_{ B_\Sigma } \left [ \PP^1_{ \{1_G\} } \right ]
\right )_*
\right )_{\widetilde W} \notag \\
& = \sum_{ \substack{\varpi \text{ partition of } \chi  \\ \gamma = ( n_{\qq,\pp} ) \\ \kappa ( \gamma ) = \varpi } } 
\quad 
\sum_{\substack{
		( ( \lambda_{ \pp,\iota} )_{\iota \in \NN^*} )  \in \mathcal Q \times \{ \pp \in I \mid \mu_{B_\Sigma} ( \pp ) < 0 \}
		\\
		\sum_\iota \lambda_{ \pp,\iota} = \kappa_\pp '' ( \gamma ) 
		\\  
		\lambda_{ \pp , \iota ,  1 } + ... + \lambda_{ \pp , \iota , | \mu_{B_\Sigma} ( \pp )|}
		= \lambda_{ \pp,\iota} }
		}   \notag \\
& \qquad 
(-1)^{\sum_{\substack{ \pp \in I \\ \mu_{B_\Sigma} ( \pp ) < 0 }} |\{  \iota \in \NN \mid \lambda_{ \pp,\iota}  > 0 \}|} 
\underbrace{\left ( 
\Sym_{/ G }^{\kappa ' ( \gamma )}
\left 
( \PP^1_G 
\right )_*
\times_\gamma
	\left ( \prod_{\pp \in I} \prod_{\iota \geqslant 1} \Sym_{/G}^{\lambda_{ \pp , \iota ,  1 } , ... , \lambda_{ \pp , \iota ,  | \mu_{B_\Sigma} ( \pp ) |}} ( \PP^1_{\{ 1_G \}} ) 
\right )_* 	
\right )_{\widetilde W}}
_{ = \left ( \Sym_{/ G }^{\kappa ' ( \gamma )}
\left 
( \PP^1_G 
\right )_*	
\times_\gamma
\left ( \prod_{\pp \in I} \prod_{\iota \geqslant 1} \Sym_{/G}^{\lambda_{ \pp , \iota ,  1 } , ... , \lambda_{ \pp , \iota ,  | \mu_{B_\Sigma} ( \pp ) |}} ( \PP^1_{\{ 1_G \}} \setminus | \SSS | ) 
\right )_* 
\right )_{\widetilde W} } . 
\label{equation:refined-Möbius-inversion-constraints}
\end{align}
For every $\gamma$ appearing in the first sum,
we apply \cref{construction:piecewise-id-hom-zero}
to the factors of the locally closed subsets 
\[
\left ( \Sym_{/ G }^{\kappa ' ( \gamma )}
\left 
( \PP^1_G 
\right )_*	
\times_\gamma
\left ( \prod_{\pp \in I} \prod_{\iota \geqslant 1} \Sym_{/G}^{\lambda_{ \pp , \iota ,  1 } , ... , \lambda_{ \pp , \iota ,  | \mu_{B_\Sigma} ( \pp ) |}} ( \PP^1_{\{ 1_G \}} \setminus | \SSS | ) 
\right )_* 
\right )_{\widetilde W}
\]
of 
\[
\left (\Sym_{/ G }^{\kappa ' ( \gamma )}
\left 
( \PP^1_G 
\right ) _*
\times
\Sym_{\PP^1_G / G }^{\kappa '' ( \gamma )}
\left 
(  \PP^1_{ \{1_G\} } 
\right )_*
\right )_{\widetilde W} .
\]
In particular,
it is sufficient to approximate the class of  
 \begin{equation}
 \label{equation:elementary-piece-to-approximate}
 	 \left ( 
	\Hom^{\chi ' } ( \mathbf A_\kb ^2  \setminus \{ 0 \} , \mathbf A_\kb^{\Sigma ( 1 )} ) ^*
	\times
\Sym^{\kappa '' ( \gamma )}_{ / \kb }
\left ( 
 \PP^1_\kb 
\setminus | \SSS | 
\right ) _*
\right )_{\widetilde W}
\overset{\Phi^\chi_{\kappa '' ( \gamma ) }}{\longrightarrow}
 \Hom^\chi ( \mathbf A_\kb ^2  \setminus \{ 0 \} , \mathbf A_\kb^{\Sigma (1)} \mid \widetilde W  ) ^* 
 \end{equation}
 whenever $\kappa ' ( \gamma )$ is a partition of $\chi ' $,
 since all the other classes involved are obtained 
 from this one by pull-back and restriction.
 Moreover,
 since we are going to perform a motivic sum over ${\Hom^\chi ( \mathbf A^2 \setminus \{ 0 \} , \mathbf A^{\Sigma ( 1 )} \mid \widetilde W )^* }$,
 we can view \eqref{equation:elementary-piece-to-approximate} as a variety 
above $\Sym^{\kappa '' ( \gamma ) }_{/ \kb }
\left ( 
 \PP^1_\kb 
\setminus | \SSS | 
\right ) _*
$ and then take the motivic sum, 
using the commutativity of the following diagram.
\[
\begin{tikzcd}
		\left ( 
		\Hom^{\chi ' } ( \mathbf A_\kb ^2  \setminus \{ 0 \} , \mathbf A_\kb^{\Sigma (1)}  ) ^*
		\times 
	\Sym^{\kappa '' ( \gamma ) } _{/ \kb} 
\left (  \PP^1_{\kb} \setminus | \SSS | \right )_*
 \right )_{\widetilde{W}} 
 \rar["\pr_1"]	\dar["\Phi_{{\kappa '' ( \gamma ) }}^\chi"] &  \Sym^{\kappa '' ( \gamma ) }_{/ \kb} 
\left (  \PP^1_{\kb} \setminus | \SSS | \right )_*  \dar \\
		 \Hom^\chi ( \mathbf A_\kb ^2  \setminus \{ 0 \} , \mathbf A_\kb^{\Sigma (1)} \mid \widetilde W  ) ^*	 \rar & \Spec ( \kb ) 
\end{tikzcd}
\]
By \cite[Lemma 2.5.5]{bilu2021motivic}
it is enough to argue fibre by fibre, so that the previous diagram becomes
\[
\begin{tikzcd}
 \Hom^{\chi ' } \left ( \mathbf A_{\kappa ( D  )} ^2  \setminus \{ 0 \} , \mathbf A_{\kappa ( D  )}^{\Sigma (1)}  \mid  {\overline D}^{-1} \widetilde{W} \right ) ^*	  
 \rar	\dar["\Phi_D^\chi"] &  \Spec ( \kappa ( D ) )   \dar \\
		 \Hom^\chi ( \mathbf A_\kb ^2  \setminus \{ 0 \} , \mathbf A_\kb^{\Sigma (1)}  ) ^*	 \otimes_\kb \kappa ( D )  \rar & \Spec ( \kb ) 
\end{tikzcd}
\]	
and our argument will be entirely compatible with restrictions to constructible subsets of $\Sym^{\kappa '' ( \gamma ) }_{/ \kb} 
\left (  \PP^1_{\kb} \setminus | \SSS | \right )_*
$
such as $ \prod_{\pp \in I} \prod_{\iota \geqslant 1} \Sym_{/G}^{\lambda_{ \pp , \iota ,  1 } , ... , \lambda_{ \pp , \iota ,  | \mu_{B_\Sigma} ( \pp ) |}} ( \PP^1_{\{ 1_G \}} \setminus | \SSS | ) $.

We can apply the second part of \cref{lemma:euclidean-division-in-univ-torsor} \cpageref{lemma:euclidean-division-in-univ-torsor} only when $\chi ' \geqslant \ell ( \SSS ) - 1 $,
which in that case gives 
\[
\Hom^{\chi ' } \left ( \mathbf A_{\kappa ( D )}^2  \setminus \{ 0 \} , \mathbf A_{\kappa ( D )}^{\Sigma (1)}  \mid  {\overline D}^{-1} \widetilde{W} \right ) ^*
\overset{\sim}{\underset{\tau_{\overline D}}{\longrightarrow}} \Hom^{\chi ' } 
\left ( \mathbf A_\kb ^2  \setminus \{ 0 \} , \mathbf A_\kb^{\Sigma (1)}  \mid \widetilde{W} 
\right ) ^* \otimes \kappa ( D ) 
\]
as ${\kappa ( D )}$-schemes.
In general, we consider the error term
\begin{align*}
	E^{\chi ' }_\mu
 & =
\left [ 
\left (  		
\Hom^{\chi ' } 
 \left ( \mathbf A_\kb ^2  \setminus \{ 0 \} , \mathbf A_\kb^{\Sigma (1)} 
 \right ) ^*	
\times 
   \Sym^{\kappa '' ( \gamma )}_{/ \kb} 
\left (  \PP^1_{\kb} \setminus | \SSS | \right )_*
\right )_{\widetilde W}
\right ]
\\
& \quad - 
	\left [  
	\Hom^{\chi ' } 
 \left ( \mathbf A_\kb ^2  \setminus \{ 0 \} , \mathbf A_\kb^{\Sigma (1)} \mid \widetilde W 
 \right ) ^*
 \times 
	\Sym^{\kappa '' ( \gamma )} _{/ \kb} 
\left (  \PP^1_{\kb} \setminus | \SSS | \right )_*
\right ] \\
& \qquad 
\in 
\KVar {\Sym^{\kappa '' ( \gamma )}_{\PP^1_\kb} 
\left ( \PP^1_\kb \setminus | \SSS |\right )_*} 
\end{align*}
as well as its restrictions 
\begin{align*}
	E^\gamma_{\lambda} 
	= & 
	\left [
	\left ( \Sym_{/ G }^{\kappa ' ( \gamma )}
\left 
( \PP^1_G 
\right )_*	
\times_\gamma
\left ( \prod_{\pp \in I} \prod_{\iota \geqslant 1} \Sym_{/G}^{\lambda_{ \pp , \iota ,  1 } , ... , \lambda_{ \pp , \iota ,  | \mu_{B_\Sigma} ( \pp ) |}} ( \PP^1_{\{ 1_G \}} \setminus | \SSS | ) 
\right )_* 
\right )_{\widetilde W} 
\right ] \\
& -
\left [ 
 \left ( \Sym_{/ G }^{\kappa ' ( \gamma )}
\left ( 
\PP^1_G 
\right )_*	\right )_{\widetilde W}
\times_\gamma
\left ( \prod_{\pp \in I} \prod_{\iota \geqslant 1} \Sym_{/G}^{\lambda_{ \pp , \iota ,  1 } , ... , \lambda_{ \pp , \iota ,  | \mu_{B_\Sigma} ( \pp ) |}} ( \PP^1_{\{ 1_G \}} \setminus | \SSS | ) 
\right )_* 
\right ]
\end{align*}
for every $( \lambda_\pp  ) = ( ( \lambda_{ \pp,\iota} )_{\iota \in \NN^*} )  \in \mathcal Q \times I 
		$ such that $
		\sum_\iota \lambda_{ \pp,\iota} = \kappa_\pp '' ( \gamma ) $ and $
		\lambda_{ \pp , \iota ,  1 } + ... + \lambda_{ \pp , \iota , | \mu_{B_\Sigma} ( \pp )|}
		= \lambda_{ \pp,\iota} $
		(still with the convention that we only consider the trivial partition of $\kappa_\pp '' ( \gamma )$ if $\mu_{B_\Sigma} ( \pp ) > 0$).
The previous argument 
shows that $E^{\chi '}_\mu = 0$
if $\chi ' \geqslant \ell ( \SSS ) - 1$.
By \eqref{equation:toric-dim-bound-under-constraints}
of \cref{lemma:euclidean-division-in-univ-torsor},
the relative dimension of $E^{\chi ' }_\mu$
(hence also of $E^\gamma_{\lambda} $)
is bounded by 
\[
\dim \left ( \widetilde W \right ) 
+ 
\sum_{\alpha \in \Sigma ( 1 ) } 
\max ( 0 , \chi '_\alpha - \ell ( \SSS ) + 1) 
\] 
and 
if $\chi ' \ngeqslant \ell ( \SSS ) - 1 $, then there is at least one term of this sum which is equal to zero. Hence under this assumption it is bounded by 
\begin{equation}
\label{bound-toric-twisted-error-term}
- \min_{\alpha  \in \Sigma ( 1 ) } ( \chi_\alpha ' ) 
+ \ell ( \SSS) - 1 + | \chi ' | + | \Sigma ( 1 ) |\left ( 1 - \ell ( \SSS )\right )
+ \dim \left ( \widetilde W \right )  .  
\end{equation}
Now we bound the total dimension of $E^{\chi '}_\mu$,
that is to say 
\[
\dim_\kb ( E^{\chi ' }_\mu )
=
\dim_{\Sym^{\mu }_{\PP^1_\kb} 
\left ( \PP^1_\kb \setminus | \SSS |\right )_* } ( E^{\chi '}_\mu ) + \dim_\kb 
\left ( \Sym^{\mu }_{\PP^1_\kb} 
\left ( \PP^1_\kb \setminus | \SSS |\right )_*
\right ) .
\] 
Since in practice we work with the family $\mu_{B_\Sigma} [ \PP^1_\kb ]$, remembering that $\mu_{B_\Sigma} ( \nn ) = 0 $ if $\nn \notin B_\Sigma $,
we can assume that $\mu$
is a partition of the form 
\[
\mu 
= \left ( \delta_J \right )_{J\in B_\Sigma}  \in \NN^{B_\Sigma} . 
\]
Let
$\varphi ( \underline \delta ) \in \NN^{\Sigma ( 1 ) } \subset \mathcal X_* ( \GG_m^{\Sigma ( 1 )} )$ be the cocharacter whose $\alpha$-coordinate is
 \[
 \varphi ( \underline \delta )_\alpha = \sum_{J\ni \alpha } \delta_J.
 \]
Then, by \eqref{bound-toric-twisted-error-term} $E^{\chi '}_\mu$ has dimension over $\kb$ bounded by
\begin{align*}
	& - \min_{\alpha  \in \Sigma ( 1 ) } ( \chi_{\alpha } ' ) 
+ \ell ( \SSS) -1 + | \chi ' | + | \Sigma ( 1 ) | ( 1 - \ell ( \SSS ) ) 
+ \dim \left ( \widetilde W \right ) + | \underline \delta |  \\ 
= & - \min_{\alpha  \in \Sigma ( 1 ) } \left ( \chi_{\alpha  } - \varphi ( \underline \delta )_{\alpha  } \right ) \\
& + | \chi | - | \varphi ( \underline \delta ) | + | \underline \delta |    \\
& + ( | \Sigma ( 1 ) | - 1 ) ( 1 - \ell ( \SSS ))
+ \dim \left ( \widetilde W \right ) \\
\leqslant & - \min_{\alpha  \in \Sigma ( 1 ) } ( \chi_{\alpha } ) 
+ | \chi | + ( | \Sigma ( 1 ) | - 1 ) ( 1 - \ell ( \SSS ))
+ \dim \left ( \widetilde W \right )
\end{align*}
where the first equality is given by 
\[
\chi = \chi ' + \varphi ( \underline \delta )
\]
and the last inequality comes from the expression
\[
 | \varphi ( \underline \delta ) |
=
\sum_{J\in B_\Sigma } | J | \delta_J 
\]
together with the fact that
 $|J|\geqslant 2$ for all $J\in B_\Sigma$, hence 
 \[
- \min_{\alpha  \in \Sigma ( 1 ) } \left ( \chi_{\alpha  } - \varphi ( \underline \delta )_{\alpha  } \right ) 
-   | \varphi ( \underline \delta ) |
\leqslant 
- \min_{\alpha  \in \Sigma ( 1 )} ( \chi_{\alpha } ) 
+ \max_{\alpha  \in \Sigma ( 1 )} ( \varphi ( \underline \delta )_\alpha )
- | \varphi ( \underline \delta ) |
\leqslant 
- \min_{\alpha  \in \Sigma ( 1 )} ( \chi_{\alpha } ) 
 \]
 and the proposition is finally proved
 for $E^\chi = \sum_{\gamma , \lambda } E^\gamma_\lambda$
 by
 replacing every
 \[
 \left ( \Sym_{/ G }^{\kappa ' ( \gamma )}
\left 
( \PP^1_G 
\right )_*	
\times_\gamma
\left ( \prod_{\pp \in I} \prod_{\iota \geqslant 1} \Sym_{/G}^{\lambda_{ \pp , \iota ,  1 } , ... , \lambda_{ \pp , \iota ,  | \mu_{B_\Sigma} ( \pp ) |}} ( \PP^1_{\{ 1_G \}} \setminus | \SSS | ) 
\right )_* 
\right )_{\widetilde W} 
 \]
 with 
 \[
 \left ( \Sym_{/ G }^{\kappa ' ( \gamma )}
\left 
( \PP^1_G 
\right )_*	\right )_{\widetilde W}
\times_\gamma
\left ( \prod_{\pp \in I} \prod_{\iota \geqslant 1} \Sym_{/G}^{\lambda_{ \pp , \iota ,  1 } , ... , \lambda_{ \pp , \iota ,  | \mu_{B_\Sigma} ( \pp ) |}} ( \PP^1_{\{ 1_G \}} \setminus | \SSS | ) 
\right )_* 
 \]
in  \eqref{equation:refined-Möbius-inversion-constraints} \cpageref{equation:refined-Möbius-inversion-constraints}.
\end{proof}

\begin{proof}[Final computation] 
\index{motivic height zeta function!of smooth split projective toric varieties!with constraints}

Let 
\[
E_W ( \TT )
= \sum_{\chi \in \NN^{\Sigma ( 1 )}}
E^\chi \TT^\chi . 
\]
The previous proposition can be rewritten
\begin{align*}
& \sum_{ \dd \in \NN^{\Sigma ( 1 )} } \left [ \Hom^{\dd} \left ( \mathbf A^2_\kb \setminus \{ 0 \} , \mathcal T_\Sigma \mid \widetilde{W} \right )^* \right ] \TT^\dd \\
& =
\left ( \prod_{p\notin \SSS }  P_{B_\Sigma} ( \TT )  \right ) 
\times 	
\left ( \sum_{ \dd  \in \NN^{\Sigma ( 1 )}} \left [ \Hom^{\dd} \left ( \mathbf A^2_\kb \setminus \{ 0 \} , \mathbf A_\kb^{\Sigma ( 1 )} \mid \widetilde{W} \right )^* \right ] \TT^\dd \right ) \\
& \qquad + E_W ( \TT )
\end{align*}
in $\KVar \kb [[ \TT ]]$.
We define $\mu_{\Sigma }^{| \SSS | } ( \ee ) $, $\ee \in \NN^{\Sigma ( 1 )}$, to be the coefficients of the motivic Euler product 
\[
\prod_{p\notin | \SSS | }  P_{B_\Sigma}( \TT ) .
\]
By definition of $\widetilde W$, together with \cref{ptn:repres-moduli-space-toric} and the equivalent description of the functor ${S \rightsquigarrow \Hom_S^{\chi } ( \mathbf A^2_S \setminus \{ 0 \} , \mathcal T_{\Sigma , S}   )^* }$ we gave,
we have the relation
\[
( \LL_\kb - 1 ) ^r 
\left [ \Hom_\kb^\dd  ( \PP^1_\kb , V_\Sigma \mid W )_U \right ]
=  \left [ \Hom_\kb^\dd  ( \mathbf A^2 \setminus \{ 0 \} , \mathcal T_\Sigma \mid \widetilde{W} )^*  \right ] 
\]
as soon as $\dd \in \CEff ( V )^\vee_\ZZ$,
and by \cref{lemma:restriction-toric-torsor-locally-trivial}
\[
\left [ \Hom_\kb^\dd  ( \mathbf A^2 \setminus \{ 0 \} , \mathbf A ^{\Sigma ( 1 )} \mid \widetilde{W} ) ^* \right ] = \left [ \widetilde W \right ] ( \LL - 1 )^{|\Sigma ( 1 )|} \prod_{\alpha \in \Sigma ( 1 ) } \left [ \PP^{d_\alpha - \ell ( \SSS ) }_\kb \right ]
\]
whenever $\dd \geqslant \ell ( \SSS )$.
Thus we decompose the following series into two parts: 
\[
	 \sum_{ \dd  \in \NN^{\Sigma ( 1 )}} \left [ \Hom^{\dd} \left ( \mathbf A^2_\kb \setminus \{ 0 \} , \mathbf A_\kb^{\Sigma ( 1 )} \mid \widetilde{W} \right ) \right ] \TT^\dd \\
	 = \left [ \widetilde W \right ] ( \LL - 1 )^{|\Sigma ( 1 )|} \prod_{\alpha \in \Sigma ( 1 ) } t_\alpha^{\ell ( \SSS ) }Z^\text{Kapr}_{\PP^1_\kb}  ( t_\alpha )  +  H_W ( \TT )  
\]
where 
\[
H_W ( \TT ) = 	 \sum_{\substack{ \dd  \in \NN^{\Sigma ( 1 )} \\ \dd \ngeqslant \ell ( \SSS ) }} \left [ \Hom^{\dd} ( \mathbf A^2_\kb \setminus \{ 0 \} , \mathbf A_\kb^{\Sigma ( 1 )} \mid \widetilde{W} )^*  \right ] \TT^\dd . 
\]
Then, we use again the decomposition \eqref{equation:decomposition-product-Kapr-P1} given \cpageref{equation:decomposition-product-Kapr-P1}
\[
\prod_{\alpha \in \Sigma ( 1 )} Z^{\text{Kapr}}_{\PP^1_\kb} ( t_\alpha ) = \sum_{A\subset \Sigma ( 1 )} \frac{( - \LL )^{| \Sigma ( 1 ) | - | A |}}{ ( 1 - \LL )^{| \Sigma ( 1 ) |}} Z_A ( \TT ) 
\]
of this product of Kapranov zeta functions, 
where for any $A\subset \Sigma ( 1 ) $
\[
Z_A ( \TT ) = \prod_{\alpha \in A } ( 1 - t_\alpha )^{-1} \prod_{\alpha \notin A} ( 1  - \LL t_\alpha )^{-1} .
\]
By identification, the coefficient of order $\dd $ of 
\[
\TT^{\ell ( \SSS ) } Z_A ( \TT ) \times \prod_{p\notin | \SSS |} P_{B_\Sigma } ( \TT )  
\]
is the sum
\[
\mathfrak s^A_\dd = 
\sum_{\ee \leqslant \dd} \mu_{\Sigma}^{|\SSS|} ( \ee )  \LL^{\sum_{\alpha \notin A} d_\alpha - \ell ( \SSS ) - e_\alpha }
\]
whenever $\dd \geqslant \ell ( \SSS )$, and zero otherwise.\\
If $A=\varnothing$, then after dividing by $\LL^{-|\dd |}$ it becomes 
\[
\mathfrak s^A_\dd \LL^{-|\dd |}
= 
\LL^{-|\Sigma ( 1 )|\ell ( \SSS )} \sum_{\ee \leqslant \dd} \mu_{\Sigma}^{|\SSS|} ( \ee )  \LL^{ - | \ee | }
\]
which is, up to the factor $\LL^{-|\Sigma ( 1 )|\ell ( \SSS )}$, the $\dd$-th partial sum of  $  \prod_{p\notin | \SSS |} P_{B_\Sigma } ( \LL^{-1} )$.
The corresponding error term
\[
\LL^{-|\Sigma ( 1 )|\ell ( \SSS )} \sum_{\ee \nleqslant \dd} \mu_{\Sigma}^{|\SSS|} ( \ee )  \LL^{ - | \ee | }
\]
has virtual dimension at most
\[
-|\Sigma ( 1 )|\ell ( \SSS ) - \frac{1}{2} \min_{\alpha \in \Sigma ( 1 ) } ( d_\alpha + 1 ). 
\]
If $A\neq \varnothing$, then one gets instead
\[
\LL^{-( |\Sigma ( 1 )| - |A|) \ell ( \SSS )} \sum_{\ee \leqslant \dd} \mu_{\Sigma}^{|\SSS|} ( \ee )  \LL^{ - | \ee | } \LL^{ | \dd_A - \ee_A |}. 
\]
In that case,
recalling that $\dim (  \mu_\Sigma ( \ee ) \LL_\kb^{- | \ee |}  ) \leqslant - \frac{1}{2} | \ee |$ for all $\ee\in\NN^{\Sigma ( 1 )}$,
 \cref{lemma:controlling-error-terms-convolution} \cpageref{lemma:controlling-error-terms-convolution} gives 
\[
\dim ( \sum_{\ee \leqslant \dd} \mu_{\Sigma}^{|\SSS|} ( \ee )  \LL^{ - | \ee | } \LL^{ | \dd_A - \ee_A |} ) \leqslant - \frac{1}{4} \min_{\alpha \in A } ( d_\alpha ). 
\]
Now we consider the terms coming from $H_W$. The coefficient of order $\dd $ of 
\[
H_W ( \TT ) \times \prod_{p\notin | \SSS |} P_{B_\Sigma } ( \TT )  
\]
is 
\[
\mathfrak h_\dd = \sum_{\substack{ \ee \leqslant \dd  \\ \dd \ngeqslant \ell ( \SSS ) + \ee  }}  \mu_{\Sigma}^{|\SSS|} ( \ee )  \left [ \Hom^{\dd - \ee } ( \mathbf A^2_\kb \setminus \{ 0 \} , \mathbf A_\kb^{\Sigma ( 1 )} \mid \widetilde{W} )^* \right ]. 
\]
Dividing by $\LL^{|\dd |}$, we get 
\[
\mathfrak h_\dd \LL^{-|\dd|}
=
\sum_{\substack{ \ee \leqslant \dd  \\ \dd \ngeqslant \ell ( \SSS ) + \ee  }}  \mu_{\Sigma}^{|\SSS|} ( \ee ) \LL^{-|\ee |} \left [ \Hom^{\dd - \ee } ( \mathbf A^2_\kb \setminus \{ 0 \} , \mathbf A_\kb^{\Sigma ( 1 )} \mid \widetilde{W} )^* \right ] \LL^{-|\dd - \ee |} . 
\]
As for the coefficients of $E_W ( \TT )$, we have the dimensional upper bound 
coming from \eqref{equation:toric-dim-bound-under-constraints} of \cref{lemma:euclidean-division-in-univ-torsor}
\[	
\dim \left ( \Hom^{\dd ' } ( \mathbf A^2_\kb \setminus \{ 0 \} , \mathbf A_\kb^{\Sigma ( 1 )} \mid \widetilde{W} ) ^* \right )   \leqslant \dim ( \widetilde W ) + \sum_{\alpha \in \Sigma ( 1 ) } \min ( 0 ,  d_\alpha '  - \ell ( \SSS ) + 1  ) .
\]
Given $\dd ' \in \NN^{\Sigma ( 1 )}$ 
such that $\dd ' \ngeqslant \ell ( \SSS ) $, there exists at least one element $\alpha \in \Sigma ( 1 )$ such that $d_\alpha < \ell ( \SSS ) $,
that is to say such that $\min ( 0 ,  d_\alpha '  - \ell ( \SSS ) + 1  ) = 0$.
Hence the sum 
\[
\sum_{\alpha \in \Sigma ( 1 ) } \min ( 0 ,  d_\alpha '  - \ell ( \SSS ) + 1  )
\]
is bounded by 
\[
| \dd '  | + | \Sigma ( 1 ) | ( 1 - \ell ( \SSS ) ) - \min_{\substack{\alpha \in \Sigma ( 1 )\\ d_\alpha ' < |\SSS |}} (d'_\alpha ) + \ell ( \SSS ) - 1  
\]
and the bound on $\dim \left ( \Hom^{\dd ' } ( \mathbf A^2_\kb \setminus \{ 0 \} , \mathbf A_\kb^{\Sigma ( 1 )} \mid \widetilde{W} ) ^* \right )$ becomes
\begin{align}
&	\dim \left ( \Hom^{\dd ' } ( \mathbf A^2_\kb \setminus \{ 0 \} , \mathbf A_\kb^{\Sigma ( 1 )} \mid \widetilde{W} ) ^* \right ) \notag \\
& \leqslant \dim ( \widetilde W ) + | \dd '  | + | \Sigma ( 1 ) | ( 1 - \ell ( \SSS ) ) - \min_{\substack{\alpha \in \Sigma ( 1 )\\ d_\alpha ' < |\SSS |}} (d'_\alpha ) + \ell ( \SSS ) - 1  \label{ineq:toric-second-error-term-dim-bound}
\end{align}
for all $\dd ' \in \NN^{\Sigma ( 1 )}$ 
such that $\dd ' \ngeqslant \ell ( \SSS ) $. Thus the dimension of 
\[
\left [ \Hom^{\dd - \ee } ( \mathbf A^2_\kb \setminus \{ 0 \} , \mathbf A_\kb^{\Sigma ( 1 )} \mid \widetilde{W} ) \right ] \LL^{-|\dd - \ee |}
\]
in the expression of $\mathfrak h_\dd$ above is bounded. We can be more precise and argue as  we did in the proof of \cref{lemma:controlling-error-terms-convolution}.
If $2 \ee \leqslant \dd $ then $2 ( \dd - \ee ) \geqslant \dd$ and 
\[
\dim \left ( \mu_{\Sigma}^{|\SSS|} ( \ee ) \LL^{-|\ee |} \left [ \Hom^{\dd - \ee } ( \mathbf A^2_\kb \setminus \{ 0 \} , \mathbf A_\kb^{\Sigma ( 1 )} \mid \widetilde{W} )^* \right ] \LL^{-|\dd - \ee |} \right ) 
\]
is at most 
\[
 - \frac{1}{2} \min_{\alpha \in \Sigma ( 1 ) } (d_\alpha ) +  \dim ( \widetilde W ) + ( 1 -   \ell ( \SSS ) ) ( | \Sigma ( 1 ) |  - 1 )
\]
while if $2 \ee \nleqslant \dd$  we use the coarse upper bound deduced from \eqref{ineq:toric-second-error-term-dim-bound}
\[
\dim \left ( \left [ \Hom^{\dd - \ee } ( \mathbf A^2_\kb \setminus \{ 0 \} , \mathbf A_\kb^{\Sigma ( 1 )} \mid \widetilde{W} )^* \right ] \LL^{-|\dd - \ee |} \right ) 
\leqslant 
 \dim ( \widetilde W ) + ( 1 -   \ell ( \SSS ) ) ( | \Sigma ( 1 ) |  - 1 )
\]
together with 
\[
\dim (  \mu_{\Sigma}^{|\SSS|} ( \ee ) \LL^{-|\ee |} ) \leqslant - \frac{1}{2} | \ee | < - \frac{1}{4} \min_{\alpha \in \Sigma	 ( 1 )} (d_\alpha ) .
\]
Therefore, for any $\dd$ we have
\[
\dim ( \mathfrak h_\dd \LL^{-|\dd|} ) 
\leqslant - \frac{1}{4} \min_{\alpha \in \Sigma ( 1 ) } (d_\alpha )  + \dim ( \widetilde W ) + ( 1 -   \ell ( \SSS ) ) ( | \Sigma ( 1 ) |  - 1 ) .
\]
Remember 
from \cref{ptn:classes-of-constrained-morphisms-fixed-degen-degree}
that the $\dd$-th term of $E_W ( \TT ) $ has dimension bounded by 
\[
- \min_{\alpha  \in \Sigma ( 1 ) } ( d_{\alpha } ) 
+ | \chi | + ( \Sigma ( 1 ) - 1 ) ( 1 - \ell ( \SSS ))
+ \dim \left ( \widetilde W \right ) . 
\]
Now we rewrite the motivic density of $\widetilde W$ as follows:
\begin{align*}
\left [ \widetilde W \right ]   \LL^{- |\Sigma ( 1 )| \ell ( \SSS )} 
& =
[ W ] \LL^{-\ell ( \SSS ) \dim ( V_\Sigma )} \times [ T_{\NS , \SSS } ] \LL^{-r \ell ( \SSS )} \\
& = \frac{[ W ]}{ \LL^{\ell ( \SSS ) \dim ( V_\Sigma )}  } \prod_{p\in | \SSS |} \left ( 1  - \LL_p ^{-1}\right )^r 
\end{align*}
Putting everything together, we get
\begin{align*}
& \left [ \Hom_\kb^\dd  ( \PP^1_\kb , V_\Sigma \mid W )_U \right ] \LL^{-|\dd |}\\
& =  ( \LL - 1 )^{-r} \left (\LL^{| \Sigma ( 1 ) |} [ \widetilde W] 
\sum_{A\subset \Sigma ( 1 )} ( - \LL )^{ - | A |} \mathfrak s^A_\dd \LL^{-|\dd |} 
+ \mathfrak h_d \LL^{-|\dd |} + \mathfrak e_{\dd}\LL^{-|\dd|}\right ) \\
 & = \frac{\LL^{\dim (V_\Sigma )}}{( 1 - \LL^{-1} ) ^r }  [ W ] \LL^{-\ell ( \SSS ) \dim ( V_\Sigma )}   \prod_{p\in | \SSS |} ( 1  - \LL_p ^{-1})^r  \prod_{p\notin | \SSS |} P_{B_\Sigma } ( \LL^{-1} ) \\
& + \text{ an error term of dimension at most: }\\
 & \qquad - \frac{1}{4} \min_{\alpha \in \Sigma ( 1 ) } (d_\alpha ) + ( 1 -  \ell ( \SSS )  ) ( \dim ( V_\Sigma )  - 1 )+ \dim ( W ) 
\end{align*}
for all $\dd \in \CEff ( V_\Sigma )_\ZZ^\vee$. 
This concludes the proof of \cref{thm-equidistribution-toric}. 
\end{proof}

\bigskip 


\section{Twisted products of toric varieties}

The goal of this section is to apply the notion of equidistribution of (rational) curves 
to the case of a certain kind of twisted products.
It provides a going-up theorem answering the following question in a particular setting: given a fibration, if a Batyrev-Manin-Peyre principle holds for the base and for the fibres, does it hold for the entire fibration ?

First, we recall the construction and geometric properties of such a twisted product, as it is done in \cite{chambert2001torseurs}. 
Then we study the moduli space of rational curves and apply the change of model 
\cref{thm:equidistribution-and-models}
to this context. 

\subsection{Generalities on twisted products} 
\label{section:generalities-twisted-products}
In this section we adapt the framework of \cite{chambert2001torseurs} and \cite{strauch1997height} to the study of rational curves. 
Concerning torsors, we will refer to \cite{colliot1987descente}. 

In this paragraph $S$ is a scheme, $G$ is a linear flat group scheme over $S$, with connected fibres, and $g : \BBB \to S$ a flat scheme over $S$.
Recall that a $G$-torsor over $\BBB$ is a scheme $\TTT \to \BBB$ over $\BBB$ which is faithfully flat and locally of finite presentation, endowed with a $G$-action $\tau : G\times_S \TTT \to \TTT$ over $\BBB$ such that the induced morphism 
\[
(\tau, \pr_2 )  : G \times_S \TTT \to \TTT \times_\BBB \TTT
\]
is an isomorphism. Moreover, in this article we will only consider torsors which are locally trivial for the Zariski topology. 

\subsubsection{Twisted products and twisted invertible sheaves}
\index{twisted product $\TTT \times^G X$}
Following \cite[\S 2]{chambert2001torseurs}, let $f : X \to S$ be a flat (quasi-compact and quasi-separated) $S$-scheme endowed with an action of $G/S$. Let $\TTT \to \BBB $ be a $G$-torsor locally trivial for the Zariski topology. 
We construct a fibration $\pi : \XXX = \TTT \times^G X \to \BBB$ locally isomorphic to $X$ over $\BBB$ in the following manner.
Let $(U_i)_{i\in I}$ be a Zariski-covering of $\BBB$ together with trivialisation $\phi_i : G \times _S U_i \to \TTT_{U_i}$.
For all $i,j\in I$ there exists a unique section $g_{ij}$ of $G$ over $U_i \cap U_j$ such that $\phi_i = g_{ij} \phi_j$ on $U_i \cap U_j$. This data provides a cocycle whose class in $H^1 ( \BBB_\Zar , G )$ represents the isomorphism class of $\TTT $ as a $G$-torsor. 
Then we set $\XXX_i = X \times_S U_i$. The action of $G/S$ on $X / S $ induces an action of $g_{ij}$ over $X \times_S ( U_i \cap U_j )$ 
and the later 
yields an isomorphism $\varphi_{ij} : {\XXX_j}_{| U_i \cap U_j } \simeq {\XXX_i}_{| U_i \cap U_j }$. Gluing \textit{via} the $\varphi_{ij}$'s defines $\pi : \XXX \to \BBB$. 
Up to a unique isomorphism, this construction does not depend on the choice of the open sets $(U_i)$.

\smallskip

There exists a functor $\vartheta $ from the category of $G$-linearised quasi-coherent sheaves over $X$ to the category of quasi-coherent sheaves over $\XXX$ \cite[Construction 2.1.7]{chambert2001torseurs} 
which is compatible with the standard operations for sheaves (direct sum, tensor product, localisation).
It sends a $G$-linearised quasi-coherent sheaf over $X$
to its twisted version over $\XXX$,
the gluing isomorphisms being given by the $\varphi_{i,j}$'s. 
This functor induces a map on the isomorphism classes. 
In particular, such a map sends $\Omega^1_{X / S}$ to $\Omega^1_{\XXX / \BBB}$ \cite[Proposition 2.1.8]{chambert2001torseurs}.

When $X = S $ then $\XXX = \BBB$ and this functor $\vartheta $ is written $\eta_\TTT$. It induces a group morphism $\mathcal X^* ( G) \to \Pic ( \BBB ) $, also written $\eta_\TTT$,
sending $\chi$ to the (isomorphism class of the) line bundle on $\BBB$ obtained \textit{via} the gluing morphisms
\[
(  u, t ) 
	\in ( U_i \cap U_j ) \times_S \mathbf A^1_S 
\mapsto 
( u , \chi ( g_{ij} ) t ) 
	\in ( U_j \cap U_i ) \times_S \mathbf A^1_S 
\]
where $( g_{ij} ) \in H^1 ( \BBB_\Zar , G )$ is the cocycle defined above.

We define $\Pic ^G ( X ) $ to be the group of isomorphism classes of $G$-linearised invertible sheaves on $X$.
If $X/S$ and $\BBB / S $ are smooth, the canonical sheaf over $X/S$ is endowed with a canonical $G$-linearisation
and 
\[
\omega_{\XXX / S } \simeq \vartheta ( \omega_{X/S} ) \otimes \pi^* \omega_{\BBB / S}
\]
by \cite[Proposition 2.1.8]{chambert2001torseurs}.
The forgetful functor $\varpi$
induces a forgetful morphism 
\[
\varpi : \Pic^G ( X ) \to \Pic ( X ) .
\]
Let 
\[
\iota : \mathcal X^* ( G ) \to \Pic ^G ( X ) 
\]
be the group morphism sending $\chi$ to the (isomorphism class of the) trivial bundle 
\[
X\times_S \mathbf A^1_S
\] 
together with the action of $G $ given by
\[
g \cdot ( x , t  ) = ( g \cdot x , \chi ( g ) t )  
\]
for all $(x, t )  \in X \times_S  \mathbf A^1_S$ and $g\in G$. 

\medskip

Putting $\iota$, $\eta_\TTT$, $\vartheta$ and $\pi^*$ together, we get morphisms
\[
\begin{tikzcd}
\mathcal X^* ( G ) \rar{	\left ( \iota , - \eta_\TTT \right )} & \Pic^G ( X ) \oplus \Pic ( \BBB )
\end{tikzcd}
\]
and
\[
\begin{tikzcd}
\Pic^G ( X ) \oplus \Pic ( \BBB ) \rar{\vartheta \otimes \pi^*} & \Pic ( \XXX ) .
\end{tikzcd}
\]
For every character $\chi$,
there is a canonical isomorphism of invertible sheaves on $\XXX$
\begin{equation} \label{equation:identification-functions-theta-iota-and-pi-star-eta}
\vartheta ( \iota ( \chi ) )\simeq \pi^* \eta_\TTT ( \chi )
\end{equation}
by \cite[Proposition 2.1.11]{chambert2001torseurs}.

\subsubsection{Twisted products over a field: an exact sequence}
Assume that $S$ is the spectrum of a field $\kb$, that $\BBB$ is a smooth proper and geometrically integral variety over $\kb$ and $G$ is a multiplicative group. Then, there is an exact sequence \cite[(2.0.2) \& Theorem 1.5.1]{colliot1987descente}
\[
0 
\longrightarrow 
H^1 ( \kb , G ) 
\longrightarrow 
H^1 ( \BBB , G ) 
\longrightarrow 
\Hom ( \mathcal X^* ( G ) , \Pic ( \overline \BBB ) )
\longrightarrow  
H^2 ( \kb , G )
\longrightarrow 
H^2 ( \BBB , G). 
\]
Assume moreover that $\BBB$ admits an open subset $U$ such that $\Pic ( \overline U ) =0 $. Then by \cite[Remark 2.2.7 \& Proposition 2.2.8]{colliot1987descente}, in the previous exact sequence $H^2 ( k , G ) \to H^2 ( \BBB , G ) $ is injective and the resulting short exact sequence 
\begin{equation}\label{equation:exact-sequence-obstruction-torsor-class-type}
0 
\longrightarrow 
H^1 ( \kb , G ) 
\longrightarrow 
H^1 ( \BBB , G ) 
\longrightarrow 
\Hom ( \mathcal X^* ( G ) , \Pic ( \overline \BBB ) )
\longrightarrow  
0 
\end{equation}
splits. 
It is the case if the base $\BBB$ has a $\kb$-rational point, the splitting being given by the evaluation map $H^1 ( \BBB , G)  \to H^1 ( \kb , G)$ at this point. 

\subsubsection{H90 multiplicative groups}

We again take $S$ to be the spectrum of a field $\kb$ 
and 
$G$ is a linear connected group over $\kb$. 

\begin{mydef}\label{def:h90-multiplicative-group}
\index{H90 multiplicative algebraic group}
We will say that $G$ is an H90 multiplicative algebraic group if
\begin{itemize}
\item $H^1 ( \kb , G ) $ is trivial;
\item $G$ is multiplicative and solvable over $\kb$. 
\end{itemize}
If $G$ acts on a projective $\kb$-variety $V$, we will always assume that
every line bundle on $V$ admits a $G$-linearisation. 
\end{mydef}

\begin{myexample}
If 
$\kb$ has cohomological dimension at most $1$ 
and 
$G$ is  a linear connected group which is solvable over $\kb$,
then by \cite[Théorème 1']{serre1994cohomologie} 
the first cohomology group $H^1 ( \kb , G ) $ is trivial. 
\end{myexample}

\begin{myexample}
If $V$ is a split smooth toric variety and $G$ is its torus, then $G$ is an H90 multiplicative algebraic group, by Hilbert 90. 
\end{myexample}

\subsection{Twisted models of $X$ over $\PP^1_\kb$}

From now on we assume that $X$ is Fano-like, that $G$ is H90 multiplicative and acts on $X$, and that $\BBB ( \kb ) $ is Zariski-dense in $\BBB$. 
Moreover, we assume that the Picard groups of $\BBB$ and $X$ coincide respectively with their geometric Picard group: $\Pic ( \overline \BBB ) \simeq \Pic ( \BBB )$ and $\Pic ( \overline X ) \simeq \Pic ( X )$. 

Then the sequence 
\begin{equation}\label{equation:exact-sequence-picard-groups}
\begin{tikzcd}
	0 \rar & \mathcal X^* ( G )  \rar{\left (\iota , - \eta_\TTT \right )}  & \Pic^G ( X ) \oplus \Pic ( \BBB ) \rar{\vartheta \otimes \pi^*} & \Pic ( \XXX ) \rar & 0  
\end{tikzcd}
\end{equation}
is exact by \cite[Théorème 2.2.4]{chambert2001torseurs}.
As a corollary,
we get exact sequences 
\[
\begin{tikzcd}
0 \rar & \mathcal X^* ( G ) \rar{\iota} & \Pic^G ( X ) \rar{\varpi} & \Pic ( X ) 	\rar & 0
\end{tikzcd}
\]
(by taking $\BBB = \Spec ( \kb )$ in \eqref{equation:exact-sequence-picard-groups})
and
\begin{equation}
\label{equation:exact-sequence-picard-groups-detwisted}
\begin{tikzcd}
0 \rar & \Pic ( \BBB ) \rar{\pi^*} & \Pic ( \XXX )  \rar{\widetilde{\varpi}} & \Pic ( X ) 	\rar & 0 . 
\end{tikzcd}
\end{equation}
The map $\widetilde{\varpi} : \Pic ( \XXX ) \to \Pic ( X ) $ above is the map
sending the class of a line bundle of the form 
\[
\vartheta ( L ) \otimes \pi^* ( \LLL ),
\]
with $\LLL$ a line bundle on $\BBB$ and $L$ a $G$-linearised line bundle on $X$, 
to the one 
of
$\varpi ( L ) $. 

\subsubsection{Pulling-back}
Let $f:\PP^1_\kb \to \BBB$ be a rational curve on $\BBB$. It induces a morphism $\degg f : \Pic ( \BBB ) \to \Pic ( \PP^1_\kb ) \simeq \ZZ $. 
Since in our situation we assume $G$ to be an H90 multiplicative group,
type and class coincide by \eqref{equation:exact-sequence-obstruction-torsor-class-type}, so that $\eta_\TTT $ and the type ${\alpha  \in \Hom ( \mathcal X^* ( G ) , \Pic ( \BBB ) )}$ of the $G$-torsor $\TTT \to \BBB$ can be identified
(we refer the interested reader to \cite[\S 2]{colliot1987descente} for the precise definition of type). 
The pulling-back operation
\[
\begin{tikzcd}
\TTT_f \arrow[r ] \arrow[d] \arrow[dr, phantom, "\ulcorner", very near start] & \TTT \arrow[d] \\
\PP^1_\kb 	\arrow[r,"f"] & \BBB
\end{tikzcd} 
\]induces a $G$-torsor $\TTT_f$ 
whose type (or class) is given by $( \degg f  ) \circ \alpha \in \mathcal X_* ( G ) $, together with functors on quasi-coherent sheaves
$\vartheta_f = f_\XXX^* \circ \vartheta$ and $\eta_{\TTT_f } = f^* \circ \eta_{\TTT}$.

\smallskip

Then, remark that the pull-back
\[
\begin{tikzcd}
\XXX_{f} \arrow[r,"f_\XXX"] \arrow[d] \arrow[dr, phantom, "\ulcorner", very near start] & \XXX \arrow[d] \\
\PP^1_\kb 	\arrow[r,"f"] & \BBB .
\end{tikzcd} 
\]
only depends on the multidegree of $f$
since it is exactly the twisted product obtained 
by starting from the $G$-torsor $\TTT_f \to \PP^1_\kb$ 
of class $f^* \circ \eta_\TTT = \degg ( f ) \circ \alpha $. 

In order to compare degrees
of line bundles on models of $X$ above $\PP^1_\kb$
 coming from different $f$ of same multidegree $\mdeg_\BBB$, we need to find canonical isomorphisms between the Picard groups of these different models.
So we take $f$ and $f'$ to be two rational curves $\PP^1\to \BBB$ of equal multidegree $\mdeg_\BBB$.
They induce pull-backs $\TTT_f$, $\TTT_{f'}$ and $\XXX_f$, $\XXX_{f'}$. 
Since $\TTT_f$ and $\TTT_{f'}$ have equal types and classes, $\XXX_f$ and $\XXX_{f'}$ are isomorphic as $G$-varieties.
We get a
commutative diagram of exact sequences
\[
\begin{tikzcd}[swap,bend angle=60]
0 \rar 
& \mathcal X^* ( G )  \dar[equal] \rar{\left (\iota , - \eta_\TTT \right )} 
& \Pic ^G ( X ) \oplus \Pic ( \BBB )  \dar{(\mathrm{id}, \mdeg_\BBB )} \rar{\vartheta \otimes \pi^* }
& \Pic ( \XXX ) \rar  \dar[swap]{f_\XXX^*}
& 0 \\
0 \rar 
& \mathcal X^* ( G ) \dar[equal] \rar{\left (\iota , - \eta_{\TTT_f} \right )} 
& \Pic ^G ( X ) \oplus \Pic ( \PP^1_\kb ) \rar{ \vartheta_f \otimes \pi_f^* }   \dar[equal]
& \Pic ( \XXX_f )  \arrow[d,dashed,swap,"\simeq","\exists !"'] \rar & 0 \\
0 \rar 
& \mathcal X^* ( G ) \rar{\left (\iota , - \eta_{\TTT_{f'}} \right )} 
& \Pic ^G ( X ) \oplus \Pic ( \PP^1_\kb ) \rar{ \vartheta_{f'} \otimes \pi_{f'}^* }   
& \Pic ( \XXX_{f'} ) \arrow[from=uu,bend left,crossing over,"{f_\XXX '}^*" near end,swap]
 \rar & 0 
\end{tikzcd}
\]
providing a canonical isomorphism $\Pic ( \XXX_f ) \simeq \Pic ( \XXX_{f'} )$.

\subsubsection{Multidegrees of sections of $\XXX_f \to \PP^1_\kb$}
Assume now that $f:\PP^1_\kb \to \BBB$ comes from a morphism ${ g:\PP^1_\kb \to \XXX }$, 
that is to say, $f=\pi \circ g$. Then we obtain the following Cartesian square
\[
\begin{tikzcd}
 \XXX_f \arrow[r,"f_\XXX"] \arrow[d,right,"\pi_f",swap] \arrow[dr, phantom, "\ulcorner", very near start] & \XXX \arrow[d,"\pi "] \\
\PP^1_\kb \arrow[ru,"g"'] \arrow[u,shift right=1.5ex,dashrightarrow,"\sigma",swap] \arrow[r,"f"] & \BBB 
\end{tikzcd}
\]
in which $g$ induces a unique section $ \sigma : \PP^1_\kb \to \XXX_f $ such that $g = f_\XXX \circ \sigma $.  
We deduce from this square the relations on degree maps
\begin{align*}
	 \degg ( g ) &=  \degg ( \sigma ) \circ f^*_\XXX  \\
	 \degg ( f ) &  = \degg (g ) \circ \pi^* \\
	 			&= \degg ( \sigma ) \circ f^*_\XXX \circ \pi^*\\
	\mathrm{id}_{\Pic ( \PP^1_\kb )}  & = \degg ( \sigma ) \circ \pi_f^*  .
\end{align*}

\begin{mysetting}\label{setting:line-bundles-on-twisted-products}
	Let $L_1, ... , L_{r_X}$ be a family of line bundles on $X$ whose classes form a $\ZZ$-basis of $\Pic ( X ) $.
	We fix a section $s : \Pic ( X ) \to \Pic ^G ( X ) $ by choosing a $G$-linearisation on each $L_i$. 
\end{mysetting}

From \eqref{equation:exact-sequence-picard-groups} 
and \eqref{equation:exact-sequence-picard-groups-detwisted} 
one deduces the commutative diagram of exact sequences
\[
\begin{tikzcd}
0 \rar & \Pic ( \BBB ) \rar{\pi^*} \dar{\degg ( f )} & \Pic ( \XXX ) \rar{\widetilde \varpi} \dar{f^*_\XXX } & \Pic^G ( X ) / \iota ( \mathcal X^* ( G )) \simeq \Pic ( X  ) \dar[equal] \rar & 0 	\\
0 \rar & \Pic ( \PP^1_\kb ) \rar{\pi_f^*} & \Pic ( \XXX_f ) \rar{\widetilde{\varpi_f}} & \Pic^G ( X ) / \iota ( \mathcal X^* ( G )) \simeq \Pic ( X  ) \rar & 0 	
\end{tikzcd}
\]
where the arrow $\widetilde{\varpi_f} : \Pic ( \XXX_f ) \to \Pic ( X ) \simeq \ZZ$ 
is obtained from $\widetilde \varpi $ by replacing $\vartheta$ by $\vartheta_f$ and $\pi$ by $\pi_f$.

Furthermore $\degg ( \sigma ) : \Pic ( \XXX_f ) \to \Pic ( \PP^1_\kb )$ induces
by composition with ${s : \Pic ( X ) \to \Pic^G ( X )} $ and ${\vartheta_f : \Pic ^G ( X ) \to \Pic ( \XXX_f )} $
 a multidegree 
\[
\mdeg_X (\sigma ) : \Pic ( X ) \to \Pic ( \PP^1_\kb ) \simeq \ZZ 
\]
sending the class of a line bundle $L$ on $X$
to 
\[ 
\degg ( \sigma ) \scdot 
( \vartheta_f \circ  s ) ( [ L ] )  = 
\degg ( g ) \scdot   (\vartheta \circ   s ) ( [ L ] )  .
\]
If $\sigma ' $ is another section obtained in this way, 
that is to say from another $g'$ and another $f'$ such that $\pi \circ g '= f' $ and $\degg ( g ) = \degg ( g ' ) $,
then $\degg ( f) = \degg ( f ' ) = \delta_\BBB$, and $\XXX_f \simeq \XXX_{f'} $.
The following commutative diagram summarises the situation and shows that ${\delta_X ( \sigma ) = \delta_X ( \sigma ' ) = \degg ( g ) \circ \vartheta \circ s }$, an element of $\Pic ( X )^\vee $ which will be denoted by $\mdeg_X ( g )$.
\[s
\begin{tikzcd}
0 \rar & \Pic ( \BBB ) \rar{\pi^*} \dar[swap]{\mdeg_\BBB} & \Pic ( \XXX ) \arrow[dl,swap,"\degg ( g )"] \rar \dar{f^*_\XXX } & \arrow[l,bend right,swap,"\vartheta \circ s"] \Pic ( X  ) \arrow[dl,"\upsilon_f \circ s"] \dar[equal] \rar & 0 	\\
0 \rar & \Pic ( \PP^1_\kb ) \dar[equal] \rar{\pi_f^*} & \Pic ( \XXX_f ) \dar{\simeq} \arrow[l,bend left=20,"\degg ( \sigma )"]  \rar &  \Pic ( X  ) \rar \dar[equal] & 0 	\\
0 \rar & \Pic ( \PP^1_\kb ) \rar{\pi_{f'}^*} & \Pic ( \XXX_{f'} ) \arrow[l,bend left=20,"\degg ( \sigma ' )"] \arrow[from=uu,bend left=60,swap,crossing over,"{f_\XXX '}^*" near end,swap] \rar &  \Pic ( X  ) \rar & 0 	
\end{tikzcd}
\]
By duality, from \eqref{equation:exact-sequence-picard-groups} we get an exact sequence 
\[
0 \longrightarrow \Pic ( \XXX ) ^\vee \overset{(\vartheta^\vee ,   \pi_*) }{\longrightarrow} \Pic ^G ( X )^ \vee \oplus \Pic ( \BBB ) ^\vee \longrightarrow \mathcal X_* ( G ) \longrightarrow 0 
\]
which allows us to decompose a multidegree on $\XXX$. 
\begin{mylemma} 
\label{lemma:relation-delta-X-G-delta-X}
Let $\degg ( g )  = (\delta^G_X ( g ), \delta_\BBB ( g ) )$ viewed in $\Pic^G (X )^\vee \oplus \Pic ( \BBB )^\vee$.
Then 
the morphism
	\[
	[L] \in \Pic^G ( X ) 
	\longmapsto 
	\delta_X^G ( g ) 
	\scdot 
	[ L ]  
	- 
	\delta_X ( g ) 
	\scdot 
	[\varpi (  L  )] 
	\in \Pic ( \PP^1_\kb ) \simeq \ZZ 
	\]
	defines a cocharacter of $G$ given by 
	\[
	\chi \in \mathcal X^* ( G ) 
	\longmapsto
	\delta_\BBB ( g ) 
	\scdot 
	(\eta_\TTT \circ \iota ) ( \chi )
	 .
	\]
\end{mylemma}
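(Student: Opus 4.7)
The plan is to show that the function $F \colon \Pic^G(X) \to \ZZ$ defined by $F([L]) = \langle \delta_X^G(g), \vartheta([L]) \rangle - \langle \delta_X(g), \varpi([L]) \rangle$ kills the image of the fixed section $s$, hence factors through a retraction $\Pic^G(X) \twoheadrightarrow \mathcal X^*(G)$ onto the kernel of $\varpi$, and then to identify the resulting homomorphism $\mathcal X^*(G) \to \ZZ$ as the claimed cocharacter by using the exact sequence~\eqref{equation:exact-sequence-picard-groups}. The whole argument is a short two-step diagram chase.

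First I would unwind the formula $\delta_X(g) = \delta_X^G(g) \circ s$ recorded in the commutative diagram preceding the lemma, so that
\[
F([L]) = \delta_X^G(g)\bigl([L] - s(\varpi([L]))\bigr).
\]
Because $\varpi \circ s = \mathrm{id}_{\Pic(X)}$, the class $[L] - s(\varpi([L]))$ lies in $\ker(\varpi)$, which by the exactness of the top row of~\eqref{equation:exact-sequence-picard-groups-detwisted} (or equivalently by the $X$-row of~\eqref{equation:exact-sequence-picard-groups}) equals $\iota(\mathcal X^*(G))$. There is therefore a unique $\chi([L]) \in \mathcal X^*(G)$ with $[L] - s(\varpi([L])) = \iota(\chi([L]))$, giving $F([L]) = \delta_X^G(g)(\iota(\chi([L])))$ and proving that $F$ depends only on $\chi([L])$.

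To identify the resulting cocharacter, I would use the second half of~\eqref{equation:exact-sequence-picard-groups}: since $(\vartheta \otimes \pi^*) \circ (\iota, -\eta_\TTT) = 0$, one has the identity $\vartheta(\iota(\chi)) = \pi^*(\eta_\TTT(\chi))$ in $\Pic(\XXX)$ for every $\chi \in \mathcal X^*(G)$. Pairing both sides with $\degg(g) \in \Pic(\XXX)^\vee$ yields
\[
\delta_X^G(g)(\iota(\chi)) = \langle \degg(g), \pi^*(\eta_\TTT(\chi)) \rangle = \langle \delta_\BBB(g), \eta_\TTT(\chi) \rangle,
\]
which, specialised to $\chi = \chi([L])$, is exactly the formula of the statement (with the notation $\eta_\TTT \circ \iota(\chi)$ parsed as $\eta_\TTT(\chi) \in \Pic(\BBB)$, since $\iota$ is the inclusion $\mathcal X^*(G) \hookrightarrow \Pic^G(X)$). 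No real obstacle arises: the argument combines the splitting of $\varpi$ given by $s$ with the canonical relation $\vartheta \circ \iota = \pi^* \circ \eta_\TTT$ on Picard groups, and the only minor delicacy is the consistent parsing of the duality pairings $\langle \delta_X^G(g), \vartheta([L]) \rangle$ and $\langle \delta_\BBB(g), \eta_\TTT(\chi) \rangle$ as evaluations of $\degg(g) \in \Pic(\XXX)^\vee$ on the corresponding classes in $\Pic(\XXX)$.
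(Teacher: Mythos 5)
Your proposal is correct and follows essentially the same route as the paper's own proof: both reduce to the identity $\delta_X(g)=\delta_X^G(g)\circ s$, decompose $[L]-s(\varpi([L]))$ as $\iota(\chi)$ via the split exact sequence, and conclude with $\vartheta\circ\iota=\pi^*\circ\eta_\TTT$ together with the adjunction $\langle\degg(g),\pi^*(\cdot)\rangle=\langle\delta_\BBB(g),\cdot\rangle$. Your explicit remark on parsing $\eta_\TTT\circ\iota(\chi)$ as $\eta_\TTT(\chi)$ matches the paper's intended reading.
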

\begin{proof}We use again our favorite exact sequences to get the following diagram
\[
\begin{tikzcd}
  &   & 				0  \dar  &   0 \dar &     & \\
  &   &			\mathcal X^* ( G ) \dar{\left ( \iota , - \eta_\TTT  \right )}	\rar[equal]	&  \mathcal X^* ( G ) \dar{\iota} & 		\\
  &   & \Pic^G ( X ) \oplus \Pic ( \BBB ) \dar[swap]{\vartheta \otimes \pi^*} \dlar \rar & \Pic^G ( X ) \dlar[swap]{\vartheta} \dar{\varpi} &  	\\
 0 \rar & \Pic ( \BBB ) \rar{\pi^*} \dar[swap]{\mdeg_\BBB (g)} & \Pic ( \XXX ) \arrow[dl,swap,"\degg ( g )"] \rar \dar &  \Pic ( X  ) \arrow[u,bend left,"s"] \dar \rar & 0 	\\
0 \rar & \Pic ( \PP^1_\kb ) \arrow[from=uurr,bend left=15,crossing over,near end,"\delta_X^G (g)"] \arrow[from=urr,bend left=30,crossing over,near end,"\delta_X (g)"] & 0   & 0   & \\
\end{tikzcd}
\]
from which one reads $\delta_X ( g ) = \delta_X^G ( g ) \circ s $ 
and $\delta_X^G ( g ) = \degg ( g ) \circ \vartheta $.

Let $L$ be a $G$-linearised line bundle on $X$.
Since $s$ is a section of $\varpi$,
there exists a unique character $\chi \in \mathcal X^* ( G ) $
such that 
\[
[ L ] = ( s \circ \varpi ) ( [ L ] ) + \iota ( \chi ) 
\]
in $\Pic^G ( X ) $. Taking intersection degrees, we get
\begin{align*}
\delta^G_X ( g ) \scdot  [ L ]  & = 
\delta_X^G ( g ) \scdot (s \circ  \varpi ) ( [ L ] )    +  \delta_X^G ( g ) \scdot \iota ( \chi ) 
\\
& = \delta_X ( g ) \scdot  \varpi ( [ L ] )   +  \degg ( g ) \scdot ( \vartheta \circ \iota \, ( \chi )) \\
& = \delta_X ( g ) \scdot  \varpi ( [ L ] ) +  \degg ( g ) \scdot ( \pi^* \circ \eta_\TTT \, ( \chi ) ) 
\tag{by \eqref{equation:identification-functions-theta-iota-and-pi-star-eta} \cpageref{equation:identification-functions-theta-iota-and-pi-star-eta} }
\\
& = \delta_X ( g ) \scdot  \varpi ( [ L ] )  +   \delta_\BBB ( g ) \scdot \eta_\TTT ( \chi ) .
\end{align*}
Hence 
\[
( \delta_X^G ( g ) \scdot \vartheta - \delta_X ( g )  \scdot \varpi ) : [L] \longmapsto \delta_\BBB \scdot  \eta_\TTT ( [L] - s \circ \varpi [ L ]  ) 
\]
lies in $\mathcal X_* ( G )$.
\end{proof}

\smallskip 

We reformulate these remarks in terms of moduli spaces of rational curves in the following paragraph. 

\subsection{Moduli spaces of morphisms and sections}
\label{subsetion:moduli-spaces-of-morphims-on-toric-products}
In what follows $\UUU$ is a dense open subset of $\BBB$, $U$ a dense open subset of $X$ which is stable under the action of $G$, and $\widetilde \UUU$ is the intersection of the preimage of $\UUU $ with $\TTT \times^G U$ in $\XXX$. 

Let $\kb ' $ be an extension of $\kb$ and $f: \PP^1_{\kb '} \to  \BBB_{\kb '}$ be a $\kb '$-morphism given by a $\kb '$-point $x$ of $\Hom ( \PP^1_\kb , \BBB)_\UUU $. As before, the morphism $f$ defines pull-backs
$\TTT_f = \TTT \times_f \PP^1_{\kb '} $
and
 $\XXX_f = \PP^1_{\kb '}\times_f \XXX_{\kb ' }$ over $\PP^1_{\kb '}$. 
We fix once and for all a representative $\TTT_{\delta_\BBB}$
of the isomorphism class of $\TTT_f$,
whenever $\delta_\BBB = \degg ( f ) : \Pic ( \BBB_{\kb ' } ) \to \Pic ( \PP^1_{\kb '} ) $,
as well as a corresponding twisted product $\XXX_{\delta_\BBB}$.
We have canonical isomorphisms 
$\TTT_{\delta_\BBB} \simeq \TTT_f$,
$\XXX_{\delta_\BBB} \simeq \XXX_f$
and 
$\Pic ( \XXX_{\delta_\BBB} ) \simeq \Pic ( \XXX_f )$.

\begin{mylemma}\label{lemma:schematic-fibre-of-projection-moduli-spaces}
The schematic fibre $M_{x}$ 
of \[
\pi_* : \Hom ( \PP^1_\kb , \XXX )_{\widetilde{\UUU}} \to \Hom ( \PP^1_\kb , \BBB)_\UUU 
\]
over the $\kb '$-point $x $ corresponding to $f$
is canonically isomorphic to
\[ 
\Hom_{\PP^1_{\kb '}} ( \PP^1_{\kb '} , \XXX_f )_{U} \simeq \Hom_{\PP^1_{\kb '}} ( \PP^1_{\kb '} , \XXX_{\delta_\BBB} )_{U}.
\] 
\end{mylemma}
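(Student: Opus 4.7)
The plan is to identify the fibre via the universal property of the Cartesian square defining $\XXX_f = \PP^1_{\kb'} \times_\BBB \XXX$, verified at the level of functors of points, and then to invoke the representability provided by \cref{lemma:representability-Hom-functor} to conclude at the level of schemes.

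First I will unpack the $T$-points of the schematic fibre $M_x$ for an arbitrary $\kb'$-scheme $T$. By definition of the schematic fibre, a $T$-point of $M_x$ is the datum of a $T$-family of $\kb$-morphisms $g : \PP^1_T \to \XXX_T$ whose generic image lies in $\widetilde{\UUU}$, subject to the constraint that $\pi_T \circ g$ coincides with $f_T$, the base change of $f$ along $T \to \Spec(\kb')$.

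Next, the universal property of the Cartesian square defining $\XXX_f$, after base change to $T$, produces a unique morphism $\sigma : \PP^1_T \to (\XXX_f)_T$ such that $f_{\XXX,T} \circ \sigma = g$ and $\pi_{f,T} \circ \sigma = \mathrm{id}_{\PP^1_T}$: that is, a section of $(\pi_f)_T$. Conversely, any such section produces such a $g$ over $T$, functorially in $T$, giving a natural bijection between the $T$-points of $M_x$ (setting aside the open condition) and those of the section functor $\HHom_{\PP^1_{\kb'}}(\PP^1_{\kb'}, \XXX_f)$.

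It remains to match the open subscheme conditions. By construction $\widetilde{\UUU} = \pi^{-1}(\UUU) \cap (\TTT \times^G U)$ inside $\XXX = \TTT \times^G X$, while $\XXX_f \simeq \TTT_f \times^G X$ with $\TTT_f = \TTT \times_\BBB \PP^1_{\kb'}$, so that $f_\XXX^{-1}(\TTT \times^G U) = \TTT_f \times^G U$: this is precisely the open subset of $\XXX_f$ whose sections define $\Hom_{\PP^1_{\kb'}}(\PP^1_{\kb'}, \XXX_f)_U$. Since $x$ lies in $\Hom(\PP^1_\kb, \BBB)_\UUU$, the generic point of $\PP^1_{\kb'}$ already maps via $f$ into $\UUU$, so requiring $\sigma$ to have generic image in $\TTT_f \times^G U$ is equivalent to requiring $g$ to have generic image in $\widetilde{\UUU}$. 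This yields the first canonical isomorphism; the second then follows formally from our choice of $\XXX_{\delta_\BBB}$ as a fixed representative of the isomorphism class of $\XXX_f$, depending only on $\delta_\BBB = \degg(f)$. The argument is essentially formal and I do not anticipate any genuine obstacle; the only point requiring minor care is the compatibility of the open conditions, which is handled by the very definition of $\widetilde{\UUU}$ together with the hypothesis that $f$ already lands generically in $\UUU$.
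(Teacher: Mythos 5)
Your proof is correct and takes essentially the same route as the paper's: one identifies $T$-points of the schematic fibre with $T$-sections of $(\XXX_f)_T$ via the universal property of the Cartesian square defining $\XXX_f$, functorially in $T$. Your explicit matching of the open conditions ($\widetilde{\UUU}$ on the source versus $\TTT_f \times^G U$ on the pullback, using that $f$ already lands generically in $\UUU$) is a detail the paper's proof leaves implicit, and you handle it correctly.
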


\begin{proof}
On the one hand, if we consider $T$-points of $M_{\kb '}$, where $T$ is a scheme over $\kb '$, 
we get $T$-morphisms $g : T  \times_{\kb '} \PP^1_{\kb '} \to T  \times_{\kb '} \XXX_{\kb '}$ 
and a commutative diagram 
\[
\begin{tikzcd}
 \XXX_T \times_{\BBB_T} \PP^1_T   \arrow[r,"\pr_{  \XXX_{T}}"] \arrow[d,right,"\pr_{\PP^1_T}"'] \arrow[dr, phantom, "\ulcorner", very near start] &  \XXX_{T} \arrow[d," \pi_{T}"] \\
\PP^1_{T} \arrow[ru,"g"'] \arrow[r,"(\mathrm{id}_T \times f)"] & \BBB_{T} .
\end{tikzcd}
\] 
The product $\XXX_T \times_{\BBB_T} \PP^1_T $ is nothing else but the extension of scalars of $\XXX_f$ to $T$ 
and the previous square is nothing else but
\begin{equation}\label{diagram:pull-back-point-of-Hom}
\begin{tikzcd}
T \times_{\kb '} \XXX_f  \arrow[r] \arrow[d,right,"\pr_{\PP^1_T}"'] \arrow[dr, phantom, "\ulcorner", very near start] &  T \times_{\kb '} \XXX_{\kb '} \arrow[d,"(\mathrm{id} \times \pi_{\kb '})"] \\
T \times_{\kb '}  \PP^1_{\kb '} \arrow[ru,"g"'] \arrow[u,shift right=1.5ex,dashrightarrow,"\exists !"'] \arrow[r,"(\mathrm{id}_T \times f)"] & T \times_{\kb '} \BBB_{\kb '} 
\end{tikzcd}
\end{equation}
giving the existence of a unique $T$-section $\sigma : \PP^1_{T} \to T \times_{\kb '} \XXX_f $. 
On the other hand, such a $T$-section defines a unique $T$-morphism $g : \PP_{T}^1 \to T \times \XXX_{\kb '}$
 making the bottom right triangle commutative, 
 that is, a unique $T$-point of $M_{\kb '}$. 
Thus the schematic fibre of $\pi_*$ at the ${\kb '}$-point $x$ corresponding to $f : \PP^1_{\kb ' } \to \BBB_{\kb ' }$ is canonically isomorphic to $\Hom_{\PP^1_{\kb '} } ( \PP^1_{\kb '} ,   \XXX_f )$ as a $\kb '$-scheme. 
\end{proof}

In particular, 
the previous argument shows that for every $\kb'$-scheme $T$ there is a map of sets   
\[
\Sigma_\delta ( T ) : \Hom ^{\delta}_\kb ( \PP^1_\kb , \XXX ) ( T ) \to \Hom_{\PP^1_\kb} ^{\delta_X^G \circ s } ( \PP^1_\kb , \XXX_{\delta_\BBB} ) ( T )
\]
sending a $T$-point $g :  \PP^1_T \to \XXX \times_\kb T $ of multidegree $\delta  \in \Pic ( \XXX ) ^\vee $ to the unique $T$-point $\sigma $ of $\Hom_{\PP^1_\kb} ( \PP^1_\kb , \XXX_f )  \simeq \Hom_{\PP^1_\kb} ( \PP^1_\kb , \XXX_{\delta_\BBB} ) $ given by the dashed arrow in \eqref{diagram:pull-back-point-of-Hom}.   
Note that this construction is functorial in $T$, leading to a morphism of schemes 
\[
\Sigma_\delta : 
\Hom^{\delta}_\kb ( \PP^1_\kb , \XXX )  
\to 
\Hom_{\PP^1_\kb} ^{\delta_X^G \circ s } ( \PP^1_\kb , \XXX_{\delta_\BBB} ) .
\]

From \cref{ptn:classes-of-piecewise-trivial-fibrations}
\cpageref{ptn:classes-of-piecewise-trivial-fibrations}
we deduce:
\begin{myptn} \label{ptn:class-of-moduli-space-is-product-of-the-classes}
Let $s : \Pic ( X ) \to \Pic ^G ( X ) $ be a section of the forgetful morphism $\varpi$.
Then
for any class 
$\mdeg = ( \mdeg_X^G , \mdeg_\BBB ) $  
we have
\begin{align*}
& \left [ \Hom ^{\mdeg_\XXX}_\kb ( \PP^1_\kb , \XXX )_{\widetilde \UUU}  \right ] \\
& =  
\left [\Hom ^{\delta_\BBB }_\kb ( \PP^1_\kb , \BBB )_{\UUU} \right ]
\left [ \Hom ^{\delta_X^G \circ s }_{\PP^1_\kb} ( \PP^1_\kb , \XXX_{\mdeg_\BBB} )_U \right ]
\end{align*}
in $\KVar \kb$.
\end{myptn}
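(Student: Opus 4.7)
The plan is to exhibit $\pi_*$, restricted to the appropriate multidegrees, as a piecewise trivial fibration and conclude via \cref{ptn:classes-of-piecewise-trivial-fibrations}. First, I would check that the morphism
\[
\pi_* : \Hom_\kb(\PP^1_\kb, \XXX)_{\widetilde \UUU} \to \Hom_\kb(\PP^1_\kb, \BBB)_{\UUU}
\]
from \cref{subsetion:moduli-spaces-of-morphims-on-toric-products} sends the closed--and--open subscheme of multidegree $\mdeg_\XXX = (\mdeg_X^G, \mdeg_\BBB)$ into the one of multidegree $\mdeg_\BBB$. For any $T$-point $g$ and its composition $f = \pi_T \circ g$, the degree relation $\degg(f) = \degg(g) \circ \pi^*$ recalled before \cref{setting:line-bundles-on-twisted-products} shows that this is just the projection of $\mdeg_\XXX$ along the dual of \eqref{equation:exact-sequence-picard-groups-detwisted}, which is precisely $\mdeg_\BBB$.

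Second, I want to identify the geometric fibres of this restricted morphism. Let $x \in \Hom_\kb^{\mdeg_\BBB}(\PP^1_\kb, \BBB)_\UUU$ be a $\kb'$-point, corresponding to $f : \PP^1_{\kb'} \to \BBB_{\kb'}$. By \cref{lemma:schematic-fibre-of-projection-moduli-spaces}, the unrestricted fibre is canonically $\Hom_{\PP^1_{\kb'}}(\PP^1_{\kb'}, \XXX_f)_U$. Because $G$ is H90 multiplicative, the exact sequence \eqref{equation:exact-sequence-obstruction-torsor-class-type} identifies the class of the pullback torsor $\TTT_f$ with the type $\mdeg_\BBB \circ \alpha$, which depends only on $\mdeg_\BBB$; hence there is a canonical isomorphism $\XXX_f \simeq (\XXX_{\mdeg_\BBB})_{\kb'}$ of $G$-varieties over $\PP^1_{\kb'}$, giving a canonical identification of the fibre with $\Hom_{\PP^1_{\kb'}}\!\bigl(\PP^1_{\kb'}, (\XXX_{\mdeg_\BBB})_{\kb'}\bigr)_U$.

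Third, I would translate the multidegree condition on $g$ into the multidegree condition on the induced section $\sigma$. By \cref{lemma:relation-delta-X-G-delta-X}, once $\mdeg_\BBB$ is fixed, the cocharacter gap $\delta_X^G(g)\circ\vartheta - \delta_X(g)\circ\varpi$ is determined by $\mdeg_\BBB \circ \eta_\TTT \circ \iota \in \mathcal X_*(G)$, so the condition $\delta_X^G(g) = \mdeg_X^G$ is equivalent to $\delta_X(\sigma) = \mdeg_X^G \circ s$, which is precisely the multidegree used to define $\Hom^{\mdeg_X^G \circ s}_{\PP^1_\kb}(\PP^1_\kb, \XXX_{\mdeg_\BBB})_U$. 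Combining with the second step, every fibre of the restricted $\pi_*$ over a $\kb'$-point $x$ is isomorphic, after base change to $\kappa(x)$ and reduction, to the fixed $\kb$-scheme $F := \Hom^{\mdeg_X^G \circ s}_{\PP^1_\kb}(\PP^1_\kb, \XXX_{\mdeg_\BBB})_U$.

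Finally, \cref{ptn:point-wise-criterion-for-piecewise-trivial-fibrations} ensures that the restricted $\pi_*$ is a piecewise trivial fibration with fibre $F$, and \cref{ptn:classes-of-piecewise-trivial-fibrations} yields the claimed factorisation in $\KVar{\kb}$. The only delicate point is the bookkeeping in step three: one must ensure that the cocharacter correction from \cref{lemma:relation-delta-X-G-delta-X} is absorbed uniformly into the \emph{single} chosen model $\XXX_{\mdeg_\BBB}$ (together with the fixed section $s$ from \cref{setting:line-bundles-on-twisted-products}), so that the target multidegree on the section side truly does not depend on the point $x$ but only on $\mdeg_\BBB$; this is where the hypotheses that $G$ is H90 multiplicative and that $\BBB(\kb)$ is Zariski-dense are essential, via the splitting of \eqref{equation:exact-sequence-obstruction-torsor-class-type}.
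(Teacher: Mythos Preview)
Your proposal is correct and follows essentially the same approach as the paper: the paper's proof is a one-line citation of \cref{ptn:classes-of-piecewise-trivial-fibrations}, because the identification of schematic fibres (\cref{lemma:schematic-fibre-of-projection-moduli-spaces}), the construction of $\Sigma_\delta$, and the multidegree bookkeeping were all set up in the paragraphs immediately preceding the proposition. You have faithfully reconstructed this groundwork and made explicit the appeal to the point-wise criterion \cref{ptn:point-wise-criterion-for-piecewise-trivial-fibrations}, which the paper leaves implicit.
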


\subsection{Asymptotic behaviour}
We assume that the motivic constants
$\tau_{\PP^1_\kb} ( X ) $
and 
$ \tau_{\PP^1_\kb} ( \BBB ) $
are well-defined in $\widehat{\mathscr M_\kb} = \widehat{\mathscr M_\kb}^{w}$ or $\widehat{\mathscr M_\kb} = \widehat{\mathscr M_\kb}^{\dim }$. 

\begin{mylemma}
The symbol $\tau_{\PP^1_\kb} ( \XXX ) $ is well-defined 
and one has
\[
\tau_{\PP^1_\kb} ( \XXX )  =  \tau_{\PP^1_\kb} ( X ) \tau_{\PP^1_\kb} ( \BBB )
\]
in $\widehat{\mathscr M_\kb}$. 
\end{mylemma}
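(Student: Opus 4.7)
The plan is to verify that every ingredient entering \cref{def:motivic-Tamagawa-number-of-a-model} for $\XXX$, viewed with its constant model $\XXX \times_\kb \PP^1_\kb \to \PP^1_\kb$, factors as the corresponding ingredient for $X$ times the one for $\BBB$, and then to invoke \cref{ptn:multiplicative-property-for-effective-Euler-products} to turn a pointwise product of local factors into a product of Euler products.

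First I would record the numerical identities $\dim(\XXX) = \dim(\BBB) + \dim(X)$ (obvious, since $\pi$ is Zariski-locally trivial with fibre $X$) and $\rk\Pic(\XXX) = \rk\Pic(\BBB) + \rk\Pic(X)$, which follows by combining the exact sequence \eqref{equation:exact-sequence-picard-groups} with $0 \to \mathcal X^*(G) \to \Pic^G(X) \to \Pic(X) \to 0$, the copies of $\mathcal X^*(G)$ cancelling. Since $\PP^1_\kb$ has genus zero, these identities immediately imply that the global prefactor $\LL_\kb^{\dim V}[\Pic^0(\PP^1_\kb)]^{\rk\Pic V}/(1-\LL_\kb^{-1})^{\rk\Pic V}$ appearing in the definition of $\tau_{\PP^1_\kb}(V)$ for $V=\XXX$ factors as the product of the analogous prefactors for $V=X$ and $V=\BBB$.

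Second, for the local factors at a closed point $p\in \PP^1_\kb$, the Zariski-local triviality of the $G$-torsor $\TTT \to \BBB$ implies that $\pi: \XXX \to \BBB$ is Zariski-locally trivial with fibre $X$, a property preserved under base-change to $\kappa(p)$. Applying \cref{ptn:classes-of-piecewise-trivial-fibrations} to the base-changed fibration $\XXX_p \to \BBB_p$ yields $[\XXX_p] = [\BBB_p][X_p]$ in $\KVar{\kappa(p)}$. Dividing by $\LL_p^{\dim\XXX}$ and multiplying by $(1-\LL_p^{-1})^{\rk\Pic\XXX}$ rewrites the local factor for $\XXX$ at $p$ as $A_pB_p$, where $A_p = (1-\LL_p^{-1})^{\rk\Pic\BBB}[\BBB_p]\LL_p^{-\dim\BBB}$ and $B_p = (1-\LL_p^{-1})^{\rk\Pic X}[X_p]\LL_p^{-\dim X}$ are the local factors at $p$ entering the definitions of $\tau_{\PP^1_\kb}(\BBB)$ and $\tau_{\PP^1_\kb}(X)$ respectively.

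Finally, setting $\tilde A_p = A_p - 1$ and $\tilde B_p = B_p - 1$, one has $(1+\tilde A_p)(1+\tilde B_p) = A_pB_p$, so that the local factor for $\XXX$ at $p$ is exactly $(1+\tilde A_p)(1+\tilde B_p)$. The hypothesised well-definedness of $\tau_{\PP^1_\kb}(X)$ and $\tau_{\PP^1_\kb}(\BBB)$ amounts precisely to the convergence of the two motivic Euler products $\prod_p(1+\tilde A_p)$ and $\prod_p(1+\tilde B_p)$, and \cref{ptn:multiplicative-property-for-effective-Euler-products} then guarantees both the convergence of $\prod_p(1+\tilde A_p)(1+\tilde B_p)$ and its equality with $\prod_p(1+\tilde A_p)\cdot \prod_p(1+\tilde B_p)$. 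Together with the splitting of the global prefactor this delivers $\tau_{\PP^1_\kb}(\XXX) = \tau_{\PP^1_\kb}(X)\tau_{\PP^1_\kb}(\BBB)$ in $\widehat{\MMM_\kb}$. No step presents a genuine obstacle; the argument is a direct assembly of the numerical identities from the Picard exact sequence, the class identity coming from Zariski-local triviality, and the multiplicativity of motivic Euler products, the only subtlety being that we need the non-effective version \cref{proposition:euler-product:multiplicativity} since neither $\tilde A_p$ nor $\tilde B_p$ is a priori effective.
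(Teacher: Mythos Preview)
Your proof is correct and follows the same approach as the paper: use Zariski-local triviality of $\XXX \to \BBB$ to obtain the class factorisation $[\XXX \times_\kb \PP^1_\kb] = [X \times_\kb \PP^1_\kb][\BBB \times_\kb \PP^1_\kb]$ in $\MMM_{\PP^1_\kb}$, then apply the multiplicativity of motivic Euler products (\cref{proposition:euler-product:multiplicativity} and its specialised form \cref{ptn:multiplicative-property-for-effective-Euler-products}). The paper's proof is much terser and does not spell out the rank and dimension identities or the prefactor splitting, but the argument is the same.
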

\begin{proof}
As abstract series, 
the equality $\tau_{\PP^1_\kb} ( \XXX )  =  \tau_{\PP^1_\kb} ( X ) \tau_{\PP^1_\kb} ( \BBB )$
is a consequence of the multiplicative property of the motivic Euler product given by \cref{proposition:euler-product:multiplicativity}.
Indeed, by local triviality of the fibration, one has the relation $[ \XXX \times_\kb \PP^1_\kb ]   = [ X  \times_\kb \PP^1_\kb ] [ \BBB \times_\kb \PP^1_\kb ]$ in $\mathscr M_{\PP^1_\kb }$. 
Since $\tau_{\PP^1_\kb} ( X )$ and $ \tau_{\PP^1_\kb} ( \BBB )$ both converge in $\widehat{\mathscr M_\kb}$, so does $\tau_{\PP^1_\kb} ( \XXX ) $. 
\end{proof}

\begin{mythm}\label{thm:BMP-twisted-products}
Let $X$ and $\BBB$ be two Fano-like varieties\footnote{See \cref{def:Fano-like}, \cpageref{def:Fano-like}.}
defined over the base field $\kb$.
Assume that $G$ is  an $H90$-multiplicative group\footnote{See \cref{def:h90-multiplicative-group}, \cpageref{def:h90-multiplicative-group}. For example, take $G$ to be a split torus.} acting on $X$ and that every line bundle on $X$ admits a $G$-linearisation.
Let $U$ and $\UUU$ be dense open subsets respectively of $X$ and $\BBB$, with $U$ stable under the action of $G$.

Let $\TTT$ be a $G$-torsor over $\BBB$ and 
\[
\XXX = \TTT \times ^G X 
\]
the twisted product\footnote{See \cref{section:generalities-twisted-products}, \cpageref{section:generalities-twisted-products}.} of $X$ and $\TTT$ over $\BBB$.
Let $\widetilde \UUU$ be the intersection 
of the preimage of $\UUU$ with $\TTT \times ^G U $ in $\XXX$. 

Assume that the 
motivic Batyrev-Manin-Peyre principle for rational curves\footnote{See \cref{BMP-motivic-weak}, \cpageref{BMP-motivic-weak}, and more generally \cref{conj:BMP-motivic-strong-weight-top}, \cpageref{conj:BMP-motivic-strong-weight-top}.} 
holds both for $ X  $ and $ \BBB $, 
for curves generically intersecting $U$ and $\UUU$ respectively,
which means that 
\begin{align*}
\left [\Hom ^{\mdeg_X}_\kb ( \PP^1_\kb , X ) _U \right  ] \LL^{ - \,  \mdeg_X \scdot \omega_X^{-1}  }  & \longrightarrow \tau ( X ) \\
\left [\Hom ^{\mdeg_\BBB}_\kb ( \PP^1_\kb , \BBB )_\UUU  \right ] \LL^{  - \,  \mdeg_\BBB \scdot \omega_\BBB^{-1}   }& \longrightarrow \tau ( \BBB ) 
\end{align*}
when $d (  \mdeg_X  ,  \partial \mathrm{Eff} ( X )  ^\vee   ) $ and $d (   \mdeg_\BBB   ,  \partial \mathrm{Eff} ( \BBB )   ^\vee  ) $ both tend to infinity,
in $\widehat{\MMM_\kb} = \widehat{\MMM_\kb}^w $ or $\widehat{\MMM_\kb}^{\dim}$.

Assume furthermore that equidistribution of rational curves\footnote{See \cref{def:equidistribution-products-constructible-subsets}, \cpageref{def:equidistribution-products-constructible-subsets}.} holds for $X$.
\smallskip 
 
Then for $\mdeg  \in \CEff ( \XXX ) _\ZZ ^\vee$
the normalized class 
\[
\left [ \Hom ^{\mdeg }_\kb ( \PP^1_\kb , \XXX )_{ \widetilde \UUU } \right  ] \LL^{ - \, \mdeg \scdot \omega_\XXX^{-1}  }  
\]
tends to the non-zero effective element 
\[
\tau ( \XXX )  = \tau ( X  ) \tau ( \BBB ) \in \widehat{\MMM_\kb} 
\]
 when the distance $d (  \mdeg  ,  \partial \mathrm{Eff} ( \XXX )  ^\vee   ) $ goes to infinity.
\end{mythm}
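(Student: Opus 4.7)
The starting point is the factorisation
\[
\left[\Hom^{\mdeg}_\kb\!\left(\PP^1_\kb, \XXX\right)_{\widetilde\UUU}\right] = \left[\Hom^{\mdeg_\BBB}_\kb\!\left(\PP^1_\kb, \BBB\right)_\UUU\right] \cdot \left[\Hom_{\PP^1_\kb}^{\mdeg_X^G \circ s}\!\left(\PP^1_\kb, \XXX_{\mdeg_\BBB}\right)_U\right]
\]
provided by Proposition~\ref{ptn:class-of-moduli-space-is-product-of-the-classes}, where $\mdeg = (\mdeg_X^G, \mdeg_\BBB)$ decomposes $\mdeg$ via the dual of the exact sequence \eqref{equation:exact-sequence-picard-groups}. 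The key preliminary step is to choose the section $s: \Pic(X) \to \Pic^G(X)$ so that $s([\omega_X^{-1}])$ is the canonical $G$-linearisation of $\omega_X^{-1}$. Together with the isomorphism $\omega_\XXX^{-1} \simeq \vartheta(\omega_X^{-1}) \otimes \pi^*\omega_\BBB^{-1}$ and Lemma~\ref{lemma:relation-delta-X-G-delta-X} (whose correction character vanishes on this chosen $s([\omega_X^{-1}])$), this yields the clean splitting of the anticanonical intersection
\[
\mdeg \scdot \omega_\XXX^{-1} = \mdeg_X \scdot \omega_X^{-1} + \mdeg_\BBB \scdot \omega_\BBB^{-1}, \qquad \mdeg_X := \mdeg_X^G \circ s.
\]

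Renormalising both sides of the product formula by $\LL_\kb^{-\mdeg \scdot \omega_\XXX^{-1}}$ thus realises our normalised class as a product of two individually renormalised classes. As $\mdeg$ becomes arbitrarily far from $\partial \CEff(\XXX)^\vee$, the structure of the effective cone of $\XXX$ induced by the fibration $\pi: \XXX \to \BBB$ (encoded in the exact sequence \eqref{equation:exact-sequence-picard-groups}) forces both $\mdeg_\BBB$ and $\mdeg_X$ to become arbitrarily far from the boundaries of their respective dual effective cones. The first factor then tends to $\tau(\BBB)$ by the Batyrev--Manin--Peyre hypothesis on $\BBB$. For the second factor, I would note that $\XXX_{\mdeg_\BBB} \to \PP^1_\kb$ is a proper model of $X$ whose isomorphism class depends only on the composite $\mdeg_\BBB \circ \alpha \in \mathcal X_*(G)$, where $\alpha$ denotes the type of the torsor $\TTT$. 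Since equidistribution of rational curves on the constant model $X \times_\kb \PP^1_\kb$ holds by assumption, Theorem~\ref{thm:equidistribution-and-models} applied to trivial constraints transfers it to the model $\XXX_{\mdeg_\BBB}$, and the second factor converges to $\tau(X)$. Combining the two convergences in the completion $\widehat{\MMM_\kb}$ then gives the expected limit $\tau(X)\tau(\BBB) = \tau(\XXX)$, using continuity of multiplication together with the fact that both factors have bounded virtual dimension close to $\dim(\tau(X))$ and $\dim(\tau(\BBB))$.

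\textbf{Main obstacle.} The delicate point is the uniformity of the second convergence in the parameter $\mdeg_\BBB$: Theorem~\ref{thm:equidistribution-and-models} provides convergence to $\tau(X)$ for each fixed model $\XXX_{\mdeg_\BBB}$, but as $\mdeg_\BBB$ varies so does the model, and the error terms supplied by the theorem a priori depend on this model. To handle this, I would revisit the proof of the change-of-model theorem: since $\TTT$ is Zariski-locally trivial, the models $\XXX_{\mdeg_\BBB}$ all coincide with $X \times_\kb \PP^1_\kb$ outside a finite set of closed points of $\PP^1_\kb$ controlled by the cocycle defining $\TTT_f$, and Lemmas~\ref{lemma:local-difference-is-constructible} and~\ref{lemma:difference-of-degrees-takes-finite-nb-of-values} give finite and explicitly controlled local discrepancies $\varepsilon_{\widetilde{\LLL}'-\widetilde{\LLL}}$ entering the change-of-variable formula. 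Showing that the resulting error bound in the second factor grows in $\mdeg_\BBB$ strictly slower than the speed of convergence afforded by BMP on $\BBB$ would then close the argument and yield the stated limit in $\widehat{\MMM_\kb}$.
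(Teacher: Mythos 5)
Your proposal follows essentially the same route as the paper: the factorisation of \cref{ptn:class-of-moduli-space-is-product-of-the-classes}, the change-of-model theorem (\cref{thm:equidistribution-and-models}) applied to the twisted models $\XXX_{\mdeg_\BBB}$ using equidistribution on the constant model of $X$, the splitting of the anticanonical degree, the cone decomposition $\CEff ( \XXX )_\ZZ^\vee = \vartheta ( \CEff^G ( X ) )^\vee_\ZZ \oplus \pi^* ( \CEff ( \BBB ) )^\vee_\ZZ$, and continuity of multiplication. Two remarks: the paper does not normalise $s$ so that $s ( [ \omega_X^{-1} ] )$ is the canonical linearisation --- this need not be achievable, since modifying the linearisations of a basis $L_1 , \dots , L_{r_X}$ with $\omega_X^{-1} \simeq \otimes L_i^{\otimes \lambda_i}$ only shifts $s ( [ \omega_X^{-1} ] )$ by characters of the form $\sum_i \lambda_i \chi_i$ --- and instead keeps the correction character of \cref{lemma:relation-delta-X-G-delta-X} explicit, checking that the factor $\LL_\kb^{\delta_\BBB \cdot \eta_\TTT ( [ \omega_X^{-1} ] - s \circ \varpi [ \omega_X^{-1} ] )}$ cancels between the Tamagawa number of $\XXX_{\mdeg_\BBB}$ and the degree decomposition. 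As for the uniformity in $\mdeg_\BBB$ that you single out as the main obstacle, it is a genuine subtlety, but the paper's own proof does not treat it more carefully than you do: it applies \cref{thm:equidistribution-and-models} for each fixed $\mdeg_\BBB$ and concludes by continuity of the multiplication, so your sketch of how to control the local discrepancies uniformly via the Zariski-local triviality of $\TTT$ is, if anything, more explicit than the published argument.
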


Together with \cref{thm-equidistribution-toric}, we get the following. 

\begin{mycor}
	Let $X$ be a smooth projective split toric variety with open orbit $U \simeq \GG_m^n$
	and $\TTT \to \BBB$ 
	a $\GG_m^n$-torsor
	above a Fano-like variety $\BBB$. 
	Assume that the Batyrev-Manin-Peyre principle holds for rational curves on $\BBB$.
	Then it holds as well for rational curves on the twisted product $\XXX = \TTT \times ^G X $. 
\end{mycor}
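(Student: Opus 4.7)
The plan is to combine the product decomposition of \cref{ptn:class-of-moduli-space-is-product-of-the-classes} with the change-of-model \cref{thm:equidistribution-and-models}, using the stronger equidistribution hypothesis on $X$ to compensate for the non-triviality of the twist.

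First I fix the section $s\colon \Pic(X) \to \Pic^G(X)$ of \cref{setting:line-bundles-on-twisted-products} so that $s([\omega_X^{-1}])$ coincides with the canonical $G$-linearisation of $\omega_X^{-1}$ (which is allowed up to modifying $s$ by a character). For any $\mdeg \in \Pic(\XXX)^\vee$, decomposing $\mdeg$ as $(\mdeg_X^G, \mdeg_\BBB) \in \Pic^G(X)^\vee \oplus \Pic(\BBB)^\vee$ via $(\vartheta^\vee, \pi_*)$, the isomorphism $\omega_\XXX^{-1} \simeq \vartheta(\omega_X^{-1}) \otimes \pi^*(\omega_\BBB^{-1})$ together with \cref{lemma:relation-delta-X-G-delta-X} yields
\[
\mdeg \cdot \omega_\XXX^{-1} = \mdeg_X \cdot \omega_X^{-1} + \mdeg_\BBB \cdot \omega_\BBB^{-1},
\]
where $\mdeg_X = \mdeg_X^G \circ s$. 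Combining with \cref{ptn:class-of-moduli-space-is-product-of-the-classes} gives the factorisation
\[
[\Hom^\mdeg_\kb(\PP^1_\kb, \XXX)_{\widetilde\UUU}] \LL^{-\mdeg \cdot \omega_\XXX^{-1}} = A(\mdeg_\BBB) \cdot B(\mdeg_X, \mdeg_\BBB),
\]
with $A(\mdeg_\BBB) = [\Hom^{\mdeg_\BBB}_\kb(\PP^1_\kb, \BBB)_\UUU] \LL^{-\mdeg_\BBB \cdot \omega_\BBB^{-1}}$ and $B(\mdeg_X, \mdeg_\BBB) = [\Hom^{\mdeg_X}_{\PP^1_\kb}(\PP^1_\kb, \XXX_{\mdeg_\BBB})_U] \LL^{-\mdeg_X \cdot \omega_X^{-1}}$. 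An elementary check based on \eqref{equation:exact-sequence-picard-groups-detwisted} shows that as $\mdeg$ goes arbitrarily far from $\partial \CEff(\XXX)^\vee$, both $\mdeg_\BBB$ and $\mdeg_X$ go arbitrarily far from the boundaries of $\CEff(\BBB)^\vee$ and $\CEff(X)^\vee$; by the BMP hypothesis on $\BBB$, $A(\mdeg_\BBB) \to \tau(\BBB)$.

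For the second factor I view $\XXX_{\mdeg_\BBB}$ as a proper model of $X$ over $\PP^1_\kb$, and equip it with line-bundle models $\widetilde{\LLL_i}$ of the $L_i$'s extending $\vartheta_{f}(s(L_i))$. The equidistribution hypothesis for $X$, which holds on the constant model $X \times_\kb \PP^1_\kb$, transfers via \cref{thm:equidistribution-and-models} to equidistribution on $\XXX_{\mdeg_\BBB}$, with Tamagawa number $\tau_{\widetilde\LLL}(\XXX_{\mdeg_\BBB}) = \tau(X)$. Hence for each fixed $\mdeg_\BBB$, $B(\mdeg_X, \mdeg_\BBB) \to \tau(X)$ as $\mdeg_X \to \infty$, and multiplying gives the expected $\tau(\XXX) = \tau(X)\tau(\BBB)$.

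The main obstacle will be securing uniformity in $\mdeg_\BBB$: as $\mdeg$ grows, the twisted model $\XXX_{\mdeg_\BBB}$ itself varies, and the finite set $\mathsf S(\mdeg_\BBB) \subset \PP^1_\kb$ where the pulled-back torsor $\TTT_f$ fails to be trivial may drift with $\mdeg_\BBB$. To handle this I would unwind the quantitative error estimates underlying \cref{thm:equidistribution-and-models} — specifically, the constructibility and finiteness statements of \cref{lemma:local-difference-is-constructible} and the local density comparison of \cref{lemma:lifting-equidistribution-and-convergence-speed}: at each point of $\mathsf S(\mdeg_\BBB)$ the difference between the constant model and $\XXX_{\mdeg_\BBB}$ contributes a local motivic density on $X$, and the product of these densities telescopes back into the Euler product defining $\tau(X)$ because the complementary restriction $\tau(X/\PP^1_\kb)_{|\PP^1_\kb \setminus \mathsf S(\mdeg_\BBB)}$ converges to $\tau(X)$ as $|\mathsf S(\mdeg_\BBB)| \to \infty$. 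The equidistribution assumption on $X$, applied with the constraint set dictated by the local twist at each point of $\mathsf S(\mdeg_\BBB)$, is precisely what provides the quantitative control needed to make this limit uniform, completing the proof.
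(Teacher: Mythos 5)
Your proposal is correct and follows essentially the same route as the paper: the corollary is obtained by feeding the equidistribution theorem for split toric varieties (\cref{thm-equidistribution-toric}) into \cref{thm:BMP-twisted-products}, whose proof is exactly your combination of the product decomposition of \cref{ptn:class-of-moduli-space-is-product-of-the-classes}, the change-of-model \cref{thm:equidistribution-and-models} applied to the twisted models $\XXX_{\mdeg_\BBB}$, \cref{lemma:relation-delta-X-G-delta-X} for the degree bookkeeping, and the splitting of $\CEff(\XXX)^\vee$ from \cite{chambert2001torseurs}. The uniformity in $\mdeg_\BBB$ that you flag as the main obstacle is indeed the delicate point (the paper keeps the correction term $\LL^{\delta_\BBB\cdot\eta_\TTT(\cdots)}$ explicit rather than renormalising $s$, and the needed uniformity comes from the quantitative, $W$-uniform error bound in \cref{thm-equidistribution-toric}), though note the twisted model is Zariski-locally trivial everywhere, so what drifts with $\mdeg_\BBB$ is the degree-comparison function at a fixed point of $\PP^1_\kb$, not the locus of non-triviality.
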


\begin{proof}[Proof of \cref{thm:BMP-twisted-products}]
Let $\delta_\XXX = ( \delta_X^G , \delta_\BBB ) \in \Pic ( \XXX )^\vee $ viewed in $ \Pic^G ( X ) ^\vee \oplus \Pic ( \BBB)^\vee$. 
Fix a section 
\[
 s : \Pic ( X ) \to \Pic^G ( X )
 \]
of the forgetful morphism $\varpi$
as in \cref{setting:line-bundles-on-twisted-products}.
That is to say,
fix line bundles $L_1 , ... , L_{r_X}$ forming a basis of $\Pic ( X )$ 
	together with a $G$-linearisation on each of them.
	
 Given a curve $f : \PP^1_\kb \to \BBB$, we know from the previous sections that the isomorphism class (as a scheme over $\PP^1_\kb$) of the pull-back $\XXX_f $ only depends on its multidegree $\delta_\BBB $, and that it is a twisted model of $X$ over $\PP^1_\kb$. 
 In the beginning of \cref{subsetion:moduli-spaces-of-morphims-on-toric-products}, 
 we chose once and for all a representative $ \XXX_{\delta_\BBB}$ of its isomorphism class.
\[
\begin{tikzcd}
 \XXX_{\delta_\BBB} \arrow[r] \arrow[d,right] \arrow[dr, phantom, "\ulcorner", very near start] & \XXX \arrow[d,"\pi "] \\
\PP^1_\kb   \arrow[r] & \BBB 
\end{tikzcd}
\]
This model comes with functors $\vartheta_{\mdeg_\BBB}$ so that 
	$ \vartheta_{\mdeg_\BBB} ( L_ i )$ is a twisted model of $L_i$ on $\XXX_{\mdeg_\BBB}$,
and $\vartheta ( s \circ \varpi ( \omega_X^{-1} ) )$ a model of $\omega_X^{-1}$.
We fix $\mdeg_\BBB$ and consider sections $\sigma$ of $\XXX_{\mdeg_\BBB}$ of corresponding multidegree
	\[
	\mdeg_X^G \circ s : 
	[L] \mapsto \delta_X^G \scdot   s (  [ L ] )  = \delta \scdot \vartheta_{\mdeg_\BBB} ( s (  [ L ] ) ) .
	\]
By \cref{thm:equidistribution-and-models},
\[
\left [\Hom ^{ \mdeg_X^G \circ s }_{\PP^1_\kb} ( \PP^1_\kb , \XXX_{\mdeg_\BBB} )_U \right ] \LL^{- ( \mdeg_X^G \circ s )  \scdot \omega_V^{-1} } 
\]	
tends to $\tau_{ \vartheta ( s \circ \varpi ( \omega_X^{-1} ) )  } ( \XXX_{\mdeg} ) $
as $d ( \mdeg_X^G \circ s , \partial \CEff ( X )^\vee ) \to \infty $.
	Note that doing this way we obtain a motivic Tamagawa number with respect to the model $\vartheta ( s \circ \varpi ( \omega_X^{-1} ) )$
	of $\omega_X^{-1}$
	and that we can apply \cref{lemma:relation-delta-X-G-delta-X} to get the relation
	\[
	\tau_{ \vartheta ( s \circ \varpi ( \omega_X^{-1} ) )  } ( \XXX_{\mdeg_\BBB} ) 
	= 
	\LL_\kb^{ \delta_\BBB \scdot  \eta_\TTT (  [\omega_V^{-1}] - s \circ \varpi [ \omega_V^{-1} ]  ) } 	\tau_{ \vartheta ( \omega_X^{-1} )   } ( \XXX_{\mdeg_\BBB} ) 
	=  \LL_\kb^{ \delta_\BBB \scdot  \eta_\TTT (  [\omega_V^{-1}] - s \circ \varpi [ \omega_V^{-1} ]  ) }  \tau ( X ) . 
	\]
	
By \cref{ptn:class-of-moduli-space-is-product-of-the-classes} we have the equality of classes
	\begin{align*}
& \left [ \Hom ^{\mdeg}_\kb ( \PP^1_\kb , \XXX )_{\widetilde \UUU}  \right ] \\
& =  
\left [\Hom ^{\delta_\BBB}_\kb ( \PP^1_\kb , \BBB )_{\UUU} \right ]
\left [\Hom ^{ \mdeg_X^G \circ s }_{\PP^1_\kb} ( \PP^1_\kb , \XXX_{\mdeg_\BBB} )_U \right ].
\end{align*}
Moreover, the expression
	 \[
	 \omega_\XXX = \vartheta ( \omega_X ) \otimes \pi^* ( \omega_\BBB ) 
	 \]
and the projection formula
	 provide the decomposition of anticanonical degrees
	 \begin{align*}
	 \mdeg \scdot  \omega_\XXX^{-1} & = \mdeg_X^G \scdot \omega_X^{-1} + \mdeg_\BBB \scdot \omega_\BBB^{-1} \\
	 & = ( \delta_X^G \circ s ) \scdot \varpi [\omega_V^{-1} ] +  \delta_\BBB \scdot  \eta_\TTT (  [\omega_V^{-1}] - s \circ \varpi [ \omega_V^{-1} ]  ) +  \mdeg_\BBB \scdot \omega_\BBB^{-1}
	 \end{align*}
	 so that the normalised class 
	 \[ 
	 \left [ \Hom ^\mdeg_\kb ( \PP^1_\kb , \XXX )_{\widetilde \UUU } \right  ] \LL^{ - \mdeg \scdot  \omega_\XXX^{-1}}
	 \]
	 is the product 
	 \[
	 \left [\Hom ^{\delta_\BBB}_\kb ( \PP^1_\kb , \BBB )_{\UUU} \right ]
\LL^{ - \mdeg_\BBB \scdot \omega_\BBB^{-1}}
\times 
\left [\Hom ^{ \mdeg_X^G \circ s }_{\PP^1_\kb} ( \PP^1_\kb , \XXX_{\mdeg_\BBB} )_U \right ]
\LL^{ - ( \mdeg_X^G \circ s ) \scdot \omega_V^{-1} }\LL^{ - \delta_\BBB \scdot  \eta_\TTT (  [\omega_V^{-1}] - s \circ \varpi [ \omega_V^{-1} ]  ) }	 
	 \]
	 of  well-normalised classes, as expected.

To conclude the proof, we use \cite[Théorème 2.2.9]{chambert2001torseurs} ensuring that under our assumptions we have
\[
\mathrm{Eff} ( \XXX )_\ZZ^\vee = \vartheta ( \mathrm{Eff} ^G ( X ) )^\vee_\ZZ \oplus \pi^* (\mathrm{Eff} ( \BBB ) )^\vee_\ZZ 
\]
in $\Pic^G ( X ) ^\vee \oplus \Pic ( \BBB)^\vee$. 
Hence the condition 
	 $d ( \mdeg  , \partial \CEff ( \XXX )^\vee ) \to \infty$
	 means 
\[ 
d ( \mdeg_X^G  , \partial \CEff ^G ( X )^\vee ) \to \infty
\text{ and } 
d ( \mdeg_\BBB , \partial \CEff ( X )^\vee ) \to \infty,
\]
the first of these two conditions implying  
$d ( \mdeg_X^G \circ s , \partial \CEff ( X )^\vee ) \to \infty $
by \cref{lemma:relation-delta-X-G-delta-X} for $\mdeg_\BBB $ fixed. 
	  The result follows by continuity of the multiplication.
\end{proof}


\section*{Appendix: values of $\mu_{B_\Sigma}$ for the blow-up of $\PP^2_\kb$ in three general points (\cref{example-blow-up-3-pts})}
\label{annexe}

\begin{center}
\includegraphics{A2-section-7-tab-standalone.pdf}
\end{center}


\bibliography{motivic-distribution-of-rational-curves-pub-arXiv}


\end{document}